\documentclass[reqno,11pt]{amsart}
\usepackage{color}
\usepackage{hyperref}

\usepackage{amsfonts}
\usepackage{amsthm}
\usepackage{amssymb}
\usepackage[utf8]{inputenc}
\usepackage{enumerate}

\newtheorem{Theorem}{Theorem}[section]
\newtheorem{Proposition}[Theorem]{Proposition}
\newtheorem{Lemma}[Theorem]{Lemma}
\newtheorem{Definition}[Theorem]{Definition}
\newtheorem{Corollary}[Theorem]{Corollary}
\newtheorem{theoremmain}{Theorem}

\theoremstyle{definition}
\newtheorem{Remark}[Theorem]{Remark}
\newtheorem{Example}[Theorem]{Example}
\newtheorem{Notation}[Theorem]{Notation}

\newtheorem*{Agradecimientos}{Acknowledgements}

\title{Cuspidal dicritical foliations and analytical invariants of cusps}
\author{Nuria Corral}
\address{Nuria Corral. Departamento de Matemáticas, Estadística y Computación. Universidad de Cantabria. Avda. de los Castros s/n, 39005 -- Santander, SPAIN}
\email{nuria.corral@unican.es}
\author{Oziel Gómez-Martínez}
\address{Oziel Gómez-Martínez. CIMAT (Centro de Investigación en Matemáticas), Guanajuato, México. }
\email{oziel.gomez@cimat.mx}
\author{David Senovilla-Sanz}
\address{David Senovilla-Sanz.}
\email{d.ssc37@gmail.com}
\subjclass[2020]{14H20, 14H15, 32S65, 32S15, 32S05}
\keywords{Dicritical foliation, equisingularity, cusp, analytic invariant,  differential value,  semimodule, jacobian curve, conductor}
\thanks{The authors are supported by the Spanish research project PID2022-139631NB-I00 funded by the Agencia Estatal de Investigación - Ministerio de Ciencia e Innovación. The second author is also supported by Papiit (Dgapa, UNAM) IN10123 and SECIHTI through Estancias Posdoctorales en México 2022(1)}
\begin{document}

\begin{abstract}
In this paper we study dicritical foliations having a family of cusps as invariant curves. We give conditions to assure that all the curves of the family have the same semimodule of differential values. We also give an expression to compute the conductor of a cuspidal semimodule.
\end{abstract}
\maketitle
\tableofcontents
\section{Introduction}

Dicritical foliations in $(\mathbb{C}^2,0)$ are foliations with infinitely many invariant curves. Dicriticalness in dimension two is equivalent to the existence of non-invariant exceptional divisors in the sequence of blowing-ups desingularizing $\mathcal{F}$. The invariant curves of the foliation which are transversal to a non-invariant exceptional divisor are equisingular curves but they are not analytically equivalent in general. The objective of this paper is to give conditions to guarantee that all the curves in this family have the same semimodule of differential values.

Consider a foliation $\mathcal{F}$ in $(\mathbb{C}^2,0)$ and let $\pi: M \to (\mathbb{C}^2,0)$ be the minimal reduction of singularities  of $\mathcal{F}$. The foliation $\mathcal{F}$ is dicritical if there exists a component $E$ of the exceptional divisor $\pi^{-1}(0)$ which is not invariant by the strict transform $\pi^*\mathcal{F}$ of the foliation $\mathcal{F}$ by $\pi$. In this case, there exists a curve ${C}_P'$ invariant by $\pi^* \mathcal{F}$ and transversal to the divisor $E$ at each point $P \in E$ which is not a singular point of the divisor $\pi^{-1}(0)$. The projection $C_P=\pi(C_P')$ is a germ of curve in $(\mathbb{C}^2,0)$. This family of curves will be denoted $\operatorname{Curv}_E(\mathcal{F})$.

The objective of this paper is to give conditions over the foliation $\mathcal{F}$ to assure that the curves in $\operatorname{Curv}_E(\mathcal{F})$ have the same semimodule of differential values. We will focus on the case where the curves in $\operatorname{Curv}_E(\mathcal{F})$ are cusps, that is, curves with only one Puiseux pair. These foliations will be called {\em cuspidal dicritical foliations.} The results in this paper generalize the results obtained by the second author in \cite{Gom} where he gave conditions to assure that the curves  in  $\operatorname{Curv}_E(\mathcal{F})$ have the same Zariski invariant.

\medskip
Dicritical foliations appear in a natural way when studying the analytic classification of plane curves. The set of values $\Lambda_C$ given by the contact of 1-forms with an irreducible curve $C$ is an important analytic invariant of the curve $C$. The set $\Lambda_C$ is a $\Gamma_C$-semimodule, where $\Gamma_C$ is the semigroup of the curve $C$. The 1-forms whose differential values provide the basis of  $\Lambda_C$ as $\Gamma_C$-semimodule define dicritical foliations (see \cite{Can-C-SS-2023,Cor-H-H,For-R}). Let us detail all these concepts.

Given a primitive parametrization $\gamma(t)$ of the irreducible curve $C$ and a 1-form $\omega$, the differential value $\nu_C(\omega)$ is given by
$$\nu_C(\omega)= \text{ord}_t(\alpha(t))+1
$$
where $\gamma^*\omega=\alpha(t)dt$. The set of differential values $\Lambda_C$ is defined by
$$
\Lambda_C=\{\nu_C(\omega) \ : \ \omega \in \Omega^1_{(\mathbb{C}^2,0)}\}.
$$
The importance of the semimodule $\Lambda_C$ in the analytic classification of plane curve was pointed out by O. Zariski in his seminal book  \cite{Zar}, where he started the study of the analytic classification of plane curves in a systematic way. The semimodule $\Lambda_C$ is also the principal invariant in the complete classification of irreducible plane curves given by A. Hefez and M. E. Hernandes in \cite{Hef-H,Hef-H-2021}.

Let us describe some properties of the semimodule $\Lambda_C$. There is a unique basis $\{\lambda_i\}_{i=-1}^s$ of $\Lambda_C$, that is, $$\Lambda_C=\bigcup_{i=-1}^s (\lambda_i + \Gamma_C)$$
with $\lambda_j \not \in \bigcup_{i=-1}^{j-1}(\lambda_i + \Gamma_C)$. A list of 1-forms $(\omega_{-1},\omega_0,\omega_1,\ldots,\omega_s)$ such that
$$
\nu_C(\omega_i)=\lambda_i, \quad \text{ for } \quad i=-1,0,1,\ldots,s,
$$
is called a {\em minimal standard basis\/} for the curve $C$.

Properties concerning the structure of the semimodule $\Lambda_C$ are described by several authors when $C$ is a cusp (see for instance \cite{Del,Alb-A-MF,Alm-M-2021,Can-C-SS-2023}).

Assume now that $C$ is a cusp with Puiseux pair $(m,n)$ with $1 < n < m$ and $\gcd (n,m)=1$ and fix a  minimal standard basis $(\omega_{-1},\omega_0,\omega_1,\ldots,\omega_s)$ of $C$. Let us  consider  the foliation $\mathcal{G}_i$ in $(\mathbb{C}^2,0)$ defined by $\omega_i=0$ for $i=1,2,\ldots,s$. Geometrical properties of the foliations $\mathcal{G}_i$, for $i=1,\ldots,s$, were studied in  \cite{Can-C-SS-2023}.  In particular, the foliations $\mathcal{G}_i$ are dicritical in
the triple point $E$ of the resolution dual graph of the curve $C$. Moreover, the semimodule of differential values of a curve  in $\operatorname{Curv}_E(\mathcal{G}_i)$  is equal to
$$
\Lambda_{i-1}=\bigcup_{k=-1}^{i-1} (\lambda_k + \Gamma_C).
$$
The behaviour of these foliations motivates the work in this paper. Let us explain the main results.

Consider the minimal reduction of singularities $\pi_C : M \to (\mathbb{C}^2,0)$ of the cusp $C$ and let $E$ be the ``cuspidal divisor'', that is, the irreducible component $E$ of $\pi_C^{-1}(0)$  such that the curve $C'$ intersects $E$, with $C'$ the strict transform of $C$ by $\pi_C$.  Associated to the divisor $E$, there is a divisorial value $\nu_E( \  )$ defined for functions and forms (see Section~\ref{sec:div-ord}). In adapted coordinates, the divisorial value for a cuspidal divisor behaves as a weighted monomial order, that is, for a monomial $\nu_E(x^i y^j)=ni+mj$. Note that $\nu_E(\omega) \leq \nu_C(\omega)$ for any 1-form.

A foliation $\mathcal{F}$ is called a {\em totally $E$-dicritical foliation} if the strict transform $\pi^*_C \mathcal{F}$ by $\pi_C$ is transversal to $E$ and has normal crossings with $\pi_C^{-1}(0)$ at each point of $E$. The foliations $\mathcal{G}_i$ are totally $E$-dicritical and the divisorial value $\nu_E(\omega_i)=t_i$ is determined in terms of the combinatorial properties of the semimodule $\Lambda_C$ (see \cite{Can-C-SS-2023}). Let us recall the definition of the \textit{axes} $u_i,\tilde{u}_i$ and \textit{critical values} $t_i,\tilde{t}_i$ of the semimodule $\Lambda_C$.  For $i=0,1,\ldots,s,s+1$, the axes $u_i^n,u_i^m,u_i, \tilde{u}_i$ of $\Lambda_C$ are defined as
\begin{equation*}
\begin{aligned}
   u_i^n &= \min\{\lambda_{i-1} + m \ell \in \Lambda_{i-2} \ : \ell \geq 1\}; \quad &u_i^m & = \min \{ \lambda_{i-1} + m \ell \in \Lambda_{i-2} \ : \ell \geq 1 \}; \\
   u_i & =\min\{u_i^n,u_i^m\}; & \tilde{u}_i &=\min\{u_i^n,u_i^m \}.
\end{aligned}
\end{equation*}
Put $t_{-1}=\lambda_{-1}=n$, $t_0=\lambda_0=m$, and for $i=1,\ldots,s,s+1$,  the critical values are given by
\begin{equation*}
\begin{aligned}
   t_i^n &= t_{i-1} +u_i^n - \lambda_{i-1}; \quad &t_i^m & = t_{i-1} +u_i^m - \lambda_{i-1}; \\
   t_i & =\min\{t_i^n,t_i^m\}; & \tilde{t}_i &=\min\{t_i^n,t_i^m \}.
\end{aligned}
\end{equation*}
The combinatorial properties of the semimodule described by the above parameters will be key in the proof of the results in this paper (see Section~\ref{sec:cuspidal-semimodules} and Appendix~\ref{sec:apendix-combinatoria}). In particular, we prove that the conductor $c(\Lambda_C)$ of $\Lambda_C$ can be computed as
$$
c(\Lambda_C)=\tilde{u}_{s+1}-n-m+1.
$$
(see Appendix~\ref{ap:conductor}).

 Consider now any totally $E$-dicritical foliation $\mathcal{F}$ in $(\mathbb{C}^2,0)$ defined by $\omega_\mathcal{F}=0$ and such that $C \in \operatorname{Curv}_E(\mathcal{F})$. Given another curve $\widetilde{C} \in \operatorname{Curv}_E(\mathcal{F})$, we would like to determine when $\Lambda_{\widetilde{C}}=\Lambda_{{C}}$.
The differential value $\nu_{\widetilde{C}} (\omega)$ of a 1-form $\omega$ can be computed as
\begin{equation}\label{eq:introduccion1}
\nu_{\widetilde{C}} (\omega)=\nu_E(\omega \wedge \omega_{\mathcal{F}}) - \nu_E(\omega_{\mathcal{F}}) + i_P(\mathcal{J}',\widetilde{C}')
\end{equation}
where $P$ is the infinitely near point of $\widetilde{C}$ in the divisor $E$, the curves $\widetilde{C}'$ and  $\mathcal{J}'$ are the strict transforms of $\widetilde{C}$ and $\mathcal{J}$ by $\pi$ with $\mathcal{J}$ the jacobian curve  of the foliations $\mathcal{F}$ and the foliation defined by $\omega=0$ (see Theorem~\ref{Th:nu_C} and  \cite[Theorem 3.8]{Gom}).

Recall that, given two foliations $\mathcal{F}$ and $\mathcal{G}$ in $(\mathbb{C}^2,0)$, the {\em jacobian curve} $\mathcal{J}_{\mathcal{F},\mathcal{G}}$ of $\mathcal{F}$ and $\mathcal{G}$ is the curve of tangency of the foliations defined by
$$
\omega_{\mathcal{F}}\wedge \omega_{\mathcal{G}}=0
$$
where $\omega_\mathcal{F}$, $\omega_{\mathcal{G}}$ are 1-forms defining $\mathcal{F}$ and $\mathcal{G}$ respectively.

The expression given in \eqref{eq:introduccion1} leads us to introduce next definition. We say that the foliations $\mathcal{F}$ and $\mathcal{G}$ have the {\em $E$-transversality property} if $\mathcal{J}'_{\mathcal{F},\mathcal{G}}$ does not intersect $E$, where $\mathcal{J}'_{\mathcal{F},\mathcal{G}}$ is the strict transform of the jacobian curve $\mathcal{J}_{\mathcal{F},\mathcal{G}}$ by $\pi_C$.  In this situation, given two curves $\widetilde{C}_1,\widetilde{C}_2 \in \operatorname{Curv}_E(\mathcal{F})$, the differential values $\nu_{\widetilde{C}_1}(\omega_{\mathcal{G}})=\nu_{\widetilde{C}_2}(\omega_{\mathcal{G}})$ since
$$
\nu_{\widetilde{C}}(\omega_{\mathcal{G}})=\nu_E(\omega_{\mathcal{F}}\wedge \omega_{\mathcal{G}}) - \nu_E(\omega_\mathcal{F}).
$$
for all $\widetilde{C}\in \operatorname{Curv}_E(\mathcal{F})$. In particular, if $\mathcal{F}$ has the $E$-transversality property with the foliations $\mathcal{G}_i$, for $i=-1,0,1, \ldots,\ell$, with $\ell \in \{1,2,\ldots,s\}$, we obtain that
$$
\nu_{\widetilde{C}}(\omega_i)=\nu_C(\omega_i)=\lambda_i \quad \text{ for } \quad  i=-1,0,1, \ldots,\ell.
$$
Then the values $\lambda_{-1},\lambda_0,\lambda_1,\ldots,\lambda_\ell$ are elements in $\Lambda_{\widetilde{C}} \smallsetminus \Gamma_{\widetilde{C}}$.
We say that a foliation $\mathcal{F}$ satisfies the {\em $\ell$-transversality property} for the divisor $E$ if $\mathcal{F}$ and $\mathcal{G}_\ell$ have the $E$-transversality property. We prove that the $\ell$-transversality property implies the $j$-transversality property for $1 \leq j \leq \ell$ (Proposition~\ref{Prop:l-transv-j-transv}) and  we obtain the following properties concerning the semimodules of differential values of the curves in $\operatorname{Curv}_E(\mathcal{F})$ (see Theorem~\ref{th:valores-dif-tilde-C})
\begin{theoremmain}
Let $\mathcal{F}$ be a totally $E$-dicritical foliation with $C$ as invariant curve and assume that $\mathcal{F}$ satisfies the $\ell$-transversality property for some $\ell \in \{1,2,\ldots,s\}$. Let $\widetilde{C} \in \operatorname{Curv}_E(\mathcal{F})$ and $\mathcal{B}_{\widetilde{C}}=(\lambda_{-1}^{\widetilde{C}},\lambda_{0}^{\widetilde{C}},\lambda_{1}^{\widetilde{C}},\ldots,\lambda_{\ell}^{\widetilde{C}}, \lambda_{\ell+1}^{\widetilde{C}}, \ldots, \lambda_{\tilde{s}}^{\widetilde{C}})$ be the basis of the semimodule $\Lambda_{\widetilde{C}}$ of $\widetilde{C}$. Then $\tilde{s} \geq \ell$ and
$$
\lambda_{j}^{\widetilde{C}} = \lambda_{j}, \qquad \text{ for } j=-1,0,1,\ldots, \ell.
$$
In particular, $\Lambda_j^{\widetilde{C}}=\Lambda_j$ for $j=-1,0,1,\ldots, \ell$, and a minimal standard basis for the curve $\widetilde{C}$ is given by
$$
(\omega_{-1},\omega_0,\omega_1,\ldots,\omega_\ell,\omega_{\ell+1}^{\widetilde{C}}, \ldots, \omega_{\tilde{s}}^{\widetilde{C}} ).
$$
\end{theoremmain}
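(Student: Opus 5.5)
By Proposition~\ref{Prop:l-transv-j-transv}, the $\ell$-transversality property gives the $j$-transversality property for $1\le j\le \ell$. So $\mathcal{F}$ and $\mathcal{G}_j$ have the $E$-transversality property for every $1\le j\le\ell$. For $j=-1,0$ we take $\omega_{-1},\omega_0$ to be coordinate forms such as $dx,dy$ in adapted coordinates. Since $\mathcal{F}$ is totally $E$-dicritical, I expect $E$-transversality to hold for these as well, or to reduce to $\nu_{\widetilde C}(dx)=n$ and $\nu_{\widetilde C}(dy)=m$. The latter holds because $\widetilde C$ is equisingular to $C$, being transversal to $E$ at a non-corner point.

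The first step is to apply formula \eqref{eq:introduccion1}, i.e.\ Theorem~\ref{Th:nu_C}, with $\omega=\omega_j$ and the curve $\widetilde C$. The intersection term vanishes because $\mathcal{J}'_{\mathcal{F},\mathcal{G}_j}\cap E=\emptyset$. This gives
$$\nu_{\widetilde C}(\omega_j)=\nu_E(\omega_{\mathcal F}\wedge\omega_j)-\nu_E(\omega_{\mathcal F}).$$
The right-hand side does not depend on $\widetilde C$. Evaluating it at $C\in\operatorname{Curv}_E(\mathcal F)$ shows $\nu_{\widetilde C}(\omega_j)=\nu_C(\omega_j)=\lambda_j$ for $j=-1,0,\dots,\ell$.

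The second step uses $\Gamma_{\widetilde C}=\Gamma_C=\langle n,m\rangle$, again by equisingularity. We prove by induction on $j$ that $\lambda_j^{\widetilde C}=\lambda_j$. The base cases are $\lambda^{\widetilde C}_{-1}=n$ and $\lambda^{\widetilde C}_0=m$. Assume $\lambda_k^{\widetilde C}=\lambda_k$ for $k<j$, so that $\Lambda_{j-1}^{\widetilde C}=\Lambda_{j-1}$. The basis element $\lambda_j^{\widetilde C}$ is $\min(\Lambda_{\widetilde C}\smallsetminus\Lambda^{\widetilde C}_{j-1})$. We know $\lambda_j\in\Lambda_{\widetilde C}\smallsetminus\Lambda_{j-1}$, so $\lambda_j^{\widetilde C}\le\lambda_j$; in particular $\tilde s\ge j$, since $\Lambda_{\widetilde C}\ne\Lambda_{j-1}$.

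The main obstacle is the reverse inequality: we must exclude some $\lambda\in\Lambda_{\widetilde C}\smallsetminus\Lambda_{j-1}$ with $\lambda<\lambda_j$. My plan is to use the symmetric situation. $\widetilde C$ is itself a cusp in $\operatorname{Curv}_E(\mathcal F)$ with the same cuspidal divisor $E$, and $\mathcal F$ is totally $E$-dicritical with $\widetilde C$ invariant. By the truncated Delorme/Zariski-type description of cuspidal semimodules (Section~\ref{sec:cuspidal-semimodules}), $\lambda_j$ is characterized as the first value not in $\Lambda_{j-1}$ attained by a form whose $\nu_E$ equals the critical value $t_j$. We then compare with $C$ via \eqref{eq:introduccion1} in the reverse direction. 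Concretely, take $\omega$ with $\nu_{\widetilde C}(\omega)=\lambda<\lambda_j$ not in $\Lambda_{j-1}$. Reduce $\omega$ modulo $\omega_{-1},\dots,\omega_{j-1}$ with function coefficients, which is possible since $\Lambda^{\widetilde C}_{j-1}=\Lambda_{j-1}$. The aim is to arrange $\nu_E(\omega)$ so that $\nu_E(\omega\wedge\omega_{\mathcal F})-\nu_E(\omega_{\mathcal F})\le\lambda$. The formula applied to $C$ then gives $\nu_C(\omega)\le\lambda$, and the combinatorics of axes and critical values (Appendix~\ref{sec:apendix-combinatoria}) should force $\nu_C(\omega)\notin\Lambda_{j-1}$ and less than $\lambda_j$, contradicting the definition of $\lambda_j$.

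If this symmetric argument fails, the alternative is to invoke directly the known fact that $\lambda_j$ depends only on $\lambda_{-1},\dots,\lambda_{j-1}$ and on the existence of a value in the appropriate gap below the axis $u_j$. The remaining statements then follow: $\Lambda_j^{\widetilde C}=\Lambda_j$, and $(\omega_{-1},\dots,\omega_\ell)$ extends to a minimal standard basis of $\widetilde C$ by choosing forms realizing the remaining basis elements.
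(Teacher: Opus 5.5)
Your first step is correct and matches the paper. Proposition~\ref{Prop:l-transv-j-transv}, together with Remark~\ref{Rm:-1-0-transv} for $j=-1,0$, and Theorem~\ref{Th:nu_C} give $\nu_{\widetilde C}(\omega_j)=\lambda_j$. In your induction this yields $\lambda_j^{\widetilde C}\le\lambda_j$ and $\tilde s\ge j$.

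\textbf{The gap is the reverse inequality, which is the substance of the theorem. Your plan for it fails.}
\begin{itemize}
\item Theorem~\ref{Th:nu_C} gives $\nu_C(\omega)=\nu_E(\omega\wedge\omega_{\mathcal F})-\nu_E(\omega_{\mathcal F})+i_P(\mathcal J',C')$ with $i_P(\mathcal J',C')\ge 0$. So a bound $\nu_E(\omega\wedge\omega_{\mathcal F})-\nu_E(\omega_{\mathcal F})\le\lambda$ only gives a \emph{lower} bound on $\nu_C(\omega)$. That bound is essentially automatic anyway, from the same formula applied to $\widetilde C$.
\item To get $\nu_C(\omega)\le\lambda$ you would need $i_P(\mathcal J',C')=0$. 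That is $E$-transversality between $\mathcal F$ and the foliation of an arbitrary $\omega$, which you do not have. It is the kind of statement Proposition~\ref{prop:nu_C-nu_tilde_C} proves only with considerable work.
\item Even granting $\nu_C(\omega)\le\lambda$, nothing prevents $\nu_C(\omega)\in\Lambda_{j-1}$. Saying ``the combinatorics should force it'' is not an argument.
\item Your fallback, that $\lambda_j$ is determined by $\lambda_{-1},\dots,\lambda_{j-1}$ plus a gap condition below $u_j$, is neither precise nor a known fact. The $\lambda_j$ are genuinely analytic invariants.
\end{itemize}

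\textbf{The missing idea.} You half-named the right tool, the characterization of basis elements via critical values. Note that it is a supremum, not a ``first value'', and it must be applied to $\widetilde C$ itself rather than to $C$.
\begin{itemize}
\item $\widetilde C$ is a cusp with the same cuspidal divisor $E$, and you already know $j\le\tilde s$. So Theorem~\ref{th_caracterizacion-omega-i}(1) gives $\lambda_j^{\widetilde C}=\sup\{\nu_{\widetilde C}(\omega):\nu_E(\omega)=t_j^{\widetilde C}\}$.
\item The axes $u_i^{\widetilde C}$ and critical values $t_i^{\widetilde C}$ for $i\le j$ are computed only from $\Lambda_{j-1}^{\widetilde C}$. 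By your induction hypothesis this equals $\Lambda_{j-1}$, so $t_j^{\widetilde C}=t_j=\nu_E(\omega_j)$.
\item Hence $\omega_j$ is an admissible competitor, and $\lambda_j^{\widetilde C}\ge\nu_{\widetilde C}(\omega_j)=\lambda_j$.
\end{itemize}
This closes the induction with no further use of jacobian curves. It is exactly the paper's argument, which there is phrased by contradiction with a minimal $\tilde\lambda\in\Lambda_{\widetilde C}\smallsetminus\Lambda_\ell$ below $\lambda_\ell$.
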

In \cite{Gom}, the second author showed a particular case of this result proving that all the curves in $\operatorname{Curv}_E(\mathcal{F})$ have the same Zariski invariant provided that $\mathcal{F}$ has the 1-transversality property.

From the previous result we deduce that, if $\mathcal{F}$ satisfies the $\ell$-transversality property for $\ell=-1,0,1,\ldots,s$, then $\Lambda_C \subset \Lambda_{\widetilde{C}}$ for any $\widetilde{C}\in \operatorname{Curv}_E(\mathcal{F})$. Hence, we say that
the foliation $\mathcal{F}$ satisfies {\em total transversality property} when $\mathcal{F}$ and $\mathcal{G}_\ell$ have the $E$-transversality property for $\ell=-1,0,1,\ldots,s$. Note that this property does not depend on the minimal standard basis for $C$. Moreover, we can check if a foliation satisfies the total transversality property in terms of the divisorial value of the 1-form defining the foliation (see Theorem~\ref{Th:total-transv-property})
\begin{theoremmain}
Let $\mathcal F$ be a totally $E$-dicritical foliation with $C$ as invariant curve and
$$
\nu_E(\omega_\mathcal{F})\in [t_{s+1},t_{s+1}+\tilde{t}_{s+1}-t_s),
$$
then the foliation $\mathcal{F}$ satisfies the total transversality property.
\end{theoremmain}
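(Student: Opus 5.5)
We need to show that $\mathcal{J}'_{\mathcal{F},\mathcal{G}_\ell}$ does not meet $E$ for every $\ell=-1,0,1,\ldots,s$. By Proposition~\ref{Prop:l-transv-j-transv} it essentially suffices to treat $\ell=s$, but the cases $\ell=-1,0$ (the foliations $dx=0$, $dy=0$ in adapted coordinates) still have to be checked; the same mechanism will handle them.

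The key tool is the dicritical-divisor computation of intersection numbers. For a totally $E$-dicritical $\mathcal F$ and any 1-form $\omega$, the strict transform of $\mathcal{J}_{\mathcal F,\omega}$ meets $E$ with total multiplicity
$$\sum_{P\in E} i_P(\mathcal{J}',E)=\nu_E(\omega_{\mathcal F}\wedge\omega)-\nu_E(\omega_{\mathcal F})-\nu_E(\omega)+\text{(correction)}.$$
This should be made concrete in adapted weighted coordinates, where $\nu_E$ is the weighted order with weights $(n,m)$. There the initial weighted-homogeneous parts of $\omega_{\mathcal F}$ and $\omega$ determine whether the initial part of $\omega_{\mathcal F}\wedge\omega$ vanishes identically. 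The $E$-transversality property is equivalent to the equality
$$\nu_E(\omega_{\mathcal F}\wedge\omega)=\nu_E(\omega_{\mathcal F})+\nu_E(\omega)+\text{(fixed constant)},$$
equivalently to the initial parts being "transversal" up to the monomial factor $x^ay^b$ coming from the corner points. Via formula \eqref{eq:introduccion1} applied to $C$ itself, this reduces to checking
$$\lambda_\ell=\nu_C(\omega_\ell)=\nu_E(\omega_{\mathcal F}\wedge\omega_\ell)-\nu_E(\omega_{\mathcal F})$$
for each $\ell$, since $i_P(\mathcal J',C')\ge 0$ with equality exactly when $\mathcal J'$ avoids $P$. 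More precisely, I will show that $\nu_E(\omega_{\mathcal F}\wedge\omega_\ell)\ge\nu_E(\omega_{\mathcal F})+\lambda_\ell$ cannot be strict.

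To get this, I will write $\omega_{\mathcal F}=\sum_\ell h_\ell\,\omega_\ell$ plus higher terms, using a decomposition of 1-forms relative to the standard basis $(\omega_{-1},\ldots,\omega_s)$; such decompositions were established in \cite{Can-C-SS-2023}, with $\nu_E(h_\ell\omega_\ell)$ controlled by $\nu_E(h_\ell)+t_\ell$. Since $C$ is $\mathcal F$-invariant, $\nu_C(\omega_{\mathcal F})=\infty$. The combinatorics of the semimodule (axes $u_i,\tilde u_i$ and critical values $t_i,\tilde t_i$, from Section~\ref{sec:cuspidal-semimodules} and the Appendix) then forces the lowest divisorial-order terms of $\omega_{\mathcal F}$ to come in the pairs that kill the values $\tilde u_{s+1}$.

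The hypothesis $\nu_E(\omega_{\mathcal F})<t_{s+1}+\tilde t_{s+1}-t_s$ should ensure that the initial part of $\omega_{\mathcal F}$ involves only $\omega_s$ and $\omega_{s-1}$-type terms, with the specific coefficients required to achieve $t_{s+1}$. The lower bound $\nu_E(\omega_{\mathcal F})\ge t_{s+1}$ holds automatically for invariance, by the definition of $t_{s+1}$ as the minimal divisorial value of a form "killing" $C$. Wedging this initial part with $\omega_\ell$ gives a nonzero initial form of the expected order, which yields the equality for all $\ell$.

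The main obstacle is this initial-form analysis: showing that within the window $[t_{s+1},t_{s+1}+\tilde t_{s+1}-t_s)$ the $E$-initial part of $\omega_{\mathcal F}$ is proportional to that of the canonical form realizing $t_{s+1}$ (the form $\omega_{s+1}$ built from $\omega_s,\omega_{s-1}$ in \cite{Can-C-SS-2023}). The upper bound is exactly what excludes the second-level contribution, since $\tilde t_{s+1}-t_s$ is the gap to the next critical value. I would first try to prove this by induction on the basis index, using the recursive definitions $t_i=t_{i-1}+u_i-\lambda_{i-1}$.
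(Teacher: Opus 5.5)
\textbf{There is a genuine gap.} It lies in your decisive step, ``wedging this initial part with $\omega_\ell$ gives a nonzero initial form of the expected order.''

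For $\ell\ge 1$ this step is false. By Lemma~\ref{lemma:In-total-dicritical} and Lemma~\ref{lemma_G_i_dicritica}, both $\operatorname{In}(\omega_{\mathcal F})$ and $\operatorname{In}(\omega_\ell)$ are monomial multiples of $m\frac{dx}{x}-n\frac{dy}{y}$. Hence $\operatorname{In}(\omega_{\mathcal F})\wedge\operatorname{In}(\omega_\ell)=0$ identically. Your mechanism works only for $\ell=-1,0$, where $\omega_\ell$ is non-resonant, and that case is already Remark~\ref{Rm:-1-0-transv}. So $\nu_E(\omega_{\mathcal F}\wedge\omega_\ell)$ is strictly larger than $\nu_E(\omega_{\mathcal F})+t_\ell$ and is governed by terms beyond the initial parts. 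No amount of information about $\operatorname{In}(\omega_{\mathcal F})$ alone can decide transversality.

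Three further problems:
\begin{itemize}
\item You have the inequality the wrong way round. Theorem~\ref{Th:nu_C} gives $\nu_E(\omega_{\mathcal F}\wedge\omega_\ell)=\nu_E(\omega_{\mathcal F})+\lambda_\ell-i_P(\mathcal J',C')\le\nu_E(\omega_{\mathcal F})+\lambda_\ell$ for free. What must be proved is the lower bound: the cancellation caused by resonance has size exactly $\lambda_\ell-t_\ell$ and no more.
\item Your ``fixed constant'' characterization is wrong, since $\lambda_\ell-t_\ell$ is strictly increasing in $\ell$ (Lemma~\ref{lema_t_i_u_i}(iii)).
\item Your claim that $\operatorname{In}(\omega_{\mathcal F})$ is a monomial multiple of $\operatorname{In}(\omega_{s+1})$ fails on part of the window. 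Forms reachable from $\widetilde\omega_{s+1}$, e.g.\ $\widetilde\omega_{s+1}$ itself, have $\nu_E\in[\tilde t_{s+1},t_{s+1}+\tilde t_{s+1}-t_s)$ and are not reachable from $\omega_{s+1}$ (Lemma~\ref{lemma_reachable_only_one}).
\end{itemize}

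\textbf{What is missing} is an exact, not merely initial, description of $\omega_{\mathcal F}$, combined with a transversality statement you already know. The paper's route is as follows.
\begin{itemize}
\item Write $\omega_{\mathcal F}=A\,\omega_{s+1}+B\,\widetilde\omega_{s+1}$ using the Saito basis (Theorem~\ref{Th:Saito_basis}).
\item The upper bound of the window, together with $n\ell^n_{s+1}+m\ell^m_{s+1}<nm$ (Corollary~\ref{corollary_limites}), forces $\nu_E(A\omega_{s+1})\neq\nu_E(B\widetilde\omega_{s+1})$. So exactly one term dominates; say the first.
\item Lemma~\ref{lemma_t_i*_lambda_i} gives $\nu_E(\omega_{s+1}\wedge\omega_s)=t_{s+1}+\lambda_s$ and $\nu_E(\widetilde\omega_{s+1}\wedge\omega_s)=\tilde t_{s+1}+\lambda_s$. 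Hence the dominance survives after wedging with $\omega_s$.
\item Since $\nu_E(A)<\tilde t_{s+1}-t_s<nm$, $\operatorname{In}(A)$ is a monomial. Writing $J$ and $J_{s+1,s}$ for equations of $\mathcal J_{\mathcal F,\mathcal G_s}$ and $\mathcal J_{\mathcal G_{s+1},\mathcal G_s}$, this gives $\operatorname{In}(J)=x^\alpha y^\beta\operatorname{In}(J_{s+1,s})$ up to a constant.
\item The $E$-transversality of $\mathcal G_{s+1}$ (resp.\ $\widetilde{\mathcal G}_{s+1}$) with $\mathcal G_s$, which is Theorem~\ref{Th:transv-omega_s+1}, then transfers to $\mathcal F$. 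Proposition~\ref{Prop:l-transv-j-transv} finishes.
\end{itemize}

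A smaller point: your reduction via \eqref{eq:introduccion1} to the equality $\lambda_\ell=\nu_E(\omega_{\mathcal F}\wedge\omega_\ell)-\nu_E(\omega_{\mathcal F})$ only shows that $\mathcal J'$ avoids the single point of $C'$ on $E$, not all of $E$. Inside the window this can be repaired: the equality forces $\nu_E(J)<nm$ by Lemma~\ref{lemma:ui+ti}, so $\operatorname{In}(J)$ is a monomial. You need to say this explicitly.
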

The upper bound in the result above is sharp as shown in Example~\ref{ex:sharp-bound}.

The previous results only determine part of the semimodule of differential values of the curves in $\operatorname{Curv}_E(\mathcal{F})$ as shown in Example~\ref{ex-G-tilde-no-lambda-cte1}. Section~\ref{sec:constant-semimodule} is devoted to determine conditions to assure that a foliation $\mathcal{F}$ is {\em $\Lambda$-constant}, that is, if $\Lambda_{C}=\Lambda_{\widetilde{C}}$ for $C,\widetilde{C} \in \operatorname{Curv}_E(\mathcal{F})$. We give conditions on the divisorial value $\nu_E(\omega_\mathcal{F})$ in terms of the combinatorial of the semimodule $\Lambda_C$ which allow to assure that the foliation $\mathcal{F}$ is $\Lambda$-constant (Theorem~\ref{th:lambda-cte}). More precisely,

\begin{theoremmain}\label{th:main-3}
Let $\mathcal{F}$ be a totally $E$-dicritical foliation in $(\mathbb{C}^2,0)$ with $C$ as invariant curve. If  $\mathcal{F}$ satisfies the total transversality property and
$$\nu_E(\omega_\mathcal{F}) \leq c(\Gamma_C)-c(\Lambda_C)+2(n+m)-1,$$
then the foliation $\mathcal{F}$ is $\Lambda$-constant.
\end{theoremmain}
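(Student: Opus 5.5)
Under the total transversality property, Theorem~\ref{th:valores-dif-tilde-C} applied with $\ell=s$ gives, for every $\widetilde{C}\in\operatorname{Curv}_E(\mathcal{F})$, the equalities $\lambda^{\widetilde{C}}_j=\lambda_j$ for $j=-1,0,\ldots,s$. Since all curves in $\operatorname{Curv}_E(\mathcal{F})$ are equisingular, $\Gamma_{\widetilde{C}}=\Gamma_C$, and therefore $\Lambda_C=\Lambda_s\subseteq\Lambda_{\widetilde{C}}$. The plan is to prove that $\tilde{s}=s$, that is, that $\Lambda_{\widetilde{C}}$ contains no element outside $\Lambda_C$. Suppose this fails. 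Then there is a smallest $\lambda\in\Lambda_{\widetilde{C}}\smallsetminus\Lambda_C$, namely $\lambda=\lambda^{\widetilde{C}}_{s+1}$. Because every integer $\geq c(\Lambda_C)$ lies in $\Lambda_C$, this forces $\lambda<c(\Lambda_C)$. I will derive a contradiction by using formula \eqref{eq:introduccion1}, in the form of Theorem~\ref{Th:nu_C}, to bound $\lambda$ from below.

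First I choose a $1$-form $\omega$ with $\nu_{\widetilde{C}}(\omega)=\lambda$. Since $\lambda\notin\Lambda_C$, we have $\nu_E(\omega)<\nu_C(\omega)$ or, more usefully, $\nu_C(\omega)\neq\lambda$, so $\omega$ is not ``adapted'' to $C$. Applying Theorem~\ref{Th:nu_C} to both $C$ and $\widetilde{C}$ (both belong to $\operatorname{Curv}_E(\mathcal F)$) gives
\[
\nu_{\widetilde{C}}(\omega)-i_{\widetilde P}(\mathcal{J}',\widetilde{C}')=\nu_E(\omega\wedge\omega_{\mathcal F})-\nu_E(\omega_{\mathcal F})=\nu_C(\omega)-i_P(\mathcal{J}',C').
\]
Thus the discrepancy between $\nu_{\widetilde C}(\omega)$ and $\nu_C(\omega)$ comes entirely from how the strict transform $\mathcal J'$ of the jacobian curve $\mathcal J_{\mathcal F,\omega}$ meets $E$. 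I will then modify $\omega$ by subtracting $\Gamma_C$-combinations of $\omega_{-1},\dots,\omega_s$, using that these have the same values on $C$ and on $\widetilde C$. The aim is to reach a normal form in which $\nu_E(\omega)$ is as large as possible and $\mathcal J'$ meets $E$ at the point $\widetilde P$ but not at $P$. This is the usual reduction step in the construction of standard bases, and in this normal form $\lambda=\nu_E(\omega\wedge\omega_{\mathcal F})-\nu_E(\omega_{\mathcal F})+i_{\widetilde P}(\mathcal J',\widetilde C')$.

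The key estimate is a lower bound on $\nu_E(\omega\wedge\omega_{\mathcal F})$. The divisor $E$ is non-invariant and $\mathcal F$ is totally $E$-dicritical. Counting the intersection of $\mathcal J'$ with $E$ through the weighted-degree (Newton polygon) description of $\nu_E$ should give a relation of the form
\[
\nu_E(\omega\wedge\omega_{\mathcal F})=\nu_E(\omega)+\nu_E(\omega_{\mathcal F})+nm\cdot(\text{intersection of }\mathcal J'\text{ with }E)\ ,
\]
in analogy with the computations of \cite{Gom}. Because $\mathcal J'$ passes through $\widetilde P$, at least one intersection point contributes. Combining this with the maximality of $\nu_E(\omega)$ after reduction, which I expect to be controlled by the axes and critical values and ultimately by $\tilde u_{s+1}$, should give
\[
\lambda\geq c(\Gamma_C)+2(n+m)-1-\nu_E(\omega_{\mathcal F})+\bigl(\text{terms}\ \geq 0\bigr)\geq c(\Lambda_C),
\]
where the final inequality is exactly the hypothesis $\nu_E(\omega_{\mathcal F})\leq c(\Gamma_C)-c(\Lambda_C)+2(n+m)-1$. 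Here $c(\Gamma_C)=(n-1)(m-1)$, and I will use the conductor formula $c(\Lambda_C)=\tilde u_{s+1}-n-m+1$ from Appendix~\ref{ap:conductor} to translate between the two sides. This contradicts $\lambda<c(\Lambda_C)$.

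The main obstacle is the precise degree-counting step, which must bound $\nu_E(\omega\wedge\omega_{\mathcal F})-\nu_E(\omega_{\mathcal F})$ from below sharply enough to produce the constant $c(\Gamma_C)+2(n+m)-1$. My first approach would be to work in adapted coordinates where $\nu_E$ is the $(n,m)$-weighted order. There I would write $\omega_{\mathcal F}$ through its $E$-initial part, which is forced to be a nonzero multiple of $mx\,dy-ny\,dx$ times a weighted-homogeneous factor because $\mathcal F$ is dicritical along $E$, and compute the initial part of $\omega\wedge\omega_{\mathcal F}$. The worst case then comes from forms whose initial parts vanish against it, and these should be treated with the combinatorics of the appendix.
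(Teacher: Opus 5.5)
Your reduction is sound. By Theorem~\ref{th:valores-dif-tilde-C} with $\ell=s$ you have $\Lambda_C=\Lambda_s^{\widetilde C}\subseteq\Lambda_{\widetilde C}$, so it suffices to show that a hypothetical new generator $\lambda=\lambda^{\widetilde C}_{s+1}$ satisfies $\lambda\geq c(\Lambda_C)$. Since $c(\Gamma_C)+2(n+m)-1=nm+n+m$, the target is $\lambda\geq nm+n+m-\nu_E(\omega_{\mathcal F})$.

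The gap is the step that produces this bound, and the mechanism you propose for it is wrong. The relation $\nu_E(\omega\wedge\omega_{\mathcal F})=\nu_E(\omega)+\nu_E(\omega_{\mathcal F})+nm\,(\mathcal J'\cdot E)$ fails in general. For $\omega=d(y^n-x^m)$ the initial parts wedge to $nm\,x^ay^b(x^m-y^n)\frac{dx}{x}\wedge\frac{dy}{y}\neq 0$, so there is no jump, yet $\mathcal J'$ meets $E$. Worse, if your relation held, $\nu_E(\omega_{\mathcal F})$ would cancel in Theorem~\ref{Th:nu_C}. With $\mathcal J'$ through $\widetilde P$ you would then get $\lambda\geq\nu_E(\omega)+nm>c(\Lambda_C)$ with no hypothesis on $\nu_E(\omega_{\mathcal F})$ at all. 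That conclusion is false. In Example~\ref{ex-G-tilde-no-lambda-cte1}, $\widetilde{\mathcal G}_2$ has the total transversality property (Theorem~\ref{Th:transv-omega_s+1} with $s=1$), yet the cusps $C_a$ acquire the new generator $19<c(\Lambda_C)=20$; there $\nu_E(\widetilde\omega_2)=31$ exceeds the bound $29$. So the hypothesis must enter through a concrete estimate $\nu_E(\eta\wedge\omega_{\mathcal F})\geq nm+n+m$ for a suitable test form $\eta$. For your $\omega$, which realizes $\lambda$ on $\widetilde C$ and is not tangent to $C$, nothing forces $\omega\wedge\omega_{\mathcal F}$ to be divisible by an equation of $C$. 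Your unspecified ``normal form'' does not supply this.

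The missing idea is to bound $\lambda$ through a different form.
\begin{enumerate}
\item Apply Theorem~\ref{th_caracterizacion-omega-i}(1) to $\widetilde C$. Since $t^{\widetilde C}_{s+1}=t_{s+1}$ (it is determined by $\Lambda^{\widetilde C}_s=\Lambda_C$), you get $\lambda^{\widetilde C}_{s+1}=\sup\{\nu_{\widetilde C}(\eta):\nu_E(\eta)=t_{s+1}\}$. Hence any lower bound for $\nu_{\widetilde C}(\omega_{s+1})$ bounds $\lambda$ from below.
\item Write $\omega_{\mathcal F}=A\omega_{s+1}+B\widetilde\omega_{s+1}$ in the Saito basis of Theorem~\ref{Th:Saito_basis}. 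Saito's criterion (Theorem~\ref{th:Saito-criterion}) gives $\omega_{s+1}\wedge\omega_{\mathcal F}=B\,uf\,dx\wedge dy$.
\item By Corollary~\ref{lema-2-div-valor}, $\nu_E(\omega_{s+1}\wedge\omega_{\mathcal F})\geq \nu_E(f)+n+m=nm+n+m$.
\item Theorem~\ref{Th:nu_C}, after discarding the nonnegative term $i_Q$, yields $\lambda^{\widetilde C}_{s+1}\geq\nu_{\widetilde C}(\omega_{s+1})\geq nm+n+m-\nu_E(\omega_{\mathcal F})\geq c(\Lambda_C)$. This is the contradiction you wanted.
\end{enumerate}
No normal-form reduction, Newton-polygon counting, or use of $\tilde u_{s+1}$ is needed; only $c(\Gamma_C)=(n-1)(m-1)$ enters.
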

Note that $c(\Gamma_C)-c(\Lambda_C)+2(n+m)-1=t_{s+1}+n+m+1$. In particular, if we denote $t_\ast=\min \{\tilde{t}_{s+1}-t_s,n+m+1\}$,  we get that the foliation $\mathcal{F}$ is $\Lambda$-constant provided that $\nu_E(\omega_\mathcal{F}) \in [t_{s+1},t_{s+1}+t_*)$.

At the end of Section~\ref{sec:constant-semimodule}, we  discuss different situations where the hypothesis in Theorem~\ref{th:main-3} are satisfied  and  we construct families of $\Lambda$-constant foliations.

In the appendices at the end of the article we include some technical results concerning the structure of increasing cuspidal semimodules that we use along the paper.

\begin{Agradecimientos} We would like to thank  Prof. Felipe Cano for fruitful conversations and suggestions during the preparation of this work.
\end{Agradecimientos}
\section{Local invariants of curves and foliations}

Let $C$ be  an irreducible plane curve $C$ in $(\mathbb{C}^2,0)$. In this section, we will introduce some invariants associated to the curve $C$ and we will see how these invariants are related to invariants associated to foliations in $(\mathbb{C}^2,0)$.

Consider   a primitive parametrization $\gamma(t)$ of $C$. Given a holomorphic function $h \in \mathcal{O}_{\mathbb{C}^2,0}$,
the {\em differential value\/} $\nu_C(h)$ of $h$ is defined as
$$\nu_C(h)=\text{ord}_t(h(\gamma(t))).
$$ Note that, if $h(0)=0$ and we consider the curve $H$ in $(\mathbb{C}^2,0)$ defined by $h=0$, then the intersection multiplicity $i_0(C,H)$ of the curves $C$ and $H$ at the origin is equal to $i_0(C,H)=\nu_C(h)$.

The {\em semigroup} $\Gamma_C$ of the curve $C$ is defined by
$$
\Gamma_C=\{ \nu_C(h) \ : \ h \in \mathcal{O}_{\mathbb{C}^2,0}\}.
$$
The semigroup $\Gamma_C$ is a complete topological invariant of an irreducible curve $C$ (see \cite{Zar}). 
Recall that $\mathbb{N} \smallsetminus \Gamma_C$ is finite and hence there exists an integer $c({\Gamma_C})$ such that any non-negative integer greater or equal to $c({\Gamma_C})$ belongs to $\Gamma_C$. The number $c({\Gamma_C})$ is the {\em conductor} of the semigroup. Recall that $c({\Gamma_C})$ coincides with the Milnor number $\mu(C)$ of the curve $C$.

Let $\omega \in \Omega^1_{(\mathbb{C}^2,0)}$ be a 1-form  in $(\mathbb{C}^2,0)$. The {\em differential value\/} of $\omega$ is defined as
$$\nu_C(\omega)=\text{ord}_t(\alpha(t))+1,
$$
where we write $\gamma^*\omega=\alpha(t) dt$. In particular, given $h \in \mathcal{O}_{\mathbb{C}^2,0}$ with $h(0)=0$, we have that $\nu_C(h)=\nu_C(dh)$.

The {\em set of differential values} $\Lambda_C$ of the curve $C$ is  the set
$$
\Lambda_C=\{ \nu_C(\omega) \ : \ \omega \in \Omega^1_{(\mathbb{C}^2,0)}\}.
$$
By the properties of the semigroup, we obtain that $\Gamma_C \smallsetminus \{0\} \subset \Lambda_C$. Moreover,
we have that $\nu_C(h \omega)=\nu_C(h) + \nu_C(\omega)$ for $h \in \mathcal{O}_{\mathbb{C}^2,0}$ and a 1-form  $\omega \in \Omega^1_{(\mathbb{C}^2,0)}$. Hence, we get $\Lambda_C +\Gamma_C \subset \Lambda_C$ and consequently, the set of differential values $\Lambda_C$ is a {\em $\Gamma_C$-semimodule}. Since $\Lambda_C \smallsetminus \Gamma_C$ is a finite set, then $\Lambda_C$ has also a conductor $c(\Lambda_C)$ such that any non-negative integer greater or equal to $c({\Lambda_C})$ belongs to $\Lambda_C$.

As we mention before, the semimodule of differential values $\Lambda_C$ is the most relevant discrete analytic invariant of the curve $C$.

Let us see how the above invariants of plane curves are related with local invariants of foliations defined in $(\mathbb{C}^2,0)$. Recall that a foliation $\mathcal{F}$ in $(\mathbb{C}^2,0)$ is defined by $\omega=0$, where  $\omega$ is a saturated 1-form in $\Omega^1_{(\mathbb{C}^2,0)}$.  A curve $C$ is a {\em separatrix\/} (or an {\em invariant curve}) of the foliation $\mathcal F$ if and only if $\gamma^* \omega=0$, where $\gamma(t)$ is a primitive parametrization of $C$. A foliation $\mathcal{F}$ in $(\mathbb{C}^2,0)$ is {\em dicritical} if $\mathcal{F}$ has infinitely many invariant curves.

\begin{Remark}\label{rmk:dicritical}
In dimension two,  dicriticalness can be stated in terms of the minimal reduction of singularities of the foliation (see \cite{Sei,Can-C-D}). Given a foliation $\mathcal{F}$ in $(\mathbb{C}^2,0)$, there is a morphism $\pi: M \to (\mathbb{C}^2,0)$ composition of a finite number of  blow-ups centered at points such that the strict transform $\pi^* \mathcal{F}$ of $\mathcal{F}$ by $\pi$ satisfies that
\begin{itemize}
  \item each irreducible component $E$ of the exceptional divisor $\pi^{-1}(0)$ is either invariant by $\pi^*\mathcal{F}$ or transversal to $\pi^*\mathcal{F}$. In the second case, $E$ is a dicritical component;
  \item all the singular points of $\pi^*\mathcal{F}$ are simple and do not belong to a dicritical component of the exceptional divisor.
\end{itemize}
A foliation $\mathcal{F}$ in $(\mathbb{C}^2,0)$ is non-dicritical if all the irreducible components of the exceptional divisor are invariant ones. Otherwise, the foliation $\mathcal{F}$ is a dicritical foliation.

For the sake of completeness, let us recall the notion of simple singularity. The origin is a {\em simple singularity} of a foliation $\mathcal{F}$  in $(\mathbb{C}^2,0)$ if there are local coordinates
$(x, y)$ at $(\mathbb{C}^2,0)$ such that $\mathcal{F}$ is defined by a 1-form
$$y(\lambda  + a(x, y))dx -x(\mu + b(x, y))dy =0$$
with $a(0) = b(0) = 0$, $\mu \neq 0$ and $\lambda/\mu \in \mathbb{Q}_{>0}$. If $\lambda = 0$, the singularity is called a
{\em saddle-node} singularity.

Foliations without saddle-node singularities after reduction of singularities are called {\em generalized curve foliations} (see \cite{Cam-S-LN}).
\end{Remark}

Let us introduce some invariants of the foliation $\mathcal{F}$ relative to an irreducible curve $C$ in $(\mathbb{C}^2,0)$. Let $(x,y)$ be coordinates in $(\mathbb{C}^2,0)$. The foliation $\mathcal{F}$ is defined by a 1-form $\omega=0$, where $\omega=A(x,y)dx+ B(x,y)dy$  with $A,B \in \mathbb{C}\{x,y\}$ and $\gcd(A,B)=1$. The {\em multiplicity} $\nu_0(\mathcal{F})$ of the foliation  $\mathcal{F}$ at the origin is the minimum of the orders $\nu_0(A)$, $\nu_0(B)$  at the origin
of the coefficients of $\omega$. The {\em Milnor number} $\mu_0(\mathcal{F})$ is given by
$$\mu_0(\mathcal{F})=i_0(A,B).$$

Consider now a primitive parametrization $\gamma(t)=(x(t),y(t))$ of the curve $C$. If $C$ is an invariant curve of $\mathcal{F}$,
the {\em Milnor number} $\mu_0({\mathcal F},C)$ of $\mathcal F$ {\em along\/} $C$ is given by
\begin{equation}
\label{multiplicidaderelativa1}
 \mu_0({\mathcal F},C) =
\begin{cases}
 {\rm ord}_{t}(B(\gamma(t))) -  {\rm ord}_{t}(x(t)) + 1 \ \ \ \mbox{if $x(t) \neq 0$}   \\
 {\rm ord}_{t}(A(\gamma(t))) -  {\rm ord}_{t}(y(t)) + 1 \ \ \ \mbox{if $y(t) \neq 0$}
\end{cases}
\end{equation}
(this number is also called {\em multiplicity of the vector field $\mathbf{v}=B(x,y) \frac{\partial}{\partial x} - A(x,y) \frac{\partial}{\partial y}$ along the curve $C$}, see~\cite[p. 152-153]{Cam-S-LN}). 
If the curve  $C$ is not  invariant by  $\mathcal{F}$, we can consider the {\em tangency order} $\tau_0({\mathcal F},C)$ defined by
\begin{equation}\label{def-tangencia}
\tau_0({\mathcal F},C)= \text{ord}_t (\alpha(t))
\end{equation}
where $\gamma^* \omega= \alpha(t) dt$ (see \cite[p. 167 ]{Cam-S-LN} when the curve $C$ is a non-singular curve or \cite{Can-C-M} for the general
case). From the definition of differential value, we get that
\begin{equation}\label{valordif-tangencia}
\nu_C(\omega)=\tau_0({\mathcal F},C)+1.
\end{equation}
The equality $\nu_C(h)=\nu_C(dh)$ can be stated in terms of the invariants associated to foliations as follows. Assume that $h \in \mathbb{C}\{x,y\}$ is reduced and consider the hamiltonian foliation $\mathcal{G}_h$ defined by $dh=0$. Let $S=S_{\mathcal{G}_h}$ be the curve defined by $h=0$ which is the curve of separatrices of the foliation $\mathcal{G}_h$. Then we have
$$
i_0(S,C) =\tau_0(\mathcal{G}_h,C) +1.
$$
In fact, the previous equality is a particular case of a more general result. If $\mathcal{F}$ is a non-dicritical foliation  and  $S_\mathcal{F}$ is the curve of separatrices of $\mathcal{F}$, then we have
$$
i_0(S_\mathcal{F},C)\leq \tau_0(\mathcal{F},C)+1
$$
and the equality holds if and only if $\mathcal{F}$ is a second type foliation (see \cite[Corollary 1]{Can-C-M}). Consequently, if we are interested in the description of the set $\Lambda_C \smallsetminus \Gamma_C$, we need to study the
values $\nu_C(\omega)$ for 1-forms such that the foliation defined by $\omega=0$ is either
dicritical or it is not a second type foliation (see \cite{Cor-Handbook}).

From the results in \cite{Cor-2025}, we obtain that the invariants introduced in \eqref{multiplicidaderelativa1}  and \eqref{def-tangencia} are related with the jacobian curve of two foliations. Given two germs of foliations $\mathcal F$ and $\mathcal G$
 in $({\mathbb C}^2,0)$,
defined by the 1-forms $\omega=0$ and $\eta=0$, the {\em jacobian curve} ${\mathcal J}_{{\mathcal F},{\mathcal G}}$   of $\mathcal F$ and $\mathcal G$ is the curve given by
$$J_{\mathcal{F},\mathcal{G}}=0$$
where $\omega \wedge \eta= J_{\mathcal{F},\mathcal{G}} (x,y) dx \wedge dy$. Note that the jacobian curve is the curve of tangency between the foliations $\mathcal{F}$ and $\mathcal{G}$.

Next proposition relates the intersection multiplicity $i_0(\mathcal{J}_{\mathcal{F},\mathcal{G}},C)$ of the jacobian curve ${\mathcal J}_{{\mathcal F},{\mathcal G}}$ with a separatrix $C$ of one of the foliations with the invariants defined in \eqref{multiplicidaderelativa1}  and \eqref{def-tangencia}.
\begin{Proposition}[Proposition B.1, \cite{Cor-2025}]\label{prop:int-sep}
Let $\mathcal{F}$ and $\mathcal{G}$ be two foliations in $({\mathbb C}^2,0)$. Assume that $\mathcal F$ and $\mathcal G$ have no common separatrix.
If $C$ is an irreducible separatrix of $\mathcal G$, we have
\begin{equation}\label{ec:i-mu-tau}
i_0(\mathcal{J}_{\mathcal{F},\mathcal{G}},C) = \mu_0({\mathcal G},C)+\tau_0({\mathcal F},C).
\end{equation}
\end{Proposition}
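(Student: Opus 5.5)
We prove \eqref{ec:i-mu-tau} by a direct computation along a primitive parametrization $\gamma(t)=(x(t),y(t))$ of $C$. Write $\omega=A\,dx+B\,dy$ for $\mathcal{F}$ and $\eta=P\,dx+Q\,dy$ for $\mathcal{G}$, so that
$$J_{\mathcal{F},\mathcal{G}}=AQ-BP, \qquad i_0(\mathcal{J}_{\mathcal{F},\mathcal{G}},C)=\operatorname{ord}_t\bigl(J_{\mathcal{F},\mathcal{G}}(\gamma(t))\bigr).$$
The hypothesis that $\mathcal{F}$ and $\mathcal{G}$ have no common separatrix guarantees that $C$ is not invariant by $\mathcal{F}$. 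Hence $\tau_0(\mathcal{F},C)$ is finite and $J_{\mathcal{F},\mathcal{G}}\circ\gamma\not\equiv 0$, as will follow from the identity below. Without loss of generality we assume $x(t)\neq 0$; the other case is symmetric, using the second line of \eqref{multiplicidaderelativa1}.

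Since $C$ is a separatrix of $\mathcal{G}$, we have $\gamma^*\eta=0$, that is,
$$P(\gamma)\,x'(t)+Q(\gamma)\,y'(t)=0.$$
Multiplying the jacobian along $\gamma$ by $x'(t)$ and substituting $P(\gamma)x'=-Q(\gamma)y'$ gives
$$x'(t)\,J_{\mathcal{F},\mathcal{G}}(\gamma)=A(\gamma)Q(\gamma)x'-B(\gamma)P(\gamma)x' = Q(\gamma)\bigl(A(\gamma)x'+B(\gamma)y'\bigr)=Q(\gamma)\,\alpha(t),$$
where $\gamma^*\omega=\alpha(t)\,dt$.

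Taking orders in $t$ yields
$$\operatorname{ord}_t x(t)-1+i_0(\mathcal{J}_{\mathcal{F},\mathcal{G}},C)=\operatorname{ord}_t Q(\gamma(t))+\tau_0(\mathcal{F},C).$$
In the notation of \eqref{multiplicidaderelativa1} the coefficient of $dy$ in $\eta$ is $Q$, so
$$\operatorname{ord}_t Q(\gamma)-\operatorname{ord}_t x(t)+1=\mu_0(\mathcal{G},C).$$
Rearranging gives exactly \eqref{ec:i-mu-tau}.

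The only delicate point is the finiteness of the orders involved. Here $Q(\gamma)\not\equiv 0$: otherwise, since $x'\not\equiv 0$, the relation $\gamma^*\eta=0$ would force $P(\gamma)\equiv 0$ as well, so $C$ would be contained in $\{P=Q=0\}$, contradicting $\gcd(P,Q)=1$ (isolated singularity). Also $\alpha\not\equiv 0$ because $C$ is not $\mathcal{F}$-invariant. Both orders on the right are therefore finite, and the identity shows $J_{\mathcal{F},\mathcal{G}}\circ\gamma\not\equiv 0$. I expect no further obstacle; the whole argument is this one-line elimination of $P$.
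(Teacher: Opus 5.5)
Your proof is correct. Eliminating $P$ via $\gamma^*\eta=0$ gives the identity $x'(t)\,J_{\mathcal{F},\mathcal{G}}(\gamma(t))=Q(\gamma(t))\,\alpha(t)$, and your finiteness checks are exactly what is needed: saturation of $\eta$ gives $Q\circ\gamma\not\equiv 0$, and non-invariance of $C$ by $\mathcal{F}$ gives $\alpha\not\equiv 0$.

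The paper does not prove this statement; it quotes it from \cite{Cor-2025}, so there is no in-text argument to compare against. Your direct computation is the natural one. Equivalently, $J_{\mathcal{F},\mathcal{G}}=\omega(\mathbf{v})$ for the vector field $\mathbf{v}=Q\,\partial/\partial x-P\,\partial/\partial y$ of $\mathcal{G}$, and $\mathbf{v}(\gamma(t))=h(t)\gamma'(t)$ with $\operatorname{ord}_t h=\mu_0(\mathcal{G},C)$.

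Your argument never uses non-dicriticality. It therefore also covers the dicritical case that the paper mentions right after the statement.
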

The result above also holds when $\mathcal{F}$ and $\mathcal{G}$ are dicritical foliations.

\medskip
From equality~\eqref{valordif-tangencia}, if $\omega$ is a 1-form which defines a foliation $\mathcal{F}$ and we consider any foliation $\mathcal{G}$ having $C$ as invariant curve, we can compute the differential value $\nu_C(\omega)$ as
$$
\nu_C(\omega)=i_0(\mathcal{J}_{\mathcal{F},\mathcal{G}},C)- \mu_0(\mathcal{G},C) +1.
$$
 Consequently, the study of the behaviour of jacobian curves of foliations seems interesting when dealing with the differential values of a curve $C$.

\medskip

Let us  consider a reduced equation $f=0$  of $C$ with $f \in \mathbb{C}\{x,y\}$ and the hamiltonian foliation $\mathcal{G}_f$ given by $df=0$. If we assume that $x(t) \neq 0$ and $m_0(C)=\text{ord}_t(x(t))$,  from Proposition~\ref{prop:int-sep}, we get that
\begin{align*}
  i_0(\mathcal{J}_{\mathcal{F},\mathcal{G}_f},C) & =\mu_0(\mathcal{G}_f,C)+\tau_0(\mathcal{F},C) \\
  & =  \text{ord}_t\left(\tfrac{\partial f}{\partial y}(\gamma(t))\right) -m_0(C)+ 1 + \tau_0(\mathcal{F},C) \\
   & = \mu_0(C)+  \tau_0(\mathcal{F},C)
\end{align*}
since $\text{ord}_t\left(\tfrac{\partial f}{\partial y}(\gamma(t))\right)=\mu_0(C)+m_0(C)-1$ where $\mu_0(C)$ denotes the Milnor number of $C$ given by $\mu_0(C)=dim_\mathbb{C} \mathbb{C}\{x,y\}/(\tfrac{\partial f}{\partial x},\tfrac{\partial f}{\partial y})$ (see also Remark 4.2 in \cite{Cor-H-H}). In particular, we obtain that
$$
\nu_C({J}_{\mathcal{F},\mathcal{G}_f}) = \mu_0(C)+  \nu_C(\omega)-1.
$$
Note that if $\mathcal{G}$ is a generalized curve foliation such that $C=S_{\mathcal{G}}$ is the curve of separatrices of  $\mathcal{G}$ with $C$ irreducible,  by Corollary 1.3.11 of \cite{Cor-Handbook}, we obtain that $\mu_0(\mathcal{G},C)=\mu_0(\mathcal{G}_f,C)$ and hence
$$
i_0(\mathcal{J}_{\mathcal{F},\mathcal{G}},C) =\mu_0(\mathcal{G})+  \tau_0(\mathcal{F},C),
$$
where $\mu_0(\mathcal{G})$ denotes the Milnor number of the foliation $\mathcal{G}$.

\section{Divisorial values}\label{sec:div-ord}
Let us consider  a sequence $\mathcal{S}$ of point blow-ups
\begin{equation}\label{eq:suc-explosiones}
(M_0,P_0) \overset{\pi_1}{\longleftarrow} (M_1,P_1) \overset{\pi_{2}}{\longleftarrow} \cdots \overset{\pi_{N-1}}{\longleftarrow} (M_{N-1},P_{N-1}) \overset{\pi_N}{\longleftarrow} M_{N}
\end{equation}
where $(M_0,P_0)=(\mathbb{C}^2,0)$ and the morphism $\pi_k$ is the blow-up with center at $P_{k-1}$. We denote the exceptional divisor of $\pi_k$ as $E_k^k=\pi_k^{-1}(P_{k-1})$.
For any $1 \leq j <k$, we denote $E_{j}^k \subset M_k$ the strict transform of $E_{j}^{k-1}$ by $\pi_k$ and $D_k=E_1^k \cup E_2^k \cup \cdots \cup E_k^k$. We assume that $P_k \in D_k$ for $k=1,2, \ldots, N-1$.

We consider also the intermediary morphisms
\begin{equation}\label{eq:explosiones-intermedias}
\sigma_k : M_k  \to (M_0,P_0), \  \qquad \rho_k: M_N \to (M_k,P_k)
\end{equation}
given by $\sigma_k=\pi_1 \circ \pi_2 \circ \cdots \circ \pi_k$ and $\rho_k= \pi_{k+1} \circ \pi_{k+2} \circ \cdots \circ \pi_N$, for any $k=1,2,\ldots, N-1$. Note that $D_k=\sigma_k^{-1}(0)$.

Hence, from the sequence $\mathcal{S}$ in \eqref{eq:suc-explosiones},   we obtain a morphism
\begin{equation}\label{eq:pi}
\pi: (M,D) \to (\mathbb{C}^2,0)
\end{equation} where $\pi=\pi_1 \circ \pi_2 \circ \cdots \pi_N$, $M=M_N$ and $D=D_N$.
Let us  denote $E=E_N^N$.

This section is devoted to introduce the divisorial value relative to the divisor $E$. We fix the notation above along all the section.
\medskip

\paragraph{\bf Divisorial value of a function.} Consider a holomorphic function $h$ in $(M,D)$ defined globally in $E \subset D$. Take a point $P \in E$ and choose a reduced local equation $u=0$ of the germ $(E,P)$, the {\em divisorial value} $\nu_E(h)$ of $h$ is given by
$$
\nu_E(h)=\max\{\ell \in \mathbb{Z} \ : \ u^{-\ell} h \in \mathcal{O}_{M,P}\}.
$$
Consider now a germ of holomorphic function $h \in {\mathcal O}_{M_k,P_k}$ with $k \in \{0,1,\ldots, N-1\}$. Then $\rho_{k}^* h$ is a germ of function in $(M,D)$ globally defined in $E$. We define the {\em divisorial value} $\nu_E(h)$  by $\nu_E(h)=\nu_E(\rho_k^* h)$. Note that this divisorial value coincides with the divisorial valuation associated to the divisor $E$.

A {\em curvette} $\tilde{S}$ of the divisor $E$ is a non-singular curve transversal to $E$ at a non-singular point of $\pi^{-1}(0)$. The projection $S=\pi(\tilde{S})$ is a germ of plane curve in $(\mathbb{C}^2,0)$ and we say that $S$ is an $E$-{\em curvette}. We will denote by $\text{Curv}(E)$ the set of $E$-curvettes. Note that all the curves in $\text{Curv}(E)$ have the same topological type. The divisorial value can be computed in terms of  intersection multiplicities as follows:
\begin{Lemma}[Theorem 7.2, \cite{Spiv-1990}] \label{lemma:nu_E_funcion} Consider $h \in \mathcal{O}_{\mathbb{C}^2,0}$ and let $H$ be the germ of curve given by $h=0$ in $(\mathbb{C}^2,0)$. Then $\nu_E(h)$ is given by
$$
\nu_E(h)=\min\{i_0(H,C) \ : \ C \in \text{\rm Curv}(E)\}.
$$
\end{Lemma}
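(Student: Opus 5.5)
We argue through the strict transform of $H$ and the projection formula for intersection multiplicities. Write $\pi^*H=\tilde H+\sum_{j=1}^N \nu_{E_j}(h)E_j$ on $M$, where $\tilde H$ is the strict transform of $H$ and $\nu_{E_j}(h)$ is the order of vanishing of $\pi^*h$ along the component $E_j=E_j^N$. In particular the coefficient of $E=E_N^N$ is $\nu_E(h)$ as defined above. For a curvette $\tilde S$ of $E$ with projection $S=\pi(\tilde S)\in\text{Curv}(E)$, the projection formula gives
\begin{equation*}
i_0(H,S)=(\pi^*H\cdot \tilde S)=(\tilde H\cdot\tilde S)+\sum_j \nu_{E_j}(h)\,(E_j\cdot\tilde S).
\end{equation*}

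Since $\tilde S$ is smooth and meets $D$ only at one point of $E$ that is not a singular point of $D$, transversally, we have $(E\cdot\tilde S)=1$ and $(E_j\cdot\tilde S)=0$ for $j\neq N$. Hence
\begin{equation*}
i_0(H,S)=\nu_E(h)+i_P(\tilde H,\tilde S)\geq \nu_E(h),
\end{equation*}
where $P=\tilde S\cap E$. This gives the inequality $\nu_E(h)\le\min\{i_0(H,C)\,:\,C\in\text{Curv}(E)\}$.

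For the reverse inequality we choose the curvette to avoid $\tilde H$. The curve $\tilde H$ meets $E$ in finitely many points, since $E$ is not a component of $\tilde H$. The singular points of $D$ on $E$ are also finite. Pick $P\in E$ outside both finite sets, and take any smooth germ $\tilde S$ through $P$ transversal to $E$; in a local chart, $\tilde S=\{v=0\}$ where $u=0$ defines $E$. Then $i_P(\tilde H,\tilde S)=0$, so $i_0(H,\pi(\tilde S))=\nu_E(h)$ and the minimum is attained.

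The one point needing care is the identification of the coefficient of $E$ in $\pi^*H$ with $\nu_E(\rho_k^*h)$ as defined through $u^{-\ell}h\in\mathcal{O}_{M,P}$. This is immediate, since both are the order of vanishing of $\pi^*h$ along $E$ at a generic point. We also need that this order does not depend on the chosen point $P\in E$, which holds because $E$ is irreducible. I expect no real obstacle beyond this bookkeeping.
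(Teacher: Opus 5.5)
Your proof is correct. Note that the paper gives no argument of its own for this lemma: it cites Spivakovsky. It then uses Noether's formula to get $\nu_E(h)=\sum m_{P_i}(H^i)\,m_{P_i}(C^i)$, and records the identity $\nu_C(h)=\nu_E(h)+i_P(C',H')$ as Remark~\ref{Rm-nuC-nuE}, presented as a consequence of the lemma.

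You reverse that order. You first prove the identity directly, by splitting the total transform $\pi^*H$ into $\tilde H$ plus exceptional components and intersecting with a curvette. Equivalently, you compute $\operatorname{ord}_t$ of $\pi^*h=u^{\nu_E(h)}\tilde h$ along a lift $\tilde\gamma$ of the parametrization, using $\operatorname{ord}_t u(\tilde\gamma(t))=1$ by transversality. You then read off the lemma by placing $P$ off $\tilde H$ and off the corners of $D$.

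What your route buys:
\begin{itemize}
\item It is self-contained.
\item It works for an arbitrary divisor $E$, not only cuspidal ones, which matches the generality in which the lemma is stated.
\item It gives Remark~\ref{Rm-nuC-nuE} for free.
\end{itemize}
What the paper's route adds is the explicit formula for $\nu_E(h)$ in terms of multiplicities at the infinitely near points $P_i$. You would still need Noether's formula to obtain that.

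One small point deserves a sentence in your write-up. Since $\pi$ is an isomorphism off $D$, $\pi|_{\tilde S}$ is birational onto $S$. Hence $\pi_*\tilde S=S$ with multiplicity one, and $\pi\circ\tilde\gamma$ is a primitive parametrization of $S$. This is what justifies your use of the projection formula. The degenerate cases are consistent: if $h$ is a unit both sides are $0$, and if $h=0$ both are $\infty$.
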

Hence,  using Noether's formula (see for instance Theorem 3.3.1 of \cite{Cas-2000}), we get
$$
\nu_E(h)=\sum_{i=1}^{N-1} m_{P_i}(H^i) m_{P_i}(C^i)
$$
where  $C \in \text{Curv}(E)$, the curves $H^i$,  ${C}^i$ are the strict transforms of the curves ${H}$ and ${C}$ by the morphism $\sigma_i$ at $P_i$ and $m_{P_i}(C^i)$ is the multiplicity of the curve $C^i$ at $P_i$.

\begin{Remark}\label{Rm-nuC-nuE}
With the notations above, if $C \in \text{Curv}(E)$ and $P$ is the infinitely near point of $C$ in the divisor $E$, we get that
$$
\nu_C(h)=\nu_E(h) + i_P(C',H'),
$$
where $C'$ and $H'$ are the strict transforms of $C$ and $H$ by $\pi$ respectively. Consequently, $\nu_E(h) \leq \nu_C(h)$.
\end{Remark}

\medskip

\paragraph{\bf Divisorial value of a 1-form.}
Let $\omega$ be a 1-form defined globally in $E \subset D$ and choose a reduced equation $u=0$ of $E$ at $P$ as before. The {\em divisorial value} $\nu_E(\omega)$ is given by
$$
\nu_E(\omega)=\max\{\ell \in \mathbb{Z}\ : \ u^{-\ell} \omega \in \Omega_{M,P}^1[\text{log} E]\}.
$$
where $\Omega_{M,P}^1[\text{log} E]$ is the $\mathcal{O}_{M,P}$-module of germs of logarithmic 1-forms along $E$ at $P$.

Given a germ of 1-form $\omega \in \Omega_{M_k,P_k}^1$ with $k \in \{0,1,\ldots, N-1\}$, we define the {\em divisorial value} $\nu_E(\omega)$  by $\nu_E(\omega)=\nu_E(\rho_k^* \omega)$.

\begin{Remark}
Note that if $h  \in \mathcal{O}_{\mathbb{C}^2,0}$  with $h(0)=0$, then we have  $\nu_E(h)=\nu_E(dh)$ (see \cite[Corollary 3.5]{Can-C-SS-2023}).
\end{Remark}
\medskip
Consider a sequence  of blow-ups as the one given in \eqref{eq:suc-explosiones}. For any $P \in D_k=E_1^k \cup E_2^k \cup \cdots \cup E_k^k$, we denote $e(P)=\sharp \{j \ : \ P \in E_j^k\}$. Note that $e(P) \in \{1,2\}$. We say that $P$ is a {\em free point} if $e(P)=1$ and $P$ is a {\em corner point} if $e(P)=2$.

Given a curve $C \in \text{Curv}(E)$, the morphism $\pi: M \to (\mathbb{C}^2,0)$ is a reduction of singularities of the curve $C$ and the points $P_1,\ldots, P_{N-1}$ are infinitely near points of the curve $C$.

 Assume that $P_1,P_2,\ldots, P_f$ are infinitely near points of $C$ which are free points  and $P_{f+1}$ is a corner point. We say that a non-singular branch $Y$ in $(\mathbb{C}^2,0)$ {\em has maximal contact with} $C$ if and only if $P_k$ is an infinitely near point of $Y$ for $j=1,2,\ldots,f$. A system of coordinates $(x,y)$ in $(\mathbb{C}^2,0)$ is  {\em adapted to the divisor} $E$ if and only if the curve $y=0$ has maximal contact with any curve $C \in \text{Curv}(E)$.

\begin{Remark}\label{rm:parametrizacion-coord-adapt}
Let $\{\beta_0,\beta_1,\ldots,\beta_g\}$ be the characteristic exponents of a curve $C \in \text{Curv}(E)$. If $(x,y)$ are coordinates in $(\mathbb{C}^2,0)$ adapted to the divisor $E$, a primitive parametrization  $\gamma(t)=(x(t),y(t))$ of $C$ is given by
\begin{equation*}
  \left\{
\begin{aligned}
  x(t) & = t^{\beta_0} \\
  y(t) & = \sum_{i \geq \beta_1} a_i t^{i}
\end{aligned}
\right.
\end{equation*}
Note that the number $f$ of free infinitely near points  of $C$ in $\mathcal{S}$  is equal to $\left[\tfrac{\beta_1}{\beta_0}\right]$. In particular, if $Y$ is a non-singular curve in $(\mathbb{C}^2,0)$ with maximal contact with $C$ then
$$
i_0(C,Y)=\beta_1.
$$
Moreover, we get that
\begin{align*}
\nu_E(x)&=\nu_E(dx)=\beta_0, \quad \nu_E(y)=\nu_E(dy)=\beta_1, \\
\nu_C(x)&=\nu_C(dx)=\beta_0, \quad \nu_C(y)=\nu_C(dy)=\beta_1.
\end{align*}
\end{Remark}

Let $\omega$ be a 1-form at $(\mathbb{C}^2,0)$. Take  coordinates $(x,y)$ at $(\mathbb{C}^2,0)$ adapted to $E$ and  write the differential 1-form $\omega=A(x,y) dx + B(x,y) dy$.  By Proposition 3.4 in \cite{Can-C-SS-2023}, the  divisorial value $\nu_E(\omega)$ is equal to
\begin{equation}\label{ec:nu_E-omega}
\nu_E(\omega)=\min \{\nu_E(xA),\nu_E(yB)\}
\end{equation}
 (see Section 3.1 of \cite{Can-C-SS-2023} for a more detailed study of the divisorial value of a 1-form).
\begin{Remark}
From the definitions of differential and divisorial values, if $C \in \text{Curv}(E)$, we have
  $$
  \nu_E(\omega) \leq \nu_C(\omega)
  $$
  for any 1-form $\omega \in \Omega^1_{(\mathbb{C}^2,0)}$.
\end{Remark}

\medskip

\paragraph{\bf Divisorial value of a 2-form.} Consider now a 2-form $\eta$ defined globally in $E \subset D$ and choose a reduced equation $u=0$ of $E$ at $P$ as before. The {\em divisorial value} $\nu_E(\eta)$ is given by
$$
\nu_E(\eta)=\max\{\ell \in \mathbb{Z}\ : \ u^{-\ell} \eta \in \Omega_{M,P}^2[\text{log} E]\}.
$$
where $\Omega_{M,P}^2[\text{log} E]$ is the $\mathcal{O}_{M,P}$-module of germs of logarithmic 2-forms along $E$ at $P$. As before, given a 2-form $\eta \in \Omega_{M_k,P_k}^2$ with $k \in \{0,1,\ldots, N-1\}$, we define the {\em divisorial value} $\nu_E(\eta)$  by $\nu_E(\eta)=\nu_E(\rho_k^* \eta)$. Next result gives the computation of the divisorial value of a 2-form in adapted coordinates.

\begin{Lemma}
Let $(x,y)$ be coordinates in $(\mathbb{C}^2,0)$  adapted to $E$. Given  a holomorphic 2-form  $\eta$ in $(\mathbb{C}^2,0)$, the divisorial value $\nu_E(\eta)$ is given by
$$
\nu_E(\eta)=\nu_E(xyh) = \nu_E(h) + \nu_E(dx) + \nu_E(dy).
$$
where  $\eta = xy h(x,y) \ \frac{dx}{x} \wedge \frac{dy}{y}$ with $h \in \mathbb{C}\{x,y\}$.
\end{Lemma}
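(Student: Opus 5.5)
The plan is to reduce the 2-form case to the 1-form formula \eqref{ec:nu_E-omega} by writing $\eta$ as a wedge product. Write $\eta = g\,dx\wedge dy$ with $g=xyh$ if $g$ is divisible by $xy$. Since an arbitrary $g$ need not be divisible by $xy$, I would instead treat the expression $\eta=xyh\,\frac{dx}{x}\wedge\frac{dy}{y}$ with $h$ meromorphic, i.e.\ $h=g/(xy)$. Then $\nu_E(xyh)=\nu_E(g)$ makes sense for every holomorphic $g$. The identity $\nu_E(xyh)=\nu_E(h)+\nu_E(x)+\nu_E(y)$ follows from multiplicativity of the valuation $\nu_E$, and $\nu_E(x)=\nu_E(dx)$, $\nu_E(y)=\nu_E(dy)$ holds by Remark~\ref{rm:parametrizacion-coord-adapt}. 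So the real content is $\nu_E(\eta)=\nu_E(g)$ where $\eta=g\,dx\wedge dy$.

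To prove this, I will compute locally at a general point $P\in E$, which is allowed because $\nu_E$ does not depend on the choice of $P$. In adapted coordinates the composition $\pi$ near a generic point of $E$ has a monomial form: there are local coordinates $(u,v)$ at $P$ with $E=\{u=0\}$ and
$$x=u^{\beta_0}\,a(u,v),\qquad y=u^{\beta_1}\,b(u,v),$$
with $a,b$ units. I would get this by tracking the blow-ups (Noether/Enriques, or toric charts along the weighted blow-up). Then $\frac{dx}{x}\wedge\frac{dy}{y}$ pulls back to $\phi\,\frac{du}{u}\wedge dv$ with $\phi$ holomorphic. The key step is to show that $\phi$ is a unit at a generic $P$. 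Since $x/y$ and $y$ restricted appropriately give the parametrization by $v$, this reduces to checking that $\frac{dx}{x}\wedge\frac{dy}{y}$ is a logarithmic 2-form with nonvanishing residue along $E$. Equivalently, the $2\times 2$ Jacobian of $(\log x,\log y)$ with respect to $(\log u, v)$ is invertible, and I expect the determinant to be $\beta_0\,\partial_v\log b-\beta_1\,\partial_v\log a$ at $u=0$.

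Granting this, $\pi^*\eta=(\pi^*g)\,\phi\,\frac{du}{u}\wedge dv$, and $\Omega^2_{M,P}[\log E]$ is generated by $\frac{du}{u}\wedge dv$. Hence $\nu_E(\eta)=\operatorname{ord}_u(\pi^*g)=\nu_E(g)$, which gives the statement.

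The main obstacle is justifying the nonvanishing of this determinant, i.e.\ that $x$ and $y$ give independent logarithmic coordinates on $E$. I would use that $x^{\beta_1}/y^{\beta_0}$ (up to the coprimality normalization $\beta_1/\beta_0=m/n$ in lowest terms, i.e.\ $y^n/x^m$) restricts to a nonconstant function on $E$: its divisorial value is $0$, and it separates $E$-curvettes through distinct points of $E$, since those curvettes have different coefficients of $t^{\beta_1}$. Nonconstancy of this restriction is exactly the needed nondegeneracy at generic $P$.
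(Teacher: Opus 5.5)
Your argument is essentially correct for cuspidal $E$ and takes a different route from the paper, but first fix a misreading of the statement. Since $xy\,h\,\frac{dx}{x}\wedge\frac{dy}{y}=h\,dx\wedge dy$, the function $h$ is just the coefficient of $dx\wedge dy$. It is therefore holomorphic for every holomorphic $\eta$: there is no divisibility problem, and no meromorphic $h$ is needed.

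Your sentence ``the real content is $\nu_E(\eta)=\nu_E(g)$ where $\eta=g\,dx\wedge dy$'' is false as written. For a single blow-up, $\nu_E(dx\wedge dy)=2\neq 0$. What your last paragraph actually proves, through $\pi^*\eta=(\pi^*g)\,\phi\,\frac{du}{u}\wedge dv$, is $\nu_E\bigl(g\,\frac{dx}{x}\wedge\frac{dy}{y}\bigr)=\nu_E(g)$, i.e.\ $\nu_E\bigl(\frac{dx}{x}\wedge\frac{dy}{y}\bigr)=0$. With $g=xyh$ this is exactly the lemma.

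\textbf{The paper's route.} The paper argues by induction on the number of blow-ups. In the chart $x=x_1$, $y=x_1y_1$ one has $\pi_1^*\bigl(\frac{dx}{x}\wedge\frac{dy}{y}\bigr)=\frac{dx_1}{x_1}\wedge\frac{dy_1}{y_1}$. So the toric logarithmic form reproduces itself, only the coefficient $xyh$ is pulled back, and the case $N=1$ is a direct check. This is short and purely formal. It tacitly requires every center to be the origin of a monomial chart in the inherited coordinates.

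\textbf{Your route.} You compute once at a generic point of $E$, where $\pi^*x=u^{\beta_0}a$ and $\pi^*y=u^{\beta_1}b$ with $a,b$ units. No tracking of blow-ups is needed for this normal form: the other zeros of $\pi^*x$ and $\pi^*y$ meet $E$ in finitely many points. Everything then reduces to $\phi|_E=\partial_v\log\bigl(y^{\beta_0}/x^{\beta_1}\bigr)|_E\not\equiv 0$. This holds because $y^n/x^m$ takes the value $c^n$ at the point of $E$ reached by the curvette $x=t^n$, $y=ct^m+\cdots$, for every $c\neq 0$.

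\textbf{State the hypothesis.} The benefit of your route is that it isolates exactly where the one-Puiseux-pair hypothesis enters, so you should state it explicitly. For divisors beyond the first characteristic pair, all curvettes share the coefficient of $t^{\beta_1}$, so $y^{\beta_0}/x^{\beta_1}$ is constant on $E$. There the identity itself fails. For the last divisor $E$ in the resolution of $x=t^4$, $y=t^6+t^7$, one finds $y^2/x^3\equiv 1$ on $E$ and $\nu_E(xy)=10$, but $\nu_E(dx\wedge dy)=11$.
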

\begin{proof}
We prove the result by induction on the number $N$ of blow-ups  in the sequence $\mathcal{S}$ given in~\eqref{eq:suc-explosiones}.

Assume $N=1$ and take coordinates $(x_1,y_1)$ in the first chart of the blow-up such that $\pi_1(x_1,y_1)=(x_1,x_1 y_1)$ and the divisor $E=E_1$ is given by $x_1=0$. Hence, we have
$$
\pi_1^* \eta= x_1^2 y_1 h(x_1,x_1y_1) \frac{dx_1}{x_1} \wedge \frac{dy_1}{y_1}
$$
and we get
$$
\nu_E(\eta)=\nu_E(\pi_1^*\eta)=\nu_E (\pi_1^* (xy h))
$$
as wanted. Assume now $N >1$ and write $\pi= \pi_1 \circ \rho_1$ with $\pi_1$ and $\rho_1$ given in \eqref{eq:explosiones-intermedias}.  Then, we can write
$$
\pi^* \eta= \rho_1^* \pi_1^* \eta
$$
By definition, we have
$$\nu_E(\eta)= \nu_E(\pi_1^*\eta).
$$
The induction hypothesis gives $\nu_E(\pi_1^*\eta)=\nu_E(xyh)$ which finishes the proof.
\end{proof}
As a consequence of the previous lemma, we get the following result.
\begin{Corollary} \label{lema-2-div-valor}
Let $\mathcal{F}$ and $\mathcal{G}$ be  two foliations  defined by 1-forms $\omega_{\mathcal{F}}=0$ and $\omega_{\mathcal{G}}=0$ and write $\omega_\mathcal{F} \wedge \omega_{\mathcal{G}}=J_{\mathcal{F},\mathcal{G}} (x,y)  dx \wedge dy$, then
$$
\nu_E(\omega_\mathcal{F} \wedge \omega_\mathcal{G})= \nu_E({J}_{\mathcal{F},\mathcal{G}}) + \nu_E(dx) + \nu_E(dy).
$$
\end{Corollary}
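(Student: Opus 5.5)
The corollary is a direct application of the preceding lemma to the 2-form $\eta=\omega_\mathcal{F}\wedge\omega_\mathcal{G}$. Fix coordinates $(x,y)$ in $(\mathbb{C}^2,0)$ adapted to the divisor $E$; such coordinates exist by taking $y=0$ to be a non-singular branch with maximal contact with the curves of $\text{Curv}(E)$. Write $\omega_\mathcal{F}\wedge\omega_\mathcal{G}=J_{\mathcal{F},\mathcal{G}}(x,y)\,dx\wedge dy$, which is a holomorphic 2-form in $(\mathbb{C}^2,0)$. Since $dx\wedge dy = xy\,\frac{dx}{x}\wedge\frac{dy}{y}$, we have
$$
\omega_\mathcal{F}\wedge\omega_\mathcal{G}= xy\,J_{\mathcal{F},\mathcal{G}}(x,y)\,\frac{dx}{x}\wedge\frac{dy}{y}.
$$
This is exactly the form required by the lemma, with $h=J_{\mathcal{F},\mathcal{G}}$.

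The lemma then gives
$$
\nu_E(\omega_\mathcal{F}\wedge\omega_\mathcal{G})=\nu_E(xyJ_{\mathcal{F},\mathcal{G}})=\nu_E(J_{\mathcal{F},\mathcal{G}})+\nu_E(dx)+\nu_E(dy).
$$
The middle term splits as $\nu_E(J_{\mathcal{F},\mathcal{G}})+\nu_E(x)+\nu_E(y)$ because $\nu_E$ is a valuation on functions, hence additive on products. To pass to differentials we use $\nu_E(x)=\nu_E(dx)$ and $\nu_E(y)=\nu_E(dy)$, which hold by the remark $\nu_E(h)=\nu_E(dh)$ for $h(0)=0$ (or directly by Remark~\ref{rm:parametrizacion-coord-adapt}). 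This is the claimed identity.

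The only point needing care is whether the formula depends on the choice of coordinates. The statement is read in adapted coordinates, as in the lemma: $J_{\mathcal{F},\mathcal{G}}$ is defined up to the jacobian determinant of a coordinate change, which is a unit and so does not change $\nu_E(J_{\mathcal{F},\mathcal{G}})$. Likewise $\nu_E(dx)$ and $\nu_E(dy)$ are the values $\beta_0,\beta_1$ fixed by adaptedness. No real obstacle is expected.
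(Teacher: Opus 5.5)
Your proposal is correct and is the paper's argument: the paper also obtains the corollary directly from the preceding lemma, writing $\omega_\mathcal{F}\wedge\omega_\mathcal{G}=xy\,J_{\mathcal{F},\mathcal{G}}\,\frac{dx}{x}\wedge\frac{dy}{y}$ in coordinates adapted to $E$ and taking $h=J_{\mathcal{F},\mathcal{G}}$. You are also right to insist on adapted coordinates: a coordinate change leaves $\nu_E(J_{\mathcal{F},\mathcal{G}})$ unchanged, since it only multiplies $J_{\mathcal{F},\mathcal{G}}$ by a unit, but it can change $\nu_E(dx)+\nu_E(dy)$, so the identity holds only in the adapted coordinates of the lemma.
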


\subsection{Divisorial value for a cuspidal divisor}
We say that the sequence of blow-ups $\mathcal{S}$ in \eqref{eq:suc-explosiones}  is a \textit{cuspidal sequence} if any curve $C \in \text{Curv}(E)$ is an irreducible plane curve with one Puiseux pair (a cusp) and $\pi$ is the minimal reduction of singularities of $C$.

The sequence $\mathcal{S}$ is a cuspidal sequence if $P_k \in E_k^k$ for any $k=1,2,\ldots, N-1$ and $e(P_{k-1}) \leq e(P_k)$ for $2 \leq k \leq N-1$. The divisor $E=E_N^N$ is called a {\em cuspidal divisor} (see \cite{Can-C-SS-2023}). If any curve $C \in \text{Curv}(E)$ is a cusp with Puiseux pair $(m,n)$  where $2 \leq n < m$ and $\gcd(n,m)=1$, we also say that $E$ is a $(m,n)$-cuspidal divisor. We will also say that $\mathcal{S}$ is a $(m,n)$-cuspidal sequence. When $E$ is a cuspidal divisor, we will also denote $\operatorname{Cusp}(E)=\operatorname{Curv}(E)$.

In particular, for cuspidal divisors, we get that the divisorial value of a function can be computed as follows
\begin{Lemma}[\cite{Can-C-SS-2023}]\label{lemma:nu_E_ij}
Let $E$ be a $(m,n)$-cuspidal divisor and consider $(x,y)$  coordinates  in $(\mathbb{C}^2,0)$ adapted to the divisor $E$. Given a germ $h \in \mathbb{C}\{x,y\}$, the divisorial value is equal to
$$
\nu_E(h)=\min \{n i + mj \ : \ h_{ij} \neq 0\}
$$
where $h(x,y)=\sum_{i,j} h_{ij} x^i y^j$.
\end{Lemma}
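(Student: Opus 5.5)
The plan is to compute $\nu_E(h)$ from Lemma~\ref{lemma:nu_E_funcion}, as the minimum of $i_0(H,C)$ over $C\in\operatorname{Cusp}(E)$, together with an explicit parametrization of the curvettes. Since $(x,y)$ is adapted to $E$ and every $C\in\operatorname{Cusp}(E)$ is a cusp with Puiseux pair $(m,n)$, Remark~\ref{rm:parametrizacion-coord-adapt} gives a primitive parametrization $x=t^n$, $y=a_m t^m+\sum_{i>m}a_it^i$ with $a_m\neq 0$.

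The first step is to describe which series occur as $C$ ranges over $\operatorname{Cusp}(E)$. Minimality of $\pi$ as a reduction of singularities of $C$ means the blow-up sequence stops exactly at the divisor where the characteristic exponent $m/n$ is resolved. Hence the curvettes are exactly the cusps $x=t^n$, $y=ct^m+(\text{higher order terms})$. The coefficient $c\neq0$ determines the point $P\in E$ where $C'$ meets $E$, and $c$ ranges over all of $\mathbb{C}^*$ as $P$ ranges over the free points of $E$.

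The second step is the inequality $i_0(H,C)\ge \min\{ni+mj : h_{ij}\neq0\}=:d$ for every such $C$. Substituting the parametrization into $h$ gives
$$h(\gamma(t))=\sum_{i,j} h_{ij}t^{ni}\bigl(ct^m+\cdots\bigr)^j .$$
Each monomial $x^iy^j$ contributes order at least $ni+mj$, so $\operatorname{ord}_t h(\gamma(t))\ge d$.

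The third step shows equality holds for a generic curvette. The coefficient of $t^d$ in $h(\gamma(t))$ is $\sum_{ni+mj=d}h_{ij}c^j$; the higher terms of $y(t)$ only affect orders above $d$ for these monomials, and the other monomials already have order $>d$. Because $\gcd(n,m)=1$, distinct pairs $(i,j)$ with $ni+mj=d$ have distinct $j$. So the coefficient is a nonzero polynomial in $c$, and it does not vanish for all but finitely many $c$. Choosing such a $c$, for example the plain curvette $x=t^n$, $y=ct^m$, gives $i_0(H,C)=d$. By Lemma~\ref{lemma:nu_E_funcion} the minimum equals $d$, which proves the claim.

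The main obstacle is the first step: justifying rigorously that the curvettes are exactly these parametrizations, and in particular that the leading coefficient $c$ takes every value in $\mathbb{C}^*$. Only the existence of enough values of $c$, in fact of one avoiding the finitely many roots, is needed. This follows from the standard correspondence between the leading coefficient $c$ and the points of $E$ away from the corners, which one checks in the final chart of the toric (Newton-polygon) resolution of $x^m-y^n$.
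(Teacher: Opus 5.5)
Your argument is correct and matches how the paper frames this lemma. The paper gives no proof of its own (it cites \cite{Can-C-SS-2023}), but it presents the statement as a direct consequence of Lemma~\ref{lemma:nu_E_funcion} together with the curvette parametrizations of Remark~\ref{rm:parametrizacion-coord-adapt}. That is exactly your lower bound plus a curvette whose leading coefficient $c$ avoids the finitely many roots of $\sum_{ni+mj=d}h_{ij}c^j$.

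One minor correction: distinct pairs on the line $ni+mj=d$ automatically have distinct $j$, so coprimality is not needed there. It is needed to make $t\mapsto(t^n,ct^m)$ a primitive parametrization.
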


\medskip
Consider a 1-form $\omega \in \Omega_{(\mathbb{C}^2,0)}^1$. Given coordinates $(x,y)$ in $(\mathbb{C}^2,0)$, the 1-form can be written as
\begin{equation}\label{eq:omega-ij}
\omega=\sum_{i,j} \omega_{ij} \qquad \text{ with }
\qquad \omega_{ij}=A_{ij}x^{i-1}y^j dx + B_{ij} x^i y^{j-1}dy.
\end{equation}
 We denote $\Delta(\omega)=\{(i,j) \ : \omega_{ij}\neq 0\}$. The {\em Newton polygon} $\mathcal{N}(\omega;x,y)=\mathcal{N}(\omega)$ is given by the convex envelop of $\Delta(\omega)+(\mathbb{R}_{\geq 0})^2$. If $\mathcal{F}$ is a foliation defined by a 1-form $\omega=0$, we define the Newton polygon $\mathcal{N}(\mathcal{F})=\mathcal{N}(\mathcal{F};x,y)=\mathcal{N}(\omega;x,y)$.

Given a rational number $\alpha \in \mathbb{Q}$, the {\em initial part} of $\omega$ with {\em weight} $\alpha$ is given by
$$
\text{In}_{\alpha}(\omega;x,y)= \sum_{i + \alpha j=k} \omega_{ij}
$$
where $i + \alpha j=k$ is the equation of the first line of slope $-1/\alpha$ which intersects the Newton polygon of $\omega$ in the coordinates $(x,y)$ (see \cite{Cor-2003,Cor-2025} for more details).

Let $E$ be a $(m,n)$-cuspidal divisor and  take $(x,y)$ coordinates in $(\mathbb{C}^2,0)$ adapted to the divisor $E$. From equality in~\eqref{ec:nu_E-omega} and Lemma~\ref{lemma:nu_E_ij}, we get that
$$
\nu_E(\omega)=\min \{ n i + m j \ : \ \omega_{ij} \neq 0\}
$$
where $\omega$ is written as in expression~\eqref{eq:omega-ij}.
In particular, we obtain
$$
\text{In}_{\tfrac{m}{n}}(\omega;x,y)= \sum_{ni + m j=\nu_E(\omega)} \omega_{ij}.
$$
In a similar way, given a holomorphic function $h \in \mathbb{C}\{x,y\}$, we can write $h(x,y)=\sum_{i,j} h_{ij} x^i y^j$ and we define $\Delta(h)=\{(i,j) \ : \ h_{ij} \neq 0\}$. The {\em Newton polygon} $\mathcal{N}(h;x,y)=\mathcal{N}(h)$ is the convex envelop of $\Delta(h) + (\mathbb{R}_{\geq 0})^2$. If $h=0$ is a reduced equation of a curve $H$, then  the Newton polygon of the curve $H$ is given by $\mathcal{N}(H)=\mathcal{N}(h)$.

\begin{Remark} \label{monomio-nm}
If $\nu_E(\omega) <nm$, then there are unique $a,b \in \mathbb{Z}_{\geq 0}$, with $ab \neq 0$, such that $\nu_E(\omega)=n a + m b$. Consequently, the initial part of $\omega$ is given by
$$
\text{In}_{\tfrac{m}{n}}(\omega;x,y)=x^a y^b \left( \lambda \frac{dx}{x} + \mu \frac{dy}{y} \right).
$$
A similar argument holds for the case of a function $f \in \mathbb{C}\{x,y\}$.
Therefore, if $\nu_E(h)< nm$, we get $\nu_E(h)=\nu_C(h)$. However, for 1-forms we can have $\nu_E(\omega)<nm$ and $\nu_E(\omega) < \nu_C(\omega)$; in this case, the initial part of $\omega$ is given by
\begin{equation}\label{eq:1-forma-dicritica}
\text{In}_{\tfrac{m}{n}}(\omega;x,y)=\mu x^a y^b \left( m \frac{dx}{x} -n \frac{dy}{y} \right)
\end{equation}
with $\mu \neq 0$, and $a,b \geq 1$ such that $\nu_E(\omega)=n a + mb$.
\end{Remark}

A differential 1-form with initial part given by the expression in \eqref{eq:1-forma-dicritica} is called a {\em resonant differential 1-form}. Properties of foliations defined by resonant 1-forms will be described in Section~\ref{sec:E-dicritical}.
\medskip

\section{Totally dicritical foliations}\label{sec:E-dicritical}
Consider a sequence $\mathcal{S}$ of point blow-ups
\begin{equation}\label{eq:suc-explosiones-totaldicritical}
(\mathbb{C}^2,0)=(M_0,P_0) \overset{\pi_1}{\longleftarrow} (M_1,P_1) \overset{\pi_{2}}{\longleftarrow} \cdots \overset{\pi_{N-1}}{\longleftarrow} (M_{N-1},P_{N-1}) \overset{\pi_N}{\longleftarrow} M_{N}
\end{equation}
where  $\pi_k$ is the blow-up with center at $P_{k-1}$. The exceptional divisor of $\pi_k$  is denoted by $E_k^k=\pi_k^{-1}(P_{k-1})$, the strict transform of $E_j^{k-1}$ by $\pi_k$ is denoted by $E_j^k$ with $1 \leq j < k$ and $D_k=E_1^k \cup E_2^k \cup \cdots \cup E_k^k$ for $k=1,2,\ldots, N-1$. We also denote $M=M_N$, $D=D_N$, $E=E_N^N$ and $\pi: M \to (\mathbb{C}^2,0)$ with $\pi=\pi_1 \circ \cdots \circ \pi_N$.

This section is devoted to prove some properties of totally $E$-dicritical foliations. These foliations where introduced in \cite{Can-C-SS-2023} when studying the behaviour of the foliations defined by the 1-forms of a standard basis for a cusp, but this notion can be extended to non-cuspidal  divisors.

\begin{Definition}
 A foliation $\mathcal F$  is a {\em totally $E$-dicritical foliation} if the strict transform $\pi^*\mathcal{F}$ by $\pi$ is transversal to $E$ and has normal crossings with $D$ at each point of $E$.
\end{Definition}
Hence, for each free point $P \in E$, there exists a curve $C_P \in \operatorname{Curv}(E)$ such that $C_P$ is an invariant curve of $\mathcal{F}$. We will denote $\operatorname{Curv}_E(\mathcal{F})$ the set of these curves.
\begin{Remark}
Note that the morphism $\pi$ gives a reduction of singularities of any of the curves $C \in \operatorname{Curv}_E(\mathcal{F})$ but $\pi$ can be different from the minimal reduction of singularities of $C$. In the case that $\pi$ is equal to the minimal reduction of singularities of one curve $C$ in $\operatorname{Curv}_E(\mathcal{F})$, then $\pi$ is also the minimal reduction of singularities of all the curves in $\operatorname{Curv}_E(\mathcal{F})$.
\end{Remark}

In the first part of this section we recall some results showing the relationship between resonant 1-forms and totally dicritical foliations when $E$ is a cuspidal divisor. In the second part, we describe some properties of totally $E$-dicritical foliations which hold for any divisor $E$. In particular, given a curve $\widetilde{C} \in \operatorname{Curv}_E(\mathcal{F})$ with $\mathcal{F}$ a totally $E$-dicritical foliation and any 1-form $\omega$,  we prove a formula which allows to compute the differential value $\nu_{\widetilde{C}}(\omega)$ in terms of divisorial values relatives to the divisor $E$ and the jacobian curve of the foliation $\mathcal{F}$ and the one defined by $\omega=0$ (see Theorem~\ref{Th:nu_C}). This result will be key to describe the semimodule of differential values of the curves $\widetilde{C} \in \operatorname{Curv}_E(\mathcal{F})$.

\subsection{Cuspidal dicritical foliations}
Let us first introduce the notion of cuspidal dicritical foliations (see also \cite{Gom,Can-C-SS-2023}).

\begin{Definition}\label{def:cuspidal-dicritical}
A foliation $\mathcal{F}$ is a {\em cuspidal dicritical foliation} if $\mathcal{F}$ is  totally $E$-dicritical for a cuspidal divisor $E$.
\end{Definition}

Let $\mathcal{F}$ be a totally $E$-dicritical foliation with $E$ a $(m,n)$-cuspidal divisor. For each free point $P \in E$, there exists a cusp ${C}_P$ in $(\mathbb{C}^2,0)$ whose strict transform by $\pi$ cuts $E$ at $P$ and such that ${C}_P$ is an invariant curve of $\mathcal{F}$. We say that ${C}_P$ is an $E$-{\em cusp passing through} $P$ and we denote by $\operatorname{Cusp}_E(\mathcal{F})$ the set of all $E$-cusps of $\mathcal{F}$. Note that $\operatorname{Cusp}_E(\mathcal{F}) \subset \text{Curv}(E)=\text{Cusp}(E)$.

In \cite[Section 4]{Can-C-SS-2023},  cuspidal dicritical foliations are characterized in terms of the Newton polygon of the foliation. Assume that $E$ is  a $(m,n)$-cuspidal divisor and  take $(x,y)$ coordinates in $(\mathbb{C}^2,0)$ adapted to the divisor $E$. Consider a foliation $\mathcal{F}$ in $(\mathbb{C}^2,0)$ defined by a 1-form $\omega=0$. From the results in \cite{Can-C-SS-2023} we deduce the following statements
\begin{Lemma}
If $\omega$ is resonant and $\nu_E(\omega) < nm$, then the foliation $\mathcal{F}$ is totally $E$-dicritical.
\end{Lemma}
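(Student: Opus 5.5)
Our plan is to follow the blow-up sequence $\mathcal{S}$ explicitly and track the strict transform of $\omega$ in weighted coordinates. Fix adapted coordinates $(x,y)$, so $\nu_E(x^iy^j)=ni+mj$. Since $\nu_E(\omega)<nm$, Remark~\ref{monomio-nm} gives a unique monomial $x^ay^b$ with $na+mb=\nu_E(\omega)$. The resonance hypothesis says the initial part is $\mu x^ay^b(m\,dx/x-n\,dy/y)$ with $\mu\neq0$ and $a,b\geq 1$; every other term $\omega_{ij}$ has $ni+mj>\nu_E(\omega)$.

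The first step is to compute the pull-back of $\omega$ to $M$ near a generic point of $E$. For a $(m,n)$-cuspidal divisor there is a chart of $M$ along $E$ whose coordinates $(u,v)$ satisfy:
\begin{itemize}
\item $E=\{u=0\}$;
\item $x=u^n v^{\alpha}$ and $y=u^m v^{\beta}$, with $m\alpha-n\beta=\pm1$;
\item the corner points of $E$ lie at $v=0$ and $v=\infty$.
\end{itemize}
One could instead reach the same chart by the weighted blow-up of weights $(n,m)$ followed by toric resolution of the cyclic quotient points. In this chart a monomial form $x^iy^j(A\,dx/x+B\,dy/y)$ pulls back to
\[
u^{ni+mj}v^{\alpha i+\beta j}\Bigl((nA+mB)\frac{du}{u}+(\alpha A+\beta B)\frac{dv}{v}\Bigr).
\]
For the initial part, $(A,B)=(m,-n)$, so the $du/u$ coefficient vanishes and the $dv/v$ coefficient is $m\alpha-n\beta=\pm1\neq 0$. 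Hence
\[
\pi^*\omega=u^{\nu_E(\omega)}\Bigl(\pm\mu\, v^{\alpha a+\beta b-1}dv+u\,\theta\Bigr),
\]
where $\theta$ is logarithmic in $u$ and holomorphic in $v$ on $\mathbb{C}^*$. The point is that the higher terms carry strictly larger powers of $u$. Dividing by $u^{\nu_E(\omega)}$ gives a saturated form that, at every point with $v\neq0,\infty$, is $\pm\mu v^{k}dv+u(\cdots)$ with $v^k\neq0$. This shows that the foliation is nonsingular and transversal to $E$ at all free points of $E$.

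The main obstacle is to show the two corner points, that is, normal crossings with $D$ there. At $v=0$ the other divisor component is $\{v=0\}$, and we must check that it is invariant with a simple, non-saddle-node reduced model. Rather than tracking charts, we would invoke the characterization of totally $E$-dicritical foliations via the Newton polygon from \cite[Section 4]{Can-C-SS-2023}. The key input is $\nu_E(\omega)<nm$: all the support except the vertex $(a,b)$ lies strictly above the line $ni+mj=na+mb$. In the corner charts this makes the initial part dominate, so the transformed form looks like $u^{*}v^{*}\bigl(c_1\,du/u+c_2\,dv/v\bigr)+\text{h.o.t.}$ with $c_2\neq0$. Normal crossings follow from that. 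We would also check that $a,b\geq1$ prevents non-logarithmic terms from entering at the corners. Finally, the condition $\nu_E(\omega)<nm$ excludes the second lattice point on the weighted line, which could otherwise cancel the resonance or create extra singular points on $E$.
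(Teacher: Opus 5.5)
Your computation at the free points of $E$ is correct. In the toric chart $x=u^nv^\alpha$, $y=u^mv^\beta$, the resonant initial part kills the $du/u$ coefficient and leaves $\pm\mu\,u^{\nu_E(\omega)}v^{\alpha a+\beta b-1}dv$. Hence $\pi^*\mathcal F$ is regular and transversal to $E$ at every point with $v\neq 0$.

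The gap is at the two corner points of $E$. You flag them as the main obstacle, but you do not actually handle them.

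First, the target there is misstated. Since $\pi^*\mathcal F$ must be transversal to $E$ at every point of $E$, each corner must be a \emph{regular} point of $\pi^*\mathcal F$ whose leaf is the other divisor component $\{v=0\}$. A simple singularity at the corner is not allowed.

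Second, and more seriously, the step ``all the support except $(a,b)$ lies strictly above the line $ni+mj=\nu_E(\omega)$, so the initial part dominates in the corner charts'' is a non sequitur. A term $\omega_{ij}$ pulls back to $u^{ni+mj}v^{\alpha i+\beta j}(\cdots)$, and lying above the weighted line controls only the $u$-exponent. At $(u,v)=(0,0)$ the initial term dominates only if, in addition, $\alpha i+\beta j\geq \alpha a+\beta b$ for every $(i,j)\in\Delta(\omega)$. The analogous inequality $\alpha' i+\beta' j\geq \alpha' a+\beta' b$ is needed for the ray $(\alpha',\beta')$ of the other corner. If some support point had a smaller $v$-exponent, then after removing the maximal power of $v$ that term would govern the corner, which could then be singular or have a leaf other than $\{v=0\}$.

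This containment of $\mathcal N(\omega)$ in the region cut out by the two adjacent rays (the region $R^{n,m}(a,b)$; cf.\ $R^{6,7}(1,6)$ in the paper's example) is exactly the hypothesis of the Newton polygon characterization of \cite{Can-C-SS-2023} on which the paper's statement rests. So invoking that characterization does not close the gap either: you must show that $\nu_E(\omega)<nm$ forces the containment. This condition is genuinely needed. For instance, with $(n,m)=(4,9)$ and $(a,b)=(11,1)$, the support point $(0,6)$ lies above the weighted line but violates the inequality for the ray $(1,2)$.

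The missing step is short. Uniqueness of the lattice point on the line, which you mention at the end, is equivalent to $a\leq m-1$ and $b\leq n-1$. This is where $\nu_E(\omega)<nm$ is really used; the free points need only resonance. Let $(\alpha,\beta),(\alpha',\beta')$ be the rays of the two components of $D$ meeting $E$, with $m\alpha-n\beta=1$, $m\alpha'-n\beta'=-1$ and all entries $\geq 0$. For $(i,j)\in\Delta(\omega)\smallsetminus\{(a,b)\}$ put $(p,q)=(i-a,j-b)$, so that $np+mq\geq 1$, $p\geq -a$ and $q\geq -b$. Then $m(\alpha p+\beta q)=p+\beta(np+mq)\geq -a\geq 1-m$, and $n(\alpha' p+\beta' q)=q+\alpha'(np+mq)\geq -b\geq 1-n$. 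Both are integers, so $\alpha p+\beta q\geq 0$ and $\alpha'p+\beta'q\geq 0$. With this, near the corner $v=0$ one gets $u^{-\nu_E(\omega)}v^{1-\alpha a-\beta b}\pi^*\omega=(\pm\mu+O(u))\,dv+v(\cdots)\,du$, and symmetrically at the other corner. This gives regularity, transversality to $E$, and invariance of the other component, i.e.\ normal crossings with $D$.
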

Next result describes the initial part of a 1-form defining a totally $E$ dicritical foliation
\begin{Lemma}\label{lemma:In-total-dicritical}
If the foliation $\mathcal{F}$ is totally $E$-dicritical then, up to multiply by a constant,
$$
\operatorname{In}_{\tfrac{m}{n}}(\omega;x,y)= x^a y^b \left(m \frac{dx}{x} - n \frac{dy}{y} \right),
$$
where $\nu_E(\omega) = n a + m b$.
\end{Lemma}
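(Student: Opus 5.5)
The plan is to compute the strict transform of $\mathcal{F}$ along the last blow-up in a chart adapted to the weights $(n,m)$, and to read off from transversality to $E$ which weighted monomials may occur in the initial part. Write $\nu_E(\omega)=\nu$ and $\operatorname{In}_{m/n}(\omega;x,y)=\sum_{ni+mj=\nu}\omega_{ij}$, with $\omega_{ij}=x^iy^j(A_{ij}\frac{dx}{x}+B_{ij}\frac{dy}{y})$.

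The morphism $\pi$ restricted to a neighbourhood of a generic point of $E$ can be described by a toric (monomial) map. Choose integers $p,q$ with $mp-nq=1$ and use coordinates $(u,v)$ near a free point of $E$ such that
$$x=u^{n}v^{q},\qquad y=u^{m}v^{p},$$
with $E=\{u=0\}$, where $v$ is a unit near the free points. This is the standard description of the chart of the last blow-up of a $(m,n)$-cuspidal sequence, as in \cite{Can-C-SS-2023}. Since $\frac{dx}{x}=n\frac{du}{u}+q\frac{dv}{v}$ and $\frac{dy}{y}=m\frac{du}{u}+p\frac{dv}{v}$, we get
$$\pi^*\omega_{ij}=u^{ni+mj}v^{qi+pj}\Big((nA_{ij}+mB_{ij})\frac{du}{u}+(qA_{ij}+pB_{ij})\frac{dv}{v}\Big).$$
Hence
$$\pi^*\omega=u^{\nu}\Big(\alpha(v)\frac{du}{u}+\beta(v)\frac{dv}{v}+u(\cdots)\Big),$$
where $\alpha(v)=\sum_{ni+mj=\nu}(nA_{ij}+mB_{ij})v^{qi+pj}$ and $\beta(v)=\sum_{ni+mj=\nu}(qA_{ij}+pB_{ij})v^{qi+pj}$.

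If $\alpha\not\equiv 0$, then $E$ is invariant after saturation: dividing by $u^{\nu-1}$ leaves $\alpha\,du+u(\cdots)$, whose restriction to $E$ has $E$ as a leaf. This contradicts the transversality of $\pi^*\mathcal{F}$ to $E$. So $nA_{ij}+mB_{ij}=0$ for every $(i,j)$ on the line $ni+mj=\nu$, that is, $(A_{ij},B_{ij})$ is proportional to $(m,-n)$. Each nonzero initial term is then $c_{ij}x^iy^j\big(m\frac{dx}{x}-n\frac{dy}{y}\big)$, and the saturated transform becomes $\beta(v)\,dv/v+u(\cdots)$ with $\beta(v)=\sum c_{ij}(qm-pn)v^{qi+pj}=-\sum c_{ij}v^{qi+pj}$.

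The remaining step is to show that only one lattice point carries a nonzero term. Transversality together with normal crossings with $D$ at every point of $E$ forces $\beta$ to have no zeros on the free part $v\in\mathbb{C}^*$ of $E$, and to be of the correct monomial form at the two corners $v=0,\infty$. The exponents $qi+pj$ for distinct lattice points on the segment $ni+mj=\nu$ are distinct: consecutive points differ by $(m,-n)$, so the exponents differ by $qm-pn=-1$. A Laurent polynomial in $v$ with at least two distinct exponents has a zero in $\mathbb{C}^*$, which would produce a point of $E$ where $\pi^*\mathcal{F}$ is tangent to $E$ or singular. Hence a single term $(a,b)$ survives, and after multiplying by a constant we obtain the stated form with $\nu_E(\omega)=na+mb$.

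The main obstacle is justifying the toric chart description of $\pi$ near $E$ for a general cuspidal sequence, together with the saturation argument. I would cite \cite{Can-C-SS-2023} for the chart, or else argue via the weighted blow-up that resolves the $(n,m)$-weights. A secondary point is that the corner conditions alone do not exclude intermediate monomials, which is why the argument must also use the absence of tangency points on the free part of $E$.
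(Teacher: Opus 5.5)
Your argument is correct, but it takes a different route from the paper. The paper gives no proof of its own: it deduces the lemma from the Newton-polygon characterization of cuspidal dicritical foliations in \cite[Section 4]{Can-C-SS-2023}. You instead compute directly in a toric chart over the free part of $E$, in two steps:
\begin{enumerate}
\item The coefficient $\alpha(v)$ of $du/u$ on the line $ni+mj=\nu_E(\omega)$ must vanish, since otherwise $E$ is invariant. This forces every initial term to be proportional to $m\,\frac{dx}{x}-n\,\frac{dy}{y}$.
\item The function $\beta(v)$ can have no zero on $\mathbb{C}^*$. Since the exponents $qi+pj$ are distinct along that line, only one monomial survives.
\end{enumerate}
This is self-contained and slightly stronger than the statement. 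It only uses that $\pi^*\mathcal{F}$ is regular and transverse to $E$ at free points, never the normal-crossing condition at the two corners. What the cited characterization offers in exchange is a sufficient criterion in the opposite direction: a resonant initial part together with $\mathcal{N}(\mathcal{F})\subset R^{n,m}(a,b)$, as in the paper's example with $R^{6,7}(1,6)$. That criterion also controls the corners, and the paper uses it to certify that concrete foliations are totally $E$-dicritical; your argument does not provide it.

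Three small repairs.
\begin{enumerate}
\item The chart $x=u^nv^q$, $y=u^mv^p$ requires $np-mq=\pm1$, not $mp-nq=1$. For $(n,m)=(2,5)$, the pair $(q,p)=(2,1)$ satisfies the latter but gives a map of degree $8$. In fact $np-mq=1$ is what you actually use when you write $qm-pn=-1$.
\item Your ``main obstacle'' closes quickly. In adapted coordinates every center $P_k$ is torus-fixed: the free centers lie on the strict transform of $y=0$, and the satellite centers are corners. So $\pi$ is a toric modification whose fan contains the ray $(n,m)$ of $E$. Over the free part of $E$, any $(q,p)$ with $np-mq=\pm1$ then gives valid coordinates.
\item State explicitly that $u^{-\nu}\pi^*\omega$ is saturated near free points of $E$. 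This holds because $\omega$ is saturated and $\pi$ is biholomorphic off $D$, and it is what turns a zero of $\beta$ into an actual singular point or tangency with $E$.
\end{enumerate}
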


\begin{Remark}  Let  $\mathcal{F}$ be a foliation in $(\mathbb{C}^2,0)$ defined by a 1-form $\omega_{\mathcal{F}}=0$ and $E$ a cuspidal divisor. If the foliation  $\mathcal{F}$ is totally $E$-dicritical, then
\begin{itemize}
  \item[(i)]  $\nu_C(\omega_{\mathcal{F}})> \nu_E(\omega_{\mathcal{F}})$ for any $C \in \operatorname{Curv}(E)$;
  \item[(ii)]  the Newton polygon $\mathcal{N}(\mathcal{F})$ in adapted coordinates to $E$ fulfills
$$
\mathcal{N}(\mathcal{F}) \cap \{(i,j) \ : \ n i + m j =\nu_E(\omega_{\mathcal{F}})\} = \{(a,b)\}.
$$
with $\nu_E(\omega_{\mathcal{F}})=na+mb$.
\end{itemize}

\end{Remark}

\begin{Notation}
From now on, we will only consider  initial parts with weight $\tfrac{m}{n}$. Hence, we will denote $\operatorname{In} (\omega)= \operatorname{In}_{\tfrac{m}{n}}(\omega)$ for any $1$-form $\omega$ or $\operatorname{In} ( h)= \operatorname{In}_{\tfrac{m}{n}}(h)$ for any $h \in \mathbb{C}\{x,y\}$.
\end{Notation}

\begin{Remark}\label{rm-nu_C-nu_E-no-resonante}
Note that if $\eta$ is a non-resonant 1-form, then
$$\nu_E(\eta)=\nu_C(\eta)
$$
for any $C \in \operatorname{Curv}(E)$.
\end{Remark}
In general, given two 1-forms $\omega$ and $\eta$, we have that
$$
\nu_E(\omega \wedge \eta) \geq \nu_E(\omega) + \nu_E(\eta).
$$
Next lemma shows a particular case, that we will need later, where the equality holds.
\begin{Lemma}\label{lema_curva-contacto-no-resonante}
Let $\mathcal{F}$ be a totally $E$-dicritical foliation having $C$ as invariant curve. If $\omega$ is a non-resonant 1-form with
$$
\operatorname{In}(\omega)=x^\alpha y^\beta \left( \mu_1 \frac{dx}{x} + \mu_2 \frac{dy}{y} \right),
$$
then
$$
\nu_E(\omega_\mathcal{F} \wedge \omega)= \nu_E(\omega_\mathcal{F}) + \nu_E(\omega)=\nu_E(\omega_\mathcal{F}) + \nu_C(\omega).
$$
\end{Lemma}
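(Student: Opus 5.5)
The plan is to compute initial parts with respect to the weight $\tfrac{m}{n}$ and show that the leading terms of $\omega_\mathcal{F}\wedge\omega$ do not cancel. By Lemma~\ref{lemma:In-total-dicritical}, up to a nonzero constant,
$$\operatorname{In}(\omega_\mathcal{F})=x^a y^b\left(m\frac{dx}{x}-n\frac{dy}{y}\right),\qquad \nu_E(\omega_\mathcal{F})=na+mb.$$
By hypothesis,
$$\operatorname{In}(\omega)=x^\alpha y^\beta\left(\mu_1\frac{dx}{x}+\mu_2\frac{dy}{y}\right),\qquad \nu_E(\omega)=n\alpha+m\beta.$$

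Expanding $\omega_\mathcal{F}$ and $\omega$ as sums of weighted homogeneous pieces $\omega_{ij}$ as in \eqref{eq:omega-ij}, the wedge product of a piece of weight $k$ with a piece of weight $l$ is a 2-form $x^{i}y^{j}\,\tfrac{dx}{x}\wedge\tfrac{dy}{y}$ with $ni+mj=k+l$. By the lemma computing $\nu_E$ of a 2-form in adapted coordinates, together with Lemma~\ref{lemma:nu_E_ij}, the divisorial value of $h\,\tfrac{dx}{x}\wedge\tfrac{dy}{y}$ is the minimal weight $ni+mj$ of its monomials. Hence the weight-$(\nu_E(\omega_\mathcal{F})+\nu_E(\omega))$ part of $\omega_\mathcal{F}\wedge\omega$ is exactly $\operatorname{In}(\omega_\mathcal{F})\wedge\operatorname{In}(\omega)$, and everything else has strictly larger weight. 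This yields the general inequality $\nu_E(\omega_\mathcal{F}\wedge\omega)\ge \nu_E(\omega_\mathcal{F})+\nu_E(\omega)$, with equality provided this product is nonzero. We compute
$$\operatorname{In}(\omega_\mathcal{F})\wedge\operatorname{In}(\omega)=x^{a+\alpha}y^{b+\beta}\,(m\mu_2+n\mu_1)\,\frac{dx}{x}\wedge\frac{dy}{y}.$$

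The main step is to show that $n\mu_1+m\mu_2\neq 0$, and this is exactly where non-resonance enters. If $n\mu_1+m\mu_2=0$, then $(\mu_1,\mu_2)$ would be proportional to $(m,-n)$, so $\operatorname{In}(\omega)$ would have the resonant shape \eqref{eq:1-forma-dicritica}, contradicting the hypothesis. There is one subtlety: the definition of resonant also requires $\alpha,\beta\ge1$. I would argue directly instead. Take $C\in\operatorname{Curv}(E)$ with parametrization $x=t^n$, $y=ct^m+\cdots$ as in Remark~\ref{rm:parametrizacion-coord-adapt}. Then
$$\gamma^*\operatorname{In}(\omega)=c^\beta t^{n\alpha+m\beta}(n\mu_1+m\mu_2)\,\frac{dt}{t}.$$
So $n\mu_1+m\mu_2\neq0$ is equivalent to $\nu_C(\omega)=\nu_E(\omega)$, which is what Remark~\ref{rm-nu_C-nu_E-no-resonante} guarantees for non-resonant forms. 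This gives both the nonvanishing and the second equality $\nu_E(\omega)=\nu_C(\omega)$ of the statement.

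Combining these, $\nu_E(\omega_\mathcal{F}\wedge\omega)=n(a+\alpha)+m(b+\beta)=\nu_E(\omega_\mathcal{F})+\nu_E(\omega)$, and substituting $\nu_E(\omega)=\nu_C(\omega)$ gives the last expression. The hypothesis that $C$ is invariant by $\mathcal{F}$ is used only to know that $(x,y)$ is adapted to the $(m,n)$-cuspidal divisor $E$ with $C\in\operatorname{Curv}(E)$.
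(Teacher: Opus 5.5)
Your proposal is correct and follows the paper's proof. Both compute $\operatorname{In}(\omega_\mathcal{F})\wedge\operatorname{In}(\omega)=\mu\, x^{a+\alpha}y^{b+\beta}(n\mu_1+m\mu_2)\frac{dx}{x}\wedge\frac{dy}{y}$ (your sign is the right one; the paper's extra minus sign is a harmless slip), use non-resonance to get $n\mu_1+m\mu_2\neq 0$, and invoke Remark~\ref{rm-nu_C-nu_E-no-resonante} for $\nu_E(\omega)=\nu_C(\omega)$. Your weight-additivity argument only makes explicit the identification of the initial part of $\omega_\mathcal{F}\wedge\omega$ with the wedge of initial parts, which the paper takes for granted.

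The curvette detour is valid but not needed. If $n\mu_1+m\mu_2=0$ with $(\mu_1,\mu_2)\neq(0,0)$, then $\mu_1\mu_2\neq 0$. Holomorphy of $\mu_1x^{\alpha-1}y^{\beta}dx+\mu_2x^{\alpha}y^{\beta-1}dy$ then forces $\alpha,\beta\geq 1$, so the resonant shape, and the contradiction, is reached directly.
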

\begin{proof}
Taking into account that $\omega_\mathcal{F}$ is resonant and $\omega$ is non-resonant, a simple computation shows that $\operatorname{In}(\omega_\mathcal{F})\wedge \operatorname{In}(\omega) \neq 0$. Hence, if we write $\omega_\mathcal{F} \wedge \omega=J(x,y) \frac{dx}{x} \wedge \frac{dy}{y}$, then
$$
\operatorname{In}(J) \frac{dx}{x} \wedge \frac{dy}{y} =\operatorname{In}(\omega_\mathcal{F})\wedge \operatorname{In}(\omega)
= - \mu x^{a+\alpha} y^{b + \beta} (n\mu_1 + m \mu_2) \frac{dx}{x} \wedge \frac{dy}{y}
$$
where we assume that $\operatorname{In}(\omega_\mathcal{F})$ is given by an expression as in~\eqref{eq:1-forma-dicritica}.
From the previous equality, we get  $\nu_E(\omega_\mathcal{F} \wedge \omega)= \nu_E(\omega_\mathcal{F}) + \nu_E(\omega)$.
The second equality in the statement is consequence of the fact that $\nu_E(\omega)=\nu_C(\omega)$ since $\omega$ is non-resonant.
\end{proof}
\subsection{Totally $E$-dicritical foliations} In this section, we give some properties of $E$-dicritical foliations  without assuming that the curves in $\operatorname{Curv}(E)$ are cusps.

\begin{Lemma}\label{lema-milnor-adaptado} Let $\mathcal{F}$ be a totally $E$-dicritical foliation. If $S,\widetilde{S} \in \operatorname{Curv}_E(\mathcal{F})$, then
$$\mu_0(\mathcal{F},S)=\mu_0(\mathcal{F},\widetilde{S}).$$
\end{Lemma}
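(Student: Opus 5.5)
We have to show that $\mu_0(\mathcal{F},S)$ takes the same value for every curve $S$ in $\operatorname{Curv}_E(\mathcal{F})$. The idea is to write $\mu_0(\mathcal{F},S)$ as an intersection multiplicity with a jacobian curve, via Proposition~\ref{prop:int-sep}, and then evaluate that multiplicity with Noether's formula along the common resolution $\pi$.

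Fix coordinates $(x,y)$ adapted to $E$. Let $\mathcal{G}$ be the foliation $dx=0$ and write $\omega_\mathcal{F}=A\,dx+B\,dy$. Then $\omega_\mathcal{F}\wedge dx=-B\,dx\wedge dy$, so $\mathcal{J}_{\mathcal{F},\mathcal{G}}$ is the curve $B=0$. A generic choice of $x$ is not invariant by $\mathcal{F}$. By Proposition~\ref{prop:int-sep}, or directly from the definition~\eqref{multiplicidaderelativa1} with $x(t)=t^{\beta_0}$ as in Remark~\ref{rm:parametrizacion-coord-adapt}, we get
$$\mu_0(\mathcal{F},S)=\nu_S(B)-\beta_0+1,$$
since $S$ is a separatrix of $\mathcal{F}$. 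All curves of $\operatorname{Curv}(E)$ share the same $\beta_0=\nu_E(x)$. It therefore suffices to prove that $\nu_S(B)$ does not depend on $S\in\operatorname{Curv}_E(\mathcal{F})$.

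By Remark~\ref{Rm-nuC-nuE},
$$\nu_S(B)=\nu_E(B)+i_P(S',H'),$$
where $H$ is the curve $B=0$, $H'$ its strict transform and $P=S'\cap E$. So the key step is to show that $H'$ meets $E$ only at points that are not free points of the form $S'\cap E$ with $S\in \operatorname{Curv}_E(\mathcal{F})$. Equivalently, $i_P(S',H')=0$ for every free point $P$.

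To prove this, work at a free point $P\in E$ in local coordinates $(u,v)$ with $E=\{u=0\}$. By total $E$-dicriticalness, $\pi^*\mathcal{F}$ is defined there by a 1-form $\tilde\omega=dv+u(\cdots)$, and its leaves are the transversal curves $S'$. Now pull back $dx$: near a free point of $E$, $\pi^*x=u^{\beta_0}\cdot\text{unit}$, so $\pi^*dx=u^{\beta_0-1}(\beta_0\,\text{unit}\,du+u\,d(\text{unit}))$. Then
$$\pi^*(\omega_\mathcal{F}\wedge dx)=u^{\nu_E(\omega_\mathcal{F})}\,\tilde\omega\wedge \pi^*dx,$$
and the leading part of $\tilde\omega\wedge\pi^*dx$ is $u^{\beta_0-1}\beta_0\,\text{unit}\,dv\wedge du$. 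Since $\pi^*(dx\wedge dy)=u^{\nu_E(dx)+\nu_E(dy)-1}\cdot\text{unit}\,du\wedge dv$, comparing gives $\pi^*B=u^{\nu_E(B)}\cdot\text{unit}$ near $P$. Consequently $H'$ does not pass through free points of $E$. (This matches Corollary~\ref{lema-2-div-valor}: $\nu_E(\omega_\mathcal{F}\wedge dx)=\nu_E(\omega_\mathcal{F})+\nu_E(dx)$.)

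It follows that $\nu_S(B)=\nu_E(B)$ for all $S\in\operatorname{Curv}_E(\mathcal{F})$, which gives the statement.

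The main obstacle is the local computation at free points. The normal form of $\pi^*x$ there must be justified: it is a monomial in the exceptional divisors times a unit, and at a free point only $E$ appears, with exponent $\nu_E(x)=\beta_0$. One must also check that the unit factors do not produce cancellations; this holds because the coefficient $\beta_0\neq0$ dominates.
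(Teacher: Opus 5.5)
For $N\ge 2$ your argument is correct, and it takes a genuinely different route from the paper's.

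\emph{The paper's route.} It argues by induction on the number $N$ of blow-ups. The blow-up formula $\mu_0(\mathcal F,S)=\mu_{P_1}(\mathcal F_1,S_1)+m_0(S)\,(\nu_0(\mathcal F)-1+\epsilon)$, with $\epsilon=1$ if the first blow-up is dicritical and $\epsilon=0$ otherwise, depends on $S$ only through $m_0(S)$ and the common point $P_1$. So everything reduces to $N=1$, where $\mathcal F$ is analytically the radial foliation and $\mu_0=1$.

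\emph{Your route.} You work once and for all on $M$. You express $\mu_0(\mathcal F,S)$ through the polar curve $H=\{B=0\}$ of $\mathcal F$ relative to $dx$. You then show that $H'$ misses every free point of $E$, because there $\pi^*\mathcal F$ is transversal to $E$ while $E$ is a leaf of the pull-back of $dx=0$. This is the mechanism the paper later uses, via initial forms, in Remark~\ref{Rm:-1-0-transv}.

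\emph{What each buys.} Your version proves more than the statement: $\nu_S(B)=\nu_E(B)$ for every $S\in\operatorname{Curv}_E(\mathcal F)$. Moreover, your identity $\nu_E(\omega_{\mathcal F}\wedge dx)=\nu_E(\omega_{\mathcal F})+\nu_E(dx)$, combined with Corollary~\ref{lema-2-div-valor}, gives the formula of Lemma~\ref{lemma-Milnor-valorDiff} without the contradiction argument used there. The paper's induction, in exchange, is uniform in $N$ and needs no local analysis of $\pi^*x$.

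Two harmless imprecisions in your local computation:
\begin{enumerate}
\item A regular foliation transversal to $E$ has the form $\beta\,dv+\alpha\,du$ with $\beta(P)\neq 0$. It becomes a unit times $dv$ only if $v$ is chosen as a first integral. The computation works in the general form anyway, since the $\alpha\,du$ term only meets $u^{\beta_0}\,d(\text{unit})$.
\item You only need the Jacobian of $\pi$ to be a power of $u$ times a unit near free points of $E$, which holds because $\pi$ is biholomorphic off $D$. Its precise exponent plays no role.
\end{enumerate}

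\textbf{The gap: the case $N=1$.} The statement includes $N=1$ (the paper uses it as the base of its induction), and your proof fails there. With $N=1$ there is no infinitely near point to have maximal contact with, so every smooth germ is an $E$-curvette; in particular $\{x=0\}$ is one. Its strict transform meets $E$ at a free point $Q$, where $\pi^*x$ is \emph{not} $u^{\beta_0}$ times a unit. For the curve $S_Q\in\operatorname{Curv}_E(\mathcal F)$ through $Q$ you get $\operatorname{ord}_t x(t)\ge 2$ (or $x(t)\equiv 0$). So neither ``all $S$ have $\operatorname{ord}_t x(t)=\beta_0$'' nor ``$\nu_S(B)$ is constant'' holds.

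For instance, $\omega_{\mathcal F}=y\,dx-(x+y^2)\,dy$ is the radial foliation in the coordinates $(x-y^2,y)$. Here $\nu_S(B)=1$ for a generic leaf but $\nu_S(B)=2$ for $S=\{x=y^2\}$, while $\mu_0(\mathcal F,S)=1$ in both cases.

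For $N\ge 2$ you should say explicitly why the problem disappears:
\begin{enumerate}
\item $P_1$ lies on the strict transform of $\{y=0\}$, so the strict transform of $\{x=0\}$ meets $E_1^1$ at a point different from $P_1$.
\item That strict transform is never touched by the later centers, so it meets only $E_1^N\neq E$.
\item This justifies both $\pi^*x=u^{\beta_0}\cdot\text{unit}$ at free points of $E$ and $\operatorname{ord}_t x(t)=\beta_0$ for every $S$.
\end{enumerate}
The case $N=1$ must then be handled separately, e.g.\ as in the paper: the linear part of $\omega_{\mathcal F}$ is radial and $\mu_0(\mathcal F,S)=1$ for every $S$.
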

\begin{proof}
Let us prove the result by induction on the number $N$ of blow-ups in the sequence~\eqref{eq:suc-explosiones-totaldicritical} above.

Assume that $N=1$. Up to a analytic change of coordinates, the foliation $\mathcal{F}$ is defined by $ydx-xdy=0$ and the curves in $\operatorname{Curv}_E(\mathcal{F})$ are given by $\gamma_{[a:b]}=(at,bt)$ with $[a:b] \in \mathbb{P}_{\mathbb{C}}^1$. We obtain that $\mu_0(\mathcal{F},S)=1$ for any $S \in \operatorname{Curv}_E(\mathcal{F})$.

Assume that $N >1$ and consider the first blow-up $\pi_1: (M_1,P_1) \to (\mathbb{C}^2,0)$ in the sequence~\eqref{eq:suc-explosiones-totaldicritical}. Take $S,\widetilde{S} \in \operatorname{Curv}_E(\mathcal{F})$. Let $\mathcal{F}_1, S_1,\widetilde{S}_1$ be the strict transforms of $\mathcal{F}, S,\widetilde{S}$ by the blow-up  $\pi_1$. Note that $S_1 \cap E_1^1=\widetilde{S}_1 \cap E_1^1=\{P_1\}$. Let us denote $\nu_0(\mathcal{F})$ the multiplicity of $\mathcal{F}$ at the origin and $m_0(S)$ the multiplicity of the curve $S$ at the origin. The behaviour of the Milnor number $\mu_0(\mathcal{F},S)$ along $S$ under blow-up is given by
$$
\mu_0(\mathcal{F},S)=\left\{
                       \begin{array}{ll}
                         \mu_{P_1}(\mathcal{F}_1,S_1) + m_0(S) \cdot \nu_0(\mathcal{F}), & \hbox{ if $\pi_1$ is dicritical} \\
                         \mu_{P_1}(\mathcal{F}_1,S_1) + m_0(S) \cdot (\nu_0(\mathcal{F})-1), & \hbox{ if $\pi_1$ is not dicritical.}
                       \end{array}
                     \right.
$$
By induction hypothesis $\mu_{P_1}(\mathcal{F}_1,S_1)=\mu_{P_1}(\mathcal{F}_1,\widetilde{S}_1)$. Since $m_0(S)=m_0(\widetilde{S})$, we get the result.
\end{proof}
Let us fix coordinates $(x,y)$ in $(\mathbb{C}^2,0)$ adapted to $E$. If a 1-form defines a totally $E$-dicritical foliation, the divisorial value of the 1-form can be computed as follows:

\begin{Lemma}\label{lemma-Milnor-valorDiff}
Let $\mathcal{F}$ be a totally $E$-dicritical foliation defined by  $\omega_{\mathcal{F}}=0$ and take $S \in \operatorname{Curv}_E(\mathcal{F})$. Then
$$
\nu_E(\omega_\mathcal{F})=\mu_0(\mathcal{F},S) + \nu_E(dx) + \nu_E(dy)-1.
$$
\end{Lemma}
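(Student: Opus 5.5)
The plan is to argue by induction on the number $N$ of blow-ups in the sequence~\eqref{eq:suc-explosiones-totaldicritical}, in the same way as in Lemma~\ref{lema-milnor-adaptado}. We combine the blow-up formula for $\mu_0(\mathcal{F},S)$ with the analogous blow-up formula for the divisorial value of a 1-form. By Lemma~\ref{lema-milnor-adaptado} the right hand side does not depend on the choice of $S\in\operatorname{Curv}_E(\mathcal{F})$, so we may fix any $S$. Since $\nu_E(dx)=\nu_E(x)$ and $\nu_E(dy)=\nu_E(y)$, and the coordinates are adapted, $\nu_E(dx)+\nu_E(dy)$ can be read from the multiplicities of the infinitely near points via Noether's formula.

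\emph{Base case $N=1$.} Here $\mathcal{F}$ is analytically $y\,dx-x\,dy=0$ up to a unit and higher-order terms that do not destroy dicriticalness; more precisely, $\nu_0(\mathcal{F})=1$ with radial linear part. Then $\nu_E(\omega_\mathcal{F})=2$, because $\pi_1^*\omega_\mathcal{F}=x_1^2(\ldots)\tfrac{dy_1}{\ldots}$ is $x_1^2$ times a logarithmic form. We also have $\mu_0=1$ and $\nu_E(dx)=\nu_E(dy)=1$. This gives $2=1+1+1-1$.

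\emph{Inductive step.} Write $\pi=\pi_1\circ\rho_1$ and let $\mathcal{F}_1$ be the strict transform at $P_1$. In the chart $(x_1,y_1)$ with $x=x_1$, $y=x_1y_1$, $E_1$ is given by $x_1=0$. Then $\pi_1^*\omega_\mathcal{F}=x_1^{\nu_0(\mathcal{F})+\epsilon}\,\omega_1$, where $\epsilon=1$ if $\pi_1$ is dicritical and $0$ otherwise, and $\omega_1$ defines $\mathcal{F}_1$. This gives
\[
\nu_E(\omega_\mathcal{F})=\nu_E(\omega_1)+(\nu_0(\mathcal{F})+\epsilon-1)\,\nu_E(x_1),
\]
with the correction $-1$ coming from the logarithmic convention.

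This correction is the delicate point, and it is the step I expect to be the main obstacle. The clean way to handle it is to check that the correct exponent is $\nu_0(\mathcal{F})+\epsilon-1$ relative to $\nu_E(x_1)=\nu_E(E_1)=m_0(S)$. Use $\nu_E(x_1^k\omega_1)=k\,\nu_E(x_1)+\nu_E(\omega_1)$ and $\pi_1^*\omega=x_1^{\nu_0+\epsilon-1}\cdot x_1\cdot(\ldots)$. The factor $x_1$ must be absorbed consistently by writing forms with $dx_1/x_1$. Concretely, I will verify the identity on the generators $x^iy^j\,dx/x$ and $x^iy^j\,dy/y$ using~\eqref{ec:nu_E-omega}.

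Next, apply the induction hypothesis to $\mathcal{F}_1$ at $P_1$, which is totally $E$-dicritical for the shorter sequence, using coordinates $(x_1,y_1')$ adapted at $P_1$. This gives
\[
\nu_E(\omega_1)=\mu_{P_1}(\mathcal{F}_1,S_1)+\nu_E(dx_1)+\nu_E(dy_1')-1.
\]
Combine this with the formula from the proof of Lemma~\ref{lema-milnor-adaptado},
\[
\mu_0(\mathcal{F},S)=\mu_{P_1}(\mathcal{F}_1,S_1)+m_0(S)(\nu_0(\mathcal{F})+\epsilon-1).
\]
It remains to prove
\[
\nu_E(dx)+\nu_E(dy)=\nu_E(dx_1)+\nu_E(dy_1')+m_0(S).
\]
This is exactly the blow-up behaviour of $\nu_E(x)+\nu_E(y)$: we have $\nu_E(x)=m_0(S)$ by Remark~\ref{rm:parametrizacion-coord-adapt}, and $y=x_1y_1$ gives $\nu_E(y)=\nu_E(x_1)+\nu_E(y_1)$. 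One has to handle the case where $P_1$ is not the origin of this chart by a translation of $y_1$, and the case $P_1$ a corner point by choosing the other chart. Adaptedness ensures that $P_1$ is the origin of one of the standard charts, so the identity holds.
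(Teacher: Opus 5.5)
Your route (induction on the number of blow-ups, pairing the blow-up rule for $\mu_0(\mathcal F,S)$ with one for $\nu_E$) differs from the paper's. The paper works directly in adapted coordinates: it uses $\nu_E(\omega_\mathcal F)=\min\{\nu_E(xA),\nu_E(yB)\}$ and the invariance relation along $S$ to get ``$\le$''. It then gets equality by contradiction, via an invariant curvette $S_Q$ through a point $Q\notin\mathcal A'$ and Lemma~\ref{lema-milnor-adaptado}. Your route can be made to work, but two steps are wrong as written.

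First, there is no logarithmic correction. We have $\nu_E(h\eta)=\nu_E(h)+\nu_E(\eta)$, and $\nu_E$ is computed at generic points of $E$, where $E_1$ does not pass. So $\pi_1^*\omega_\mathcal F=x_1^{\nu_0(\mathcal F)+\epsilon}\omega_1$ gives $\nu_E(\omega_\mathcal F)=\nu_E(\omega_1)+(\nu_0(\mathcal F)+\epsilon)\,m_0(S)$. For example, for $2y\,dx-x\,dy$ with $N=2$ one has $\nu_E(\omega_\mathcal F)=3$, $\nu_E(\omega_1)=2$ and $\nu_0+\epsilon=1=m_0(S)$. With your displayed formula, the identity left to prove would be $\nu_E(dx)+\nu_E(dy)=\nu_E(dx_1)+\nu_E(dy_1')$, which is false. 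The identity you actually write down matches the corrected formula, so your bookkeeping is inconsistent but repairable.

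Second, and this is the real gap, the last step is not justified. The equality $\nu_E(x)+\nu_E(y)=m_0(S)+\nu_E(x_1)+\nu_E(y_1)$ is immediate from $x=x_1$, $y=x_1y_1$. What must be shown is that $x_1,y_1$, in a suitable order, are coordinates at $P_1$ adapted to $E$ for the truncated sequence; otherwise you cannot apply the induction hypothesis with them. Your stated reasons miss this:
\begin{itemize}
\item $P_1$ lies only on $E_1$, so it is never a corner.
\item $P_1$ is always the origin of the chart $y=x_1y_1$, since $y=0$ is tangent to $S$.
\end{itemize}
The genuine case distinction is the following. For smooth curvettes or $m>2n$, $(x_1,y_1)$ is adapted. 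For $n<m<2n$, $S_1$ is tangent to $E_1$, so the transversal coordinate is $y_1$, and you must check that $E_1=\{x_1=0\}$ has maximal contact with $S_1$. Writing $y_1=s^{m-n}$ gives $x_1=s^n\cdot\text{unit}$ with $\gcd(n,m-n)=1$. Hence $S_1$ is a cusp of type $(n,m-n)$ (smooth if $m=n+1$), and the truncated sequence is again cuspidal or has smooth curvettes.

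This is exactly where the single Puiseux pair is used, and it cannot be skipped. For curvettes with characteristic exponents $(4;6,7)$, the adapted coordinate at $P_1$ is $x_1-y_1^2$, with $\nu_E=5\neq 4=\nu_E(x_1)$.

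A cleaner finish is available once the $-1$ is removed. Your recursion then telescopes, ending at a point of $E$ where $\pi^*\mathcal F$ is regular and transversal, so that $\nu_E(\omega_N)=0$ and $\mu_{P_N}(\mathcal F_N,S_N)=0$. This gives $\nu_E(\omega_\mathcal F)=\mu_0(\mathcal F,S)+\sum_{k=0}^{N-1}m_{P_k}(S_k)$ for any $E$. The lemma then becomes $\sum_k m_{P_k}(S_k)=\nu_E(x)+\nu_E(y)-1$. This sum equals $N$ for smooth curvettes and $n+m-1$ for an $(m,n)$-cusp (multiplicities from the Euclidean algorithm). In the $(4;6,7)$ case, however, it is $4+2+2+1+1=10\neq 9$. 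So your argument must invoke the cuspidal (or smooth-curvette) hypothesis at precisely this step.
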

\begin{proof}
Assume that $S$ is a non-singular curve, then all the centers $P_i$ of the blow-ups in the sequence~\eqref{eq:suc-explosiones-totaldicritical} are free points and a primitive parametrization of $S$ is given by $\gamma(t)=(t,a t^N+ \cdots)$. From the results in \cite{Can-C-SS-2023}, we can use the statement in Lemma~\ref{lemma:In-total-dicritical}  when $(m,n)=(N,1)$ and then the initial part of $\omega_\mathcal{F}$ is given by
$$
\operatorname{In}_{N}(\omega_\mathcal{F})=\mu x^{a-1} y^{b-1} (Nydx-xdy)
$$
where $\nu_E(\omega_\mathcal{F})=a+Nb$. The computation of the Milnor number $\mu_0(\mathcal{F},S)$ along $S$ gives $\mu_0(\mathcal{F},S)=a+N(b-1)$ and we get the result since $\nu_E(dx)=1$ and $\nu_E(dy)=N$.

Assume now that $S$ is a singular curve ($m_0(S)>1$) and consider $\gamma(t)=(x(t),y(t))$ a primitive parametrization of $S$. Then $x(t) \neq 0$ and $y(t) \neq 0$ and  $m_0(S)=\text{ord}_t(x(t)) <\text{ord}_t(y(t))$ since the coordinates are adapted to $E$.
Moreover, the curve $S$ is an invariant curve of $\mathcal{F}$, then $\gamma^* \omega_\mathcal{F}\equiv 0$, that is,
$$
A(\gamma(t)) \cdot \dot{x}(t) + B(\gamma(t)) \cdot \dot{y}(t)\equiv 0,
$$
where we write $\omega_\mathcal{F}=A(x,y)dx+B(x,y)dy$ with $A,B \in \mathbb{C}\{x,y\}$.
Consequently,  we get
$$
\text{ord}_t(A(\gamma(t))) +  \text{ord}_t(x(t)) = \text{ord}_t (B(\gamma(t))) + \text{ord}_t(y(t)).
$$
By equation~\eqref{ec:nu_E-omega} and Lemma~\ref{lemma:nu_E_funcion}, the divisorial value $\nu_E(\omega_\mathcal{F})$ is given by
\begin{align}
  \nu_E(\omega_\mathcal{F}) & = \min \{\nu_E(xA),\nu_E(yB)\} \notag \\
   & \leq \text{ord}_t(A(\gamma(t))) +  \text{ord}_t(x(t))  \label{eq:nu_E-mu_0}\\
   & = \mu_0(\mathcal{F},S) + \text{ord}_t(x(t)) + \text{ord}_t(y(t))-1 \notag\\
   & = \mu_0(\mathcal{F},S) + \nu_E(dx) + \nu_E(dy)-1.\notag
\end{align}
Let us see that the inequality in \eqref{eq:nu_E-mu_0} is an equality.  Assume  that $\min \{\nu_E(xA),\nu_E(yB)\} =\nu_E(xA)$ (the other case works similar) and that there is a curve $C^* \in \operatorname{Curv}(E)$ with
$$
\nu_E(xA)=\text{ord}_t(A(\gamma^*(t))) +  \text{ord}_t(x^*(t))<\text{ord}_t(A(\gamma(t))) +  \text{ord}_t(x(t))
$$
where $\gamma^{*}(t)=(x^*(t),y^*(t))$ is a primitive parametrization of $C^*$ with $m_0({C^*})=\text{ord}_t(x^*(t))$. Denote $\mathcal{A}$ the curve defined by $A=0$ in $(\mathbb{C}^2,0)$. Since $S$ and ${C^*}$ are equisingular curves, the inequality above implies
$$
\nu_E(A)=\nu_{C^*}(A)=\text{ord}_t(A(\gamma^*(t))) < \text{ord}_t(A(\gamma(t)))= \nu_S(A) = \nu_E(A)+i_P(S',\mathcal{A}')
$$
where $S',\mathcal{A}'$ are the strict transforms of $S$ and $\mathcal{A}$ by $\pi$ respectively, and $P$ is the infinitely near point of $S$ in the divisor $E$. If $Q$ is the  infinitely near point of $C^*$ in the divisor $E$, then $i_Q(C',\mathcal{A}')=0$. Since $\mathcal{F}$ is a totally $E$-dicritical foliation, there exists a curve $S_Q \in \operatorname{Curv}_E(\mathcal{F})$ such that $Q$ is the infinitely near point of $S_Q$ in $E$. Hence, $i_Q(S'_Q,\mathcal{A}')=0$ and then $\nu_E(A)=\nu_{S_Q}(A)$.

By Lemma~\ref{lema-milnor-adaptado}, we have $\mu_0(\mathcal{F},S)=\mu_0(\mathcal{F},S_Q)$ which implies
$$
\text{ord}_t(A(\gamma(t))-\text{ord}_t(y(t))+1 = \text{ord}_t(A(\gamma_Q(t))-\text{ord}_t(y_Q(t))+1.
$$
 Since the coordinates $(x,y)$ are adapted to $E$, the curve $y=0$ has maximal contact with any curve in $\operatorname{Curv}(E)$ and hence $\text{ord}_t(y(t))=\text{ord}_t(y_Q(t))$. Then we get $\nu_S(A)=\text{ord}_t(A(\gamma(t))=\text{ord}_t(A(\gamma_Q(t))=\nu_{S_Q}(A)$ which  is not possible.
\end{proof}


The results above concerning the divisorial value together with the results concerning invariants associated to foliations introduced in the previous sections allow us to generalize Theorem 3.8 in \cite{Gom} (see also \cite{Gom-tesis}).
\begin{Theorem}\label{Th:nu_C}
Let $\mathcal{F}$ and $\mathcal{G}$ be two foliations in  $({\mathbb C}^2,0)$ such that $\mathcal F$ and $\mathcal G$ have no common separatrix. Assume that $\mathcal{F}$ is a totally $E$-dicritical foliation and take $C \in \text{Curv}_E(\mathcal{F})$. Then
\begin{equation}\label{ec:muC-muE-2-formas}
\nu_C(\omega_{\mathcal{G}})= \nu_E(\omega_{\mathcal{F}}\wedge \omega_{\mathcal{G}})-\nu_E(\omega_\mathcal{F})+ i_P(\mathcal{J}_{\mathcal{F},\mathcal{G}}',C')
\end{equation}
where $P$ is the infinitely near point of $C$ in the divisor $E$ and $C'$, $\mathcal{J}_{\mathcal{F},\mathcal{G}}'$ are the strict transforms of $C$ and $\mathcal{J}_{\mathcal{F},\mathcal{G}}$ by $\pi$.
\end{Theorem}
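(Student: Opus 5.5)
The plan is to combine Proposition~\ref{prop:int-sep}, applied with the roles of the foliations exchanged, with the expression of intersection multiplicities through divisorial values in Remark~\ref{Rm-nuC-nuE}, Corollary~\ref{lema-2-div-valor} and Lemma~\ref{lemma-Milnor-valorDiff}.

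Since $C$ is an irreducible separatrix of $\mathcal{F}$ and $\mathcal{F}$, $\mathcal{G}$ have no common separatrix, $C$ is not invariant by $\mathcal{G}$. Proposition~\ref{prop:int-sep} (with the roles exchanged, and valid also for dicritical $\mathcal{F}$) then gives
$$
i_0(\mathcal{J}_{\mathcal{F},\mathcal{G}},C)=\mu_0(\mathcal{F},C)+\tau_0(\mathcal{G},C).
$$
Using \eqref{valordif-tangencia}, this rewrites as
$$
\nu_C(\omega_\mathcal{G})=i_0(\mathcal{J}_{\mathcal{F},\mathcal{G}},C)-\mu_0(\mathcal{F},C)+1.
$$

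Next, write $J=J_{\mathcal{F},\mathcal{G}}$ for a defining function of the jacobian curve. By Remark~\ref{Rm-nuC-nuE}, which applies since $C\in\operatorname{Curv}(E)$, we have
$$
i_0(\mathcal{J}_{\mathcal{F},\mathcal{G}},C)=\nu_C(J)=\nu_E(J)+i_P(\mathcal{J}'_{\mathcal{F},\mathcal{G}},C').
$$
Fixing coordinates $(x,y)$ adapted to $E$, Corollary~\ref{lema-2-div-valor} gives
$$
\nu_E(J)=\nu_E(\omega_\mathcal{F}\wedge\omega_\mathcal{G})-\nu_E(dx)-\nu_E(dy),
$$
and Lemma~\ref{lemma-Milnor-valorDiff}, applicable because $\mathcal{F}$ is totally $E$-dicritical and $C\in\operatorname{Curv}_E(\mathcal{F})$, gives
$$
\mu_0(\mathcal{F},C)=\nu_E(\omega_\mathcal{F})-\nu_E(dx)-\nu_E(dy)+1.
$$
Substituting both into the previous formula, the terms $\nu_E(dx)+\nu_E(dy)$ and the constants $\pm1$ cancel, and we obtain \eqref{ec:muC-muE-2-formas}.

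The only delicate point is justifying Proposition~\ref{prop:int-sep} in this setting. One needs that $J\not\equiv 0$, i.e.\ that $\mathcal{F}\neq\mathcal{G}$, which follows from the absence of common separatrices. One also needs the proposition with $\mathcal{F}$ dicritical carrying the separatrix, which the paper notes holds. Lemma~\ref{lemma-Milnor-valorDiff} does the real work of relating $\mu_0(\mathcal{F},C)$ to $\nu_E(\omega_\mathcal{F})$, and it is already available, so no further obstacle is expected.
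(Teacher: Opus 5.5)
Your proof is correct and follows the paper's argument essentially step by step. You apply Proposition~\ref{prop:int-sep} with $C$ as a separatrix of $\mathcal{F}$, replace $\mu_0(\mathcal{F},C)$ via Lemma~\ref{lemma-Milnor-valorDiff}, split $i_0(\mathcal{J}_{\mathcal{F},\mathcal{G}},C)$ via Remark~\ref{Rm-nuC-nuE}, and convert $\nu_E(J_{\mathcal{F},\mathcal{G}})$ into $\nu_E(\omega_\mathcal{F}\wedge\omega_\mathcal{G})$ with Corollary~\ref{lema-2-div-valor}, exactly as the paper does.
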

\begin{proof}
From Proposition~\ref{prop:int-sep} and Lemma~\ref{lemma-Milnor-valorDiff}, we get
\begin{align*}
\tau_0(\mathcal{G},C) & =i_0(\mathcal{J}_{\mathcal{F},\mathcal{G}},C) -\mu_0({\mathcal F},C) \\
& = i_0(\mathcal{J}_{\mathcal{F},\mathcal{G}},C) -\nu_E(\omega_\mathcal{F}) + \nu_E(dx) + \nu_E(dy) -1
\end{align*}
By Remark~\ref{Rm-nuC-nuE}, we have
$$
i_0(\mathcal{J}_{\mathcal{F},\mathcal{G}},C)=\nu_C({J}_{\mathcal{F},\mathcal{G}})=\nu_E({J}_{\mathcal{F},\mathcal{G}})+ i_P(\mathcal{J}_{\mathcal{F},\mathcal{G}}',C')
$$
with ${J}_{\mathcal{F},\mathcal{G}}=0$ an equation of the jacobian curve $\mathcal{J}_{\mathcal{F},\mathcal{G}}$.
Taking into account Corollary~\ref{lema-2-div-valor} and the equality $\nu_C(\omega_\mathcal{G})=\tau_0(\mathcal{G},C)+1$, we obtain
$$
\nu_C(\omega_\mathcal{G})= \nu_E(\omega_{\mathcal{F}}\wedge \omega_{\mathcal{G}})-\nu_E(\omega_\mathcal{F})+ i_P(\mathcal{J}_{\mathcal{F},\mathcal{G}}',C')
$$
as wanted.
\end{proof}
The previous result will motivate the definition of the transversality property given in Section~\ref{sec:transv-polar}.

\begin{Remark}\label{Th:nu_C_1-forma-no-foliacion}
Theorem~\ref{Th:nu_C}  has been proved using indices associated to foliations. However, in some computations (see Proposition~\ref{prop:nu_C-nu_tilde_C}), we will need to compute the differential value $\nu_C(\eta)$ for a 1-form $\eta$ which is not saturated.

Let us write $\eta=h \eta_1$ with $h \in \mathcal{O}_{\mathbb{C}^2,0}$ and $\eta_1$ a saturated 1-form. Denote by $\mathcal{G}$ the foliation defined by $\eta_1=0$. Consider a foliation $\mathcal F$  defined by a 1-form $\omega_\mathcal{F}$ such that $C$ is an irreducible invariant curve of $\mathcal{F}$.
Let $\mathcal{J}_{\eta,\omega_{\mathcal{F}}}$ be the curve defined by $J_{\eta, \omega_{\mathcal{F}}}=0$ with $\eta \wedge \omega_{\mathcal{F}}=J_{\eta, \omega_{\mathcal{F}}} dx \wedge dy$ and $\mathcal{H}$ be  the curve given by $h=0$. Hence,   we have
$$
\mathcal{J}_{\eta,\omega_{\mathcal{F}}} =  \mathcal{J}_{\mathcal{F},\mathcal{G}} \cup \mathcal{H}
$$
since
$$
\eta \wedge \omega_{\mathcal{F}}=h \eta_1 \wedge \omega_{\mathcal{F}}.
$$
Let us compute $\nu_C(\eta)$. We have that
$$
\nu_C(\eta) =\nu_C(h \eta_1)  =\nu_C(h) + \nu_C(\eta_1)
$$
and taking into account Theorem~\ref{Th:nu_C} and Remark~\ref{Rm-nuC-nuE}, we obtain
\begin{align*}
\nu_C(\eta) & = \nu_C(h) + \nu_C(\eta_1) \\
& = \nu_E(h) + i_P(C',\mathcal{H}') + \nu_E(  \eta_1  \wedge \omega_\mathcal{F}) - \nu_E( \omega_\mathcal{F}) +i_P(\mathcal{J}_{\mathcal{F},\mathcal{G}}',C')
\\
&=
\nu_E(h  \eta_1 \wedge \omega_\mathcal{F}) - \nu_E( \omega_\mathcal{F}) +i_P(\mathcal{J}_{\mathcal{F} ,\mathcal{G}}' \cup \mathcal{H}',C')
\end{align*}
Then, we obtain the following equation for a non-saturated 1-form
$$
\nu_C(\eta) = \nu_E(  \eta \wedge \omega_\mathcal{F}) - \nu_E( \omega_\mathcal{F}) + i_P(\mathcal{J}_{\eta,\omega_{\mathcal{F}}}',C')
$$
which extends equation~\eqref{ec:muC-muE-2-formas} to non-saturated 1-forms.
\end{Remark}


\section{Cuspidal semimodules} \label{sec:cuspidal-semimodules}
Let $C$ be an irreducible plane curve in $(\mathbb{C}^2,0)$ with one Puiseux pair $(m,n)$ where $2 \leq n <m$ and $\gcd(n,m)=1$. Then the semigroup $\Gamma_C$ is generated by $n, m$, that is, $\Gamma_C=\{a n + b m \ : \ a,b \in \mathbb{Z}_{\geq 0}\}$. We write $\Gamma_C=\langle n,m\rangle$ and we say that $\Gamma_C$ is a {\em cuspidal semigroup}. In this case, the conductor $c({\Gamma_C})=(n-1)(m-1)$.

Let $\Lambda_C$  be the semimodule of differential values of $C$. Since $\Lambda_C$ is a $\Gamma_C$-semimodule, we say that $\Lambda_C$ is a {\em cuspidal semimodule}.
There exists a unique basis of $\Lambda_C$, that is, a strictly increasing sequence $\mathcal{B}=(\lambda_{-1},\lambda_0,\lambda_1,\ldots,\lambda_s)$ of elements of $\Lambda_C$, with $s$ minimal, such that
$$
\Lambda_C=\bigcup_{i=-1}^s (\lambda_i + \Gamma_C).
$$
Note that $\lambda_{-1}=n$ and $\lambda_0=m$.

\begin{Remark}
In \cite{Zar}, O. Zariski introduces the analytic invariant $\lambda=\min (\Lambda_C \smallsetminus \Gamma_C)-n$ and he proves that $\Lambda_C \smallsetminus \Gamma_C=\emptyset$ if and only if the curve $C$ is analytically equivalent to the curve $y^n-x^m=0$. The exponent $\lambda$ is called the {\em Zariski invariant} of $C$.  Note that $\lambda_1=\lambda+n$.
\end{Remark}
A list of $1$-forms $(\omega_{-1},\omega_0,\omega_1,\ldots,\omega_s)$ satisfying $\nu_C(\omega_i)=\lambda_i$ for $-1\leq i \leq s$ is called a {\em minimal standard basis} for the curve $C$. Since each $\lambda_i$ is an element of a minimal set of generators of $\Lambda_C$, then any 1-form $\omega_i$ with $\nu_C(\omega_i)=\lambda_i$ defines a foliation in $(\mathbb{C}^2,0)$.

For the rest of this section, we fix a minimal standard basis  $(\omega_{-1},\omega_0,\omega_1,\ldots,\omega_s)$ for $C$ and we denote $\mathcal{G}_i$ the foliation in $(\mathbb{C}^2,0)$ defined by $\omega_i=0$ for $1 \leq i \leq s$.

Let $\pi: M \to (\mathbb{C}^2,0)$ be the minimal reduction of singularities of $C$ and let $E$ be the cuspidal divisor of the sequence $\mathcal{S}$ of point blow-ups that gives $\pi$. By the results of \cite{Can-C-SS-2023} we have the following properties:
\begin{Lemma}[\cite{Can-C-SS-2023}]\label{lemma_G_i_dicritica}
For $1 \leq i \leq s$, the foliations $\mathcal{G}_i$ are totally $E$-dicritical. Moreover, if $(x,y)$ are coordinates in $(\mathbb{C}^2,0)$ adapted to $E$, then
$$
\operatorname{In} (\omega_i)= \mu_i x^{a_i} y^{b_i} \left(m \frac{dx}{x} - n \frac{dy}{y}\right), \quad \mu_i \in \mathbb{C}\smallsetminus \{0\},
$$
with $\nu_E(\omega_i)=n a_i + m b_i=t_i <nm$ for $1 \leq i \leq s$. Note also that $(a_1,b_1)=(1,1)$ and
$$
1=a_1 \leq a_2 \leq \cdots \leq a_s, \quad 1=b_1 \leq b_2 \leq \cdots \leq b_s.
$$
\end{Lemma}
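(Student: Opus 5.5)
The statement is cited from \cite{Can-C-SS-2023}, but here is how I would establish it from the tools already in the paper. The plan is to argue by induction on $i$, building the initial parts of the $\omega_i$ one at a time. The induction uses two facts. The first is Remark~\ref{rm-nu_C-nu_E-no-resonante}: a non-resonant form satisfies $\nu_E=\nu_C$. The second is Remark~\ref{monomio-nm}: if $\nu_E(\omega)<nm$, then $\operatorname{In}(\omega)$ is a single weighted monomial form $x^a y^b(\lambda\, dx/x+\mu\, dy/y)$.

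\textbf{Step 1: $\omega_i$ is resonant with $\nu_E(\omega_i)<nm$.} Since $\lambda_i\notin\Lambda_{i-1}$ and $\Gamma_C\setminus\{0\}\subset\Lambda_C$, we have $\lambda_i\notin\Gamma_C$. Suppose $\omega_i$ were non-resonant. Then $\nu_E(\omega_i)=\nu_C(\omega_i)=\lambda_i$, and (after reducing the initial part) $\lambda_i$ would be a value $na+mb$ with $a,b\geq 0$, hence in $\Gamma_C$, which is a contradiction. So $\omega_i$ is resonant, and $\nu_E(\omega_i)<\nu_C(\omega_i)$.

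To get $\nu_E(\omega_i)<nm$, I would use minimality of the basis. Suppose $\nu_E(\omega_i)\geq nm$. Then $\operatorname{In}(\omega_i)$ lies in a degree where monomials $x^\alpha y^\beta$ with $n\alpha+m\beta\geq nm$ can be written as combinations of $f\cdot$(forms of lower weight) and of the forms $x^c y^d\,df$, where $f$ is the cusp equation with initial part $y^n-x^m$. One can then subtract $\Gamma_C$-combinations of $\omega_{-1},\dots,\omega_{i-1}$ and of $f$-multiples; $f$-multiples do not change $\nu_C$. This increases $\nu_E$ without changing $\nu_C(\omega_i)$, unless $\lambda_i\in\Lambda_{i-1}$. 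By finiteness, i.e.\ $\nu_E\leq\nu_C$, the reduction must stop below $nm$. I expect this elimination argument to be the main obstacle. I would handle it with the standard Zariski/Delorme reduction, writing $\omega_i$ modulo $(f)$ and $df$.

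\textbf{Step 2: total dicriticalness and the monomial $(a_i,b_i)$.} By Remark~\ref{monomio-nm}, Step 1 gives $\operatorname{In}(\omega_i)=\mu_i x^{a_i}y^{b_i}(m\,dx/x-n\,dy/y)$ with $a_i,b_i\geq1$. The preceding Lemma (resonant with $\nu_E<nm$ implies totally $E$-dicritical) then gives that $\mathcal{G}_i$ is totally $E$-dicritical. For $i=1$, the minimal resonant weight with $a,b\geq1$ is attained at $(1,1)$, since $\lambda_1$ is the first value outside $\Gamma_C$. Concretely, $\omega_1$ is obtained from $m y\,dx-nx\,dy$, and this forces $t_1=n+m$.

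\textbf{Step 3: monotonicity of $a_i$ and $b_i$.} I would show that each $\omega_{i}$ can be built from $\omega_{i-1}$ by an elimination step of the form $\omega_i=x^{p}\omega_{i-1}-c\,y^{q}\omega_{k}+\dots$, in which the resonant initial parts cancel. This mirrors the construction of the critical values $t_i=t_{i-1}+u_i-\lambda_{i-1}$. As a result, $x^{a_i}y^{b_i}$ is a multiple of $x^{a_{i-1}}y^{b_{i-1}}$ times $x$ or $y$, which gives $a_{i-1}\leq a_i$ and $b_{i-1}\leq b_i$. Matching the weights would use the formula $t_i=t_{i-1}+u_i-\lambda_{i-1}$ together with the uniqueness of the representation $na+mb$ below $nm$.
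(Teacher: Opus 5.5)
Your proposal has a genuine gap at its core. You never prove that $\nu_E(\omega_i)=t_i$, and this identity is the substantive part of the statement; the claims $(a_1,b_1)=(1,1)$ and the monotonicity of $a_i,b_i$ are consequences of it. The lemma is about an \emph{arbitrary} minimal standard basis, so what is needed is a rigidity statement: every 1-form $\omega$ with $\nu_C(\omega)=\lambda_i$ has $\nu_E(\omega)=t_i$. This is item (2) of Theorem~\ref{th_caracterizacion-omega-i} (Theorem 7.13 of \cite{Can-C-SS-2023}), and it is what the paper is quoting.

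Your Step~3 only says that $\omega_i$ ``can be built'' from $\omega_{i-1}$ by an elimination. At best this produces \emph{some} form of divisorial value $t_i$. You would still have to show that this form's differential value is exactly $\lambda_i$, i.e.\ $\lambda_i=\sup\{\nu_C(\omega):\nu_E(\omega)=t_i\}$. You also give no reason why a form with $\nu_C=\lambda_i$ cannot have divisorial value different from $t_i$.

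The step is also internally inconsistent. If the initial parts of $x^p\omega_{i-1}$ and $y^q\omega_k$ cancelled, then $\operatorname{In}(\omega_i)$ could not equal $x^p\operatorname{In}(\omega_{i-1})$. In the Delorme decomposition, what cancels is the leading term of the pull-back to $C$. Moreover, the multiplier is $x^{\ell_i^n}$ or $y^{\ell_i^m}$, not $x$ or $y$.

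The case $i=1$ has the same problem. That $\lambda_1$ is the first element of $\Lambda_C\setminus\Gamma_C$ does not by itself give $\nu_E(\omega_1)=n+m$. One way to argue: suppose $\operatorname{In}(\omega_1)=\mu x^{a-1}y^{b-1}(my\,dx-nx\,dy)$ with $(a,b)\neq(1,1)$. Subtract $\mu x^{a-1}y^{b-1}\eta$, where $\operatorname{In}(\eta)=my\,dx-nx\,dy$ and $\nu_C(\eta)\geq\lambda_1$. This keeps $\nu_C=\lambda_1$ and raises $\nu_E$; iterating gives a contradiction.

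Your Step~1 elimination argument for $\nu_E(\omega_i)<nm$ is unnecessary, and it is not coherent as written. The reductions only raise $\nu_E$ from a value $\geq nm$, so they cannot ``stop below $nm$''. The claim that the initial part can always be removed in weight $\geq nm$ is also unsupported.

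The bound is immediate. Since $\lambda_i\notin\Lambda_{i-1}\supseteq\Gamma_C\smallsetminus\{0\}$, we have $\lambda_i<c(\Gamma_C)=(n-1)(m-1)$, hence $\nu_E(\omega_i)\leq\nu_C(\omega_i)=\lambda_i<nm$. Since $\nu_E(\omega_i)=\min\{ni+mj\}\in\Gamma_C$, also $\nu_E(\omega_i)<\nu_C(\omega_i)$. This avoids Remark~\ref{rm-nu_C-nu_E-no-resonante}, which is only reliable below $nm$.

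With these two facts, Remark~\ref{monomio-nm} and the lemma just before Lemma~\ref{lemma:In-total-dicritical} correctly give the resonant initial part and total $E$-dicriticalness, as in your Step~2. Once $\nu_E(\omega_i)=t_i$ is granted, the rest is short:
\begin{itemize}
\item $t_1=n+m$ gives $(a_1,b_1)=(1,1)$.
\item $t_i=t_{i-1}+n\ell_i^n$ or $t_i=t_{i-1}+m\ell_i^m$. Combined with the uniqueness of the representation $na+mb$ below $nm$, this gives $(a_i,b_i)=(a_{i-1}+\ell_i^n,b_{i-1})$ or $(a_{i-1},b_{i-1}+\ell_i^m)$, hence the monotonicity.
\end{itemize}
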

The values $t_i=\nu_E(\omega_i)$, with $-1 \leq i \leq s$, correspond to the critical values of the semimodule $\Lambda_C$ introduced below (see expression given in \eqref{eq:t_i}). In fact, the 1-forms whose differential value is one of the elements of the basis $\mathcal{B}=(\lambda_{-1},\lambda_0,\lambda_1,\ldots,\lambda_s)$ of $\Lambda_C$ can be characterized in terms of its divisorial value as shown in Theorem~\ref{th_caracterizacion-omega-i}.
\begin{Remark}\label{rm:omega-1-0}
If $(x,y)$ are coordinates in $(\mathbb{C}^2,0)$ adapted to $E$, we have
$$
\operatorname{In} (\omega_{-1})=\mu_{-1} dx; \qquad \operatorname{In} (\omega_{0})=\mu_{0} dy,
$$
with $\mu_{-1},\mu_0 \in \mathbb{C}\smallsetminus \{0\}$.
\end{Remark}

Let us introduce the axes and the critical values of the cuspidal semimodule $\Lambda_C$ (see \cite{Can-C-SS-2023,Can-C-SS-2026} for more details). Denote $\Lambda=\Lambda_C$ and
$\Lambda_i=\bigcup_{k=-1}^i (\lambda_k + \Gamma)$, for $i=-1,0,1,\ldots,s$, where $\Gamma=\Gamma_C$.
Thus we have a {\em decomposition chain} of $\Lambda$:
$$
\Lambda_{-1} \subset \Lambda_0 \subset \Lambda_1 \subset \cdots \subset \Lambda_s=\Lambda
$$
where each $\Lambda_i$ is a $\Gamma$-semimodule with basis $\mathcal{B}_i=(\lambda_{-1},\lambda_0,\lambda_1,\ldots,\lambda_i)$ for $i=-1,0,1,\ldots,s$. The number $s$ is called the {\em length} of the semimodule $\Lambda$.

The {\em axes} $u_i^n$, $u_i^m$, $u_i$ and $\tilde{u}_i$ of $\Lambda$ are defined as
\begin{align*}
  u_i^n & = \min \{ \lambda_{i-1} + n \ell \in \Lambda_{i-2} \ : \ \ell \geq 1\}; \\
  u_i^m & = \min \{ \lambda_{i-1} + m \ell \in \Lambda_{i-2} \ : \ \ell \geq 1\}; \\
  u_i & = \min \{u_i^n, u_i^m\}; \\
  \tilde{u}_i & = \max \{u_i^n,u_i^m\}.
\end{align*}
for $1 \leq i \leq s+1$. We write $u_i^n = \lambda_{i-1}+n \ell^n_i$ and $u_i^m=\lambda_{i-1}+m \ell_i^m$. Note that $1 \leq \ell_i^n < m$ and $1 \leq \ell_i^m <n$ (see \cite[Remark 2.2]{Can-C-SS-2026}). The integers $\ell_i^n$ and $\ell_i^m$ are called {\em limits} of $\Lambda$.
\begin{Remark}
Since $\Lambda_C$ is the semimodule of differential values of a cusp $C$, then the semimodule $\Lambda_C$ is {\em increasing} which means that $\lambda_i > u_i$ for $1 \leq i \leq s$ (see \cite{Del}).
\end{Remark}
The {\em critical values} $t_i^n$, $t_i^m$, $t_i$ and $\tilde{t}_i$ of $\Lambda$ are given as follows: $t_{-1}=\lambda_{-1}=n$ and $t_0=\lambda_0=m$. For $1 \leq i \leq s+1$, we define
\begin{align}
  t_i^n & = t_{i-1} + n \ell_i^n= t_{i-1} + u_i^n-\lambda_{i-1}; \notag\\
  t_i^m & = t_{i-1} + m \ell_i^m = t_{i-1} + u_i^m-\lambda_{i-1}; \notag \\
  t_i & = \min \{ t_i^n,t_i^m\} = t_{i-1} + u_i - \lambda_{i-1}; \label{eq:t_i} \\
  \tilde{t}_i  & = \max \{ t_i^n,t_i^m\} = t_{i-1} + \tilde{u}_i - \lambda_{i-1}. \label{eq:tilde_t_i}
\end{align}
\begin{Remark}\label{Rm-def-critical-axes}
From the definition of the critical values in equations \eqref{eq:t_i} and \eqref{eq:tilde_t_i}, we obtain
\begin{equation}\label{eq:suma_u-t}
  u_i + \tilde{t}_i = \tilde{u}_i + t_i \qquad \text{ for  } 1 \leq i \leq s+1.
\end{equation}
Moreover,  we will show that the previous sum does not depend on the index $i$ and it is given by $\tilde{u}_i + t_i=nm+n+m$ for  $i =1,\ldots,s,s+1$ (see  Lemma~\ref{lemma:ui+ti}).
\end{Remark}
\begin{Notation}
If we need to work with the above invariants for different curves, we will denote them by $\Lambda_i^C$, $\mathcal{B}_C=(\lambda_{-1}^C,\lambda_0^C,\lambda_1^C, \ldots, \lambda_{s(C)}^C)$ or $(\omega_{-1}^C,\omega_0^C,\omega_1^C,\ldots,\omega_{s(C)}^C)$ where $s(C)$ is the length of the semimodule $\Lambda_C$.

The axes and the critical values will be denoted by $u_i^C$ and $t_i^C$ respectively.
\end{Notation}
In the rest of the section we will show some properties of the axes and critical values that will be useful later in the proofs of the results concerning the transversality property of foliations.
\begin{Lemma}[\cite{Can-C-SS-2023,Can-C-SS-2026}]\label{lema_t_i_u_i} For an increasing cuspidal semimodule $\Lambda_C$, the axes and critical values satisfy
\begin{itemize}
\item[(i)] $u_i>u_k$ and $t_i > t_k \quad $ for $1 \leq k < i \leq s+1$.
\item[(ii)] $\tilde{u}_i < \tilde{u}_k$ and $\tilde{t}_i < \tilde{t}_k \quad $ for $1 \leq k < i \leq s+1$.

\item[(iii)] $\lambda_i - \lambda_k > t_i - t_k \quad $ for $-1 \leq k < i \leq s$.
\end{itemize}
Therefore, we have that
\begin{align*}
n+m=t_1&< t_2< \cdots < t_s < t_{s+1} < \tilde{t}_{s+1} < \tilde{t}_s < \cdots < \tilde{t}_1=nm, \\
n+m=u_1&< u_2< \cdots < u_s < u_{s+1} < \tilde{u}_{s+1} < \tilde{u}_s < \cdots < \tilde{u}_1=nm.
\end{align*}
\end{Lemma}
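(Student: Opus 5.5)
The plan is to derive (i) and (iii) directly from the recursive definitions and the increasing property $\lambda_i>u_i$, and then to get (ii) from a description of the "gap region" of $\Lambda_{i-1}$ inside $\lambda_{i-1}+\Gamma$.

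\emph{Step 1: part (i).} For $1\le k\le s$ we have $u_{k+1}=\lambda_k+\min\{n\ell^n_{k+1},m\ell^m_{k+1}\}>\lambda_k$. Since $\Lambda_C$ is increasing, $\lambda_k>u_k$, so $u_{k+1}>\lambda_k>u_k$, and induction gives $u_i>u_k$. For the critical values, \eqref{eq:t_i} gives $t_{k+1}-t_k=u_{k+1}-\lambda_k>0$, so $t$ is strictly increasing as well. The extreme values come from the first step: $\Lambda_{-1}=n+\Gamma$ and $\lambda_0=m$, so $u_1^n=m+n(m-1)$ and $u_1^m=m+m$. Hence $u_1=n+m$ after the appropriate normalisation, and checking the definitions at $i=1$ gives $t_1=n+m$ and $\tilde t_1=nm$.

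\emph{Step 2: part (iii).} Telescoping gives
\[
\lambda_i-\lambda_k=\sum_{j=k+1}^{i}(\lambda_j-\lambda_{j-1}),\qquad t_i-t_k=(t_{k+1}-t_k)+\sum_{j=k+2}^{i}(u_j-\lambda_{j-1}).
\]
For each $j\ge1$ the increasing property gives $\lambda_j-\lambda_{j-1}>u_j-\lambda_{j-1}=t_j-t_{j-1}$, so each term with $j\ge1$ is strictly larger on the $\lambda$ side. The term with $j=0$ (present only when $k=-1$) gives an equality, since $\lambda_0-\lambda_{-1}=m-n=t_0-t_{-1}$. Thus the strict inequality holds whenever $i\ge1$. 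The pair $i=0$, $k=-1$ yields only equality, so I read the statement as requiring $i\ge1$ there.

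\emph{Step 3: part (ii).} This is the main obstacle. I would encode $\Lambda_{i-1}$ combinatorially: modulo $nm$, every integer is uniquely $an+bm$ with $0\le a<m$. The set $(\lambda_{i-1}+\Gamma)\smallsetminus\Lambda_{i-2}$ is the "staircase" $\{\lambda_{i-1}+pn+qm:\ 0\le p<\ell_i^n,\ 0\le q<\ell_i^m\}$, which requires showing that this rectangle shape is preserved. Since $\lambda_i\notin\Lambda_{i-1}$ and $\lambda_i>u_i$, write $\lambda_i-\lambda_{i-1}=\beta m-\alpha n$ (or $\alpha n-\beta m$) with $0<\alpha<\ell_i^n$ and $0<\beta<\ell_i^m$. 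Then $\lambda_i+n\alpha\in\lambda_{i-1}+\Gamma$ and $\lambda_i+m(\ell_i^m-\beta)\in\Lambda_{i-2}$, so
\[
\ell_{i+1}^n\le\alpha<\ell_i^n,\qquad \ell_{i+1}^m\le \ell_i^m-\beta<\ell_i^m
\]
up to the symmetric case. Substituting these bounds gives
\[
u_{i+1}^n=\lambda_i+n\ell_{i+1}^n\le\lambda_{i-1}+\beta m,
\]
and the analogous bound for $u_{i+1}^m$. Comparing with $\tilde u_i=\lambda_{i-1}+\max\{n\ell^n_i,m\ell^m_i\}$ and using $\beta<\ell_i^m$, $\alpha<\ell_i^n$ then yields $\tilde u_{i+1}<\tilde u_i$.

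For $\tilde t$, use $\tilde t_{i+1}-\tilde t_i=(t_i-t_{i-1})+(\tilde u_{i+1}-\lambda_i)-(\tilde u_i-\lambda_{i-1})$ and reduce to the same comparison. Equivalently, establish the invariant $\tilde u_i+t_i=nm+n+m$ by the same staircase bookkeeping; then $\tilde u$ decreasing is equivalent to $t$ increasing, which is Step 1, and \eqref{eq:suma_u-t} transfers this to $\tilde t$.

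The delicate point is the case analysis of the sign of $\lambda_i-\lambda_{i-1}$ modulo the lattice, together with verifying that the staircase shape is preserved. This is where I expect the real work to lie.
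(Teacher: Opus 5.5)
Your Steps 1 and 2 are sound. The paper gives no proof of this lemma; it cites Lemma 2.29 of \cite{Can-C-SS-2026} for (i) and (ii), and Lemma 7.10 of \cite{Can-C-SS-2023} for (iii). You are right that (iii) is only an equality for $(k,i)=(-1,0)$. There is a slip in Step 1: $u_1^n=n+m$ (already $m+n\in n+\Gamma$, so $\ell_1^n=1$) and $u_1^m=nm$ (since $\ell_1^m=n-1$), not $m+n(m-1)$ and $2m$. No normalisation is involved, and $\tilde u_1=\tilde t_1=nm$ comes exactly from $\ell_1^m=n-1$.

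The genuine gap is Step 3. You claim $\lambda_i-\lambda_{i-1}=\pm(\beta m-\alpha n)$ with \emph{both} $0<\alpha<\ell_i^n$ and $0<\beta<\ell_i^m$. This is false. For $i=1$ one always has $\ell_1^n=1$, so no such $\alpha$ exists. In the paper's example with $\mathcal{B}=(4,9,14,19)$ one has $\ell_1^n=\ell_2^n=\ell_3^n=1$, so your conclusion that both limits strictly decrease is also wrong. Separately, $\lambda_i+m(\ell_i^m-\beta)=u_i^m-\alpha n$, and a $\Gamma$-semimodule is not closed under subtracting $n$. So the claimed membership in $\Lambda_{i-2}$ is unjustified as written.

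What is actually true, say when $u_i=u_i^n$ so that $\tilde u_i=u_i^m$:
\begin{itemize}
\item[(a)] The new $n$-axis is reached through $\lambda_{i-1}$: $k_i^n=i-1$ and $\lambda_i+n\ell_{i+1}^n=\lambda_{i-1}+mb_{i+1}$ (Lemma~\ref{lemma:k_i}).
\item[(b)] The limits and colimits satisfy $\ell_{i+1}^m+b_{i+1}=\ell_i^m$ and $\ell_{i+1}^n+a_{i+1}=a_i$ (Proposition~\ref{prop:relaciones-limites-colimites}).
\end{itemize}
So $\beta=b_{i+1}<\ell_i^m$, while $\alpha=\ell_{i+1}^n$ is controlled only by the colimit $a_i$. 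Only the limit attached to $\tilde u_i$ drops. From these relations,
\begin{align*}
u_{i+1}^n&=\lambda_{i-1}+mb_{i+1}<\tilde u_i,\\
u_{i+1}^m&=\tilde u_i-n\ell_{i+1}^n<\tilde u_i,
\end{align*}
which gives $\tilde u_{i+1}<\tilde u_i$. Your identity $\tilde t_{i+1}-\tilde t_i=(\tilde u_{i+1}-\tilde u_i)-(\lambda_i-u_i)$ then handles $\tilde t$ correctly.

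Proving (a) and (b) is the missing idea. The rectangle description of $(\lambda_{i-1}+\Gamma)\smallsetminus\Lambda_{i-2}$ is the easy part. It is Lemma~\ref{Lemma:a-b-l}, and it follows directly from $\lambda_{i-1}\notin\Lambda_{i-2}$ and $\lambda_k<\lambda_{i-1}$ for $k\le i-2$.

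Your fallback through the invariant $\tilde u_i+t_i=nm+n+m$ is a correct reduction. Combined with (i) and \eqref{eq:suma_u-t}, it gives (ii) at once. The middle inequalities $t_{s+1}<\tilde t_{s+1}$ and $u_{s+1}<\tilde u_{s+1}$ only need $n\ell^n_{s+1}\neq m\ell^m_{s+1}$. However, the paper proves that invariant (Lemma~\ref{lemma:ui+ti}) from exactly relations (a) and (b), imported from \cite{Can-C-SS-2026}, not from the bookkeeping you sketched.
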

The properties given in items $(i)$ and $(ii)$ are proved in Lemma 2.29 in \cite{Can-C-SS-2026} and assertion in item $(iii)$ corresponds to Lemma 7.10 in \cite{Can-C-SS-2023}.

Moreover, we have the following property
\begin{Lemma}[see \cite{Can-C-SS-2023}, Lemma 6.9 and \cite{Can-C-SS-2026}, Lemma 2.4]\label{Lemma:a-b-l}
If $\lambda_i + na + m b \in \Lambda_{i-1}$ with $a,b \in \mathbb{Z}_{\geq 0}$, then $a \geq \ell_{i+1}^n$ or $b \geq \ell_{i+1}^m$.
\end{Lemma}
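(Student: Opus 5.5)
The plan is a direct argument by contradiction, using only the definition of the limits $\ell_{i+1}^n,\ell_{i+1}^m$ and two facts: $\lambda_i\notin\Lambda_{i-1}$, and the basis is strictly increasing. Recall that $\ell_{i+1}^n$ is the least $\ell\geq 1$ with $\lambda_i+n\ell\in\Lambda_{i-1}$. Likewise $\ell_{i+1}^m$ is the least $\ell\geq1$ with $\lambda_i+m\ell\in\Lambda_{i-1}$. So I assume $\lambda_i+na+mb\in\Lambda_{i-1}$ with $a<\ell_{i+1}^n$ and $b<\ell_{i+1}^m$, and aim for a contradiction.

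Since $\Lambda_{i-1}=\bigcup_{k=-1}^{i-1}(\lambda_k+\Gamma)$ and $\Gamma=\langle n,m\rangle$, there exist $k\leq i-1$ and $p,q\in\mathbb{Z}_{\geq0}$ with
$$
\lambda_i+na+mb=\lambda_k+np+mq .
$$
I would then split into cases according to the signs of $p-a$ and $q-b$.

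\begin{enumerate}[(1)]
\item If $p\geq a$ and $q\geq b$, then $\lambda_i=\lambda_k+n(p-a)+m(q-b)\in\lambda_k+\Gamma\subset\Lambda_{i-1}$. This contradicts $\lambda_i$ being a basis element not lying in $\Lambda_{i-1}$.
\item If $p<a$ and $q\geq b$, then $\lambda_i+n(a-p)=\lambda_k+m(q-b)\in\Lambda_{i-1}$. Here $1\leq a-p\leq a<\ell_{i+1}^n$, which contradicts the minimality defining $u_{i+1}^n$.
\item Symmetrically, if $q<b$ and $p\geq a$, then $\lambda_i+m(b-q)=\lambda_k+n(p-a)\in\Lambda_{i-1}$ with $1\leq b-q<\ell_{i+1}^m$. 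This contradicts the minimality of $\ell_{i+1}^m$.
\item If $p<a$ and $q<b$, then $\lambda_k=\lambda_i+n(a-p)+m(b-q)>\lambda_i$. This is impossible, since $k<i$ and the basis $(\lambda_{-1},\dots,\lambda_s)$ is strictly increasing.
\end{enumerate}

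All cases lead to a contradiction, so $a\geq\ell_{i+1}^n$ or $b\geq\ell_{i+1}^m$.

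I do not expect a genuine obstacle. The only points to watch are that the differences $a-p$ and $b-q$ in cases (2) and (3) are at least $1$, which is needed because the limits are minima over $\ell\geq1$, and that both are bounded above by $a$ and $b$ respectively. Both hold because $p,q\geq0$.
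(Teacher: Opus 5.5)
Your proof is correct and complete. The four sign cases for $(p-a,\,q-b)$ exhaust all possibilities. They contradict, respectively, $\lambda_i\notin\Lambda_{i-1}$, the minimality of $\ell_{i+1}^n$, the minimality of $\ell_{i+1}^m$, and the ordering $\lambda_k<\lambda_i$ for $k<i$. You also correctly check that $a-p$ and $b-q$ are at least $1$, which the minimality arguments need.

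The paper does not prove this lemma; it only cites Lemma 6.9 of the 2023 paper and Lemma 2.4 of the 2026 paper. So there is no proof in the paper to compare with, and your argument is a valid self-contained replacement. It uses only the definition of the limits and the fact that the basis is minimal and strictly increasing. In particular it does not need the hypothesis that $\Lambda$ is increasing, so the statement holds for any $\Gamma$-semimodule with its minimal basis.
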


 The value $\tilde{u}_{s+1}$ is related to the conductor $c(\Lambda_C)$ of the semimodule $\Lambda_C$. In \cite{Can-C-SS-2026}, it is proved that $\tilde{u}_{s+1} \geq c (\Lambda_C) + n$. Next lemma gives  the precise relationship between $\tilde{u}_{s+1}$ and the conductor $c(\Lambda_C)$.

\begin{Lemma}\label{lema_conductor_u_s+1}
We have
$$c(\Lambda_C)=\tilde{u}_{s+1}-n-m+1.
$$
\end{Lemma}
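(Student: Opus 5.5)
The plan is to characterize the conductor through the largest gap of $\Lambda=\Lambda_C$, that is, $F(\Lambda)=c(\Lambda)-1=\max(\mathbb{Z}_{\geq 0}\smallsetminus\Lambda)$, and to show that
$$F(\Lambda)=\tilde{u}_{s+1}-n-m.$$
The guiding picture is the standard staircase description of a $\Gamma$-semimodule with $\Gamma=\langle n,m\rangle$. Write each element of $\Lambda$ as $\lambda_i+na+mb$ with $a,b\ge 0$, and call an element $z\notin\Lambda$ a \emph{pseudo-Frobenius} gap if $z+n\in\Lambda$ and $z+m\in\Lambda$. Since $\Gamma$ is generated by $n$ and $m$, every gap $z$ has a pseudo-Frobenius gap $z'\ge z$: while one of $z+n,z+m$ is a gap, pass to it. Hence $F(\Lambda)$ is the largest pseudo-Frobenius gap. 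The aim is to show that the pseudo-Frobenius gaps are exactly the numbers $w-n-m$, where $w$ runs over the ``syzygies'' (inner corners) of the staircase. These are the elements lying in two translates $\lambda_j+\Gamma$ and $\lambda_k+\Gamma$ minimally, and they are exactly the axes $u_k^n,u_k^m$ that survive in $\Lambda$.

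I would argue by induction along the decomposition chain $\Lambda_{-1}\subset\Lambda_0\subset\cdots\subset\Lambda_s$, proving for every $i$ that
$$F(\Lambda_i)=\tilde{u}_{i+1}-n-m$$
whenever $\Lambda_i$ has a conductor; the statement is the case $i=s$. For the base I would use $\Lambda_0=(n+\Gamma)\cup(m+\Gamma)=\Gamma\smallsetminus\{0\}$. It has the same conductor $(n-1)(m-1)$ as $\Gamma$, hence $F(\Lambda_0)=nm-n-m$. This equals $\tilde{u}_1-n-m$ because $\tilde{u}_1=nm$ by Lemma~\ref{lema_t_i_u_i}.

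For the inductive step, set $\Lambda_i=\Lambda_{i-1}\cup(\lambda_i+\Gamma)$. By Lemma~\ref{Lemma:a-b-l}, the elements of $\lambda_i+\Gamma$ not already in $\Lambda_{i-1}$ are exactly $\lambda_i+na+mb$ with $a<\ell^n_{i+1}$ and $b<\ell^m_{i+1}$, which form a rectangle in the $(a,b)$ lattice. Adding this rectangle to the staircase of $\Lambda_{i-1}$ has the following effects.
\begin{enumerate}[(a)]
\item It removes the inner corners of $\Lambda_{i-1}$ covered by the rectangle. These include $\tilde{u}_i$, and more generally every syzygy of $\Lambda_{i-1}$ above $\lambda_i$; here I use that the semimodule is increasing, $\lambda_i>u_i$, together with the monotonicity in Lemma~\ref{lema_t_i_u_i}.
\item It creates the two new inner corners
$$u^n_{i+1}=\lambda_i+n\ell^n_{i+1},\qquad u^m_{i+1}=\lambda_i+m\ell^m_{i+1}.$$
\end{enumerate}
Consequently the pseudo-Frobenius gaps of $\Lambda_i$ are $u^n_{i+1}-n-m$, $u^m_{i+1}-n-m$, together with the old ones $w-n-m$ for syzygies $w$ of $\Lambda_{i-1}$ not covered by the rectangle. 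Two things then need checking:
\begin{itemize}
\item each $w-n-m$ is genuinely a gap of $\Lambda_i$, and its translates by $n$ and by $m$ lie in $\Lambda_i$;
\item every surviving old syzygy is smaller than $\tilde{u}_{i+1}$.
\end{itemize}
The second point should follow from the chain $u_1<\cdots<u_{s+1}<\tilde{u}_{s+1}<\cdots<\tilde{u}_1$ of Lemma~\ref{lema_t_i_u_i}: the old corners that survive are among the smaller axes $u_k$ with $k\le i$. This gives $F(\Lambda_i)=\max\{u_{i+1},\tilde{u}_{i+1}\}-n-m=\tilde{u}_{i+1}-n-m$.

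The main obstacle is step (a) together with the precise identification of which old corners survive. The inductive hypothesis should therefore be strengthened to describe the full set of inner corners of $\Lambda_i$, and not only its maximum. I would first try the hypothesis that this set is $\{u_1,\dots,u_i,u_{i+1},\tilde{u}_{i+1}\}$ (suitably read, since $u_k$ is either $u^n_k$ or $u^m_k$). Proving that the new rectangle swallows exactly $\tilde{u}_i$ and none of $u_1,\dots,u_i$ should rest on the increasing hypothesis $\lambda_i>u_i$ and on the limit bounds $\ell^n_{i+1}<m$, $\ell^m_{i+1}<n$. The verification that $\tilde{u}_{i+1}-n-m\notin\Lambda_i$ itself reduces to Lemma~\ref{Lemma:a-b-l}. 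Indeed, write $\tilde{u}_{i+1}=\lambda_i+n\ell^n_{i+1}$ (the other case is symmetric). If $\tilde{u}_{i+1}-n-m$ were of the form $\lambda_i+na+mb$, we would need $a=\ell^n_{i+1}-1$ and $b=-1$, which is impossible. If instead $\tilde{u}_{i+1}-n-m$ lay in $\lambda_k+\Gamma$ for some $k<i$, then the minimality defining $u^n_{i+1}$ and the ordering of the axes would give a contradiction.
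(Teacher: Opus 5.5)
Your proof is correct in outline, but it takes a different route from the paper's.

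\textbf{How the two routes differ.} The paper does not induct along the decomposition chain. It shows directly (Proposition~\ref{prop:base-siz}, via Lemma~\ref{Lemma:a-b-l} and Lemma~\ref{lemma:ui-ui+1}) that $(u_1,\dots,u_{s+1},\tilde u_{s+1})$ is the basis of $\operatorname{Syz}(\Lambda)$. It then quotes the theorem of Almir\'on and Moyano-Fern\'andez: $c(\Lambda)=M-n-m+1$, where $M$ is the largest syzygy generator. Your route replaces both ingredients. Identifying pseudo-Frobenius gaps with $w-n-m$ for inner corners $w$ is essentially the mechanism behind the cited theorem. Your induction (each step destroys $\tilde u_i$ and creates $u^n_{i+1},u^m_{i+1}$) replaces the direct description of the syzygy basis.

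\textbf{What each buys.} Your version is self-contained and gives the full pseudo-Frobenius set $\{u_1,\dots,u_{i+1},\tilde u_{i+1}\}-(n+m)$, hence the conductor, of every $\Lambda_i$. It also shows exactly where the increasing hypothesis enters. The paper's version is shorter given the citation, and it isolates the syzygy basis as a statement of independent interest.

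\textbf{Steps to make explicit when you write it up.}

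\emph{Completeness.} Suppose $z$ is pseudo-Frobenius for $\Lambda_i$ but not for $\Lambda_{i-1}$. Then, say, $z+n\in(\lambda_i+\Gamma)\smallsetminus\Lambda_{i-1}$ while $z\notin\lambda_i+\Gamma$.
\begin{itemize}
\item This forces $z+n=\lambda_i+mb$ with $b<\ell^m_{i+1}$.
\item Then $z+m=\lambda_i+m(b+1)-n\notin\lambda_i+\Gamma$, because $b+1<n$. Hence $z+m\in\Lambda_{i-1}$.
\item So $\lambda_i+m(b+1)\in\Lambda_{i-1}$, and minimality of $\ell^m_{i+1}$ gives $z=u^m_{i+1}-n-m$.
\end{itemize}
The case $z+m\in(\lambda_i+\Gamma)\smallsetminus\Lambda_{i-1}$ is symmetric and gives $z=u^n_{i+1}-n-m$.

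\emph{The destroyed corner.} Put $w=\lambda_i+n\ell^n_{i+1}+m\ell^m_{i+1}$.
\begin{itemize}
\item We have $w-n\in u^m_{i+1}+\Gamma$, $w-m\in u^n_{i+1}+\Gamma$, and $w-n-m$ lies in the new rectangle. So $w$ is an old inner corner that disappears.
\item Since $w>\lambda_i>u_i$, the inductive hypothesis forces $w=\tilde u_i$.
\item Each $u_k$ with $k\le i$ survives, simply because $u_k<\lambda_k\le\lambda_i$.
\end{itemize}
No monotonicity from Lemma~\ref{lema_t_i_u_i} is needed here or in the final comparison with $\tilde u_{i+1}$.

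\emph{Smaller points.}
\begin{itemize}
\item Your strengthened hypothesis needs the base case to record the whole set $\{0,nm-n-m\}$ of pseudo-Frobenius gaps of $\Gamma\smallsetminus\{0\}$ (symmetry of $\langle n,m\rangle$), not just its maximum.
\item In showing $\tilde u_{i+1}-n-m\notin\lambda_i+\Gamma$ you must allow non-unique representations. Use $\ell^n_{i+1}<m$ (resp.\ $\ell^m_{i+1}<n$).
\item $u^n_{i+1}-m\in\Lambda_{i-1}$ needs the remark that the colimit $b_{i+1}$ of Lemma~\ref{lemma:u_i} is positive. Otherwise $\lambda_i+n(\ell^n_{i+1}-1)\in\Lambda_{i-1}$, contradicting the minimality of $\ell^n_{i+1}$ (or $\lambda_i\notin\Lambda_{i-1}$ when $\ell^n_{i+1}=1$).
\end{itemize}
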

The proof of this result will be given in Appendix~\ref{ap:conductor}.

\medskip
The critical values associated to the semimodule $\Lambda_C$ correspond to the divisorial values of the 1-forms of a minimal standard basis $(\omega_{-1},\omega_0,\omega_1,\ldots,\omega_s)$ for $C$. More precisely, we have the following result
\begin{Theorem}[\cite{Can-C-SS-2023}, Theorem 7.13] \label{th_caracterizacion-omega-i}
For any $i \in \{1,2,\ldots, s\}$, the following statements hold
\begin{itemize}
  \item[(1)] $\lambda_i=\sup\{\nu_C(\omega) \ : \ \nu_E(\omega)=t_i\}$.
  \item[(2)] If $\nu_C(\omega)=\lambda_i$, then $\nu_E(\omega)=t_i$.
  \item[(3)] For each 1-form $\omega$ with $\nu_C(\omega) \not \in \Lambda_{i-1}$, there is a unique pair $a, b \geq 0$ such that $\nu_E(\omega)=\nu_E(x^a y^b \omega_i)$. In particular, we have that $\nu_C(\omega) \geq \lambda_i + na +mb$.
  \item[(4)] The semimodules $\Lambda_i$ are increasing since $\lambda_i > u_i$.
  \item[(5)] $\lambda_i + na + mb \not \in \Lambda_{i-1}$ if and only if, for any $\omega$ with $\nu_C(\omega)= \lambda_i + na + mb$, we have that $\nu_E(\omega)\leq \nu_E(x^a y^b \omega_i)$.
\end{itemize}
\end{Theorem}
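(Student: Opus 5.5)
We would prove the five items simultaneously by induction on $i$, working throughout in coordinates $(x,y)$ adapted to $E$. This lets us read $\nu_E$ as the weighted order $ni+mj$ on the Newton diagram (Lemma~\ref{lemma:nu_E_ij} and formula~\eqref{ec:nu_E-omega}) and use $\operatorname{In}(\omega)$ with weight $m/n$. The basic dichotomy is Remark~\ref{monomio-nm} together with Remark~\ref{rm-nu_C-nu_E-no-resonante}. If $\operatorname{In}(\omega)$ is non-resonant, then $\nu_C(\omega)=\nu_E(\omega)=na+mb$ with $(a,b)\neq(0,0)$, hence $\nu_C(\omega)\in\Gamma\smallsetminus\{0\}\subset\Lambda_0$. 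Thus any $\omega$ with $\nu_C(\omega)\notin\Lambda_{i-1}$, $i\ge 1$, must have a resonant initial part $\mu x^{a}y^{b}(m\,dx/x-n\,dy/y)$ with $a,b\ge1$ and $\nu_E(\omega)<nm$. By Lemma~\ref{lemma_G_i_dicritica}, the basis forms $\omega_j$ have exactly this shape, with exponents $(a_j,b_j)$ and $\nu_E(\omega_j)=t_j$.

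The core step is an elimination procedure, which gives (3). Take $\omega$ with $\nu_C(\omega)\notin\Lambda_{i-1}$. If $\nu_E(\omega)=\nu_E(x^{a'}y^{b'}\omega_j)$ for some $j<i$ and $a',b'\ge0$, then a suitable constant $c$ makes $\omega-c\,x^{a'}y^{b'}\omega_j$ have strictly larger $\nu_E$, since both initial parts are multiples of the same resonant monomial form. Moreover, $\nu_C(x^{a'}y^{b'}\omega_j)=\lambda_j+na'+mb'\in\Lambda_{i-1}$ while $\nu_C(\omega)\notin\Lambda_{i-1}$. The two $C$-values therefore differ, so the difference has $\nu_C=\min$ of the two. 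We want to keep $\nu_C\notin\Lambda_{i-1}$. This forces us to check that $\nu_C(\omega)<\lambda_j+na'+mb'$ can happen only if $\nu_C(\omega)\in\Lambda_{i-1}$. For that we use the induction hypothesis (item (1) for $j$, i.e.\ $\lambda_j$ is the supremum at level $t_j$) together with Lemma~\ref{lema_t_i_u_i}(iii).

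Iterating raises $\nu_E$ strictly while it stays below $nm$. The process stops, and we would show it stops exactly at a form with $\nu_E(\omega)=\nu_E(x^ay^b\omega_i)$. Here the combinatorics of the axes enters. A resonant value $na+mb<nm$ that is not of the form $t_j+na'+mb'$ for $j<i$ must dominate $(a_i,b_i)$. This should follow from Lemma~\ref{Lemma:a-b-l} and the definition $t_i=t_{i-1}+u_i-\lambda_{i-1}$. Uniqueness of $(a,b)$ comes from $\nu_E<nm$, as in Remark~\ref{monomio-nm}. The inequality $\nu_C(\omega)\ge\lambda_i+na+mb$ then comes from the minimality of $\lambda_i$ in $\Lambda\smallsetminus\Lambda_{i-1}$ applied to $\omega$ divided through by the monomial, or more directly by subtracting a multiple of $x^ay^b\omega_i$ and repeating.

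Items (1), (2) and (5) then follow. For (2): if $\nu_C(\omega)=\lambda_i\notin\Lambda_{i-1}$, item (3) gives $(a,b)$ with $\lambda_i\ge\lambda_i+na+mb$, so $a=b=0$ and $\nu_E(\omega)=t_i$. For (1): any $\omega$ with $\nu_E(\omega)=t_i$ and $\nu_C(\omega)>\lambda_i$ would have $\nu_C(\omega)\in\Lambda_{i-1}$ by (3). The elimination then lets us reach a form at level $t_i$ with $C$-value exactly $\lambda_i$, or else contradict the fact that $\lambda_i$ is a basis element. Item (5) is the same argument read with the monomial twist $x^ay^b$. Item (4) is the inequality $t_i<$ the resonant level of any $\Lambda_{i-1}$-element, which translates into $\lambda_i>u_i$ via Lemma~\ref{lema_t_i_u_i}.

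The main obstacle is the combinatorial claim in the elimination step. We must show that resonant levels $<nm$ not reachable as $t_j+na'+mb'$ with $j<i$ are exactly those $\ge t_i$ in the product order on exponents. We must also show that subtracting never pushes $\nu_C$ into $\Lambda_{i-1}$. We would first try to settle this via Lemma~\ref{Lemma:a-b-l}, translating $\lambda_j+na'+mb'\in\Lambda_{j-1}$ into $a'\ge\ell^n_{j+1}$ or $b'\ge\ell^m_{j+1}$, and matching these thresholds with $t^n_{j+1},t^m_{j+1}$.
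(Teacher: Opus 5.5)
The paper takes this theorem from \cite{Can-C-SS-2023} without proof, so your plan has to stand on its own. It does not: item (3), which carries the whole theorem, is not established. Your reduction to resonant $\omega$ with $\nu_E(\omega)<nm$ is correct. Item (2) is already contained in Lemma~\ref{lemma_G_i_dicritica}, which you use. The elimination loop you propose for (3) fails for three reasons.

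\emph{The stopping rule never fires.} Since $(a_1,b_1)=(1,1)$ and the exponents $(a_j,b_j)$ are non-decreasing, every resonant level below $nm$ is reachable from $\omega_1$. Every level dominating $(a_i,b_i)$ is reachable from every $\omega_j$ with $1\le j\le i$. So the combinatorial claim you isolate is false: there are no levels unreachable from all $\omega_j$, $j<i$, and in particular they are not the ones dominating $(a_i,b_i)$.

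\emph{The invariant is lost at the first step.} Let $\theta=x^{a'}y^{b'}\omega_j$ have initial part proportional to $\operatorname{In}(\omega)$. The case that occurs is $\nu_C(\omega)>\nu_C(\theta)$; take $\omega=\omega_i$ against $x^{\ell_i^n}\omega_{i-1}$ or $y^{\ell_i^m}\omega_{i-1}$, where $\lambda_i>u_i$. Your sentence actually asserts that $\nu_C(\omega)<\nu_C(\theta)$ is impossible. That is true, but it means every subtraction produces the value $\nu_C(\theta)\in\Lambda_{i-1}$, so the property you want to propagate disappears immediately.

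\emph{The loop proves the wrong thing.} A loop that modifies $\omega$ could at best locate the level of the final form. Statement (3) is about $\nu_E$ of the original $\omega$.

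What is needed is a contradiction argument, run as a descending induction on $\nu_E(\omega)$ for all indices at once.
\begin{itemize}
\item[] \emph{Setup.} Suppose the exponent $(A,B)$ of $\operatorname{In}(\omega)$ does not dominate $(a_i,b_i)$. Take $j<i$ maximal with $(A,B)\ge(a_j,b_j)$, and set $\theta=x^{A-a_j}y^{B-b_j}\omega_j$.
\item[] \emph{Case $\nu_C(\omega)<\nu_C(\theta)$.} Then $\omega-c\theta$ has the same value and a higher level $L'$. Induction gives $\nu_C(\omega)\ge\lambda_i+L'-t_i$. Since $\lambda_i-t_i>\lambda_j-t_j$, this exceeds $\lambda_j+\nu_E(\omega)-t_j=\nu_C(\theta)$, a contradiction.
\item[] \emph{Case $\nu_C(\omega)>\nu_C(\theta)$.} Then $\theta-c^{-1}\omega$ realizes $\nu_C(\theta)$ above $\nu_E(\theta)$. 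Induction gives $\nu_C(\theta)\in\Lambda_{j-1}$, and Lemma~\ref{Lemma:a-b-l} yields $A-a_j\ge\ell^n_{j+1}$ or $B-b_j\ge\ell^m_{j+1}$.
\item[] \emph{The two alternatives.} Only one of them gives $(A,B)\ge(a_{j+1},b_{j+1})$ and so contradicts the choice of $j$. The other one points towards $\tilde t_{j+1}$. It is the real difficulty, and your proposal never addresses it. It is excluded because then $\nu_C(\omega)>\nu_C(\theta)\ge\tilde u_{j+1}\ge\tilde u_i>c(\Lambda_{i-1})$, by Lemma~\ref{lema_conductor_u_s+1} applied to the increasing semimodule $\Lambda_{i-1}$ whose last axis is $\tilde u_i$. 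Hence $\nu_C(\omega)\in\Lambda_{i-1}$, a contradiction.
\item[] \emph{The inequality in (3).} It then follows, as you suggest, by subtracting $c\,x^ay^b\omega_i$ and applying the induction.
\end{itemize}

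There are also three smaller issues.
\begin{itemize}
\item[] Your derivation of (4) through Lemma~\ref{lema_t_i_u_i} is circular, since that lemma assumes increasingness. Instead, (4) follows from (2): $t_i$ is also the level of $x^{\ell_i^n}\omega_{i-1}$ or $y^{\ell_i^m}\omega_{i-1}$, whose value is $u_i$. If $\lambda_i<u_i$, cancelling initial parts would give a form of value $\lambda_i$ above level $t_i$.
\item[] Item (1) does not follow from (3) as you claim, because a value larger than $\lambda_i$ need not lie in $\Lambda_{i-1}$. It follows from (2) by subtracting a multiple of $\omega$ from $\omega_i$.
\item[] The ``only if'' half of (5) is not ``the same argument''. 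It needs its own step: write $\lambda_i+na+mb=\lambda_k+na''+mb''$ with $k<i$, and use $x^{a''}y^{b''}\omega_k$, whose level exceeds $t_i+na+mb$ because $\lambda_i-\lambda_k>t_i-t_k$.
\end{itemize}
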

Moreover, the other critical values $t_{s+1}$ and $\tilde{t}_i$, $1 \leq i \leq s+1$, are also the divisorial values of 1-forms which define foliations with interesting properties that we will describe in Section~\ref{sec:standard-systems}. Let us now introduce these 1-forms. In \cite[Proposition 8.3]{Can-C-SS-2023}, it is proved the existence of a 1-form $\omega_{s+1}$ such that
\begin{itemize}
  \item $\nu_E(\omega_{s+1})=t_{s+1}$;
  \item the cusp $C$ is an invariant curve of the foliation defined by $\omega_{s+1}=0$, that is, $\nu_C(\omega_{s+1})=\infty$.
\end{itemize}
We say that $\mathcal{E}=(\omega_{-1},\omega_0,\omega_1,\ldots,\omega_s,\omega_{s+1})$ is an {\em extended standard basis\/} for $C$ when $(\omega_{-1},\omega_0,\omega_1,\ldots,\omega_s)$ is a minimal standard basis for $C$ and $\omega_{s+1}$ satisfies the properties above. Note also that the foliation $\mathcal{G}_{s+1}$ defined by $\omega_{s+1}=0$ is also a totally $E$-dicritical foliation (see \cite{Can-C-SS-2023}).

Moreover, there exists a family $\widetilde{\mathcal{E}}=(\widetilde{\omega}_1,\widetilde{\omega}_2,\ldots,\widetilde{\omega}_s,\widetilde{\omega}_{s+1})$ of 1-forms satisfying
$$
\nu_E(\widetilde{\omega}_j)=\tilde{t}_j,\qquad \nu_C(\widetilde{\omega}_j)=\infty, \qquad \text{ for } 1 \leq j \leq s+1.
$$
(see \cite[Section 4]{Can-C-SS-2026}). We say that $(\mathcal{E},\widetilde{\mathcal{E}})$ is a {\em standard system\/} for the cusp $C$.

We finish this section with a technical lemma that will be useful later. Recall that, given two 1-forms $\omega$ and $\eta$, we say that $\omega$ is {\em reachable from} $\eta$ if and only if there exists  non-negative integer numbers $a,b$ and a constant $\mu \in \mathbb{C}$ such that
$$
\nu_E(\omega-\mu x^a y^b \eta) > \nu_E(\omega)
$$
Note that the pair $(a,b)$ and the constant $\mu$ are unique.

\begin{Lemma}\label{lema_eta_resonant-omega_i}
Let $\eta$ be a resonant 1-form with $\nu_E(\eta) \leq t_\ell$ for some $\ell \in \{1, \ldots, s\}$. Then there exist $a,b \geq 0$ and $i \in \{1,\ldots, \ell\}$ such that $\nu_E(\eta)=\nu_E(x^a y^b \omega_i)$ and $\nu_C(\eta) \leq \nu_C(x^a y^b \omega_i)$.
\end{Lemma}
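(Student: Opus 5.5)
We argue by distinguishing whether $\nu_C(\eta)$ lies in $\Lambda_{\ell-1}$ or not, and in each case we locate the right index $i$ through the characterization of Theorem~\ref{th_caracterizacion-omega-i}. Since $\eta$ is resonant and $\nu_E(\eta)\le t_\ell<nm$, Remark~\ref{monomio-nm} gives a unique pair $(a',b')$ with $a',b'\ge 1$ and $\nu_E(\eta)=na'+mb'$.

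\emph{Step 1: the case $\nu_C(\eta)\notin\Lambda_{\ell-1}$.} Here we take $i$ maximal in $\{1,\dots,\ell\}$ with $\nu_C(\eta)\notin\Lambda_{i-1}$; then $i=\ell$ in this case. By Theorem~\ref{th_caracterizacion-omega-i}(3) there is a unique pair $a,b\ge 0$ with $\nu_E(\eta)=\nu_E(x^ay^b\omega_\ell)=t_\ell+na+mb$. Since $\nu_E(\eta)\le t_\ell$, we must have $a=b=0$ and $\nu_E(\eta)=t_\ell$. Theorem~\ref{th_caracterizacion-omega-i}(1) then gives $\nu_C(\eta)\le \lambda_\ell=\nu_C(\omega_\ell)$, which is the claim with $i=\ell$.

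\emph{Step 2: the case $\nu_C(\eta)\in\Lambda_{\ell-1}$.} We choose the smallest $j$ with $\nu_C(\eta)\in\Lambda_j$. If $j\ge 1$, then $\nu_C(\eta)\notin\Lambda_{j-1}$, so Theorem~\ref{th_caracterizacion-omega-i}(3) applied with index $j$ gives $a,b\ge0$ with $\nu_E(\eta)=\nu_E(x^ay^b\omega_j)$. It remains to show $\nu_C(\eta)\le\lambda_j+na+mb=\nu_C(x^ay^b\omega_j)$.

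We argue by contradiction, assuming $\nu_C(\eta)>\lambda_j+na+mb$. Both $\eta$ and $x^ay^b\omega_j$ are resonant with the same divisorial value, so by Lemma~\ref{lemma_G_i_dicritica} their initial parts are proportional. Hence there is $\mu\ne0$ with $\nu_E(\eta-\mu x^ay^b\omega_j)>\nu_E(\eta)$. Moreover $\nu_C(\eta-\mu x^ay^b\omega_j)=\lambda_j+na+mb$, because the two differential values differ. This is where we invoke Theorem~\ref{th_caracterizacion-omega-i}(5). If $\lambda_j+na+mb\notin\Lambda_{j-1}$, every form with that differential value must have divisorial value at most $\nu_E(x^ay^b\omega_j)$, which contradicts the strict increase. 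If instead $\lambda_j+na+mb\in\Lambda_{j-1}$, then Lemma~\ref{Lemma:a-b-l} gives $a\ge\ell^n_{j+1}$ or $b\ge\ell^m_{j+1}$, and therefore $\nu_E(\eta)\ge t_j+\min\{n\ell^n_{j+1},m\ell^m_{j+1}\}=t_{j+1}$. Combined with $\nu_E(\eta)\le t_\ell$, this forces $j+1\le\ell$. In that situation we replace $j$ by a larger index and iterate the argument, comparing $\eta$ with a form $x^{a''}y^{b''}\omega_{j'}$ with $j'\le \ell$ chosen via Theorem~\ref{th_caracterizacion-omega-i}(3) applied to the value in $\Lambda_{j'}\setminus\Lambda_{j'-1}$ reached. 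The iteration terminates because the indices are bounded by $\ell$.

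\emph{Step 3: the degenerate case $j\le 0$.} If $j\le0$, i.e.\ $\nu_C(\eta)\in\Gamma\cup(m+\Gamma)$, we use the bound $\nu_E(\eta)\ge t_1=n+m$ coming from $a',b'\ge1$. We then compare $\eta$ directly with $x^{a'-1}y^{b'-1}\omega_1$, which has the same divisorial value, and apply the same subtraction argument as in Step~2.

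The main obstacle is the iteration in Step~2: we need to make sure that the cancellation process stays at indices $\le\ell$ and produces the inequality, rather than cycling. Lemma~\ref{lema_t_i_u_i}(iii) is the tool we expect to control this, since it relates differences of the $\lambda_i$ to differences of the $t_i$.
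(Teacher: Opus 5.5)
\textbf{There is a genuine gap.} It sits where you flagged the ``main obstacle'': the passage from $\omega_j$ to $\omega_{j+1}$ in the subcase $\lambda_j+na+mb\in\Lambda_{j-1}$.

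To continue the argument you need a monomial $x^{a''}y^{b''}$ with $\nu_E(\eta)=\nu_E(x^{a''}y^{b''}\omega_{j+1})$, i.e.\ you need $\eta$ to be reachable from $\omega_{j+1}$. Theorem~\ref{th_caracterizacion-omega-i}(3) cannot supply this. It only applies to forms whose differential value lies outside $\Lambda_{j'-1}$. But $\nu_C(\eta)\in\Lambda_j\subseteq\Lambda_{j'-1}$ for every $j'>j$, and the difference form has $\nu_C=\lambda_j+na+mb\in\Lambda_{j-1}$, so there is no ``value reached'' to which (3) applies. Your estimate $\nu_E(\eta)\ge t_j+\min\{n\ell^n_{j+1},m\ell^m_{j+1}\}=t_{j+1}$ discards exactly the information you need: $\nu_E(\eta)\ge t_{j+1}$ alone does not make $\nu_E(\eta)-t_{j+1}$ an element of $\Gamma$. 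Lemma~\ref{lema_t_i_u_i}(iii) is also not the tool that controls this step.

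\textbf{The missing idea} is to track which alternative of Lemma~\ref{Lemma:a-b-l} occurs. Say $t_{j+1}=t^n_{j+1}=t_j+n\ell^n_{j+1}$, so that $\tilde t_{j+1}=t_j+m\ell^m_{j+1}$.
\begin{itemize}
\item If $b\ge\ell^m_{j+1}$, then $\nu_E(\eta)=t_j+na+mb\ge\tilde t_{j+1}>t_{s+1}>t_\ell$, which contradicts the hypothesis. This is the paper's ``reachable from $\widetilde{\omega}_{j+1}$'' case.
\item Hence $a\ge\ell^n_{j+1}$. Since $\operatorname{In}(\omega_{j+1})$ is a constant multiple of $x^{\ell^n_{j+1}}\operatorname{In}(\omega_j)$, $\eta$ is reachable from $\omega_{j+1}$ via $x^{a-\ell^n_{j+1}}y^{b}$. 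The paper gets the initial-part relation from Delorme's decomposition; it also follows from Lemma~\ref{lemma_G_i_dicritica} and uniqueness of the representation $na+mb$ below $nm$.
\end{itemize}
With this step your iteration goes through: apply (5) at each index, and it stops at $\ell$ because $t_{\ell+1}$ and $\tilde t_{\ell+1}$ both exceed $t_\ell$.

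\textbf{How the paper organises it.} It avoids iteration altogether. By resonance $\eta$ is reachable from $\omega_1$; take $j$ maximal in $\{1,\dots,\ell\}$ with $\eta$ reachable from $\omega_j$. If $\nu_C(\eta)>\nu_C(x^ay^b\omega_j)$, item (5) forces $\lambda_j+na+mb\in\Lambda_{j-1}$. The dichotomy above then contradicts either the maximality of $j$ or $\nu_E(\eta)\le t_\ell$. This makes your Step 1, Step 3, and the choice of the minimal $j$ with $\nu_C(\eta)\in\Lambda_j$ unnecessary. Note also that $\Lambda_0=(n+\Gamma)\cup(m+\Gamma)$, not $\Gamma\cup(m+\Gamma)$.
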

\begin{proof}
Assume that for each $i \in \{1,\ldots,\ell\}$ and any $a,b \geq 0$, a 1-form $\alpha=x^a y^b \omega_i$ with $\nu_E(\alpha)=\nu_E(\eta)$ satisfies that $\nu_C(\alpha) < \nu_C(\eta)$.

Since $\eta$ is resonant and $\nu_E(\eta) \leq t_\ell$, we can define
$$j=\max \{ i \in \{1,\ldots, \ell\} \ : \ \eta \text{ is reachable from } \omega_i\}.$$
 Take $\alpha=x^a y^b \omega_j$ with $\nu_E(\alpha)=\nu_E(\eta)$ and hence $\operatorname{In}(\eta)=\mu \operatorname{In}(\alpha)$. By hypothesis we have that $\nu_C(\alpha)<\nu_C(\eta)$. Consequently the 1-form
$$
\beta=\eta- \mu \alpha
$$
satisfies $\nu_E(\beta) > \nu_E(\alpha)$ and $\nu_C(\beta)=\nu_C(\alpha)$. By statement (5) in Theorem~\ref{th_caracterizacion-omega-i}, we get that
\begin{equation}\label{Lambda_j-1}
\nu_C(\alpha)=\lambda_j + na + mb \in \Lambda_{j-1}.
\end{equation}
Let us see that this implies that  $\eta$ is reachable from $\omega_{j+1}$ or from $\widetilde{\omega}_{j+1}$. The expression given in~\eqref{Lambda_j-1} implies that $a \geq \ell_{j+1}^n$ or $b\geq \ell_{j+1}^m$ (see Lemma~\ref{Lemma:a-b-l}).
Let us assume that $t_{j+1}=t_{j+1}^n=t_j + n \ell_{j+1}^n$ (the other possibility is similar).
Using Delorme's decomposition given in Theorem~\ref{Th:descomp-Delorme}, we can write $\omega_{j+1}$ as
$$
\omega_{j+1}=\sum_{i=-1}^j h_i \omega_i
$$
with $\operatorname{In}(\omega_{j+1})=\operatorname{In}(h_j \omega_j)=\operatorname{In}(h_j) \operatorname{In}(\omega_j)$.
Since $\nu_E(\omega_{j+1})=t_{j+1}$, then $\operatorname{In}(\omega_{j+1})=x^{\ell_{j+1}^n} \operatorname{In}(\omega_j)$ up to multiply by a constant. In a similar way, we get that   $\operatorname{In}(\widetilde{\omega}_{j+1})=y^{\ell_{j+1}^m} \operatorname{In}(\omega_j)$.

Hence, if $a \geq \ell_{j+1}^n$, then $\eta$ is reachable from $\omega_{j+1}$. In case that $b \geq \ell_{j+1}^m$, we obtain that $\eta$ is reachable from $\widetilde{\omega}_{j+1}$.

Let us see that none of these possibilities can happen:
\begin{itemize}
  \item $\eta$ reachable from $\omega_{j+1}$ which contradicts the definition of $j$;
  \item $\eta$ reachable from $\widetilde{\omega}_{j+1}$: we have that $\nu_E(\eta) \geq  \tilde{t}_{j+1} > t_{s+1} > t_\ell$ (see Lemma~\ref{lema_t_i_u_i}) which is a contradiction with the hypothesis $\nu_E(\eta) \leq t_\ell$.
\end{itemize}
\end{proof}

\section{Foliations associated to a Standard System}\label{sec:standard-systems}
Let $C$ be an irreducible plane curve in $(\mathbb{C}^2,0)$ with one Puiseux pair $(m,n)$ where $2 \leq n <m$ and $\gcd(n,m)=1$. Consider a standard system $(\mathcal{E},\widetilde{\mathcal{E}})$ for $C$, where $\mathcal{E}=(\omega_{-1},\omega_0,\omega_1,\ldots,\omega_s,\omega_{s+1})$
is extended standard basis for $C$ and $\widetilde{\mathcal{E}}=(\widetilde{\omega}_{1},\widetilde{\omega}_2,\ldots,\widetilde{\omega}_s,\widetilde{\omega}_{s+1})$ (see Section~\ref{sec:cuspidal-semimodules}). Recall that
these 1-forms satisfy the following properties
\begin{equation*}
\begin{aligned}
  \nu_E(\omega_i) & = t_i, \  -1 \leq i \leq s+1, \qquad & \nu_E(\widetilde{\omega}_i) & = \tilde{t}_i, \ \   1 \leq i \leq s+1 \\
  \nu_C(\omega_i) & = \lambda_i,   \  -1 \leq i \leq s,  \qquad
  & \nu_C(\omega_{s+1}) & = \infty, \\
  \nu_C(\widetilde{\omega}_i) & = \infty,  \ \   1 \leq i \leq s+1
\end{aligned}
\end{equation*}
where $\mathcal{B}=(\lambda_{-1},\lambda_0,\lambda_1,\ldots, \lambda_s)$ is the basis of the semimodule $\Lambda_C$.

Once a standard system as above is fixed, we will denote by $\mathcal{G}_i$, resp. $\widetilde{\mathcal{G}}_i$, the foliation defined in $(\mathbb{C}^2,0)$ by the 1-form $\omega_i=0$, resp. $\widetilde{\omega}_i=0$, for $1 \leq i \leq s+1$.  Properties of these foliations are described in \cite{Can-C-SS-2023} and \cite{SS-2025}. In this section we will recall some of these properties which will be useful later to study totally $E$-dicritical foliations.

Consider the minimal reduction of singularities $\pi: M \to (\mathbb{C}^2,0)$ of $C$ and let $E$ be the cuspidal divisor. By the results in Section~\ref{sec:cuspidal-semimodules}, we know that the foliations $\mathcal{G}_i$ are totally $E$-dicritical for $i=1,2,\ldots,s,s+1$. Moreover, we can describe the semimodule of the cusps in $\operatorname{Cusp}_E(\mathcal{G}_i)$ which are invariant curves of $\mathcal{G}_i$:
\begin{Theorem}[\cite{Can-C-SS-2023}, Theorem 8.8]\label{Th:semirraices}
Fix $i \in \{1,2,\ldots,s,s+1\}$ and consider  any cusp  $C^i \in \operatorname{Cusp}_E(\mathcal{G}_i)$, then
\begin{itemize}
  \item[(a)] $\Lambda_{i-1}$ is the semimodule of differential values of $C^i$.
  \item[(b)] $\mathcal{E}_i=(\omega_{-1},\omega_0,\omega_1,\ldots,\omega_{i})$ is an extended standard basis for $C^i$.
\end{itemize}
\end{Theorem}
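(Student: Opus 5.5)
The plan is to compare each $C^i\in\operatorname{Cusp}_E(\mathcal{G}_i)$ with the fixed cusp $C$ through a perturbation estimate on Puiseux parametrizations in coordinates $(x,y)$ adapted to $E$. By Remark~\ref{rm:parametrizacion-coord-adapt}, $C$ and $C^i$ are parametrized by $x=t^n$, $y=\gamma(t)$ and $x=t^n$, $y=\gamma(t)+\delta(t)$ respectively. The basic estimate I would establish first is the following: if $\operatorname{ord}_t\delta=k$, then for every 1-form $\omega$,
$$\nu_{C^i}(\omega)-\nu_C(\omega)\ \text{has order}\ \geq \nu_E(\omega)+k-m,$$
meaning that the pull-backs of $\omega$ along the two parametrizations agree up to (but not including) $t$-order $\nu_E(\omega)+k-m-1$. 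This should follow from a Taylor expansion in $y$ of the coefficients $A,B$, together with the weighted description $\nu_E(\omega)=\min\{ni+mj:\omega_{ij}\neq 0\}$ of Section~\ref{sec:div-ord}. Differentiating a monomial with respect to $y$ lowers its weight by $m$, which accounts for the shift $-m$.

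Next I would determine $k$ for a curve $C^i$ passing through a free point of $E$ close to the point of $C$. Since $\nu_C(\omega_i)=\lambda_i<\infty=\nu_{C^i}(\omega_i)$ and $\nu_E(\omega_i)=t_i$, the estimate (read in reverse, using that the leading correction term for a resonant form does not cancel) forces $k=\lambda_i-t_i+m$. Then, for $j\leq i-1$, we have $\nu_E(\omega_j)=t_j$, and Lemma~\ref{lema_t_i_u_i}(iii) gives
$$t_j+k-m=\lambda_i-(t_i-t_j)>\lambda_j .$$
Hence $\nu_{C^i}(\omega_j)=\lambda_j$, so $\Lambda_{i-1}\subset\Lambda_{C^i}$.

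For an arbitrary $C^i$ (not necessarily close to $C$), I would instead use Theorem~\ref{Th:nu_C} with $\mathcal{F}=\mathcal{G}_i$ and $\mathcal{G}=\mathcal{G}_j$:
$$\nu_{C^i}(\omega_j)=\nu_E(\omega_i\wedge\omega_j)-t_i+i_P(\mathcal{J}',C^{i\prime}).$$
Here the first two terms do not depend on $C^i$. So it suffices to show that the jacobian term vanishes, i.e.\ that $\operatorname{In}(\omega_i)\wedge\operatorname{In}$ of the relevant part of $\omega_j$ is not divisible by the resonant factor. I would try to obtain this from the "close curve" computation: it shows the value is $\lambda_j$ on an open set of $P\in E$, and an upper semicontinuity argument then extends the identity to all free points.

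For the reverse inclusion $\Lambda_{C^i}\subset\Lambda_{i-1}$, I would take any $\omega$ and use Delorme's decomposition with respect to the extended basis, $\omega=\sum_{k=-1}^{s+1}h_k\omega_k$. The terms with $k\geq i$ have to be rewritten using $\omega_i$ (on which $C^i$ is invariant) plus forms of divisorial value at least $t_{k}$. An induction on $\nu_E$, combined with the characterization in Theorem~\ref{th_caracterizacion-omega-i}(3),(5) applied to the semimodule $\Lambda_{i-1}$, should show that every value not in $\Lambda_{i-1}$ is either not attained or equals $\infty$.

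Statement (b) then follows. The forms $\omega_{-1},\dots,\omega_{i-1}$ realize the basis of $\Lambda_{i-1}$. Moreover, $\omega_i$ has $\nu_{C^i}(\omega_i)=\infty$ and $\nu_E(\omega_i)=t_i$, and $t_i$ is exactly the critical value $t_{(i-1)+1}$ of $\Lambda_{i-1}$, because its axes $u_i$ depend only on $\lambda_{-1},\dots,\lambda_{i-1}$.

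I expect the main obstacle to be the reverse inclusion, i.e.\ ruling out new differential values of $C^i$ beyond $\tilde u_i$-type thresholds. The perturbation estimate only controls values below $\lambda_i-(t_i-\nu_E(\omega))$, so large values must be handled using the conductor-type bound $c(\Lambda_{i-1})=\tilde u_i-n-m+1$ from Lemma~\ref{lema_conductor_u_s+1} applied to $\Lambda_{i-1}$.
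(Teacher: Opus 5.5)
\textbf{There is a genuine gap: your argument controls only one curve of $\operatorname{Cusp}_E(\mathcal{G}_i)$.}

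Your perturbation estimate is correct. For the semiroot $C^i_P$ through the point $P$ of $C$ (with $i\le s$), it does give $k=\lambda_i-t_i+m$ and, via Lemma~\ref{lema_t_i_u_i}(iii), $\nu_{C^i_P}(\omega_j)=\lambda_j$ for $j<i$. But it reaches only that one curve.

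In adapted coordinates a curve of $\operatorname{Cusp}(E)$ is $x=t^n$, $y=a_mt^m+\cdots$, and its point on $E$ is determined by $a_m^n$. Two curves through different free points have $a_m^n\neq a_m'^n$, so after any admissible reparametrization $t\mapsto\zeta t$ with $\zeta^n=1$ one still has $\operatorname{ord}_t\delta=m$. Your bound $\nu_E(\omega)+k-m=\nu_E(\omega)$ is then vacuous. This is also why the leading correction of a resonant form cancels when $k=m$. So there is no ``open set'' of points where step 1 applies. For $i=s+1$ it proves nothing at all, since the curve through $P$ is $C$ itself.

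The semicontinuity step cannot repair this. By Theorem~\ref{Th:nu_C}, $Q\mapsto\nu_{C^i_Q}(\omega_j)$ equals the constant $\nu_E(\omega_i\wedge\omega_j)-t_i$ plus $i_Q(\mathcal{J}',C^{i\prime}_Q)$. It is therefore minimal generically and can only jump \emph{up*} at the finitely many points where $\mathcal{J}'$ meets $E$. Knowing its value at one point, or even on a dense open set, says nothing about those points.

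\textbf{The missing ingredient.} You need that $\mathcal{J}'_{\mathcal{G}_i,\mathcal{G}_j}$ does not meet $E$ for $j<i$. You cannot take this from Theorem~\ref{Th:transv-omega_s+1}: the paper deduces that theorem from the very statement you are proving, which it only cites from \cite{Can-C-SS-2023}. You must prove it directly, for instance by running the induction of Lemma~\ref{lemma_t_i*_lambda_i} on initial forms instead of on $\nu_E$ alone.
\begin{itemize}
\item Write $\omega_i=\sum_{r\le j}h_r\omega_r$ as in Theorem~\ref{Th:descomp-Delorme}. The proof of that lemma shows that $h_k\omega_k\wedge\omega_j$ strictly dominates $\omega_i\wedge\omega_j$.
\item Since $\nu_E(h_k)=\nu_C(h_k)<nm$, the initial part $\operatorname{In}(h_k)$ is a single monomial.
\item $\operatorname{In}(\omega_k\wedge\omega_j)$ is a monomial times $dx\wedge dy$ by induction, with base cases $j\in\{-1,0\}$ as in Remark~\ref{Rm:-1-0-transv}.
\end{itemize}
Hence $\operatorname{In}(\omega_i\wedge\omega_j)$ is a monomial, $\mathcal{J}'$ misses $E$, and Theorem~\ref{Th:nu_C} gives $\nu_{C^i_Q}(\omega_j)=t_i+\lambda_j-t_i=\lambda_j$ at every free point $Q$. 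This needs no perturbation argument and also covers $i=s+1$.

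\textbf{The reverse inclusion is easy, not the main obstacle.} Once $\nu_{C^i}(\omega_j)=\lambda_j$ for all $j<i$, the argument in the proof of Theorem~\ref{th:valores-dif-tilde-C} shows that $\lambda_{-1},\dots,\lambda_{i-1}$ are the first elements of the basis of $\Lambda_{C^i}$. Consequently the critical value $t_i^{C^i}$ equals $t_i$. If $\Lambda_{C^i}$ had a further generator, Theorem~\ref{th_caracterizacion-omega-i}(1) applied to $C^i$ would make it equal to $\sup\{\nu_{C^i}(\omega):\nu_E(\omega)=t_i\}$. That supremum is $\infty$, witnessed by $\omega_i$. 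So $\Lambda_{C^i}=\Lambda_{i-1}$ follows without any Delorme rewriting or conductor bound, and (b) then follows as you describe.
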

If we denote by $C_P^i$ the cusp in $\operatorname{Cusp}_E(\mathcal{G}_i)$ passing through a point $P \in E$, we say that $C_P^i$ is the $i$-{\em analytic semiroot} of $C$ through $P$ with respect to the extended standard basis $\mathcal{E}$. Observe that the result above does not imply that the curves $C_P^i$ and $C_Q^i$ are analytically equivalent for any pair of free points $P, Q \in E$ (see Example 8.13 in \cite{Can-C-SS-2023}).

The foliations $\widetilde{\mathcal G}_i$ are also totally $E$-dicritical for $i=2,\ldots,s,s+1$ (see \cite{SS-2025}, Chapter 6) but in general the semimodule of a curve $\widetilde{C}^i \in \operatorname{Cusp}_E(\widetilde{\mathcal{G}}_i)$ is not determined from $\Lambda_C$, we can only determine the first elements of its basis. More precisely, we have
\begin{Theorem}[\cite{SS-2025}, Theorem 7.6]\label{Th:separatrices-tilde}
Let $\widetilde{C}^i$ be a cusp in $\operatorname{Cusp}_E(\widetilde{\mathcal{G}}_i)$ with $i \in \{2,\ldots,s,s+1\}$. Then
$$
\nu_{\widetilde{C}^i}(\omega_\ell)=\nu_C(\omega_\ell)=\lambda_\ell, \qquad  \text{ for }  \ \ \ell= -1,0,1,\ldots, i-1.
$$
Consequently, $\Lambda_{i-1} \subset \Lambda_{\widetilde{C}^i}$. Moreover, the basis of the semimodule $\Lambda_{\widetilde{C}^i}$ is given by
$$
{\mathcal B}_{\widetilde{C}^i}=(\lambda_{-1},\lambda_0,\lambda_1,\ldots,\lambda_{i-1},\lambda_i^{\widetilde{C}^i},\ldots, \lambda_{s(\widetilde{C}^i)}^{\widetilde{C}^i})
$$
and there is a minimal standard basis for the curve $\widetilde{C}^i$ given by $$(\omega_{-1},\omega_0,\omega_1,\ldots, \omega_{i-1},\omega_i^{\widetilde{C}^i},\ldots, \omega_{s(\widetilde{C}^i)}^{\widetilde{C}^i}).$$
\end{Theorem}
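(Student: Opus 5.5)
The plan is to compare $C$ and $\widetilde{C}^i$ through Theorem~\ref{Th:nu_C}, applied with the totally $E$-dicritical foliation $\mathcal{F}=\widetilde{\mathcal{G}}_i$. Both curves lie in $\operatorname{Cusp}_E(\widetilde{\mathcal{G}}_i)$:
\begin{itemize}
\item $C$ does, because $\nu_C(\widetilde{\omega}_i)=\infty$;
\item $\widetilde{C}^i$ does, by hypothesis.
\end{itemize}
For any 1-form $\omega$ we therefore get, using Remark~\ref{Th:nu_C_1-forma-no-foliacion} if $\omega$ is not saturated,
$$
\nu_{C}(\omega)=\nu_E(\widetilde{\omega}_i\wedge\omega)-\tilde t_i+i_P(\mathcal{J}',C'),\qquad
\nu_{\widetilde{C}^i}(\omega)=\nu_E(\widetilde{\omega}_i\wedge\omega)-\tilde t_i+i_Q(\mathcal{J}',(\widetilde{C}^i)'),
$$
where $\mathcal{J}$ is the jacobian curve of $\widetilde{\omega}_i$ and $\omega$. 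So it suffices to show that $\mathcal{J}'$ does not meet $E$ whenever one of these differential values is small.

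\textbf{Step 1 (key step).} Write $\widetilde{\omega}_i\wedge\omega=J\,dx\wedge dy$ in coordinates adapted to $E$. By Corollary~\ref{lema-2-div-valor}, $\nu_E(\widetilde{\omega}_i\wedge\omega)=\nu_E(J)+n+m$. If $\mathcal{J}'$ meets $E$ at a free point, then the weighted initial form $\operatorname{In}(J)$ is not a monomial. Since the weights are $(n,m)$, it must then contain a factor $y^n-cx^m$ with $c\neq 0$, which forces $\nu_E(J)\ge nm$.

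Hence, if $\nu_E(\widetilde{\omega}_i\wedge\omega)<nm+n+m$, then $\mathcal{J}'$ does not meet $E$. The point to check carefully is that an intersection of $\mathcal{J}'$ with $E$ at a corner point does not affect the curvettes through free points; this is where I expect the only real difficulty. I would handle it via Lemma~\ref{lemma:nu_E_ij} and the description of $E$-cusps as projections of curves transversal to $E$ at free points.

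\textbf{Step 2.} By Remark~\ref{Rm-def-critical-axes} and Lemma~\ref{lemma:ui+ti}, $nm+n+m=\tilde u_i+t_i=u_i+\tilde t_i$. Now take $\omega$ with $\nu_C(\omega)<u_i$ (or, symmetrically, with $\nu_{\widetilde{C}^i}(\omega)<u_i$). The displayed formula gives
$$
\nu_E(\widetilde{\omega}_i\wedge\omega)\le \nu_C(\omega)+\tilde t_i<u_i+\tilde t_i=nm+n+m.
$$
By Step 1 the intersection terms vanish, and therefore $\nu_C(\omega)=\nu_{\widetilde{C}^i}(\omega)$.

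We also need $\widetilde{\mathcal{G}}_i$ and the foliation of $\omega$ to have no common separatrix. I would get this from the non-saturated version in Remark~\ref{Th:nu_C_1-forma-no-foliacion}, or by perturbing $\omega$ by a high-order term that changes neither value.

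\textbf{Step 3.} For $\ell\le i-1$ we have $\lambda_\ell\le\lambda_{i-1}<u_i=\lambda_{i-1}+n\ell_i^n$ (or $+m\ell_i^m$). Step 2 then gives $\nu_{\widetilde{C}^i}(\omega_\ell)=\lambda_\ell$.

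Moreover, applying Step 2 in both directions shows $\Lambda_{\widetilde{C}^i}\cap[0,u_i)=\Lambda_C\cap[0,u_i)$. Since $\lambda_i>u_i$ because $\Lambda_C$ is increasing, the right-hand side equals $\Lambda_{i-1}\cap[0,u_i)$.

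The minimal generators of a $\Gamma$-semimodule that lie below $u_i$ are determined by its intersection with $[0,u_i)$. Hence the basis of $\Lambda_{\widetilde{C}^i}$ begins with $\lambda_{-1},\lambda_0,\ldots,\lambda_{i-1}$, and $\Lambda_{i-1}\subset\Lambda_{\widetilde{C}^i}$. Since $\nu_{\widetilde{C}^i}(\omega_\ell)=\lambda_\ell$, the forms $\omega_{-1},\ldots,\omega_{i-1}$ extend to a minimal standard basis of $\widetilde{C}^i$.
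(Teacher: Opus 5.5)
The paper does not prove this statement; it is imported from \cite{SS-2025}, so there is no in-text proof to compare against. Your argument is correct, granting the cited fact that $\widetilde{\mathcal{G}}_i$ is totally $E$-dicritical for $i\ge 2$ (this is where $i\ge 2$ is used). Neither of the two points you flag is a real obstacle.

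\emph{Corner points never enter.} The formula of Theorem~\ref{Th:nu_C} only involves $i_Q(\mathcal{J}',D')$ at the free point $Q$ where a curve $D\in\operatorname{Cusp}_E(\widetilde{\mathcal{G}}_i)$ meets $E$. Once $\nu_E(J)<nm$, Remark~\ref{monomio-nm} gives $\operatorname{In}(J)=c\,x^ay^b$. Hence $\nu_D(J)=\nu_E(J)$ for every $D\in\operatorname{Cusp}(E)$, and Remark~\ref{Rm-nuC-nuE} then gives $i_Q(\mathcal{J}',D')=0$ at every free point $Q$.

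\emph{No perturbation is needed for the separatrix hypothesis.} Write $\widetilde{\omega}_i=A\,dx+B\,dy$ and $\widetilde{\omega}_i\wedge\omega=J\,dx\wedge dy$, and let $\gamma$ parametrize a $\widetilde{\mathcal{G}}_i$-invariant curve, with $\gamma^*\omega=\alpha(t)\,dt$. Then $J(\gamma)\,\dot{x}=-B(\gamma)\,\alpha$. This gives the identity of Proposition~\ref{prop:int-sep}, and hence Theorem~\ref{Th:nu_C}, as soon as that curve is not $\omega$-invariant. For $\widetilde{C}^i$ this holds because $\nu_{\widetilde{C}^i}(J)=\nu_E(J)<\infty$.

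Your route differs from the machinery the paper uses for neighbouring results. There, $\nu_E(\omega_i^*\wedge\omega_\ell)=t_i^*+\lambda_\ell$ is computed exactly via Delorme's decomposition (Lemma~\ref{lemma_t_i*_lambda_i}). New generators are then excluded, as in Theorem~\ref{th:valores-dif-tilde-C}, through the characterization $\lambda_{k+1}=\sup\{\nu_C(\omega):\nu_E(\omega)=t_{k+1}\}$ of Theorem~\ref{th_caracterizacion-omega-i}. You instead exploit that $C$ itself lies in $\operatorname{Cusp}_E(\widetilde{\mathcal{G}}_i)$. Theorem~\ref{Th:nu_C} at $C$ therefore gives the bound $\nu_E(\widetilde{\omega}_i\wedge\omega)\le\nu_C(\omega)+\tilde t_i$ for free, and Lemma~\ref{lemma:ui+ti} turns $\nu_C(\omega)<u_i$ into $\nu_E(J)<nm$.

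This is shorter and yields more:
\begin{itemize}
\item the equality $\Lambda_{\widetilde{C}^i}\cap[0,u_i)=\Lambda_C\cap[0,u_i)$;
\item a general principle: for any totally $E$-dicritical $\mathcal{F}$, a differential value below $nm+n+m-\nu_E(\omega_{\mathcal{F}})$ on one curve of $\operatorname{Cusp}_E(\mathcal{F})$ is the same on all of them;
\item taking $\omega=\omega_\ell$ with $\ell<i$, a direct proof that $\widetilde{\mathcal{G}}_i$ and $\mathcal{G}_\ell$ are $E$-transversal at free points. The paper obtains this in Theorem~\ref{Th:transv-omega_s+1} only by invoking the very theorem under discussion.
\end{itemize}
What the Delorme computation buys is independence from a separatrix with known values. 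For $\mathcal{G}_i$ with $i\le s$, $C$ is not invariant, so your bootstrap has no starting curve, and Lemma~\ref{lemma_t_i*_lambda_i} is what supplies $\nu_E(\omega_i\wedge\omega_\ell)$.
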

In \cite[Example 7.7]{SS-2025}, it is shown that the inclusion $\Lambda_{i-1} \subset \Lambda_{\widetilde{C}^i}$ can be strict (see also Example \ref{ex-G-tilde-no-lambda-cte1}).

One of the objectives of this paper is to study under what conditions all the cusps in $\operatorname{Cusp}_E(\mathcal{F})$ have the same semimodule of differential values, for  a totally $E$-dicritical foliation $\mathcal{F}$ with $C$ as invariant curve. Let us recall some facts concerning the foliations with $C$ as invariant curve.

Let $\Omega^1(C)$ be the $\mathcal{O}_{\mathbb{C}^2,0}$-module of 1-forms $\omega \in \Omega^1_{(\mathbb{C}^2,0)}$ such that the curve $C$ is an invariant curve of the foliation defined by $\omega=0$. Note that if $f=0$ is a reduced equation of $C$ with $f \in \mathcal{O}_{\mathbb{C}^2,0}$, then
$$
\Omega^1(C)=\{ \omega \in \Omega^1_{(\mathbb{C}^2,0)} \ : \ f \text{ divides } \omega \wedge df\}.
$$
In \cite{Sai-1980}, K. Saito proved that $\Omega^1(C)$ is a free module of rank two and a basis of  $\Omega^1(C)$ is called a {\em Saito basis} for $C$. The module $\Omega^1(C)$ is now know as the {\em Saito module} of $C$. This module contains relevant information about the curve $C$. In particular, the {\em Saito number} of $C$ is defined as
$$
\mathfrak{s}(C)=\min_{\omega \in \Omega^1(C)} \{\nu_0(\omega)\}
$$
where $\nu_0(\omega)$ stands for the multiplicity of the 1-form $\omega$ at the origin. Note that $\frak{s}(C)$ is also equal to $\min \{\nu_0(\eta_1),\nu_0(\eta_2)\}$ with $\{\eta_1,\eta_2\}$ a Saito basis for $C$. In \cite{Gen-2022}, Y. Genzmer proved that the Saito number $\frak{s}(C)$ is an analytic invariant of the curve $C$.

Next result gives a simple criterion to check if a pair of 1-forms gives a Saito basis for the curve $C$.
\begin{Theorem}[Saito criterion \cite{Sai-1980}]\label{th:Saito-criterion}
Let $\eta_1,\eta_2 \in \Omega^1(C)$. The set $\{\eta_1,\eta_2\}$ is a Saito basis for $C$ if and only if
$$
\eta_1 \wedge \eta_2= u f dx \wedge dy
$$
with $u$ a unit in $\mathcal{O}_{\mathbb{C}^2,0}$.
\end{Theorem}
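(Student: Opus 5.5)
The plan is to prove Theorem~\ref{th:Saito-criterion} directly from the description $\Omega^1(C)=\{\omega : f \mid \omega\wedge df\}$. Throughout, $f$ is reduced and $\omega=A\,dx+B\,dy$. I will first prove a divisibility lemma, then the sufficiency direction by Cramer's rule, and finally the necessity direction by testing against explicit elements of $\Omega^1(C)$.

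\textbf{Step 1 (divisibility lemma).} I claim that for any $\eta_1,\eta_2\in\Omega^1(C)$ we have $\eta_1\wedge\eta_2=g\,f\,dx\wedge dy$ for some holomorphic $g$. Write $\eta_i=A_i\,dx+B_i\,dy$ and $A_if_y-B_if_x=f c_i$. Set $D=A_1B_2-A_2B_1$. A direct expansion gives
\[
f_y D = B_2(A_1f_y-B_1f_x)-B_1(A_2f_y-B_2f_x)=f\,(B_2c_1-B_1c_2),
\]
and symmetrically $f_xD$ is divisible by $f$. Now let $p$ be an irreducible factor of $f$. It cannot divide both $f_x$ and $f_y$: the singular locus of a reduced plane curve germ is isolated. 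Hence $p\mid D$. Since $f$ is reduced, it follows that $f\mid D$.

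\textbf{Step 2 (sufficiency).} Assume $\eta_1\wedge\eta_2=uf\,dx\wedge dy$ with $u$ a unit. The forms $\eta_1,\eta_2$ are then independent over the fraction field. Given $\omega\in\Omega^1(C)$, I solve $\omega=a_1\eta_1+a_2\eta_2$ by Cramer's rule:
\[
a_1=\frac{\omega\wedge\eta_2}{\eta_1\wedge\eta_2},\qquad a_2=\frac{\eta_1\wedge\omega}{\eta_1\wedge\eta_2}.
\]
By Step 1 both numerators are $f$ times holomorphic functions, so $a_1,a_2$ are holomorphic. The difference $\omega-a_1\eta_1-a_2\eta_2$ wedges to zero against both $\eta_1$ and $\eta_2$, so it vanishes. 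Therefore $\{\eta_1,\eta_2\}$ generates $\Omega^1(C)$. A nonzero wedge rules out any nontrivial relation, so it is a basis.

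\textbf{Step 3 (necessity).} Assume $\{\eta_1,\eta_2\}$ is a basis. By Step 1, $\eta_1\wedge\eta_2=hf\,dx\wedge dy$, and the task is to show $h$ is a unit.

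1. The forms $f\,dx$, $f\,dy$ and $df$ all lie in $\Omega^1(C)$. Expressing any two of them in the basis multiplies wedges by a determinant, so each such wedge is divisible by $hf$.
2. From $f\,dx\wedge f\,dy=f^2\,dx\wedge dy$ we get $h\mid f$.
3. From $f\,dx\wedge df=ff_y\,dx\wedge dy$ we get $h\mid f_y$, and similarly $h\mid f_x$.
4. Suppose $h$ is not a unit, and let $p$ be an irreducible factor of $h$. Then $p$ divides $f$, $f_x$ and $f_y$. Writing $f=pq$ with $\gcd(p,q)=1$ (by reducedness), the relation $p\mid f_x=p_xq+pq_x$ forces $p\mid p_x$. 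Since $p_x$ has smaller order than $p$ unless $p_x=0$, this gives $p_x=0$; likewise $p_y=0$. Then $p$ is constant, which is a contradiction.

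Hence $h$ is a unit.

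The main obstacle I expect is Step 1: the divisibility $f\mid\eta_1\wedge\eta_2$ needs the reducedness of $f$ in an essential way, and the whole argument rests on it. The rest is linear algebra over $\mathcal{O}_{\mathbb{C}^2,0}$.
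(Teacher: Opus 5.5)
The paper gives no proof of this statement; it quotes it from \cite{Sai-1980}. Your argument is essentially Saito's original proof, transcribed to $\Omega^1(C)=f\,\Omega^1(\log C)$:
\begin{itemize}
\item sufficiency by Cramer's rule;
\item necessity by testing the basis against $f\,dx$, $f\,dy$ and $df$ (the counterparts of $dx$, $dy$, $df/f$), which gives $h\mid f$, $h\mid f_x$ and $h\mid f_y$.
\end{itemize}
Step 1 (including the identity for $f_yD$), Step 2, and items 1--3 of Step 3 are correct as written.

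There is, however, a false claim in item 4 of Step 3. It is not true that $p\mid p_x$ forces $p_x=0$. Nor is it true that $p_x$ has smaller order than $p$ whenever $p_x\neq 0$. For example, $p=y+x^{10}$ has $\operatorname{ord}(p_x)=9>1=\operatorname{ord}(p)$. Also, $p=ye^x$ is irreducible, being an associate of $y$, yet satisfies $p_x=p$. That last example also shows that only the two conditions $p\mid p_x$ and $p\mid p_y$ \emph{together} can produce a contradiction, since there $p\nmid p_y=e^x$.

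The repair is short. Let $k=\operatorname{ord}(p)\geq 1$ and let $P_k$ be the initial form of $p$. Euler's identity gives $x\,\partial_xP_k+y\,\partial_yP_k=kP_k\neq 0$, so one of $\partial_xP_k$, $\partial_yP_k$ is nonzero. Hence one of $p_x,p_y$ is a nonzero series of order exactly $k-1$. Any nonzero multiple of $p$ has order at least $k$, so that derivative cannot be divisible by $p$, which is the desired contradiction.

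Alternatively, you can invoke the fact you already used in Step 1: an irreducible factor of a reduced $f$ cannot divide both $f_x$ and $f_y$. Your item 4, once repaired, is exactly an algebraic proof of that fact, so it also makes Step 1 self-contained. With this correction the proof is complete.
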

Note that if $\{\eta_1,\eta_2\}$ is a Saito basis for $C$, then the jacobian curve ${\mathcal J}_{\mathcal{F}_{\eta_1},\mathcal{F}_{\eta_2}}$ is equal to $C$, where $\mathcal{F}_{\eta_i}$ denotes the foliation defined by $\eta_i=0$ in $(\mathbb{C}^2,0)$.

In some recent works, Saito bases play an important role in the study of the analytic classification of plane curves and other new analytic invariants of a plane curve are defined from a Saito basis for the curve (see for instance \cite{Gen-2022,Gen-H-2020,Can-C-SS-2026}).

Next result describes a way to obtain a Saito basis for an irreducible curve with only one Puiseux pair from the data of a standard system for the curve.
\begin{Theorem}[\cite{Can-C-SS-2026}, Theorem 4.2]\label{Th:Saito_basis}
Consider $C$  a curve  in $(\mathbb{C}^2,0)$ with only one Puiseux pair. Let
$t_{s+1}$ and $\tilde{t}_{s+1}$  be the last critical values of the semimodule $\Lambda_C$.
If  $\eta$, $\widetilde{\eta}$ are 1-forms with $C$ as an invariant curve and such that
$$
\nu_E(\eta)=t_{s+1}, \qquad \nu_E(\widetilde{\eta})=\tilde{t}_{s+1},
$$
then the set $\{\eta, \widetilde{\eta}\}$ is a Saito basis for $C$.
\end{Theorem}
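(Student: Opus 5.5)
The plan is to apply the Saito criterion (Theorem~\ref{th:Saito-criterion}). Let $f=0$ be a reduced equation of $C$. Since $\eta,\widetilde{\eta}\in\Omega^1(C)$, both $\eta\wedge df$ and $\widetilde{\eta}\wedge df$ are divisible by $f$. Writing $df$ in terms of $\eta,\widetilde\eta$ generically shows that $f$ divides $\eta\wedge\widetilde{\eta}$. More directly, both forms restrict to zero on the regular part of $C$, so their wedge vanishes on $C$. Hence
$$\eta\wedge\widetilde{\eta}=g\,f\,dx\wedge dy$$
for some $g\in\mathcal{O}_{\mathbb{C}^2,0}$, and it remains to prove that $g(0)\neq 0$. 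In adapted coordinates this is equivalent to $\nu_E(g)=0$, by Lemma~\ref{lemma:nu_E_ij}. By Corollary~\ref{lema-2-div-valor} and $\nu_E(f)=nm$,
$$\nu_E(\eta\wedge\widetilde\eta)=\nu_E(g)+nm+n+m .$$
So the whole proof reduces to the single equality $\nu_E(\eta\wedge\widetilde\eta)=nm+n+m$.

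Computing $\nu_E(\eta\wedge\widetilde\eta)$ through initial parts is not enough. Both forms can be resonant, and then $\operatorname{In}(\eta)\wedge\operatorname{In}(\widetilde\eta)=0$, so there is cancellation. I will instead use Theorem~\ref{Th:nu_C}. The conditions $\nu_E(\eta)=t_{s+1}$ and $\nu_C(\eta)=\infty$ say that $\eta$ can play the role of $\omega_{s+1}$ in an extended standard basis $\mathcal{E}$ of $C$, with $\omega_{-1},\dots,\omega_s$ a fixed minimal standard basis. Therefore the foliation $\mathcal{G}_{s+1}$ defined by $\eta=0$ is totally $E$-dicritical. By Theorem~\ref{Th:semirraices}, every $C^{s+1}\in\operatorname{Cusp}_E(\mathcal{G}_{s+1})$ has semimodule $\Lambda_s=\Lambda_C$ and extended standard basis $(\omega_{-1},\dots,\omega_s,\eta)$.

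Choose such a cusp $C^{s+1}$ through a free point $P\in E$ that avoids the finitely many points where the strict transform of $\mathcal{J}_{\eta,\widetilde\eta}=C\cup\{g=0\}$ meets $E$. Then $\mathcal{G}_{s+1}$ and the foliation of $\widetilde\eta$ have no common separatrix near $C^{s+1}$, and the intersection term in Theorem~\ref{Th:nu_C} vanishes. We get
$$\nu_E(\eta\wedge\widetilde\eta)=t_{s+1}+\nu_{C^{s+1}}(\widetilde\eta).$$
By Lemma~\ref{lemma:ui+ti}, $t_{s+1}+\tilde u_{s+1}=nm+n+m$. So it suffices to prove
$$\nu_{C^{s+1}}(\widetilde\eta)=\tilde u_{s+1}.$$

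This last equality is the main obstacle. My plan is to use the structure of $\operatorname{In}(\widetilde\eta)$, as in the proof of Lemma~\ref{lema_eta_resonant-omega_i}. Since $\nu_E(\widetilde\eta)=\tilde t_{s+1}=t_s+(\tilde u_{s+1}-\lambda_s)$, I expect that $\widetilde\eta$ is reachable from $\omega_s$, with $\operatorname{In}(\widetilde\eta)=\mu\, x^{a}y^{b}\operatorname{In}(\omega_s)$ and $na+mb=\tilde u_{s+1}-\lambda_s$. This corresponds to the limit attaining $\tilde t_{s+1}$: either $x^{\ell^n_{s+1}}$ or $y^{\ell^m_{s+1}}$.

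Then $\nu_{C^{s+1}}(x^ay^b\omega_s)=\lambda_s+na+mb=\tilde u_{s+1}$. I would subtract this term and control the remainder $\widetilde\eta-\mu x^ay^b\omega_s$ using Theorem~\ref{th_caracterizacion-omega-i} applied to $C^{s+1}$, whose semimodule is $\Lambda_s$. The key point is that $\tilde u_{s+1}\in\Lambda_{s-1}$ is the first value of $\lambda_s+\Gamma$ falling back into $\Lambda_{s-1}$ in that direction. Combined with the generic choice of $P$, this should make the values not cancel, so the minimum $\tilde u_{s+1}$ is attained exactly.

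If this direct estimate is hard to close, the fallback is a two-sided argument. For the lower bound, $\nu_{C^{s+1}}(\widetilde\eta)\ge\tilde u_{s+1}$ follows from $\nu_E(g)\ge0$ and the identity $\nu_E(\eta\wedge\widetilde\eta)=t_{s+1}+\nu_{C^{s+1}}(\widetilde\eta)$. For the upper bound, I would show that a strictly larger value would force $\nu_E(\widetilde\eta)>\tilde t_{s+1}$, via statement (5) of Theorem~\ref{th_caracterizacion-omega-i}. That would contradict the hypothesis on $\widetilde\eta$.
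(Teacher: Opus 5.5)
The paper only quotes this result, so I judge your argument on its own terms.

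\textbf{Your reduction is correct but does not advance the problem.} You correctly get $\eta\wedge\widetilde\eta=g\,f\,dx\wedge dy$, with $g(0)\neq0$ iff $\nu_E(g)=0$. For a cusp $C^{s+1}\in\operatorname{Cusp}_E(\mathcal{G}_{s+1})$ through a generic point, Theorem~\ref{Th:nu_C} then gives $\nu_{C^{s+1}}(\widetilde\eta)=\nu_E(g)+\tilde u_{s+1}$. That theorem assumes no common separatrix, which fails here since both forms leave $C$ invariant. The formula survives because it is a computation along $C^{s+1}$ alone, but you should say so. The real problem is that this is an identity. So ``$\nu_{C^{s+1}}(\widetilde\eta)=\tilde u_{s+1}$'' is just the theorem restated, and the genericity of $P$ has already been spent. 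The step you call the main obstacle is the entire content, and neither of your routes closes it.

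\textbf{Your fallback is false as stated.} Since $\tilde u_{s+1}=\lambda_s+na+mb\in\Lambda_{s-1}$, statement~(5) of Theorem~\ref{th_caracterizacion-omega-i} says there \emph{exist} forms $\beta$ with $\nu_{C^{s+1}}(\beta)=\tilde u_{s+1}$ and $\nu_E(\beta)>\tilde t_{s+1}$. That is exactly what lets $\mu x^ay^b\omega_s+\beta$ cancel. Concretely, the form $\widetilde\omega_{s+1}$ of a standard system of $C^{s+1}$ has $\nu_E=\tilde t_{s+1}$ and $\nu_{C^{s+1}}=\infty$, since $C^{s+1}$ has the same semimodule and hence the same $\tilde t_{s+1}$. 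So no argument using only $\nu_E(\widetilde\eta)=\tilde t_{s+1}$ and the semimodule of $C^{s+1}$ can give the upper bound. Likewise, in your direct route, $\tilde u_{s+1}\in\Lambda_{s-1}$ is what \emph{permits} a competing term of the same value; it does not forbid cancellation. The hypothesis $\nu_C(\widetilde\eta)=\infty$ has to be used for the upper bound, and your sketch never uses it there.

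\textbf{One way to close the gap.}
\begin{enumerate}
\item Apply Theorem~\ref{Th:descomp-Delorme} to $\widetilde\eta$ with $\ell=s+1$, $j=s$. This is legitimate because $\nu_C(\widetilde\eta)>\tilde u_{s+1}$, and it is where $C$-invariance enters. You get $\widetilde\eta=\sum_{i=-1}^{s}h_i\omega_i$, where only $i=s$ and $i=k=k_s^*$ reach the value $\tilde u_{s+1}$.
\item The proof of Lemma~\ref{lemma_t_i*_lambda_i} only uses $\nu_E(\eta)=t_{s+1}$ and $\nu_C(\eta)>u_{s+1}$. So it gives $\nu_E(\eta\wedge\omega_i)=t_{s+1}+\lambda_i$.
\item Hence in $\eta\wedge\widetilde\eta=\sum_i h_i\,\eta\wedge\omega_i$, every term with $i\neq s,k$ has $\nu_E>nm+n+m$ (argue as in that proof). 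The two remaining terms have $\nu_E=nm+n+m$.
\item The initial parts of those two coefficients (with respect to $dx\wedge dy$) are weighted homogeneous of degree $nm$. So they are combinations of $x^m$ and $y^n$ only.
\item Say $\tilde u_{s+1}=u^m_{s+1}$. Then $\operatorname{In}(h_s)$ is a multiple of $y^{\ell^m_{s+1}}$, and $\operatorname{In}(h_k)$ is a multiple of $x^{a_{s+1}}$ with $a_{s+1}\geq1$; both values are ${}<nm$. So one initial coefficient is a nonzero multiple of $y^n$ and the other of $x^m$, and they cannot cancel.
\item Therefore $\nu_E(\eta\wedge\widetilde\eta)=nm+n+m$, i.e.\ $g(0)\neq0$. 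The case $\tilde u_{s+1}=u^n_{s+1}$ is symmetric.
\end{enumerate}
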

In particular, if $(\mathcal{E},\widetilde{\mathcal{E}})$ is a standard system for $C$ with $\mathcal{E}=(\omega_{-1},\omega_0,\omega_1,\ldots,\omega_s,\omega_{s+1})$ and $\widetilde{\mathcal{E}}=(\widetilde{\omega}_{1},\widetilde{\omega}_2,\ldots,\widetilde{\omega}_s,\widetilde{\omega}_{s+1})$, then $\{\omega_{s+1},\widetilde{\omega}_{s+1}\}$ is a Saito basis for $C$. Moreover, given two 1-forms $\eta$, $\widetilde{\eta}$ with $\nu_E(\eta)=t_{s+1}$,  $\nu_E(\widetilde{\eta})=\tilde{t}_{s+1}$, there exists a standard system  $(\mathcal{E},\widetilde{\mathcal{E}})=((\omega_{-1},\omega_0,\omega_1,\ldots,\omega_s,\omega_{s+1}), (\widetilde{\omega}_{1},\widetilde{\omega}_2,\ldots,\widetilde{\omega}_s,\widetilde{\omega}_{s+1}))$ for the curve $C$ with $\omega_{s+1}=\eta$ and $\widetilde{\omega}_{s+1}=\widetilde{\eta}$ (see Proposition 4.9 in \cite{Can-C-SS-2026}).

\section{Cuspidal Transversality Property}\label{sec:transv-polar}
By the results in Section~\ref{sec:E-dicritical}, it is interesting to know the behaviour of the jacobian curve of two foliations  since this curve appears in the expression given in Theorem~\ref{Th:nu_C} which allows to compute the
differential value of a 1-form  in terms of some divisorial values.

Along this section we will consider a cuspidal sequence $\mathcal{S}$ and a morphism \linebreak $\pi: M \to (\mathbb{C}^2,0)$ composition of point blow-ups as in equation~\eqref{eq:pi} with $E=E^{N}_N$ the cuspidal divisor. The results in Section~\ref{sec:E-dicritical} and
Theorem~\ref{Th:nu_C}  motivate next definition (see also Section 4 in \cite{Gom}).

\begin{Definition}
Let $\mathcal{F}$ and $\mathcal{G}$ two foliations in $(\mathbb{C}^2,0)$.
We say that the foliations $\mathcal{F}$ and $\mathcal{G}$ have the {\em transversality property for the divisor} $E$ (or the {\em $E$-transversality property}) if
$\mathcal{J}_{\mathcal{F},\mathcal{G}}'$ does not intersect $E$, where $\mathcal{J}_{\mathcal{F},\mathcal{G}}'$ is
 the strict transform of the jacobian curve $\mathcal{J}_{\mathcal{F},\mathcal{G}}$ by $\pi$.
\end{Definition}

If $\mathcal{F}$ and $\mathcal{G}$  have the $E$-transversality property and they satisfy the  hypothesis of Theorem~\ref{Th:nu_C} for a
curve $C \in \text{Curv}(E)$, then  the differential value $\nu_{C}(\omega_\mathcal{G})$ is equal to
$$
\nu_C(\omega_{\mathcal{G}})= \nu_E(\omega_{\mathcal{F}}\wedge \omega_{\mathcal{G}})-\nu_E(\omega_\mathcal{F}).
$$

\begin{Remark}
If the foliation $\mathcal{G}$ is non singular, the jacobian curve ${\mathcal J}_{\mathcal{F},\mathcal{G}}$ coincides with the polar curve of the foliation $\mathcal{F}$.
Properties concerning the equisingularity type of polar curves of foliations have been studied for instance in \cite{Cor-2003, Rou}. In particular,
if $\mathcal{F}$ is a generalized curve foliation such that the curve $S_{\mathcal{F}}$  of separatrices of $\mathcal{F}$ is a cusp with $S_{\mathcal{F}} \in \text{Curv}(E)$, then the strict transform of generic polar curve of $\mathcal{F}$ by $\pi$ does not intersect $E$ (see \cite{Cor-2003}).  Consequently, $\mathcal{F}$ and the foliations $\mathcal{G}_{[a:b]}$ have the $E$-transversality property for a generic $[a:b] \in \mathbb{P}_\mathbb{C}^1$, where $\mathcal{G}_{[a:b]}$ is the foliation defined by $b dx - a dy=0$.
\end{Remark}

Let us fix a cusp $C \in \text{Curv}(E)$ with Puiseux pair $(m,n)$, $2 \leq n < m$ and $\gcd(n,m)=1$. Consider the basis $\mathcal{B}=(\lambda_{-1},\lambda_0,\lambda_1,\ldots,\lambda_s)$ of the semimodule $\Lambda_C$ and fix a minimal standard basis $(\omega_{-1},\omega_0,\omega_1,\ldots,\omega_s)$ for $C$. We denote by $\mathcal{G}_\ell$ the foliation defined by $\omega_\ell=0$ for $\ell=-1,0,1,2,\ldots,s$. Recall that the foliations $\mathcal{G}_\ell$ are totally $E$-dicritical for $\ell=1,2,\ldots,s$ (see Lemma~\ref{lemma_G_i_dicritica}).

\begin{Definition}
Let $\mathcal{F}$ be a totally $E$-dicritical foliation in $(\mathbb{C}^2,0)$ with $C$ as invariant curve. The foliation $\mathcal{F}$ satisfies the $\ell$-{\em transversality property (relative to a minimal standard basis $(\omega_{-1},\omega_0,\omega_1,\ldots,\omega_s)$) for the divisor} $E$ if $\mathcal{F}$ and $\mathcal{G}_\ell$ have the transversality property for the divisor $E$.
\end{Definition}

Note that the definition of $\ell$-transversality property depends on the chosen minimal standard basis $(\omega_{-1},\omega_0,\omega_1,\ldots,\omega_s)$ for $C$. At the end of the section we will prove that this definition is independent of the choice of the minimal standard basis (see Corollary~\ref{Cor:indep-basis}). Moreover, since we also fix the divisor $E$ along the section, we will say that $\mathcal{F}$ satisfies the $\ell$-transversality property when we refer to the notion introduced in the previous definition if no confusion is possible. We will also take coordinates $(x,y)$ in $(\mathbb{C}^2,0)$ adapted to $E$.
\begin{Remark}\label{Rm:-1-0-transv}
Let us consider a totally $E$-dicritical foliation $\mathcal{F}$ defined by a $1$-form $\omega_\mathcal{F}=0$. Let us write
$$
\omega_\mathcal{F} \wedge \omega_{-1} = J_{{-1}}(x,y) dx \wedge dy; \qquad \omega_\mathcal{F} \wedge \omega_{0} = J_{0}(x,y) dx \wedge dy,
$$
where we denote $J_{{\ell}}(x,y)=J_{\mathcal{F},\mathcal{G}_{\ell}}(x,y)$  once the foliation $\mathcal{F}$ is fixed. Moreover, we will denote the jacobian curve $\mathcal{J}_\ell=\mathcal{J}_{\mathcal{F},\mathcal{G}_\ell}$ given by $J_\ell=0$  if there is no possible confusion.
By Lemmas~\ref{lemma:In-total-dicritical}, \ref{lema_curva-contacto-no-resonante} and Remark~\ref{rm:omega-1-0}, up to multiply by a constant, we have
$$
\operatorname{In}(J_{-1})=x^a y^{b-1}; \qquad \operatorname{In}(J_{0})=x^{a-1} y^{b}
$$
where $\nu_E(\omega_\mathcal{F})=na + mb$. Consequently, the strict transforms $\mathcal{J}'_{{-1}}$ and $\mathcal{J}'_{{0}}$ by $\pi$ of the curves  $\mathcal{J}_{-1}$ and $\mathcal{J}_{0}$ do not intersect the divisor $E$. Hence, any totally $E$-dicritical foliation satisfies the $\ell$-transversality property for the divisor $E$ when $\ell=-1,0$.

From the previous computations and Theorem~\ref{Th:nu_C}, we deduce that
\begin{equation}\label{eq:Rm:-1-0-transv}
\nu_E(\omega_\mathcal{F}\wedge \omega_{-1})=\nu_E(\omega_{\mathcal{F}}) + \lambda_{-1}; \qquad \nu_E(\omega_\mathcal{F}\wedge \omega_{0})=\nu_E(\omega_{\mathcal{F}}) +  \lambda_0.
\end{equation}

\end{Remark}
Let us show that, if a foliation $\mathcal{F}$ satisfies that $\ell$-transversality property for an index $\ell$, then $\mathcal{F}$ satisfies transversality property for all the previous indices. First, we prove a technical lemma.
\begin{Lemma}\label{lemma_nu_h_omega}
For any $h \in \mathbb{C}\{x,y\}$ and $\ell \in \{1,2,\ldots,s\}$, we have
\begin{itemize}
  \item[(i)] if $\nu_C(h \omega_\ell) < nm$, then $\nu_C(h)=\nu_E(h)$;
  \item[(ii)] if $ u_i < \nu_C(h\omega_\ell)< nm $, then $\nu_E(h) > u_i - \lambda_\ell$ for any $-1 \leq i\leq s$.
\end{itemize}
\end{Lemma}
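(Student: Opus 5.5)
For (i), the plan is to use that $\nu_C$ is additive, so $\nu_C(h\omega_\ell)=\nu_C(h)+\lambda_\ell$. In particular $\nu_C(h)<nm-\lambda_\ell<nm$. By Remark~\ref{Rm-nuC-nuE} we have $\nu_E(h)\le \nu_C(h)<nm$. Remark~\ref{monomio-nm} says that when $\nu_E(h)<nm$ the initial part $\operatorname{In}(h)$ is a single monomial $x^ay^b$ up to a constant. I would then substitute the parametrization $x=t^n$, $y=t^m+\cdots$ from Remark~\ref{rm:parametrizacion-coord-adapt}. Every monomial of $h$ with weight larger than $\nu_E(h)$ gives order in $t$ strictly larger than $\nu_E(h)$. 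The initial monomial gives exactly order $na+mb=\nu_E(h)$ with nonzero leading coefficient, so nothing cancels and $\nu_C(h)=\nu_E(h)$. This is exactly the remark's claim "$\nu_E(h)<nm$ implies $\nu_E(h)=\nu_C(h)$", so (i) is immediate.

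For (ii), the condition $\nu_C(h\omega_\ell)<nm$ lets me apply (i), so the inequality to prove becomes $\nu_C(h)+\lambda_\ell>u_i$. This is just the hypothesis $\nu_C(h\omega_\ell)>u_i$ rewritten, since $\nu_C(h\omega_\ell)=\nu_C(h)+\lambda_\ell$ and $\nu_C(h)=\nu_E(h)$. So $\nu_E(h)=\nu_C(h\omega_\ell)-\lambda_\ell>u_i-\lambda_\ell$. The statement allows the index range $-1\le i\le s$, but the argument never uses a property of $u_i$ beyond the hypothesis itself, so it works for any index where $u_i$ is defined.

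The only delicate point I expect is making sure the equality $\nu_E(h)=\nu_C(h)$ from Remark~\ref{monomio-nm} is legitimate. It needs the bound $\nu_E(h)<nm$, which I get through $\nu_E\le\nu_C$. It also needs the coordinates to be adapted to $E$, with $C$ an $E$-curvette, so that the weights $(n,m)$ match the orders of $x(t)$ and $y(t)$. Both of these are part of the standing setup of the section. If $h$ is a unit the statement is trivial, since then $\nu_C(h)=\nu_E(h)=0$.
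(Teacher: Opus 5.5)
Your proposal is correct and matches the paper's proof: additivity of $\nu_C$ gives $\nu_C(h)<nm-\lambda_\ell$, hence $\nu_E(h)\le\nu_C(h)<nm$, and the remark on monomial initial parts below $nm$ yields $\nu_E(h)=\nu_C(h)$; part (ii) is then just the hypothesis $\nu_C(h\omega_\ell)>u_i$ rewritten. Your two additions are sound: the check via the adapted parametrization that no cancellation occurs, and the observation that only the hypothesis on $u_i$ is used, so the unusual index range $-1\le i\le s$ is harmless.
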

\begin{proof}
Let us prove (i). If $\nu_C(h \omega_\ell) < nm$, then $\nu_C(h) < nm-\lambda_\ell$. Hence, we have that $\nu_E(h) <nm$ and, by Remark~\ref{monomio-nm}, we obtain $\nu_E(h)=\nu_C(h)$.

From the hypothesis in statement (ii), we obtain
$$
u_i < \nu_C(h\omega_\ell) = \nu_C(h) + \nu_C(\omega_\ell)=\nu_E(h) + \lambda_\ell
$$
which gives $\nu_E(h) > u_i - \lambda_\ell$ as desired.
\end{proof}

\begin{Proposition}\label{Prop:l-transv-j-transv}
Let $\mathcal{F}$ be a totally $E$-dicritical foliation with $C$ as invariant curve and fix $\ell \in \{1,2,\ldots, s\}$. If $\mathcal{F}$ satisfies the $\ell$-transversality property, then $\mathcal{F}$ satisfies the $j$-transversality property for any $j \in \{1,2,\ldots,\ell\}$.
\end{Proposition}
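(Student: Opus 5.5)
The plan is to argue by descending induction on $j$: it suffices to show that the $\ell$-transversality property implies the $(\ell-1)$-transversality property, for $\ell\geq 2$. Throughout, fix adapted coordinates $(x,y)$ and write $J_i=J_{\mathcal{F},\mathcal{G}_i}$. The first step is to reformulate transversality numerically. For every free point $P\in E$ take the cusp $C_P\in\operatorname{Cusp}_E(\mathcal{F})$; note $C=C_{P_0}$ for some $P_0$. Theorem~\ref{Th:nu_C} gives
$$
\nu_{C_P}(\omega_i)=\nu_E(\omega_\mathcal{F}\wedge\omega_i)-\nu_E(\omega_\mathcal{F})+i_P(\mathcal{J}_i',C_P').
$$
Hence $\mathcal{J}_i'$ misses the free points of $E$ if and only if $\nu_{C_P}(\omega_i)=\nu_E(\omega_\mathcal{F}\wedge\omega_i)-\nu_E(\omega_\mathcal{F})$ for every $P$. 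Evaluating at $P_0$, this is equivalent to
$$
\nu_{C_P}(\omega_i)=\lambda_i \text{ for all } P, \qquad \nu_E(\omega_\mathcal{F}\wedge\omega_i)=\nu_E(\omega_\mathcal{F})+\lambda_i .
$$
The corner points of $E$ will be handled separately, by checking from the initial part $\operatorname{In}(J_i)$ in adapted coordinates that $\mathcal{J}_i'$ cannot pass through them.

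Next I would use Delorme's decomposition (Theorem~\ref{Th:descomp-Delorme}, invoked already in the proof of Lemma~\ref{lema_eta_resonant-omega_i}) to write
$$
\omega_\ell=\sum_{i=-1}^{\ell-1}h_i\omega_i, \qquad \operatorname{In}(\omega_\ell)=\operatorname{In}(h_{\ell-1})\operatorname{In}(\omega_{\ell-1}).
$$
Wedging with $\omega_\mathcal{F}$ gives $J_\ell=\sum_i h_iJ_i$. On each $C_P$ one also has $\nu_{C_P}(\omega_\ell)\geq\min_i\{\nu_{C_P}(h_i)+\nu_{C_P}(\omega_i)\}$. By the $\ell$-transversality hypothesis, $\nu_{C_P}(\omega_\ell)=\lambda_\ell$ is independent of $P$. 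Every summand satisfies $\nu_C(h_i\omega_i)\leq\lambda_\ell<nm$, so Lemma~\ref{lemma_nu_h_omega}(i) gives $\nu_{C}(h_i)=\nu_E(h_i)$, and hence also $\nu_{C_P}(h_i)=\nu_E(h_i)$ for all $P$.

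The core step is then by contradiction. Assume $\mathcal{J}'_{\ell-1}$ meets $E$ at some $P$. By the first paragraph, $\nu_{C_P}(\omega_{\ell-1})\neq\lambda_{\ell-1}$. Suppose moreover, inductively, that the forms $\omega_i$ with $i<\ell-1$ keep their values $\lambda_i$ on $C_P$; at the base of the descent this holds for $i=-1,0$ by Remark~\ref{Rm:-1-0-transv}. I would then show that $\nu_{C_P}(\omega_{\ell-1})>\lambda_{\ell-1}$. The reason is that $\omega_{\ell-1}$ and $\omega_\mathcal{F}$ are both resonant with $\nu_E\leq t_{\ell-1}$, and Lemma~\ref{lema_eta_resonant-omega_i} together with Theorem~\ref{th_caracterizacion-omega-i}(1) bounds the differential values of such forms by $\lambda$'s that are constant in $P$. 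Given this strict inequality, the term $h_{\ell-1}\omega_{\ell-1}$ on $C_P$ takes a value strictly larger than on $C$. The remaining terms take the same values as on $C$.

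The main obstacle is to conclude from here that $\nu_{C_P}(\omega_\ell)\neq\lambda_\ell$, which would contradict $\ell$-transversality. On $C$ the value $\lambda_\ell$ arises from a cancellation among the summands, since $\lambda_\ell>u_\ell$ is not a minimum of the individual values. To control it I would use the increasing property $\lambda_\ell>u_\ell$, the inequalities of Lemma~\ref{lema_t_i_u_i}(iii), and Lemma~\ref{lemma_nu_h_omega}(ii) to bound $\nu_E(h_i)$ from below. The goal is to show that changing only the $h_{\ell-1}\omega_{\ell-1}$ contribution either forces $\nu_{C_P}(\omega_\ell)$ to lie in $\Lambda_{\ell-1}$, or produces a form with divisorial value $t_\ell$ whose value on $C_P$ exceeds $\lambda_\ell$. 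Either outcome contradicts Theorem~\ref{th_caracterizacion-omega-i} applied to $C_P$, because the first $\ell$ elements of the basis agree. If this direct comparison proves too delicate, I would instead compare the identity $\nu_E(\omega_\mathcal{F}\wedge\omega_\ell)=\nu_E(\omega_\mathcal{F})+\lambda_\ell$ with the expansion $J_\ell=\sum h_iJ_i$, aiming to show that $\operatorname{In}(J_{\ell-1})$ must be a monomial.
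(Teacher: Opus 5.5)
Your idea of testing the Delorme summands of $\omega_\ell$ on the other cusps $C_P\in\operatorname{Cusp}_E(\mathcal{F})$ is viable, using that $\ell$-transversality forces $\nu_{C_P}(\omega_\ell)=\lambda_\ell$ for every free point $P$. The paper argues differently: it stays on $C$ and compares the divisorial values $\nu_E(h_i\omega_i\wedge\omega_{\mathcal F})$ in Lemma~\ref{lemma-desigualdades}. As written, however, your argument does not close.

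\textbf{The induction is circular.} You reduce to $\ell\Rightarrow\ell-1$ by descending induction. Yet your core step assumes $\nu_{C_P}(\omega_i)=\lambda_i$ for all $i<\ell-1$, i.e.\ $i$-transversality for $1\le i\le\ell-2$, and a descending scheme has not produced that yet. For $\ell=3$, proving $2$ needs $1$, which you would deduce from $2$. At level $\ell-1$ the Delorme decomposition involves every $\omega_i$ with $i\le\ell-1$, so this cannot be avoided there. You need an ascending induction on $j$, with Theorem~\ref{Th:descomp-Delorme} truncated at level $j$, $\omega_\ell=\sum_{i=-1}^{j}h_i\omega_i$. The hypotheses are then $\ell$-transversality plus $p$-transversality for $p<j$ (and $p=-1,0$ by Remark~\ref{Rm:-1-0-transv}), which is how the paper proceeds.

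\textbf{The step you call the main obstacle is left open.} Your detour through $\nu_{C_P}(\omega_{\ell-1})>\lambda_{\ell-1}$ is unsupported: Lemma~\ref{lema_eta_resonant-omega_i} and Theorem~\ref{th_caracterizacion-omega-i}(1) only bound differential values from above. It is also unnecessary, because there is no cancellation left to control on $C_P$.
\begin{itemize}
\item On $C$, by items (iii)--(iv) of Theorem~\ref{Th:descomp-Delorme}, the minimum $v_j$ of the $\nu_C(h_i\omega_i)$ is attained by exactly two summands, $h_j\omega_j$ and $h_k\omega_k$ with $k<j$, and $v_j\le u_\ell<\lambda_\ell<nm$.
\item On $C_P$ you still have $\nu_{C_P}(h_k\omega_k)=v_j$ and $\nu_{C_P}(h_j)=\nu_C(h_j)$, by Remark~\ref{monomio-nm}.
\item Every other summand stays $>v_j$. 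If $\nu_E(h_i)<nm$, then $\nu_{C_P}(h_i)=\nu_C(h_i)$ and $\nu_{C_P}(\omega_i)=\lambda_i$ by induction. If $\nu_E(h_i)\ge nm$, its value exceeds $nm$.
\end{itemize}
This case split is required: your claim that every summand has $\nu_C(h_i\omega_i)\le\lambda_\ell$ is false, since Delorme only gives lower bounds for the non-minimal terms.

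Hence if $\nu_{C_P}(\omega_j)\neq\lambda_j$, in either direction, the minimum on $C_P$ is attained by a single summand. Then $\nu_{C_P}(\omega_\ell)\le v_j<\lambda_\ell$, contradicting $\ell$-transversality. So $\nu_{C_P}(\omega_j)=\lambda_j$ for all free $P$. By Theorem~\ref{Th:nu_C}, $i_P(\mathcal{J}_j',C_P')=\lambda_j-\nu_E(\omega_{\mathcal F}\wedge\omega_j)+\nu_E(\omega_{\mathcal F})$ is then independent of $P$. It vanishes because $\mathcal{J}_j'$ meets $E$ in only finitely many points.

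This also fixes a smaller slip. From ``$\mathcal{J}'_{\ell-1}$ meets $E$ at $P$'' you may only infer $\nu_{C_Q}(\omega_{\ell-1})\neq\lambda_{\ell-1}$ at some free point $Q$, not necessarily at $P$. With both repairs, your route is complete and gives the vanishing of $i_P$ at every free point of $E$ at once.
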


In order to prove the proposition above, we will write the 1-form $\omega_\ell$ using Delorme's decomposition  given in Theorem~\ref{Th:descomp-Delorme}.
Given any $j \in \{0,1,\ldots,\ell-1\}$, we can write
\begin{equation}\label{eq:omega-ell}
\omega_\ell=\sum_{i=-1}^{j} h_i \omega_i
\end{equation}
with $\nu_C(h_j \omega_j)= \nu_C(h_k \omega_k)$ for some $k  \in \{-1,0,1,\ldots, j-1\}$ and  $\nu_C(h_j \omega_j)= \nu_C(h_k \omega_k)< \nu_C(h_i \omega_i)$ for any $i \neq k$, with $-1 \leq i \leq j-1$. Note also that $\nu_C(h_j \omega_j) <\nu_C(\omega_\ell)=\lambda_\ell$.

The proof of Proposition~\ref{Prop:l-transv-j-transv} follows from the inequalities given in the following lemma:
\begin{Lemma}\label{lemma-desigualdades}
Let $\mathcal{F}$ be a totally $E$-dicritical foliation, defined by $\omega=0$, with $C$ as invariant curve. Assume that the 1-form $\omega_\ell$ is written as in expression~\eqref{eq:omega-ell} with the properties above given by Delorme's decomposition. If the foliation $\mathcal{F}$ satisfies the $\ell$-transversality property  and the $p$-transversality property for $1 \leq p \leq j-1$, then the following inequalities hold
\begin{align}
   \nu_E(h_j \omega_j \wedge \omega) & \leq  \nu_E(h_k \omega_k \wedge \omega) \label{hj-hk}\\
  \nu_E(h_j \omega_j \wedge \omega) & <  \nu_E( \omega_\ell \wedge \omega) \label{hj-omega-ell}\\
  \nu_E(h_j \omega_j \wedge \omega) & < \nu_E(h_{i} \omega_{i} \wedge \omega) \label{hj-hi}
\end{align}
for $i \neq k$, with $-1 \leq i \leq j-1$.
\end{Lemma}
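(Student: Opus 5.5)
The plan is to compute every term through Theorem~\ref{Th:nu_C}, or through its extension to non-saturated forms in Remark~\ref{Th:nu_C_1-forma-no-foliacion}, applied to the curve $C\in\operatorname{Curv}_E(\mathcal{F})$. For any 1-form $\eta$ this gives
$$
\nu_E(\eta\wedge\omega)=\nu_C(\eta)+\nu_E(\omega)-i_P(\mathcal{J}'_{\eta,\omega},C'),
$$
and hence $\nu_E(\eta\wedge\omega)\le \nu_C(\eta)+\nu_E(\omega)$, with equality exactly when the strict transform of the jacobian curve of $\eta$ and $\omega$ avoids the point $P$. This one-sided bound, combined with the transversality hypotheses, drives all three inequalities.

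\textbf{The term $\omega_\ell$.} Because $\mathcal{F}$ satisfies the $\ell$-transversality property, $\nu_E(\omega_\ell\wedge\omega)=\lambda_\ell+\nu_E(\omega)$.

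\textbf{The terms $h_i\omega_i$ with $i\le j-1$.} The index $i$ is one of $-1,0$ (Remark~\ref{Rm:-1-0-transv}) or satisfies $1\le i\le j-1$, where the $p$-transversality hypothesis applies. In all cases the strict transform of $\mathcal{J}_{\mathcal{F},\mathcal{G}_i}$ misses $E$, so
$$
\nu_E(h_i\omega_i\wedge\omega)=\nu_E(h_i)+\nu_E(\omega_i\wedge\omega)=\nu_E(h_i)+\lambda_i+\nu_E(\omega).
$$
The first equality uses that $\nu_E$ of a function times a 2-form is additive, by the 2-form lemma and the monomial description in Lemma~\ref{lemma:nu_E_ij}. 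The next step is to replace $\nu_E(h_i)$ by $\nu_C(h_i)$. For this I would use the Delorme bounds: the values $\nu_C(h_i\omega_i)$ lie below $\lambda_\ell<nm$, because $\lambda_\ell\le \tilde u$-type bounds and the conductor considerations keep basis elements under $nm$. Lemma~\ref{lemma_nu_h_omega}(i) then gives $\nu_C(h_i)=\nu_E(h_i)$, so
$$
\nu_E(h_i\omega_i\wedge\omega)=\nu_C(h_i\omega_i)+\nu_E(\omega)\quad\text{for } i\le j-1 .
$$

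\textbf{The term $h_j\omega_j$.} Here no transversality is assumed, so only the one-sided bound is available:
$$
\nu_E(h_j\omega_j\wedge\omega)\le \nu_C(h_j\omega_j)+\nu_E(\omega).
$$
The Delorme properties state that $\nu_C(h_j\omega_j)=\nu_C(h_k\omega_k)<\nu_C(h_i\omega_i)$ for $i\ne k$, and that $\nu_C(h_j\omega_j)<\lambda_\ell$. Substituting, these give \eqref{hj-hk} directly, \eqref{hj-omega-ell} strictly, and \eqref{hj-hi} strictly.

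\textbf{Main obstacle.} The delicate point is justifying $\nu_C(h_i\omega_i)<nm$, which is needed to invoke Lemma~\ref{lemma_nu_h_omega}(i) for $i\le j-1$. I would try to obtain it from $\nu_C(h_i\omega_i)\le\lambda_\ell$ together with $\lambda_\ell<nm$. The bound $\lambda_\ell<nm$ should follow from $\lambda_\ell\le c(\Lambda)+\ldots$ and $\tilde u_{s+1}<nm$, via Lemma~\ref{lema_conductor_u_s+1} and Lemma~\ref{lema_t_i_u_i}. If the Delorme terms for $i\ne k$ can have large values, I would avoid this step. Instead, I would argue directly from $\nu_E(h_i)\le\nu_C(h_i)$ that $\nu_E(h_i\omega_i\wedge\omega)\le\nu_C(h_i\omega_i)+\nu_E(\omega)$ always holds. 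This is the wrong direction for \eqref{hj-hi}, so in that case the equality must come from the Newton-polygon description: the $E$-initial part of $h_i$ equals that of a monomial when $\nu_E(h_i)<nm$, by Remark~\ref{monomio-nm}.
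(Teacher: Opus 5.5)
There is a gap in \eqref{hj-hi}. Your arguments for \eqref{hj-hk} and \eqref{hj-omega-ell} are fine and essentially the paper's. The one-sided bound $\nu_E(h_j\omega_j\wedge\omega)\le\nu_C(h_j\omega_j)+\nu_E(\omega)$ from Remark~\ref{Th:nu_C_1-forma-no-foliacion} even spares you the equality $\nu_E(h_j)=\nu_C(h_j)$. For $i=k$, the replacement $\nu_E(h_k)=\nu_C(h_k)$ is legitimate because $\nu_C(h_k\omega_k)=\nu_C(h_j\omega_j)<\lambda_\ell<nm$.

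The problem is your claim that all $\nu_C(h_i\omega_i)$, $i\le j-1$, lie below $\lambda_\ell$. Delorme's decomposition gives no such upper bound. For $i\neq k$ it only gives $\nu_C(h_i\omega_i)>\nu_C(h_j\omega_j)$, and these terms can be arbitrarily large (some $h_i$ may even vanish). When $\nu_E(h_i)\ge nm$, the equality $\nu_E(h_i)=\nu_C(h_i)$ can genuinely fail; for instance a reduced equation $f$ of $C$ has $\nu_E(f)=nm$ and $\nu_C(f)=\infty$. So the identity $\nu_E(h_i\omega_i\wedge\omega)=\nu_C(h_i\omega_i)+\nu_E(\omega)$ you are aiming for is not available in general. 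Your fallback through Remark~\ref{monomio-nm} only covers $\nu_E(h_i)<nm$, and the case $\nu_E(h_i)\ge nm$ is left open.

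The missing step is that in this case you do not need the equality at all, only a lower bound. If $\nu_E(h_i)\ge nm$, then $\nu_E(h_i)>\lambda_\ell>\nu_C(h_j\omega_j)$. Using transversality for $i$ and $\lambda_i>0$, you get
\[
\nu_E(h_i\omega_i\wedge\omega)=\nu_E(h_i)+\lambda_i+\nu_E(\omega)>\nu_C(h_j\omega_j)+\nu_E(\omega)\ge\nu_E(h_j\omega_j\wedge\omega).
\]
This is exactly the case split ($\nu_C(h_i)<nm$ versus $\nu_C(h_i)\ge nm$) in the paper's proof.

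Also, the inequality $\lambda_\ell<nm$, which you try to extract from the conductor of $\Lambda_C$, has a one-line justification. Since $\lambda_\ell\notin\Lambda_{\ell-1}\supseteq(n+\Gamma_C)\cup(m+\Gamma_C)=\Gamma_C\smallsetminus\{0\}$, we get $\lambda_\ell<c(\Gamma_C)=(n-1)(m-1)<nm$.
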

\begin{proof}
Since the foliation $\mathcal{F}$ satisfies the transversality property with $\mathcal{G}_\ell$ and $\mathcal{G}_k$, we have that
\begin{align}
  \lambda_\ell & = \nu_E(\omega \wedge \omega_\ell) - \nu_E(\omega),  \label{transv-ell}\\
  \lambda_k &  = \nu_E(\omega \wedge \omega_k) - \nu_E(\omega). \label{transv-k}
\end{align}
By Theorem~\ref{Th:nu_C}, the differential values $\lambda_j$ and $\lambda_i$ can be computed as
\begin{align}
  \lambda_j & =  \nu_E(\omega \wedge \omega_j) - \nu_E(\omega) + i_P(\mathcal{J}_{\mathcal{F},\mathcal{G}_j}',C') \label{lambda_j}, \\
  \lambda_i & =  \nu_E(\omega \wedge \omega_i) - \nu_E(\omega)   \label{lambda_i}
\end{align}
with $1 \leq i \leq j-1$.
Note that the hypothesis $\nu_C(h_j \omega_j)= \nu_C(h_k \omega_k)$   implies that
\begin{equation}\label{j-k-lambda-nuC}
  \nu_C(h_j) + \lambda_j = \nu_C(h_k) +\lambda_k.
\end{equation}
Moreover, since $ \nu_C(h_j \omega_j)= \nu_C(h_k \omega_k) < \nu_C(\omega_\ell)=\lambda_\ell <nm$, then by Lemma~\ref{lemma_nu_h_omega}, we obtain
\begin{equation}\label{ec:nu_C_E}
\nu_C(h_j)=\nu_E(h_j); \qquad \nu_C(h_k)=\nu_E(h_k).
\end{equation}
The properties of Delorme's decomposition imply that
\begin{equation}\label{lambda_i-lambda_j}
\nu_C(h_j) + \lambda_j   < \nu_C(h_i) + \lambda_i.
\end{equation}
Let us prove inequality~\eqref{hj-hk}:
\begin{align*}
  \nu_E(h_k \omega_k \wedge \omega) & =\nu_E(h_k) + \nu_E(\omega_k \wedge \omega) \\
   & \stackrel{\eqref{transv-k}}{=} \nu_E(h_k) + \lambda_k + \nu_E(\omega) \\
   & \stackrel{\eqref{j-k-lambda-nuC}}{=}  \nu_E(h_k) + \nu_C(h_j)+ \lambda_j - \nu_C(h_k) +  \nu_E(\omega) \\
    & \stackrel{\eqref{ec:nu_C_E}}{=} \nu_E(h_j)+ \lambda_j +\nu_E(\omega) \\
    & \stackrel{\eqref{lambda_j}}{=}  \nu_E(h_j) + \nu_E(\omega \wedge \omega_j) - \nu_E(\omega) + i_P(\mathcal{J}_{\mathcal{F},\mathcal{G}_j}',C') +\nu_E(\omega) \\
    & \geq \nu_E(h_j \omega \wedge \omega_j)
\end{align*}

Now we compute $\nu_E( \omega_\ell \wedge \omega)$ in order to obtain inequality~\eqref{hj-omega-ell}. We have
\begin{align*}
  \nu_E( \omega_\ell \wedge \omega) & \stackrel{\eqref{transv-ell}}{=} \lambda_\ell + \nu_E(\omega) \\
   & > \nu_C(h_j \omega_j) +  \nu_E(\omega) \\
   & = \nu_C(h_j) + \nu_C(\omega_j) + \nu_E(\omega) \\
   & \stackrel{\eqref{lambda_j}}{=} \nu_C(h_j)  + \nu_E(\omega \wedge \omega_j) - \nu_E(\omega) + i_P(\mathcal{J}_{\mathcal{F},\mathcal{G}_j}',C') + \nu_E(\omega) \\
   & \stackrel{\eqref{ec:nu_C_E}}{=} \nu_E(h_j)  + \nu_E(\omega \wedge \omega_j)+ i_P(\mathcal{J}_{\mathcal{F},\mathcal{G}_j}',C') \\
   & \geq \nu_E(h_j \omega \wedge \omega_j)
\end{align*}
Finally, let us prove inequality~\eqref{hj-hi}. We have to consider two cases: $\nu_C(h_i)<nm$ and $\nu_C(h_i) \geq nm$. If $\nu_C(h_i)<nm$, then $\nu_E(h_i)=\nu_C(h_i)$ and we have
\begin{align*}
  \nu_E(h_i \omega_i \wedge \omega) & =\nu_E(h_i) + \nu_E(\omega_i \wedge \omega) = \nu_C(h_i) +  \nu_E(\omega_i \wedge \omega)\\
    & \stackrel{\eqref{lambda_i}}{=} \nu_C(h_i) + \lambda_i + \nu_E(\omega) \\
  & \stackrel{\eqref{lambda_i-lambda_j}}{>} \nu_C(h_j) + \lambda_j + \nu_E(\omega) \\
  & \stackrel{\eqref{lambda_j}}{=} \nu_C(h_j)+ \nu_E(\omega \wedge \omega_j) - \nu_E(\omega) + i_P(\mathcal{J}_{\mathcal{F},\mathcal{G}_j}',C') + \nu_E(\omega) \\
  &  \stackrel{\eqref{ec:nu_C_E}}{=} \nu_E(h_j) + \nu_E(\omega \wedge \omega_j) + i_P(\mathcal{J}_{\mathcal{F},\mathcal{G}_j}',C')\\
  & \geq  \nu_E(h_j \omega \wedge \omega_j).
 \end{align*}
Consider now the case $\nu_C(h_i) \geq nm$. Note that we also have $\nu_E(h_i) \geq nm$ and hence $\nu_E(h_i) > \lambda_\ell > \nu_C(h_j \omega_j)$. Thus, we have
\begin{align*}
  \nu_E(h_i \omega_i \wedge \omega) & =\nu_E(h_i) + \nu_E(\omega_i \wedge \omega) \\
    & > \lambda_\ell  + \nu_E(\omega_i \wedge \omega) \\
   & > \nu_C(h_j \omega_j) + \nu_E(\omega_i \wedge \omega) \\
  & = \nu_C(h_j) + \lambda_j+ \nu_E(\omega_i \wedge \omega) \\
   & \stackrel{\eqref{lambda_j}}{=}  \nu_C(h_j) + \nu_E(\omega \wedge \omega_j) -\nu_E(\omega)  + i_P(\mathcal{J}_{\mathcal{F},\mathcal{G}_j}',C') + \nu_E(\omega_i \wedge \omega) \\
  &\stackrel{\eqref{ec:nu_C_E},\eqref{lambda_i}}{\geq} \nu_E(h_j) + \nu_E(\omega \wedge \omega_j) -\nu_E(\omega)  + i_P(\mathcal{J}_{\mathcal{F},\mathcal{G}_j}',C') + \lambda_i +\nu_E(\omega) \\
     & \geq   \nu_E(h_j \omega \wedge \omega_j).
\end{align*}
This ends the proof of the three inequalities.
\end{proof}
Let us explain how we obtain Proposition~\ref{Prop:l-transv-j-transv} from the above inequalities.
\begin{proof}[Proof of Proposition~\ref{Prop:l-transv-j-transv}] Let $\omega_\mathcal{F}$ be a 1-form such that $\mathcal{F}$ is defined by $\omega_\mathcal{F}=0$.
If $\ell=1$, the result holds. Let us prove the result by induction on $j$ with $1 \leq j < \ell$.
Assume that $\ell \geq 2$.  Let us prove that $\mathcal{F}$ satisfies the $1$-transversality property. By Delorme's decomposition (Theorem~\ref{Th:descomp-Delorme}), we can write
\begin{equation}\label{omega_ell-Delorme}
\omega_\ell = h_1 \omega_1 + h_0 \omega_0 + h_{-1}\omega_{-1}
\end{equation}
where there exists a unique index $k \in \{0,-1\}$ with
\begin{equation*}
  \nu_{C}(h_1 \omega_1)=\nu_{C}(h_k \omega_k) < \lambda_\ell .
\end{equation*}
For instance, we can assume that $k=0$ and hence we have that
\begin{equation*}
\nu_C(h_1) + \lambda_1 = \nu_C(h_0) + \lambda_0,
\end{equation*}
and by Lemma~\ref{lemma_nu_h_omega}, we get $\nu_E(h_1)=\nu_C(h_1)$  and $\nu_E(h_0)=\nu_C(h_0)$. Consequently, we obtain
\begin{equation}\label{Delorme_nu_E_1-0}
\nu_E(h_1) + \lambda_1 = \nu_E(h_0) + \lambda_0.
\end{equation}
By Theorem~\ref{Th:nu_C},   we get
\begin{align}
  \lambda_\ell & = \nu_E(\omega_{\mathcal{F}} \wedge \omega_\ell)-\nu_E(\omega_\mathcal{F}) \notag \\
  \lambda_1 & = \nu_E(\omega_{\mathcal{F}} \wedge \omega_1)-\nu_E(\omega_\mathcal{F}) + i_P(\mathcal{J}_{\mathcal{F},\mathcal{G}_1}',C') \label{ec:lambda1}
\end{align}
where the first equality follows from the $\ell$-transversality property of the foliation $\mathcal F$.
Moreover, we also have
\begin{equation}\label{transv-01}
\lambda_i= \nu_E(\omega_{\mathcal{F}} \wedge \omega_i)-\nu_E(\omega_\mathcal{F})
\end{equation}
for $i=-1,0$ by Remark~\ref{Rm:-1-0-transv}. From equation~\eqref{omega_ell-Delorme}, we obtain that
$$
  \omega_\ell \wedge \omega_\mathcal{F}= h_1 \omega_1 \wedge \omega_\mathcal{F} + h_0 \omega_0 \wedge \omega_\mathcal{F} + h_{-1}\omega_{-1}\wedge \omega_\mathcal{F}.
$$
By Lemma~\ref{lemma-desigualdades} applied to the expression of $\omega_\ell$ given in \eqref{omega_ell-Delorme}, we have the following inequalities
\begin{align*}
   \nu_E(h_1 \omega_1 \wedge \omega_\mathcal{F}) & \leq  \nu_E(h_0 \omega_0 \wedge \omega_\mathcal{F}) \\ 
  \nu_E(h_1 \omega_1 \wedge \omega_\mathcal{F}) & <  \nu_E( \omega_\ell \wedge \omega_\mathcal{F}) \\ 
  \nu_E(h_1 \omega_1 \wedge \omega_\mathcal{F}) & < \nu_E(h_{-1} \omega_{-1} \wedge \omega_\mathcal{F}) \\ 
\end{align*}
and we get that
$$
\nu_E(h_1 \omega_1 \wedge \omega_\mathcal{F}) =  \nu_E(h_0 \omega_0 \wedge \omega_\mathcal{F}).
$$
Consequently,
$$
\nu_E(h_1) + \nu_E(\omega_1 \wedge \omega_\mathcal{F}) =  \nu_E(h_0) + \nu_E( \omega_0 \wedge \omega_\mathcal{F}).
$$
and by equations~\eqref{ec:lambda1} and \eqref{transv-01}, we obtain
$$
\nu_E(h_1) +\lambda_1 - i_P(\mathcal{J}_{\mathcal{F},\mathcal{G}_1}',C') =  \nu_E(h_0) + \lambda_0.
$$
The equality given in~\eqref{Delorme_nu_E_1-0}, implies $i_P(\mathcal{J}_{\mathcal{F},\mathcal{G}_1}',C')=0$ and $\mathcal{F}$ satisfies the $1$-transversality property.

Let us prove the general case. By induction hypothesis assume now that the result holds for $j < \ell-1$. Let us prove that $\mathcal{F}$ satisfies the $(\ell-1)$-transversality property.

By Delorme's decomposition theorem (Theorem~\ref{Th:descomp-Delorme}), we can write $\omega_\ell$ as
\begin{equation}\label{omega_ell_caso_general}
  \omega_\ell = \sum_{i=-1}^{\ell-1} h_i \omega_i
\end{equation}
with $\nu_C(h_{\ell-1} \omega_{\ell-1})= \nu_C(h_k \omega_k)$ for some $k<\ell-1$ and $\nu_C(h_i \omega_i) > \nu_C(h_j \omega_j)$ for all $i \neq j,k$. Note also that
$\nu_C(h_{\ell-1} \omega_{\ell-1}) < \lambda_\ell=\nu_C(\omega_\ell)$ and hence $\nu_E(h_{\ell-1})=\nu_C(h_{\ell-1})$ and $\nu_E(h_k)=\nu_C(h_k)$ by Lemma~\ref{lemma_nu_h_omega}. Thus, we have
\begin{equation}\label{ec_nu_E-caso_general}
  \nu_E(h_{\ell-1})+ \lambda_{\ell-1} = \nu_E(h_k) + \lambda_k.
\end{equation}
By hypothesis, $\mathcal{F}$ satisfies the transversality property with $\mathcal{G}_\ell$ and $\mathcal{G}_k$, and hence we have the inequalities given in Lemma~\ref{lemma-desigualdades} applied to the decomposition given in \eqref{omega_ell_caso_general} which are
\begin{align*}
   \nu_E(h_{\ell-1} \omega_{\ell-1} \wedge \omega_\mathcal{F}) & \leq  \nu_E(h_k \omega_k \wedge \omega_\mathcal{F})  \\ 
  \nu_E(h_{\ell-1} \omega_{\ell-1} \wedge \omega_\mathcal{F}) & <  \nu_E( \omega_\ell \wedge \omega_\mathcal{F}) \\
  \nu_E(h_{\ell-1} \omega_{\ell-1} \wedge \omega_\mathcal{F}) & < \nu_E(h_{i} \omega_{i} \wedge \omega_\mathcal{F}) 
\end{align*}
for $i \neq k, {\ell-1}$, with $-1 \leq i < \ell-1$. Since
$$\omega_\ell \wedge \omega_\mathcal{F}  = \sum_{i=-1}^{\ell-1} h_i \omega_i \wedge \omega_\mathcal{F}$$
we conclude that
$$
\nu_E(h_{\ell-1} \omega_{\ell-1}\wedge \omega_\mathcal{F}) =\nu_E(h_k \omega_k \wedge \omega_\mathcal{F}).
$$
As in the previous case, taking into account that $\mathcal{F}$ satisfies the $k$-transversality property, we get
$$
\nu_E(h_{\ell-1}) + \lambda_{\ell-1} -  i_P(\mathcal{J}_{\mathcal{F},\mathcal{G}_{\ell-1}}',C')= \nu_E(h_k) + \lambda_k
$$
Then $i_P(\mathcal{J}_{\mathcal{F},\mathcal{G}_{\ell-1}}',C')=0$ as a consequence of equality~\eqref{ec_nu_E-caso_general}. This implies that $\mathcal{F}$ satisfies the $(\ell-1)$-transversality property as desired.
\end{proof}
Let us show how the transversality property allows to determine some analytic invariants of the invariant cusps of a totally dicritical foliation. More precisely, let us show that, if a totally $E$-dicritical foliation $\mathcal{F}$ satisfies the  $\ell$-transversality property, then the first  $\ell+2$ elements of the basis of the semimodule of differential values for any  cusp in $\operatorname{Cusp}_E(\mathcal{F})$ coincide.

\begin{Theorem}\label{th:valores-dif-tilde-C}
Let $\mathcal{F}$ be a totally $E$-dicritical foliation with $C$ as invariant curve and assume that $\mathcal{F}$ satisfies the $\ell$-transversality property for some $\ell \in \{1,2,\ldots,s\}$. Let $\widetilde{C}$ be any curve in $\operatorname{Cusp}_E(\mathcal{F})$ and $\mathcal{B}_{\widetilde{C}}=(\lambda_{-1}^{\widetilde{C}},\lambda_{0}^{\widetilde{C}},\lambda_{1}^{\widetilde{C}},\ldots,\lambda_{\ell}^{\widetilde{C}}, \lambda_{\ell+1}^{\widetilde{C}}, \ldots, \lambda_{\tilde{s}}^{\widetilde{C}})$ be the basis of the semimodule $\Lambda_{\widetilde{C}}$ of $\widetilde{C}$ with $\tilde{s}=s(\widetilde{C})$. Then $\tilde{s} \geq \ell$ and
$$
\lambda_{j}^{\widetilde{C}} = \lambda_{j}, \qquad \text{ for } j=-1,0,1,\ldots, \ell.
$$
In particular, $\Lambda_j^{\widetilde{C}}=\Lambda_j$ for $j=-1,0,1,\ldots, \ell$, and a minimal standard basis for the curve $\widetilde{C}$ is given by
$$
(\omega_{-1},\omega_0,\omega_1,\ldots,\omega_\ell,\omega_{\ell+1}^{\widetilde{C}}, \ldots, \omega_{\tilde{s}}^{\widetilde{C}} ).
$$
\end{Theorem}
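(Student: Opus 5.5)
By Proposition~\ref{Prop:l-transv-j-transv}, $\mathcal{F}$ satisfies the $j$-transversality property for every $j\in\{1,\ldots,\ell\}$, and by Remark~\ref{Rm:-1-0-transv} also for $j=-1,0$. The first step is to show that $\nu_{\widetilde{C}}(\omega_j)=\lambda_j$ for $-1\le j\le \ell$. For this I apply Theorem~\ref{Th:nu_C} twice to the pair $\mathcal{F},\mathcal{G}_j$: once with the curve $C$ and once with $\widetilde{C}$, both of which lie in $\operatorname{Cusp}_E(\mathcal{F})$.

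Theorem~\ref{Th:nu_C} requires that $\mathcal{F}$ and $\mathcal{G}_j$ have no common separatrix. For $C$ this holds because $\nu_C(\omega_j)=\lambda_j<\infty$. For a general common separatrix I argue as follows. Since $\mathcal{J}'_{\mathcal{F},\mathcal{G}_j}$ does not meet $E$, the $E$-transversality gives $i_P(\mathcal{J}',\widetilde{C}')=0$ at the point $P=\widetilde{C}'\cap E$. Hence $\widetilde{C}$ is not a component of the jacobian curve. On the other hand, a common invariant curve of $\mathcal{F}$ and $\mathcal{G}_j$ is always contained in the tangency curve. So $\widetilde{C}$ is not invariant by $\mathcal{G}_j$, and $\nu_{\widetilde{C}}(\omega_j)<\infty$. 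If needed, Remark~\ref{Th:nu_C_1-forma-no-foliacion} handles the non-saturated version of the formula.

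With this in place, both applications of Theorem~\ref{Th:nu_C} have vanishing intersection term. This gives
\[
\nu_{\widetilde{C}}(\omega_j)=\nu_E(\omega_\mathcal{F}\wedge\omega_j)-\nu_E(\omega_\mathcal{F})=\nu_C(\omega_j)=\lambda_j .
\]
Consequently $\lambda_{-1},\ldots,\lambda_\ell\in\Lambda_{\widetilde{C}}$. Since $\widetilde{C}$ is equisingular to $C$, we also have $\Gamma_{\widetilde{C}}=\Gamma_C=\Gamma$, and therefore $\Lambda_\ell\subseteq\Lambda_{\widetilde{C}}$.

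The second, combinatorial step is to show that $\lambda_j$ is exactly the $j$-th basis element of $\Lambda_{\widetilde{C}}$. I proceed by induction on $j$, assuming $\lambda_i^{\widetilde{C}}=\lambda_i$ for $i<j$, so that $\Lambda_{j-1}^{\widetilde{C}}=\Lambda_{j-1}$. Set $\lambda_j^{\widetilde{C}}=\min(\Lambda_{\widetilde{C}}\smallsetminus\Lambda_{j-1})$, which is at most $\lambda_j$ because $\lambda_j\notin\Lambda_{j-1}$. The task is to rule out a strictly smaller new value. Suppose $\eta$ is a 1-form with $\nu_{\widetilde{C}}(\eta)=\lambda'<\lambda_j$ and $\lambda'\notin\Lambda_{j-1}$.

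To compare $\eta$ with $C$, I use the symmetry of the situation. The foliation $\mathcal{F}$ is totally $E$-dicritical and has both $C$ and $\widetilde{C}$ as invariant curves. The forms $\omega_{-1},\ldots,\omega_{j-1}$ form the beginning of a minimal standard basis for $\widetilde{C}$, with the same values as for $C$. The plan is to run Theorem~\ref{th_caracterizacion-omega-i} for $\widetilde{C}$. Part (3) gives $\nu_E(\eta)=\nu_E(x^ay^b\omega_j^{\widetilde{C}})$, and part (2) gives $\nu_E(\omega_j^{\widetilde{C}})=t_j^{\widetilde{C}}$. The critical value $t_j$ is computed from $\lambda_{-1},\ldots,\lambda_{j-1}$ and the axes of $\Lambda_{j-1}$, so $t_j^{\widetilde{C}}=t_j$.

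Then part (1) of Theorem~\ref{th_caracterizacion-omega-i}, applied to $\widetilde{C}$ and to $C$, reads
\[
\lambda_j^{\widetilde{C}}=\sup\{\nu_{\widetilde{C}}(\omega):\nu_E(\omega)=t_j\},\qquad \lambda_j=\sup\{\nu_C(\omega):\nu_E(\omega)=t_j\}.
\]
Take $\omega=\omega_j$: it has $\nu_E(\omega_j)=t_j$ by Lemma~\ref{lemma_G_i_dicritica}, and by the first step $\nu_{\widetilde{C}}(\omega_j)=\lambda_j$. This gives $\lambda_j^{\widetilde{C}}\ge\lambda_j$, and together with the upper bound above, $\lambda_j^{\widetilde{C}}=\lambda_j$. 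This also forces $\tilde s\ge j$. The subtle point here is that the $\sup$ characterization is only valid when $\Lambda_{\widetilde{C}}$ actually has a $j$-th generator. If $\tilde s<j$, then $\Lambda_{\widetilde{C}}=\Lambda_{j-1}$, which contradicts $\lambda_j\in\Lambda_{\widetilde{C}}\smallsetminus\Lambda_{j-1}$. So $\tilde s\ge j$ holds in every case.

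Once all $\lambda_j^{\widetilde{C}}=\lambda_j$ are established, the statements that $\Lambda_j^{\widetilde{C}}=\Lambda_j$ and that $(\omega_{-1},\ldots,\omega_\ell)$ extends to a minimal standard basis for $\widetilde{C}$ follow immediately.

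The main obstacle is justifying the use of Theorem~\ref{th_caracterizacion-omega-i} for $\widetilde{C}$ in the correct coordinates. One needs $\widetilde{C}$ to have $E$ as its cuspidal divisor, which holds because $\pi$ is the minimal reduction of singularities for all curves in $\operatorname{Cusp}_E(\mathcal{F})$, so the adapted coordinates are the same. One also needs the critical value $t_j^{\widetilde{C}}$ to depend only on $\Lambda_{j-1}$. If that route proves delicate, I would instead fall back on the sandwich $\lambda_j\le\lambda_j^{\widetilde{C}}\le\lambda_j$ obtained directly from the minimality in the definition of the basis.
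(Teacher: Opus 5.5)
Your proposal is correct and follows essentially the same route as the paper. Both arguments first obtain $\nu_{\widetilde C}(\omega_j)=\nu_E(\omega_\mathcal{F}\wedge\omega_j)-\nu_E(\omega_\mathcal{F})=\lambda_j$ from Theorem~\ref{Th:nu_C} and Proposition~\ref{Prop:l-transv-j-transv}, then identify the basis via the $\sup$-characterization in Theorem~\ref{th_caracterizacion-omega-i}(1), using the shared critical value $t_j$ with $\omega_j$ as witness; the paper phrases this as a minimal counterexample $\tilde\lambda\notin\Lambda_\ell$ rather than as an induction. Your stated fallback would not work on its own, since minimality of the basis only gives $\lambda_j^{\widetilde C}\le\lambda_j$, but your main argument does supply the needed lower bound through the $\sup$-characterization.
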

\begin{proof} First note that $\lambda_{-1}^{\widetilde{C}}=\lambda_{-1}=n$ and $\lambda_{0}^{\widetilde{C}}=\lambda_{0}=m$ because $C$ and $\widetilde{C}$ are elements of $\operatorname{Cusp}(E)$. Consider $\omega_j$ an element of the minimal standard basis for $C$ with $1 \leq j \leq \ell$. By Theorem~\ref{Th:nu_C} and Proposition~\ref{Prop:l-transv-j-transv}, we have
$$
\lambda_j^{\widetilde{C}}=\nu_{\widetilde{C}}(\omega_j)=\nu_E(\omega_\mathcal{F}\wedge\omega_j)-\nu_E(\omega_\mathcal{F})=\nu_C(\omega_j)=\lambda_j
$$
Hence, $\lambda_i \in \Lambda_{\widetilde{C}}$ for $i=-1,0,1,\ldots, \ell$.

Let us prove that there is no $\tilde{\lambda} \in \Lambda_{\widetilde{C}} \setminus \Lambda_\ell$ with $\tilde{\lambda} < \lambda_\ell$. Assume that there exists such $\tilde\lambda$ and, without loss of generality, assume also that
$$
\tilde{\lambda}=\min \{ \lambda \in \Lambda_{\widetilde{C}} \ : \ \lambda \not \in \Lambda_\ell\}
$$
Denote $k=\max \{ j \ : \ \lambda_j < \tilde\lambda < \lambda_{j+1}\}$. Since $\tilde{\lambda} > m=\lambda_0$, then $k \geq 0$. In particular, we have that
$$
\Lambda_j^{\widetilde{C}}=\Lambda_j \text{ for } j=-1,0,1,\ldots, k,
$$
and consequently, by definition $u_i^{\widetilde{C}}=u_i$ for $i=1,\ldots, k,k+1$. Moreover, $t_1^{\widetilde{C}}=n+m=t_1$ and we have
$$t_{i+1}^{\widetilde{C}}=t_{i}^{\widetilde{C}}+ u_{i+1}^{\widetilde{C}}-\lambda_i^{\widetilde{C}}=t_{i+1} \quad \text{ for } i=1,\ldots, k.
$$
By the properties of the elements of the basis of the semimodule $\Lambda_{\widetilde{C}}$ of $\widetilde{C}$ given in Theorem~\ref{th_caracterizacion-omega-i},
the differential value $\tilde{\lambda}$ is equal to
$$
\tilde{\lambda}=\sup \{ \nu_{\widetilde{C}}(\omega) \ : \ \nu_E(\omega)=t_{k+1}^{\widetilde{C}}=t_{k+1}\}
$$
In particular we have that $\tilde{\lambda} \geq \nu_{\widetilde{C}}(\omega_{k+1})=\nu_C(\omega_{k+1})=\lambda_{k+1}$ which gives a contradiction with the fact $\tilde{\lambda} < \lambda_{k+1}$.
\end{proof}
\begin{Remark} The previous result shows that it is possible to determine part of the semimodule of differential values for the $E$-cusps of a totally dicritical foliation $\mathcal{F}$ satisfying the $\ell$-transversality property in the sense of the description given for the $E$-cusps of the foliations $\mathcal{G}_{\ell+1}$  (analytic semiroots of $C$) and $\widetilde{\mathcal{G}}_{\ell+1}$ in Section~\ref{sec:standard-systems} (see also \cite{Can-C-SS-2023,SS-2025}). In Section~\ref{sec:total-transv-property}, the $j$-transversality property of foliations  $\mathcal{G}_\ell$   and $\widetilde{\mathcal{G}}_\ell$ will be studied for $j<\ell$.
\end{Remark}

In the proof of Theorem~\ref{th:valores-dif-tilde-C}, we show that $\nu_C(\omega_i)=\nu_{\widetilde{C}}(\omega_i)$ for $-1 \leq i \leq \ell$  when $\widetilde{C} \in \operatorname{Cusp}_E (\mathcal{F})$ for a totally $E$-dicritical  foliation $\mathcal{F}$  satisfying the $\ell$-transversality property and the 1-forms $\omega_i$ belong to a minimal standard basis for $C$. Next proposition generalizes the above equality for any 1-form $\eta$ with divisorial value $\nu_E(\eta) \leq t_\ell$.

\begin{Proposition}\label{prop:nu_C-nu_tilde_C}
Let $\mathcal{F}$ be a totally $E$-dicritical foliation in $(\mathbb{C}^2,0)$ with $C$ as invariant curve. Assume that $\mathcal{F}$ satisfies the $\ell$-transversality property for some $\ell \in \{1,2,\ldots,s\}$. For any $1$-form $\eta$ with $\nu_E(\eta) \leq t_\ell$, we have that
$$\nu_C(\eta)=\nu_{\widetilde{C}}(\eta)
$$
for any $\widetilde{C} \in \operatorname{Cusp}_E(\mathcal{F})$.
\end{Proposition}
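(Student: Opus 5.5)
We argue by distinguishing whether $\eta$ is resonant or not. If $\eta$ is non-resonant, Remark~\ref{rm-nu_C-nu_E-no-resonante} applies to every curve of $\operatorname{Curv}(E)$, in particular to both $C$ and $\widetilde{C}$. Hence $\nu_C(\eta)=\nu_E(\eta)=\nu_{\widetilde{C}}(\eta)$ and there is nothing more to prove.

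So assume $\eta$ is resonant with $\nu_E(\eta)\le t_\ell$. The guiding identity comes from Theorem~\ref{Th:nu_C}, in the form of Remark~\ref{Th:nu_C_1-forma-no-foliacion} for non-saturated forms. For every $\widetilde{C}\in\operatorname{Cusp}_E(\mathcal{F})$, with infinitely near point $\widetilde{P}\in E$,
$$
\nu_{\widetilde{C}}(\eta)=\kappa(\eta)+i_{\widetilde{P}}(\mathcal{J}'_{\eta,\omega_\mathcal{F}},\widetilde{C}'),\qquad \kappa(\eta):=\nu_E(\eta\wedge\omega_\mathcal{F})-\nu_E(\omega_\mathcal{F}).
$$
Here $\kappa(\eta)$ does not depend on the curve. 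It therefore suffices to prove that $\mathcal{J}'_{\eta,\omega_\mathcal{F}}$ does not meet $E$, which amounts to $\nu_C(\eta)=\kappa(\eta)$ for \emph{every} $C\in\operatorname{Cusp}_E(\mathcal{F})$. In adapted coordinates this is equivalent to $\operatorname{In}(J_{\eta,\omega_\mathcal{F}})$ being a monomial (times a unit), i.e.\ containing no factor $y^n-cx^m$. For the basis forms this is already known: by Proposition~\ref{Prop:l-transv-j-transv} and Remark~\ref{Rm:-1-0-transv}, the $i$-transversality holds for all $-1\le i\le\ell$. Hence $\operatorname{In}(\omega_\mathcal{F}\wedge\omega_i)$ is monomial and $\kappa(\omega_i)=\lambda_i$. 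Multiplying by a monomial $x^ay^b$ preserves this, and $\nu_{\widetilde{C}}(x^ay^b)=na+mb$ for all $E$-cusps (adapted coordinates, Remark~\ref{rm:parametrizacion-coord-adapt}). So for $\alpha=x^ay^b\omega_i$ with $i\le \ell$ we get
$$
\nu_{\widetilde{C}}(\alpha)=\kappa(\alpha)=\lambda_i+na+mb=\nu_C(\alpha).
$$

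For resonant $\eta$ we then proceed by a reduction argument, by induction on $\nu_E(\eta)$ (and, for fixed $\nu_E(\eta)$, on $\nu_C(\eta)$ if needed). By Lemma~\ref{lema_eta_resonant-omega_i} there are $i\le\ell$ and $a,b\ge0$ such that $\alpha=x^ay^b\omega_i$ satisfies $\nu_E(\alpha)=\nu_E(\eta)$ and $\nu_C(\eta)\le\nu_C(\alpha)$. Since $\operatorname{In}(\eta)$ and $\operatorname{In}(\alpha)$ are both resonant with the same value, $\operatorname{In}(\eta)=\mu\operatorname{In}(\alpha)$. Put $\eta'=\eta-\mu\alpha$, so $\nu_E(\eta')>\nu_E(\eta)$, and write
$$
\eta\wedge\omega_\mathcal{F}=\mu\,\alpha\wedge\omega_\mathcal{F}+\eta'\wedge\omega_\mathcal{F}.
$$
We split into two cases.

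\emph{Case $\nu_C(\eta)<\nu_C(\alpha)$.} Then $\nu_C(\eta')=\nu_C(\eta)$. If $\nu_E(\eta')\le t_\ell$ we apply induction to $\eta'$ and then compare with $\alpha$ on $\widetilde{C}$, where $\nu_{\widetilde{C}}(\alpha)=\nu_C(\alpha)$. If $\eta'$ is non-resonant we use the first paragraph instead. In either case $\nu_{\widetilde{C}}(\eta)=\min\{\nu_{\widetilde{C}}(\eta'),\nu_{\widetilde{C}}(\alpha)\}=\nu_C(\eta)$.

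\emph{Case $\nu_C(\eta)=\nu_C(\alpha)$.} Here the claim is that the term $\mu\,\alpha\wedge\omega_\mathcal{F}$ dominates, i.e.\ $\nu_E(\eta'\wedge\omega_\mathcal{F})>\kappa(\alpha)+\nu_E(\omega_\mathcal{F})$. Granting this, $\operatorname{In}(J_{\eta,\omega_\mathcal{F}})$ is the monomial $\operatorname{In}(J_{\alpha,\omega_\mathcal{F}})$, and so $\nu_{\widetilde{C}}(\eta)=\kappa(\alpha)=\nu_C(\eta)$.

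The main obstacle is controlling $\eta'$ when $\nu_E(\eta')>t_\ell$, where the induction hypothesis is unavailable, and likewise establishing the domination inequality in the second case. To handle this I would first try to show that $\nu_C(\eta)\le\nu_C(\alpha)\le\lambda_\ell+$ (something small) forces the relevant values to lie below $u_{\ell+1}$ or $nm$. The natural tools are Lemma~\ref{lemma_nu_h_omega}, Lemma~\ref{Lemma:a-b-l} and Theorem~\ref{th_caracterizacion-omega-i}(3),(5). The aim would be that the cancellation producing $\eta'$ raises $\nu_E$ enough that $\nu_E(\eta'\wedge\omega_\mathcal{F})\ge\nu_E(\eta')+\nu_E(\omega_\mathcal{F})$ exceeds $\kappa(\alpha)+\nu_E(\omega_\mathcal{F})$, mimicking the strict inequalities of Lemma~\ref{lemma-desigualdades}. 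If that bound fails directly, I would iterate the reduction on $\eta'$ using the forms $\omega_{i}$, $i\le\ell$, together with $\omega_{-1},\omega_0$ (Delorme-type decomposition, Theorem~\ref{Th:descomp-Delorme}). This expresses $\eta$, modulo forms of large divisorial value, as $\sum h_i\omega_i$ with $i\le\ell$, each term satisfying $\nu_C=\nu_{\widetilde{C}}$, and then compares minima as in the proof of Proposition~\ref{Prop:l-transv-j-transv}.
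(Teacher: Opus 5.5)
There is a genuine gap: your reduction never closes. In the case $\nu_C(\eta)<\nu_C(\alpha)$ you need the statement for $\eta'=\eta-\mu\alpha$. But $\eta'$ can be resonant with $\nu_E(\eta')>t_\ell$, where your induction hypothesis is unavailable, and nothing in your argument controls $\nu_{\widetilde{C}}(\eta')$ there. In the case $\nu_C(\eta)=\nu_C(\alpha)$ the domination $\nu_E(\eta'\wedge\omega_\mathcal{F})>\kappa(\alpha)+\nu_E(\omega_\mathcal{F})$ is simply granted, although it is essentially the conclusion you want in that case. The only general estimate, $\nu_E(\eta'\wedge\omega_\mathcal{F})\ge\nu_E(\eta')+\nu_E(\omega_\mathcal{F})$, cannot give it: $\kappa(\alpha)=\lambda_i+na+mb$ exceeds $\nu_E(\alpha)$ by $\lambda_i-t_i$, and $\nu_E(\eta')$ may lie strictly in between. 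Your final paragraph is a plan rather than an argument. Also, Theorem~\ref{Th:descomp-Delorme} only decomposes forms with $\nu_E=t_\ell^*$ and $\nu_C>u_\ell^*$, not an arbitrary remainder.

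What is missing is an \emph{upper} bound valid on every $E$-cusp, obtained from the semimodule rather than from the jacobian. Take the index $i$ of Lemma~\ref{lema_eta_resonant-omega_i} maximal. For $D\in\operatorname{Cusp}_E(\mathcal{F})$, Theorem~\ref{th:valores-dif-tilde-C} makes $\omega_i$ part of a minimal standard basis of $D$ with $\Lambda^D_{i-1}=\Lambda_{i-1}$. So Theorem~\ref{th_caracterizacion-omega-i}(5) can be applied on $D$. If $\nu_D(\eta)>\nu_D(\alpha)$, then $\eta-\mu\alpha$ has $D$-value $\lambda_i+na+mb$ and larger divisorial value, so $\lambda_i+na+mb\in\Lambda_{i-1}$. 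Lemma~\ref{Lemma:a-b-l} then makes $\eta$ reachable from $\omega_{i+1}$ or from $\widetilde{\omega}_{i+1}$, contradicting either the maximality of $i$ or $\nu_E(\eta)\le t_\ell$. Hence $\nu_D(\eta)\le\nu_D(\alpha)=\nu_C(\alpha)$ for every $D$. This is the step the paper's proof rests on. The paper combines it with the lower bound $\nu_D(\eta)\ge\kappa(\eta)$ from Theorem~\ref{Th:nu_C} and an argument, via an auxiliary form $\theta$, that $i_P(\mathcal{J}'_{\omega,\eta},C')=0$.

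Inside your own scheme the same reasoning closes the gap, as follows.
\begin{itemize}
\item For a resonant $\beta$ with $\nu_E(\beta)<\lambda_\ell$, write $\beta=\mu'x^{a'}y^{b'}\omega_{i'}+\beta'$ with $i'\le\ell$ maximal, and put $\lambda_\beta=\lambda_{i'}+na'+mb'$.
\item Whenever $\lambda_\beta<\lambda_\ell$, one has $\nu_D(\beta)=\min\{\lambda_\beta,\nu_D(\beta')\}$ for every $D$. Indeed, a failure forces $\lambda_\beta\ge u_{\ell+1}$ or $\lambda_\beta\ge\tilde{u}_{i'+1}\ge\tilde{u}_{s+1}$, and both exceed $\lambda_\ell$.
\item Lemma~\ref{lema_t_i_u_i}(iii) gives $\nu_C(\alpha)\le\lambda_\ell$. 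So it suffices to show, by downward induction on $\nu_E(\beta)$, that $\min\{\nu_D(\beta),\lambda_\ell\}$ does not depend on $D$. All levels involved are below $\lambda_\ell<nm$.
\end{itemize}
With this truncation, remainders with $\nu_E>t_\ell$ are harmless and no jacobian estimate is needed.

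This route also avoids a delicate point in the paper's $\theta$-step. There $x^ay^bJ_i$ and $G$ have different divisorial values, so on the strict transform the first summand carries a power of a local equation of $E$. In Case~(i) its exponent is exactly $i_P(\mathcal{J}'_{\omega,\eta},C')$. The two leading terms along $C'$ then have equal order, and the minimum bound used there does not by itself give the contradiction.
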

\begin{proof}
If $\eta$ is non-resonant, then $\nu_{\widetilde{C}}(\eta)=\nu_E(\eta)$ for any $\widetilde{C} \in \operatorname{Cusp}_E(\mathcal{F})$ by Remark~\ref{rm-nu_C-nu_E-no-resonante} and we obtain the result. Let us now assume that $\eta$ is resonant. By Lemma~\ref{lema_eta_resonant-omega_i}, there exist $i \in \{1,\ldots, \ell\}$ and $a,b \geq 0$ such that
$$\nu_E(\eta)=\nu_E(x^a y^b \omega_i), \quad \quad \nu_C(\eta) \leq \nu_C(x^a y^b \omega_i).$$
Take $i$ the maximum index satisfying the conditions above and write $\alpha=x^a y^b \omega_i$. Let us consider a 1-form $\omega_\mathcal{F}$ such that the foliation $\mathcal{F}$ is defined by $\omega_\mathcal{F}=0$. Since
$$\alpha \wedge \omega_\mathcal{F}= x^a y^b \omega_i \wedge \omega_{\mathcal{F}}
$$
we have that
$$
\nu_E(\alpha \wedge \omega_\mathcal{F})= \nu_E( x^a y^b) + \nu_E( \omega_i \wedge \omega_{\mathcal{F}}).
$$
By Proposition~\ref{Prop:l-transv-j-transv}, the foliation $\mathcal{F}$ satisfies the $i$-transversality property and hence $\nu_E( \omega_i \wedge \omega_{\mathcal{F}})= \nu_E(\omega_\mathcal{F}) + \nu_C(\omega_i)$. Consequently, we obtain that
\begin{equation}\label{eq:1-prop6-10}
\nu_E(\alpha \wedge \omega_\mathcal{F})= \nu_E( x^a y^b) + \nu_E(\omega_\mathcal{F}) + \nu_C(\omega_i) = \nu_E(\omega_\mathcal{F}) + \nu_C(\alpha).
\end{equation}
Let us denote $\nu_C(\alpha)=\lambda$ and we put $\omega=\omega_\mathcal{F}$ to simplify notations. We consider two cases:
\begin{description}
  \item[Case (i)] $\nu_C(\eta)=\nu_C(\alpha)$
  \item[Case (ii)] $\nu_C(\eta)<\nu_C(\alpha)$
\end{description}

\noindent{\bf Case (i).} Assume $\nu_C(\eta)=\nu_C(\alpha)=\lambda$.
Note that $\eta$ can be a non-saturated 1-form, hence we use Theorem~\ref{Th:nu_C} and Remark~\ref{Th:nu_C_1-forma-no-foliacion} to compute $\nu_C(\eta)$ and we get
\begin{equation}\label{eq:2-prop6-10}
\nu_C(\eta)=
\nu_E(\eta \wedge \omega)- \nu_E(\omega) + i_P(\mathcal{J}_{\omega,\eta}',C')
\end{equation}
where $P$ is the infinitely near point of $C$ in $E$.
From equations~\eqref{eq:1-prop6-10} and \eqref{eq:2-prop6-10}, we get
\begin{equation}\label{eq:nu_C_alpha}
\nu_C(\alpha) =
\nu_E(\alpha \wedge \omega)- \nu_E(\omega)= \nu_E(\eta \wedge \omega)- \nu_E(\omega) + i_P(\mathcal{J}_{\omega,\eta}',C') = \nu_C(\eta).
\end{equation}
If $i_P(\mathcal{J}_{\omega,\eta}',C')>0$, then
$$
\nu_E(\alpha \wedge \omega) >  \nu_E(\eta \wedge \omega).
$$
Take $\mu \in \mathbb{C}^*$ such that the 1-form $\theta=\alpha - \mu \eta$ satisfies that $\nu_C(\theta) > \nu_C(\alpha)$. Then
$$
\theta \wedge \omega    =  \alpha \wedge \omega     - \mu \   \eta  \wedge \omega.
$$
Since $\nu_E(\alpha \wedge \omega) >  \nu_E(\eta \wedge \omega)$,   we conclude that
\begin{equation}\label{eq:nu_E_theta}
\nu_E(\theta \wedge \omega) = \nu_E(\eta \wedge \omega).
\end{equation}
As before, we use Theorem~\ref{Th:nu_C} and Remark~\ref{Th:nu_C_1-forma-no-foliacion} to compute $\nu_C(\theta)=\nu_E(\theta \wedge \omega) - \nu_E(\omega)+i_P(\mathcal{J}_{\omega,\theta}',C')$. By equality~\eqref{eq:nu_E_theta} and the relationships given in~\eqref{eq:nu_C_alpha}, we get
$$
\nu_C(\theta) =\nu_C(\eta) - i_P(\mathcal{J}_{\omega,\eta}',C')+i_P(\mathcal{J}_{\omega,\theta}',C')
$$
and we deduce that
\begin{equation}\label{Ip_T}
i_P(\mathcal{J}_{\omega,\theta}',C') > i_P(\mathcal{J}_{\omega,\eta}',C').
\end{equation}
Note that the  curves $\mathcal{J}_{\omega,\theta}$, $\mathcal{J}_{\mathcal{F},\mathcal{G}_i}$ and $\mathcal{J}_{\omega,\eta}$ are given by
$$
\mathcal{J}_{\omega,\theta}=(T=0), \qquad \mathcal{J}_{\mathcal{F},\mathcal{G}_i}=(J_i=0), \qquad \mathcal{J}_{\omega,\eta}=(G=0)
$$
where $\omega  \wedge \theta=T dx \wedge dy$, $\omega  \wedge \alpha  = x^a y^b J_{i} dx \wedge dy$ and
$\omega  \wedge \eta =G dx \wedge dy$.
Moreover, we have
$$
T(x,y)=x^a y^b J_{i}(x,y) - \mu G(x,y).
$$
which implies
$$
i_P(\mathcal{J}_{\omega,\theta}',C') \geq \min \{i_P(\mathcal{J}_{\mathcal{F},\mathcal{G}_i}',C'), i_P(\mathcal{J}_{\omega,\eta}',C')\}
$$
and the equality holds if $i_P(\mathcal{J}_{\mathcal{F},\mathcal{G}_i}',C') \neq i_P(\mathcal{J}_{\omega,\eta}',C')$.
Since $\mathcal{F}$ satisfies the $i$-transversality property, then  $i_P(\mathcal{J}_{\mathcal{F},\mathcal{G}_i}',C')=0$ and we must have $i_P(\mathcal{J}_{\omega,\theta}',C') = 0$ which contradicts equation~\eqref{Ip_T}. It follows that $i_P(\mathcal{J}_{\omega,\eta}',C')=0$ and $\nu_E(\alpha \wedge \omega)=\nu_E(\eta \wedge \omega)$.

Consider now $\widetilde{C} \in \operatorname{Cusp}_E(\mathcal{F})$, with $\widetilde{C} \neq C$.
By Theorem~\ref{th:valores-dif-tilde-C}, we have that $\nu_{\widetilde{C}}(\alpha)=\nu_C(\alpha)$. Recall also that $\nu_C(\eta)=\nu_C(\alpha)$. Let us see that $\nu_{\widetilde{C}}(\eta)=\nu_C(\eta)$.

If $\nu_{\widetilde{C}}(\eta)>\nu_C(\eta)$, we can consider $\eta_1=\eta - \mu_1 \alpha$ such that $\nu_C(\eta_1)>\nu_C(\eta)=\nu_C(\alpha)$. We see that $\nu_{\widetilde{C}}(\eta_1)=\nu_{\widetilde{C}}(\alpha)=na + mb + \lambda_i=\lambda$. By Theorem~\ref{th_caracterizacion-omega-i}, we have that $\lambda \in \Lambda_{i-1}$.  As in the proof of Lemma~\ref{lema_eta_resonant-omega_i}, the condition $\lambda \in \Lambda_{i-1}$ implies the following possibilities:
\begin{itemize}
  \item either $\eta$ is reachable by $\omega_{i+1}$ which contradicts the maximality in the definition of $i$;
  \item or $\eta$ is reachable by $\tilde{\omega}_{i+1}$ and we have $\nu_E(\eta) \geq  \tilde{t}_{i+1} > t_{s+1} > t_\ell$ (by Lemma~\ref{lema_t_i_u_i}) against the hypothesis $\nu_E(\eta) \leq t_\ell$.
\end{itemize}
We deduce that $\nu_{\widetilde{C}}(\eta) \leq \nu_C(\eta)=\lambda$. Now, if $Q$ is the infinitely near point of $\widetilde{C}$ in $E$, by Theorem~\ref{Th:nu_C} and Remark~\ref{Th:nu_C_1-forma-no-foliacion}, we obtain
\begin{align*}
  \nu_{\widetilde{C}}(\eta) & = \nu_E(\eta \wedge \omega)- \nu_E(\omega) + i_Q(\mathcal{J}_{\omega,\eta}',\widetilde{C}')\\
   & \geq \nu_E(\eta \wedge \omega)- \nu_E(\omega) \\
   & = \nu_E(\alpha \wedge \omega)- \nu_E(\omega) = \lambda
\end{align*}
and then $\nu_{\widetilde{C}}(\eta)=\nu_C(\eta)=\lambda$. In particular, we obtain that $i_Q(\mathcal{J}_{\omega,\eta}',\widetilde{C}')=0$ for any curve $\widetilde{C} \in \operatorname{Cusp}_E(\mathcal{F})$. Hence, if $\eta$ is a saturated 1-form which defines a foliation $\mathcal{G}$, we deduce that $\mathcal{F}$ and $\mathcal{G}$ have the transversality property for the divisor $E$.

\medskip
\noindent{\bf Case (ii).} Assume $\nu_{C}(\eta)=\lambda'<\nu_C(\alpha)=\lambda$. By Remark~\ref{Th:nu_C_1-forma-no-foliacion}, the differential value $\nu_{C}(\eta)$ is given by
\begin{align*}
  \lambda'= \nu_{C}(\eta)& = \nu_E(\eta \wedge \omega)- \nu_E(\omega) + i_P(\mathcal{J}_{\omega,\eta}',{C}')  \\
   & < \nu_E(\alpha \wedge \omega)- \nu_E(\omega)=\nu_C(\alpha)=\lambda
\end{align*}
and we obtain
$$
\nu_E(\eta \wedge \omega)+i_P(\mathcal{J}_{\omega,\eta}',{C}') < \nu_E(\alpha \wedge \omega).
$$
Let us prove that $i_P(\mathcal{J}_{\omega,\eta}',{C}')=0$. If $i_P(\mathcal{J}_{\omega,\eta}',{C}')>0$, we consider the 1-form
$$
\theta=\alpha-\eta
$$
with $\nu_C(\theta)=\nu_C(\eta)=\lambda'$. As in Case (i), since $\nu_E(\eta \wedge \omega) < \nu_E(\alpha \wedge \omega)$, we get that
$$\nu_E(\theta \wedge \omega)=\nu_E(\eta \wedge \omega).
$$
Then, we have
\begin{align*}
  \lambda'=\nu_C(\theta) & = \nu_E(\theta \wedge \omega)- \nu_E(\omega) + i_P(\mathcal{J}_{\omega,\theta}',{C}')  \\
   &  = \nu_E(\eta \wedge \omega) - \nu_E(\omega) + i_P(\mathcal{J}_{\omega,\theta}',{C}') \\
   &  = \lambda' -i_P(\mathcal{J}_{\omega,\eta}',{C}') + i_P(\mathcal{J}_{\omega,\theta}',{C}')
\end{align*}
and hence we obtain
$$
i_P(\mathcal{J}_{\omega,\eta}',{C}') = i_P(\mathcal{J}_{\omega,\theta}',{C}').
$$
As in the previous case, we obtain that $i_P(\mathcal{J}_{\omega,\theta}',{C}')=0$ by the definition of $\theta$, the properties of the intersection multiplicity and the fact $i_P(\mathcal{J}_{\mathcal{F},\mathcal{G}_i},C')=0$ since $\mathcal{F}$ has the $i$-transversality property. Consequently, $i_P(\mathcal{J}_{\omega,\eta}',{C}') =0$.

Consider now $\widetilde{C} \in \operatorname{Cusp}_E(\mathcal{F})$, with $\widetilde{C} \neq C$. With a similar argument as in the previous case, we get that $\nu_{\widetilde{C}}(\eta) \leq \nu_{C}(\alpha)=\lambda$.

If $Q$ is the infinitely near point of $\widetilde{C}$ in $E$, we have
\begin{align*}
  \nu_{\widetilde{C}}(\eta) & = \nu_E(\eta \wedge \omega) -\nu_E(\omega)+ i_Q(\mathcal{J}_{\omega,\eta}',\widetilde{C}')\\
   &  \geq  \nu_E(\eta \wedge \omega) -\nu_E(\omega) = \nu_C(\eta)=\lambda '
\end{align*}
and then $\lambda'=\nu_{C}(\eta) \leq \nu_{\widetilde{C}}(\eta) \leq \lambda= \nu_C(\alpha)=\nu_{\widetilde{C}}(\alpha)$.

Note that although we chose the 1-form $\alpha$ applying Lemma~\ref{lema_eta_resonant-omega_i} for the curve $C$, this 1-form $\alpha$ also fulfills the properties for the curve $\widetilde{C}$ since $\nu_{\widetilde{C}}(\eta) \leq \nu_{\widetilde{C}}(\alpha)$.

Assume that $\nu_{\widetilde{C}}(\eta) < \nu_{\widetilde{C}}(\alpha)$. Following the arguments given at the beginning of Case (ii), we conclude that $i_Q(\mathcal{J}_{\omega,\eta},\widetilde{C}')=0$ and hence $\nu_{\widetilde{C}}(\eta)=\nu_C(\eta)$ as wanted.

If we assume that $\nu_{\widetilde{C}}(\eta) = \nu_{\widetilde{C}}(\alpha)$, we are in the situation considered in Case (i), but this implies that $\nu_{\widetilde{C}}(\eta)=\nu_{D}(\eta)$ for all $D \in \operatorname{Cusp}_E(\mathcal{F})$ and hence $\nu_{{C}}(\eta)=\nu_{\widetilde{C}}(\eta) = \nu_{\widetilde{C}}(\alpha)=\nu_C(\alpha)$ against the hypothesis in Case (ii).
\end{proof}
From the proof of the previous proposition we obtain
\begin{Corollary}
Let $\mathcal{F}$ be a totally $E$-dicritical foliation in $(\mathbb{C}^2,0)$ with $C$ as invariant curve. Assume that $\mathcal{F}$ satisfies the $\ell$-transversality property for some $\ell \in \{1,2,\ldots,s\}$. Let $\mathcal{G}$ be a  foliation  in $(\mathbb{C}^2,0)$ defined by a 1-form $\eta=0$ with $\nu_E(\eta)\leq t_\ell$, then $\mathcal{F}$ and $\mathcal{G}$ have the $E$-transversality property.
\end{Corollary}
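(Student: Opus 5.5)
The plan is to read off the corollary directly from the proof of Proposition~\ref{prop:nu_C-nu_tilde_C}. We may take $\eta$ saturated, since $\mathcal{G}$ is a foliation defined by $\eta=0$. By definition, the $E$-transversality property asks that the strict transform $\mathcal{J}_{\mathcal{F},\mathcal{G}}'$ meets $E$ at no point. The free points of $E$ are exactly the points $Q$ through which some $\widetilde{C}_Q\in\operatorname{Cusp}_E(\mathcal{F})$ passes transversally. So it is enough to prove two things:
\begin{itemize}
\item[(a)] $i_Q(\mathcal{J}_{\omega,\eta}',\widetilde{C}')=0$ for every $\widetilde{C}\in\operatorname{Cusp}_E(\mathcal{F})$;
\item[(b)] $\mathcal{J}_{\mathcal{F},\mathcal{G}}'$ does not pass through the two corner points of $E$.
\end{itemize}

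For (a), split according to whether $\eta$ is resonant. If $\eta$ is non-resonant, Lemma~\ref{lema_curva-contacto-no-resonante} gives $\nu_E(\omega_\mathcal{F}\wedge\eta)=\nu_E(\omega_\mathcal{F})+\nu_{\widetilde{C}}(\eta)$ for every $\widetilde{C}$. Comparing this with Theorem~\ref{Th:nu_C} forces $i_Q=0$. If $\eta$ is resonant, the proof of Proposition~\ref{prop:nu_C-nu_tilde_C} already gives this, both in Case (i) (explicitly at its end) and in Case (ii). Write $\nu_{\widetilde{C}}(\eta)=\nu_E(\eta\wedge\omega)-\nu_E(\omega)+i_Q$. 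That proof shows $i_P(\mathcal{J}'_{\omega,\eta},C')=0$, and then $\nu_{\widetilde C}(\eta)=\nu_C(\eta)$ holds for every $\widetilde C$. Substituting into the formula gives $i_Q=0$ at every free point $Q$.

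For (b), which I expect to be the main obstacle, I would work in adapted coordinates and compare initial parts. Saying that the strict transform avoids the corner points is equivalent to saying that the initial part $\operatorname{In}(J_{\mathcal{F},\mathcal{G}})$ with respect to the weight $(n,m)$ is a monomial times a product of factors $(y^n-cx^m)$ with $c\neq 0$. In other words, the Newton polygon of $J$ meets the supporting line of weight $\nu_E(J)$ in a segment whose endpoints carry no extra $x$- or $y$-powers beyond those forced by the monomial. Next, compute $\operatorname{In}(\omega_\mathcal{F}\wedge\eta)$ using $\operatorname{In}(\omega_\mathcal{F})=x^ay^b(m\,dx/x-n\,dy/y)$, and show that the pairing $\operatorname{In}(\omega_\mathcal F)\wedge\operatorname{In}(\eta)$ is either nonzero or can be handled by the reachability trick (subtract $\mu x^{a'}y^{b'}\omega_i$). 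Both alternatives are already built into Lemma~\ref{lema_eta_resonant-omega_i} and the $\theta$-constructions in the proof of the proposition. Once the initial part of $J$ is identified, its strict transform meets $E$ only at the points $y^n=cx^m$, all of which are free. If this direct verification becomes heavy, a fallback is to invoke the total $E$-dicriticity of $\mathcal{F}$: the strict transform has normal crossings with $D$ at the corners, so no separatrix and no tangency curve can be concentrated there beyond what the free-point count predicts. Concretely, the total intersection $\nu_{\widetilde C}(J)-\nu_E(J)$ summed over a generic pencil should vanish.
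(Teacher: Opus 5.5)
Your part (a) is the paper's proof. The corollary is read off from the proof of Proposition~\ref{prop:nu_C-nu_tilde_C}, which yields $i_Q(\mathcal{J}'_{\omega,\eta},\widetilde{C}')=0$ for every $\widetilde{C}\in\operatorname{Cusp}_E(\mathcal{F})$. Your explicit handling of the two cases is correct:
\begin{itemize}
\item in the non-resonant case, Lemma~\ref{lema_curva-contacto-no-resonante} combined with Theorem~\ref{Th:nu_C} gives $i_Q=0$;
\item in the resonant case, $i_P=0$ at $C$ together with $\nu_{\widetilde C}(\eta)=\nu_C(\eta)$ forces $i_Q=0$.
\end{itemize}
The paper stops there and never examines the two corner points of $E$. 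It effectively uses the $E$-transversality property in the sense ``$\mathcal{J}'$ meets no free point of $E$''. The same convention underlies Remark~\ref{Rm:-1-0-transv}, where a monomial initial part of $J$ is taken to mean that $\mathcal{J}'$ misses $E$. Under that reading, (a) alone is a complete proof and you should simply drop (b).

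Part (b) is not in the paper, and the argument you give for it does not work.
\begin{itemize}
\item Your proposed criterion is empty: every polynomial that is weighted homogeneous for the weights $(n,m)$ is a monomial times a product of factors $y^n-cx^m$ with $c\neq 0$.
\item $\operatorname{In}(J)$ cannot detect corner intersections. In adapted coordinates $\pi$ is a toric modification, and $\mathcal{J}'$ passes through $E\cap E_a$ exactly when the Newton polygon of $J$ has an edge whose normal lies strictly between $(n,m)$ and $(\nu_{E_a}(x),\nu_{E_a}(y))$, the weight vector of the neighbouring divisor. For instance, for the $(3,2)$-cusp, $y^3-x^4$ has initial part $-x^4$, yet its strict transform passes through $E_3\cap E_1$. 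So the claim ``once $\operatorname{In}(J)$ is identified, the strict transform meets $E$ only at the points $y^n=cx^m$'' is false.
\item Your fallback cannot help either. By Remark~\ref{Rm-nuC-nuE}, $\nu_{\widetilde C}(J)-\nu_E(J)=i_Q(\mathcal{J}',\widetilde C')$, and $E$-curvettes never pass through corners, so no count over them sees the corners.
\item Total $E$-dicriticity is a hypothesis on $\mathcal{F}$ alone. It says nothing about where $\pi^*\mathcal{G}$ is tangent to $\pi^*\mathcal{F}$ near a corner.
\end{itemize}

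For non-resonant $\eta$ the corners can actually be handled. A short lattice computation shows that $\nu_E(\eta)<nm$ forces the support of $\eta$ into the translate, at its initial vertex, of the dual of the cone spanned by the weights of the two neighbours of $E$. Combined with the Newton-polygon characterization of totally $E$-dicritical foliations recalled in Section~\ref{sec:E-dicritical}, this makes the initial vertex of $J$ the unique minimiser for every weight in that cone. For resonant $\eta$, however, the initial terms cancel in $\omega_\mathcal{F}\wedge\eta$. The corner behaviour then depends on higher-order terms of both forms, which neither your sketch nor the paper controls.
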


As a consequence of the previous results we obtain that the transversality property is independent of the choice of the minimal standard basis for $C$.
\begin{Corollary}\label{Cor:indep-basis}
Let $\mathcal{F}$ be a totally $E$-dicritical foliation in $(\mathbb{C}^2,0)$ with $C$ as invariant curve. If $\mathcal{F}$ satisfies the $\ell$-transversality property relative to a minimal standard basis for $C$, then $\mathcal{F}$ satisfies the $\ell$-transversality property relative to all minimal standard basis for $C$.
\end{Corollary}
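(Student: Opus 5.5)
The plan is to show that the $\ell$-transversality property can be read off from data that do not depend on the chosen minimal standard basis. Fix two minimal standard bases $(\omega_{-1},\omega_0,\ldots,\omega_s)$ and $(\omega'_{-1},\omega'_0,\ldots,\omega'_s)$ for $C$, and assume $\mathcal{F}$ satisfies the $\ell$-transversality property relative to the first one. Let $\mathcal{G}'_\ell$ be the foliation defined by $\omega'_\ell=0$. We must show that $\mathcal{F}$ and $\mathcal{G}'_\ell$ have the $E$-transversality property.

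The key observation is that $\omega'_\ell$ belongs to a minimal standard basis, so $\nu_C(\omega'_\ell)=\lambda_\ell$. By statement (2) of Theorem~\ref{th_caracterizacion-omega-i}, this gives $\nu_E(\omega'_\ell)=t_\ell$. We are then exactly in the situation of the preceding Corollary, which follows from the proof of Proposition~\ref{prop:nu_C-nu_tilde_C}. Here $\mathcal{F}$ satisfies the $\ell$-transversality property relative to $(\omega_i)$, and $\mathcal{G}'_\ell$ is defined by a $1$-form $\eta=\omega'_\ell$ with $\nu_E(\eta)\le t_\ell$. Hence $\mathcal{F}$ and $\mathcal{G}'_\ell$ have the $E$-transversality property, which is the $\ell$-transversality property relative to $(\omega'_i)$.

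The step needing care is the application of the previous Corollary, that is, Case (i) of the proof of Proposition~\ref{prop:nu_C-nu_tilde_C}. There, for a saturated $\eta$, one shows $i_Q(\mathcal{J}'_{\omega,\eta},\widetilde{C}')=0$ for every $\widetilde{C}\in\operatorname{Cusp}_E(\mathcal{F})$, hence at every free point $Q\in E$. Two things must be checked.

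First, $\omega'_\ell$ must be saturated so that it defines a foliation. This holds as remarked in Section~\ref{sec:cuspidal-semimodules}: a factor $h$ with $h(0)=0$ would give $\nu_C(\omega'_\ell)-\nu_C(h)\in\Lambda_C$, contradicting minimality of $\lambda_\ell$ as a generator.

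Second, transversality at the corner points of $E$ must also be covered. I would handle this using $\nu_E$: by Theorem~\ref{Th:nu_C}, $\nu_C(\omega'_\ell)=\nu_E(\omega_\mathcal{F}\wedge\omega'_\ell)-\nu_E(\omega_\mathcal{F})$ for all cusps in $\operatorname{Cusp}_E(\mathcal{F})$. A strict transform meeting $E$ only at corner points has its initial part in adapted coordinates equal to a monomial, so it contributes nothing at free points. This agrees with how $E$-transversality is used throughout the paper.

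Finally, the argument is symmetric in the two bases, so the property is independent of the chosen minimal standard basis.
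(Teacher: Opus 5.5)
Your proposal is correct and is exactly the argument the paper has in mind: since $\nu_C(\omega'_\ell)=\lambda_\ell$, statement (2) of Theorem~\ref{th_caracterizacion-omega-i} gives $\nu_E(\omega'_\ell)=t_\ell$, and the corollary stated right after Proposition~\ref{prop:nu_C-nu_tilde_C} then yields the $E$-transversality of $\mathcal{F}$ and $\mathcal{G}'_\ell$. Your paragraph on corner points does not actually exclude intersections there, since a monomial initial part does not prevent the strict transform from passing through a corner of $E$. Nothing more is needed, however: the paper's notion of $E$-transversality, as checked in Remark~\ref{Rm:-1-0-transv} and in the proof of Proposition~\ref{prop:nu_C-nu_tilde_C}, only concerns the free points of $E$.
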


Note that the fact that $\mathcal{F}$ has the $E$-transversality property with a foliation $\mathcal{G}$ defined by a 1-form $\eta$ with $\nu_E(\eta)=t_\ell$ does not imply that $\mathcal{F}$ satisfies the $\ell$-transversality property as it is shown in the following example.
\begin{Example}
Let $\mathcal{F}$ be the foliation in $(\mathbb{C}^2,0)$ defined by $\omega_\mathcal{F}=0$ with
$$
\omega_\mathcal{F}=(x^6y-7y^6)dx + (6y^5x -x^7) dy.
$$
The curve $C$ given by $y^6-x^7-x^6y=0$ is an invariant curve of $\mathcal{F}$. Note that $C$ is a cusp with Puiseux pair $(7,6)$ and a Puiseux parametrization of $C$ is given by $\gamma(t)=(x(t),y(t))$ with
\begin{equation*}
\left\{
\begin{aligned}
  x(t) & = t^6 \\
  y(t) & = t^{7}+\tfrac{1}{6}t^{8}-\tfrac{1}{24}t^{9}+\tfrac{1}{81}t^{10}-\tfrac{91}{31104}t^{11}+ \cdots
\end{aligned}
\right.
\end{equation*}
Using the results and notations in \cite[Section 4]{Can-C-SS-2023} and Section~\ref{sec:E-dicritical},  the Newton polygon of $\mathcal{F}$ satisfies
$$\mathcal{N}(\mathcal{F}) \subset R^{6,7}(1,6)= \{(i,j) \ : \ i+j \geq 7 \} \cap \{(i,j) \ : \ 5i+6j \geq 41 \}$$ and $\operatorname{In}(\omega)= y^5(-7 y dx + 6 x dy)$ is a resonant 1-form, then the foliation $\mathcal{F}$ is a totally $E$-dicritical foliation, where $E$ is the cuspidal divisor in the minimal reduction of singularities of $C$.

The basis of the semimodule $\Lambda_C$ is equal to $\mathcal{B}=(6,7,15)$ and a standard basis for the curve $C$ is given by
\begin{equation}
\omega_{-1}=dx,\;\;\;\omega_{0}=dy,\;\;\;\omega_{1}=7ydx-\left(6x-\tfrac{1}{7}y\right)dy.
\end{equation}

Consider the foliation $\mathcal{G}$ defined by $\eta=0$ with
$$\eta=7ydx-6xdy.$$
The divisorial value of $\eta$ is equal to $\nu_E(\eta)=t_1=13$. Since
$\omega_{\mathcal{F}}\wedge \eta=x^{7}y dx\wedge dy,$
the strict transform of the Jacobian curve $\mathcal{J}_{\mathcal{F},\mathcal{G}}=(x^{7}y=0)$ by $\pi$ does not intersect $E$ and hence $\mathcal{F}$ and $\mathcal{G}$ have the $E$-transversality property.

Consider now the foliation $\mathcal{G}_{1}$ defined by $\omega_{1}=0$. We have
$$
\omega_{\mathcal{F}}\wedge \omega_{1} =  y(-y^{6}+x^{7}+\frac{1}{7}x^{6}y)dx\wedge dy
$$
and then the jacobian curve $\mathcal{J}_{\mathcal{F},\mathcal{G}_1}$ is given by $\mathcal{J}_{\mathcal{F},\mathcal{G}_1}=\mathcal{J}_{\mathcal{F},\mathcal{G}_1}^1 \cup \mathcal{J}_{\mathcal{F},\mathcal{G}_1}^2$ with $\mathcal{J}_{\mathcal{F},\mathcal{G}_1}^1=\{y=0\}$ and $\mathcal{J}_{\mathcal{F},\mathcal{G}_1}^2=\{-y^{6}+x^{7}+\frac{1}{7}x^{6}y=0\}$. Hence, the strict transform of $\mathcal{J}_{\mathcal{F},\mathcal{G}_1}^2$ by $\pi$ intersects the cuspidal divisor $E$ in a free point. Then $\mathcal{F}$ does not satisfies the 1-transversality property.

\end{Example}
\section{Total transversality property}\label{sec:total-transv-property}
In this section we introduce the notion of total transversality property for a foliation $\mathcal{F}$ which means that the foliations $\mathcal{F}$ and $\mathcal{G}_\ell$ have the transversality property for the cuspidal divisor for $\ell=1,\ldots,s$. We give  conditions in terms of the divisorial value of a 1-form defining $\mathcal{F}$ which allow to assure that $\mathcal{F}$ have the total transversality property (see Theorem~\ref{Th:total-transv-property}).

As in the previous section, consider a morphism $\pi: (M,D) \to (\mathbb{C}^2,0)$ obtained from a cuspidal sequence and let $E$ be the cuspidal divisor. We fix a curve $C \in \operatorname{Curv}(E)$ with Puiseux pair $(m,n)$. Let $\mathcal{B}=(\lambda_{-1},\lambda_0,\lambda_1,\ldots,\lambda_s)$ be the basis of the semimodule $\Lambda_C$ and fix a minimal standard basis $(\omega_{-1},\omega_0,\omega_1,\ldots,\omega_s)$ for $C$.
\begin{Definition}
Let $\mathcal{F}$ be a totally $E$-dicritical foliation in $(\mathbb{C}^2,0)$ with $C$ as invariant curve. We say that the foliation $\mathcal{F}$ has the {\em total transversality property\/} if $\mathcal{F}$ satisfies the $s$-transversality property.
\end{Definition}
As a consequence of Proposition~\ref{Prop:l-transv-j-transv}, if $\mathcal{F}$ has the total transversality property then $\mathcal{F}$ has the $\ell$-transversality property for $-1\leq \ell \leq s$.
We start this section proving that   the foliations $\mathcal{G}_{s+1}$ and $\widetilde{\mathcal{G}}_{s+1}$ satisfy the total transversality property (see Theorem~\ref{Th:transv-omega_s+1}), where  $\mathcal{G}_{s+1}$ and $\widetilde{\mathcal{G}}_{s+1}$ are defined by the 1-forms $\omega_{s+1}=0$ and $\widetilde{\omega}_{s+1}=0$ respectively.  First we prove a technical lemma.
\begin{Lemma}\label{lemma_t_i*_lambda_i}
Consider a cusp $C \in \operatorname{Curv}(E)$ and take $(\mathcal{E},\widetilde{\mathcal{E}})$ a standard system for the curve $C$ with $\mathcal{E}=(\omega_{-1},\omega_0,\omega_1,\ldots,\omega_s,\omega_{s+1})$ and $\widetilde{\mathcal{E}}=(\widetilde{\omega}_1,\ldots,\widetilde{\omega}_s,\widetilde{\omega}_{s+1})$. For any $\ell \in \{-1,0,1,\ldots, s\}$, we have
$$
\nu_E(\omega_i^* \wedge \omega_\ell)=t_i^* + \lambda_\ell, 
$$
where  $i > \ell$ and $1 \leq  i \leq s+1$, the 1-form $\omega_i^* \in \{\omega_i,\widetilde{\omega}_i\}$ and $t_i^*=\nu_E(\omega_i^*)$.
\end{Lemma}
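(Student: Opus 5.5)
The plan is to apply Theorem~\ref{Th:nu_C} with the roles of $\mathcal{F}$ and $\mathcal{G}$ played by the foliation defined by $\omega_i^*=0$ and by $\mathcal{G}_\ell$, and then to kill the intersection term by choosing a suitable point of $E$. Let $\mathcal{G}_i^*$ be the foliation defined by $\omega_i^*=0$, so $\mathcal{G}_i^*$ is either $\mathcal{G}_i$ or $\widetilde{\mathcal{G}}_i$. For $1\le i\le s+1$ the foliation $\mathcal{G}_i$ is totally $E$-dicritical (Section~\ref{sec:standard-systems}). For $2\le i\le s+1$ the same holds for $\widetilde{\mathcal{G}}_i$ by \cite{SS-2025}.

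Fix a free point $P\in E$ and let $C^*_P\in\operatorname{Cusp}_E(\mathcal{G}_i^*)$ be the cusp through $P$. The key input is that $\nu_{C^*_P}(\omega_\ell)=\lambda_\ell$ for every $\ell\le i-1$, independently of $P$.
\begin{itemize}
\item For $\omega_i^*=\omega_i$, this follows from Theorem~\ref{Th:semirraices}, since $(\omega_{-1},\ldots,\omega_i)$ is an extended standard basis for $C^*_P$.
\item For $\omega_i^*=\widetilde{\omega}_i$, it is exactly Theorem~\ref{Th:separatrices-tilde}.
\end{itemize}
In particular $\nu_{C^*_P}(\omega_\ell)<\infty$, so $C^*_P$ is not invariant by $\mathcal{G}_\ell$. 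More generally, $\mathcal{G}_i^*$ and $\mathcal{G}_\ell$ cannot share a separatrix. Indeed, $\omega_\ell$ is a basis element, so $\mathcal{G}_\ell$ is either non-dicritical or dicritical along $E$ with invariant cusps of semimodule $\Lambda_{\ell-1}\neq\Lambda_{i-1}$. This is the one point that needs a short argument, and it is in any case implied by $\omega_i^*\wedge\omega_\ell\ne 0$, which follows since $\nu_{C_P^*}(\omega_\ell)\neq\infty=\nu_{C_P^*}(\omega_i^*)$.

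Theorem~\ref{Th:nu_C} then gives, for every free point $P\in E$,
$$
\lambda_\ell=\nu_{C^*_P}(\omega_\ell)=\nu_E(\omega_i^*\wedge\omega_\ell)-t_i^*+i_P(\mathcal{J}'_{\mathcal{G}_i^*,\mathcal{G}_\ell},C_P^{*\prime}).
$$
The strict transform $\mathcal{J}'$ of the jacobian curve is a germ of curve, so it meets $E$ in finitely many points. Choosing a free point $P$ outside this finite set makes the last term vanish, and we obtain $\nu_E(\omega_i^*\wedge\omega_\ell)=t_i^*+\lambda_\ell$. As a byproduct, the intersection term vanishes at every free $P$, since the left-hand side is independent of $P$.

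The main obstacle is the boundary case $\omega_i^*=\widetilde{\omega}_1$, which forces $\ell\in\{-1,0\}$ and is not covered by the total dicriticalness results quoted above. There I would argue directly with initial parts in adapted coordinates. By Remark~\ref{rm:omega-1-0}, $\operatorname{In}(\omega_{-1})=\mu_{-1}dx$ and $\operatorname{In}(\omega_0)=\mu_0dy$, and $\widetilde{t}_1=nm$. I would compute $\operatorname{In}(\widetilde{\omega}_1)$, which should be a combination of $y^{n-1}dy$ and $x^{m-1}dx$ coming from $d(y^n-x^m)$-type terms of weight $nm$. Then I would check that $\operatorname{In}(\widetilde{\omega}_1)\wedge dx$ and $\operatorname{In}(\widetilde{\omega}_1)\wedge dy$ are nonzero. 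Using Lemma~\ref{lemma:nu_E_ij}, this gives divisorial values $nm+n+m-m=nm+n$ and $nm+m$ respectively, after accounting for $\nu_E(dx\wedge dy)=n+m$. These equal $\widetilde{t}_1+\lambda_{-1}$ and $\widetilde{t}_1+\lambda_0$, as required.

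The same initial-part computation, via Lemma~\ref{lema_curva-contacto-no-resonante} and Remark~\ref{Rm:-1-0-transv}, also gives an independent check of the cases $\ell=-1,0$ for all $i$.
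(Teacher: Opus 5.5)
Your argument is correct, and it takes a genuinely different route from the paper.

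\textbf{The paper's route.} The paper proves the identity by a direct computation inside the standard system. The cases $\ell=-1,0$ come from Remark~\ref{Rm:-1-0-transv}. For $1\le\ell<i$ it inducts on $(\ell,i)$ and writes $\omega_i^*=\sum_{j\le\ell}h_j\omega_j$ by Delorme's decomposition (Theorem~\ref{Th:descomp-Delorme}), so that $\omega_i^*\wedge\omega_\ell=\sum_{j<\ell}h_j\omega_j\wedge\omega_\ell$. It then uses the induction hypothesis, Lemma~\ref{lema-lambda-t-u} and Lemma~\ref{lema_t_i_u_i}(iii) to show that the distinguished term $h_k\omega_k\wedge\omega_\ell$ has divisorial value exactly $t_i^*+\lambda_\ell$, while all other terms are strictly larger.

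\textbf{Your route.} You import heavier cited facts: total $E$-dicriticalness of $\mathcal{G}_i^*$, and $\nu_{C_P^*}(\omega_\ell)=\lambda_\ell$ from Theorems~\ref{Th:semirraices} and~\ref{Th:separatrices-tilde}. You then read off the divisorial value from Theorem~\ref{Th:nu_C} at a free point $P$ outside the finite set $\mathcal{J}'\cap E$.

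\textbf{Comparison.} Your proof is much shorter and yields Theorem~\ref{Th:transv-omega_s+1} at the same time; the paper deduces that theorem from the lemma plus exactly those cited results. The paper's computation, by contrast, is intrinsic. It never needs the semimodules of the $E$-cusps of $\mathcal{G}_i^*$, so it can serve as an ingredient towards such results rather than resting on them.

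Your separate treatment of $\widetilde{\omega}_1$ is a real improvement. $\widetilde{\mathcal{G}}_1$ need not be totally $E$-dicritical: $df$, with $f=0$ a reduced equation of $C$, has $\nu_E=nm=\tilde t_1$ and $\nu_C=\infty$, yet defines a non-dicritical foliation. So Remark~\ref{Rm:-1-0-transv}, which the paper invokes for the base case for all $i$, does not literally cover that case.

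Two small corrections to your write-up:
\begin{enumerate}
\item $\omega_i^*\wedge\omega_\ell\neq0$ only says the two foliations are distinct; it does not imply they have no common separatrix. This is harmless. The derivation of Theorem~\ref{Th:nu_C} (through Proposition~\ref{prop:int-sep}) only uses that $C_P^*$ is not $\mathcal{G}_\ell$-invariant, and you do establish that.
\item In the $\widetilde{\omega}_1$ case, you leave open why both coefficients of $\operatorname{In}(\widetilde{\omega}_1)=\alpha x^{m-1}dx+\beta y^{n-1}dy$ are nonzero. This follows from the invariance of $C$. With $\gamma(t)=(t^n,at^m+\cdots)$, $a\neq0$, the coefficient of $t^{nm-1}$ in $\gamma^*\widetilde{\omega}_1$ is $n\alpha+ma^n\beta$. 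Since $C$ is invariant this coefficient vanishes, and $(\alpha,\beta)\neq(0,0)$ then forces $\alpha\beta\neq0$.
\end{enumerate}
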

\begin{proof}
By Remark~\ref{Rm:-1-0-transv}, any totally $E$-dicritical foliation satisfies the $\ell$-transversality property for $\ell=-1,0$. In particular, we have the equalities given in expression~\eqref{eq:Rm:-1-0-transv} which imply the result for all $1 \leq i \leq s+1$ and $\ell=-1,0$. Let us prove the result by induction on the pair $(\ell,i)$ with $1 \leq \ell < i$.

We start computing the divisorial value $\nu_E(\omega_2^* \wedge \omega_1)$. We write $\omega_2^*$ using the decomposition of  Delorme (see Theorem~\ref{Th:descomp-Delorme}) and we obtain
\begin{equation}\label{eq:delorme_omega_*_2}
\omega_2^*=h_1 \omega_1 + h_0 \omega_0 + h_{-1}\omega_{-1}
\end{equation}
with $\nu_C(h_1 \omega_1)=u_2^* <nm$ and there exists a unique index $k \in \{-1,0\}$ with
\begin{equation}\label{eq:h_1-h_k}
\nu_C(h_k \omega_k)= \nu_C(h_1 \omega_1) =t_2^*-t_1+\lambda_1.
\end{equation}
We assume that $k=0$ (the case $k=-1$ works in a similar way). Hence, we have $\nu_C(h_0 \omega_0) <nm$ and then $\nu_C(h_0)=\nu_E(h_0)$  by Lemma~\ref{lemma_nu_h_omega}.
From equation~\eqref{eq:delorme_omega_*_2}, we obtain that
$$
\omega^*_2 \wedge \omega_1 = h_0 \omega_0 \wedge \omega_1 + h_{-1}\omega_{-1}\wedge \omega_1.
$$
The equality in~\eqref{eq:h_1-h_k} implies that $\nu_E(h_0)=\nu_C(h_0)=t_2^*-t_1+\lambda_1 -\lambda_0$. Then we have
$$
\nu_E(h_0 \omega_0 \wedge \omega_1)=\nu_E(h_0) + \nu_E(\omega_0 \wedge \omega_1) = t_2^*-t_1+\lambda_1 -\lambda_0+ t_1+\lambda_0=t_2^*+\lambda_1.
$$
Let us compute now $\nu_E(h_{-1}\omega_{-1}\wedge \omega_1)$. We consider two cases: $\nu_E(h_{-1}) \geq nm$ and  $\nu_E(h_{-1}) <nm$. In the first case, by Lemma~\ref{lema_t_i_u_i}, we have  $\nu_E(h_{-1}) \geq nm > u_2^*$ and then
\begin{align*}
  \nu_E(h_{-1}\omega_{-1}\wedge \omega_1) & = \nu_E(h_{-1}) + \nu_E(\omega_{-1}\wedge \omega_1) \\
   &  \geq nm + \nu_E(\omega_{-1}\wedge \omega_1) \\
   & > u_2^* + t_1+\lambda_{-1} = t^*_2-t_1+\lambda_1+t_1+\lambda_{-1} \\
   & > t_2^*+\lambda_1.
\end{align*}
In the case, $\nu_E(h_{-1}) <nm$, we have
\begin{align*}
  \nu_E(h_{-1}\omega_{-1}\wedge \omega_1) & = \nu_C(h_{-1}) + \nu_E(\omega_{-1}\wedge \omega_1) \\
   &  > t_2^*-t_1+ \lambda_1 -\lambda_{-1} + t_1+\lambda_{-1} \\
   & = t_2^*+\lambda_1.
\end{align*}
From the computations above, we deduce that
$$
\nu_E(\omega_2^*\wedge \omega_1)=t_2^*+\lambda_1
$$
as wanted. Let us now assume by induction hypothesis that for any $\ell, k$ with $1 \leq \ell < k \leq i-1$ we have that
$$
\nu_E(\omega_k^* \wedge \omega_\ell)=t_k^* + \lambda_\ell
$$
and let us prove the general case: $\nu_E(\omega_i^*\wedge \omega_\ell)=t_i^*+\lambda_\ell$ for any $\ell$ such that $1 \leq \ell < i$. As before, we write $\omega_i^*$ using the decomposition of Delorme given in Theorem~\ref{Th:descomp-Delorme}
\begin{equation}\label{eq:Delorme_omega_i_*}
\omega_i^*=h_{\ell}\omega_{\ell} + h_k \omega_k + \sum_{{j=-1}\atop {j\neq k}}^{\ell-1} h_j \omega_j
\end{equation}
with $\nu_C(h_{\ell}\omega_{\ell})=\nu_C(h_k\omega_k)=t_i^*-t_{\ell}+\lambda_{\ell}$ and $\nu_C(h_j \omega_j) > \nu_C(h_{\ell}\omega_{\ell})$ for $j \neq \ell,k$. Thus, we obtain
\begin{equation}\label{eq:omega_i_*_wedge_omega_ell}
\omega_i^* \wedge \omega_\ell= h_k \omega_k \wedge \omega_\ell + \sum_{{j=-1}\atop {j\neq k}}^{\ell-1} h_j \omega_j \wedge \omega_\ell.
\end{equation}
Let us compute $\nu_E( h_k \omega_k \wedge \omega_\ell)$. By induction hypothesis, we have that $\nu_E(\omega_k \wedge \omega_\ell)=t_\ell + \lambda_k$ since $k < \ell$. By Lemma~\ref{lema-lambda-t-u}, the differential value $\nu_{C}(h_k \omega_k) < u_i^*$ and hence $\nu_E(h_k)=\nu_C(h_k)$ by Lemma~\ref{lemma_nu_h_omega}. Moreover,
since $\nu_C(h_k)=t_i^*-t_\ell+\lambda_\ell -\lambda_k$, we obtain
\begin{equation}\label{eq:nu:E_h_k_omegakell}
\nu_E( h_k \omega_k \wedge \omega_\ell)= \nu_E(h_k) + \nu_E(\omega_k \wedge \omega_\ell)= 
t_i^*+\lambda_\ell.
\end{equation}
We compute now $\nu_E(h_j \omega_j \wedge \omega_\ell)$ for $j\neq \ell,k$.  By induction hypothesis, we have that $\nu_E(\omega_j \wedge \omega_\ell)=t_\ell + \lambda_j$ since $j < \ell$. We consider now two cases: $\nu_E(h_j)\geq nm$ and $\nu_E(h_j)< nm$. In the first case, we have that $\nu_E(h_j)\geq nm  > u_{i}^*$ by Lemma~\ref{lema_t_i_u_i} and then
\begin{align*}
  \nu_E(h_j \omega_j \wedge \omega_\ell) & = \nu_E(h_j)+t_\ell+\lambda_j \\
   & >u_{i}^* + t_\ell+\lambda_j > u_{i}^* + t_\ell.
  \end{align*}
In the case $\nu_E(h_j)< nm$, we obtain $\nu_C(h_j)=\nu_E(h_j)$. Moreover, as a consequence of item (iv) in Theorem~\ref{Th:descomp-Delorme}, we have $u_i^* < \nu_C(h_j \omega_j)$ and then $\nu_E(h_j) > u_i^*- \lambda_j$. Consequently,
\begin{align*}
  \nu_E(h_j \omega_j \wedge \omega_\ell) & = \nu_E(h_j)+t_\ell+\lambda_j \\
   & >u_i^* -\lambda_j + t_\ell+\lambda_j \\
   & = u_i^*+t_\ell.
   \end{align*}
Let us show that $u_i^*+t_\ell \geq t_i^*+\lambda_\ell$. This inequality is equivalent to  $u_i^*-t_i^*\geq \lambda_\ell - t_\ell$. Note that $u_i^*-t_i^*=\lambda_{i-1}- t_{i-1}$, we have to prove that $\lambda_{i-1}- t_{i-1} \geq \lambda_\ell - t_\ell$ which is consequence of statement $(iii)$ in Lemma~\ref{lema_t_i_u_i} since $\ell \leq i-1$. We conclude that
\begin{equation}\label{eq:nu:E_h_j_omegajell}
\nu_E(h_j \omega_j \wedge \omega_\ell) > t_i^*+\lambda_\ell \qquad \text{ for } j \neq \ell,k.
\end{equation}
From expressions~\eqref{eq:omega_i_*_wedge_omega_ell}, \eqref{eq:nu:E_h_k_omegakell} and \eqref{eq:nu:E_h_j_omegajell}, we obtain that
$$\nu_E(\omega_i^* \wedge \omega_\ell)=t_i^* + \lambda_\ell
$$
and this finishes the proof of the lemma.
\end{proof}
As a consequence of the equality given in the previous lemma we obtain the following result
\begin{Theorem}\label{Th:transv-omega_s+1}
Consider a cusp $C \in \operatorname{Cusp}(E)$ and take $(\mathcal{E},\widetilde{\mathcal{E}})$ a standard system for the curve $C$ with $\mathcal{E}=(\omega_{-1},\omega_0,\omega_1,\ldots,\omega_s,\omega_{s+1})$ and $\widetilde{\mathcal{E}}=(\widetilde{\omega}_1,\ldots,\widetilde{\omega}_s,\widetilde{\omega}_{s+1})$.

For any $\ell \in \{1,\ldots, s, s+1\}$, the foliation $\mathcal{G}_\ell^*$ satisfies the $j$-transversality property for any $-1 \leq j <\ell$, where $\mathcal{G}_\ell^*$ is the foliation defined by $\omega_\ell^*=0$ with $\omega_\ell^* \in \{\omega_\ell,\widetilde{\omega}_\ell\}$ and $\omega_\ell^* \neq \widetilde{\omega}_1$.

In particular, the foliations $\mathcal{G}_{s+1}$ and $\widetilde{\mathcal{G}}_{s+1}$ satisfy the total transversality property.
\end{Theorem}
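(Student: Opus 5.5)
We want to show that for $j<\ell$ the jacobian curve $\mathcal{J}_{\mathcal{G}^*_\ell,\mathcal{G}_j}$ has strict transform not meeting $E$. We may assume $\mathcal{G}^*_\ell$ is totally $E$-dicritical with $C$ as invariant curve; this holds for $\mathcal{G}_\ell$ ($\ell\ge 1$) and for $\widetilde{\mathcal{G}}_\ell$ ($\ell\ge 2$) by Section~\ref{sec:standard-systems}. This is exactly why $\widetilde\omega_1$ is excluded.

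\textbf{Where $C$ lies.} The hypothesis $C\in\operatorname{Cusp}_E(\mathcal{G}^*_\ell)$ is only available for $\ell=s+1$, where $\nu_C(\omega^*_{s+1})=\infty$. For $\ell\le s$ we instead apply Theorem~\ref{Th:nu_C} to any cusp $C^\ell\in \operatorname{Cusp}_E(\mathcal{G}^*_\ell)$, passing through a point $P$ of $E$. By Theorem~\ref{Th:semirraices} (for $\omega_\ell$), respectively Theorem~\ref{Th:separatrices-tilde} (for $\widetilde\omega_\ell$), we have $\nu_{C^\ell}(\omega_j)=\lambda_j$ for $-1\le j\le \ell-1$. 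For $\ell=s+1$ we simply take $C^\ell=C$.

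\textbf{The key identity.} For a fixed $j$ with $-1\le j<\ell$, Theorem~\ref{Th:nu_C} gives
\[
\lambda_j=\nu_{C^\ell}(\omega_j)=\nu_E(\omega^*_\ell\wedge\omega_j)-\nu_E(\omega^*_\ell)+i_P(\mathcal{J}'_{\mathcal{G}^*_\ell,\mathcal{G}_j},C^{\ell\prime}).
\]
Lemma~\ref{lemma_t_i*_lambda_i} (with $i=\ell>j$) gives $\nu_E(\omega^*_\ell\wedge\omega_j)=t^*_\ell+\lambda_j$, and by definition $\nu_E(\omega^*_\ell)=t^*_\ell$. Substituting, $i_P(\mathcal{J}'_{\mathcal{G}^*_\ell,\mathcal{G}_j},C^{\ell\prime})=0$.

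\textbf{Passing from one point to all of $E$.} The cusps $C^\ell$ run through every free point $P\in E$, so the strict transform of the jacobian curve meets no free point of $E$. It remains to check corner points, which I expect to be the main technical obstacle. I would handle them through the divisorial value: $\nu_E(J)$ equals $\nu_E(\omega^*_\ell\wedge\omega_j)-\nu_E(dx)-\nu_E(dy)$ by Corollary~\ref{lema-2-div-valor}. Absence of free-point intersections forces $\operatorname{In}(J)$ to be a monomial, since any non-monomial quasi-homogeneous part would have a factor $y^n-cx^m$ with $c\neq 0$, producing a free intersection. Then, arguing as in Remark~\ref{Rm:-1-0-transv}, a strict transform whose initial form is a monomial does not meet $E$ at corners either. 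Here I use the convention, as in that remark, that the transversality property concerns the non-corner behaviour, i.e.\ the axis components $x=0$, $y=0$ pass through other divisors.

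This gives the $j$-transversality property for all $-1\le j<\ell$. Taking $\ell=s+1$, where $j=s$ is allowed, yields the $s$-transversality, that is, the total transversality property for $\mathcal{G}_{s+1}$ and $\widetilde{\mathcal{G}}_{s+1}$.
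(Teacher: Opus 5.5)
Your central step is the paper's own argument. You evaluate Theorem~\ref{Th:nu_C} on an $E$-cusp of $\mathcal{G}^*_\ell$ through a point $P$, get $\nu(\omega_j)=\lambda_j$ from Theorem~\ref{Th:semirraices} or Theorem~\ref{Th:separatrices-tilde}, and use Lemma~\ref{lemma_t_i*_lambda_i} to force $i_P(\mathcal{J}',C_P')=0$. The only structural difference is that you do this for every $j<\ell$ directly, whereas the paper proves only $j=\ell-1$ and then appeals to Proposition~\ref{Prop:l-transv-j-transv}. Your variant is valid, because the lemma and both semiroot theorems cover all $j<\ell$. It is also slightly cleaner: Proposition~\ref{Prop:l-transv-j-transv} is formulated for foliations having $C$ as invariant curve, and $\mathcal{G}_\ell$ does not have this property when $\ell\le s$. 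For the same reason, delete your opening claim that $\mathcal{G}^*_\ell$ has $C$ as invariant curve.

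Two things need correcting.

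First, for $\ell=s+1$ you cannot simply take $C^\ell=C$. That controls only the one point of $E$ through which $C$ passes, so your later sentence that the $C^\ell$ ``run through every free point'' fails for this index. You need every $C'\in\operatorname{Cusp}_E(\mathcal{G}^*_{s+1})$. Since Theorems~\ref{Th:semirraices} and \ref{Th:separatrices-tilde} include the index $s+1$, just treat $\ell=s+1$ like the other cases.

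Second, your corner-point paragraph rests on a false claim: a monomial initial form $\operatorname{In}(J)$ does not keep $\mathcal{J}'$ away from the corners of $E$. Take $(m,n)=(3,2)$ and $h=y^3+x^5$. Then $\operatorname{In}(h)=y^3$, yet the strict transform of $h=0$ (a branch with Puiseux exponent $5/3$) passes through the corner $E_3^3\cap E_2^3$. Corner intersections come from edges of $\mathcal{N}(J)$ whose inner normal lies strictly between $(n,m)$ and the weight vector of the adjacent divisor, and the $\tfrac{m}{n}$-initial part cannot detect these.

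So drop that argument rather than try to patch it. What your identity proves is that $\mathcal{J}'$ meets no free point of $E$. The paper's proof proves the same thing, since it tacitly takes the bad point $P$ to be free when it chooses $C_P$ through it. This is the sense in which the paper actually establishes and uses the transversality property (compare the monomial-initial-part reasoning in Remark~\ref{Rm:-1-0-transv}). It is also exactly what Theorem~\ref{Th:nu_C} needs, because $i_P$ there is only evaluated at free points. Your closing remark about a ``convention'' points to this reading, and under it the corner paragraph is simply unnecessary.
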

\begin{proof}
By Proposition~\ref{Prop:l-transv-j-transv}, it is enough to show that the foliation $\mathcal{G}_\ell^*$ satisfies the $(\ell-1)$-transversality property. We can assume that $\ell \geq 2$ since the other cases are consequence of Remark~\ref{Rm:-1-0-transv}.

Let $\mathcal{J}=\mathcal{J}_{\mathcal{G}_\ell^*,\mathcal{G}_{\ell-1}}$ be the jacobian curve of the foliations $\mathcal{G}_\ell^*$ and $\mathcal{G}_{\ell-1}$. If the foliations $\mathcal{G}_\ell^*$ and $\mathcal{G}_{\ell-1}$ do not have the $E$-transversality property, then the strict transform $\mathcal{J}'$ of the jacobian curve $\mathcal{J}$ by the morphism $\pi$ intersects the divisor $E$ at a point $P \in E$. Take $C_P \in \operatorname{Cusp}_E(\mathcal{G}_\ell^*)$ such that $P$ is the infinitely near point of $C_P$ in $E$. By Theorem~\ref{Th:separatrices-tilde}, we have that $\nu_{C_P}(\omega_{\ell-1})=\lambda_{\ell-1}$. On the other hand, we can compute $\nu_{C_P}(\omega_{\ell-1})$ using expression \eqref{ec:muC-muE-2-formas} given in Theorem~\ref{Th:nu_C} and taking into account that $\nu_E(\omega_\ell^* \wedge \omega_{\ell -1})=  t_\ell^*+\lambda_{\ell-1}$ by Lemma~\ref{lemma_t_i*_lambda_i}, we obtain
\begin{align*}
  \nu_{C_P}(\omega_{\ell-1})=\lambda_{\ell-1} & = \nu_E(\omega_\ell^* \wedge \omega_{\ell -1}) - \nu_E(\omega_\ell^*) + i_P(\mathcal{J}',C_P') \\
   & =t_\ell^*+\lambda_{\ell-1} - t_\ell^*  + i_P(\mathcal{J}',C_P') \\
   & =  \lambda_{\ell-1} + i_P(\mathcal{J}',C_P') > \lambda_{\ell-1}
\end{align*}
which gives a contradiction.
\end{proof}
The following example shows that there are foliations $\mathcal{F}$ defined by a 1-form $\omega_\mathcal{F}$ with $\nu_E(\omega_\mathcal{F}) \neq t_{s+1}^*$ and satisfying the total transversality property.
\begin{Example}\label{Ex:total-transv-nuE-distinto-t}
Let $C$ be the cusp with Puiseux pair $(9,4)$ given by the Puiseux parametrization $\varphi(t)=(x(t),y(t))$ with
\begin{equation}\label{eq:cusp4-9}
\left\{
\begin{aligned}
  x(t) & = t^4 \\
  y(t) & = t^9+t^{10} -\tfrac{1}{2}t^{11} + \tfrac{7}{8}t^{13}
\end{aligned}
\right.
\end{equation}
The basis of the semimodule $\Lambda_C$ is $\mathcal{B}=(4,9,14,19)$ and a standard basis for $C$ is given by $(dx,dy,\omega_1,\omega_2)$ where
\begin{equation}\label{base-standar-ex-9-4}
\omega_1=9ydx-4xdy; \qquad \omega_2=9xy dx - \left(4x^2-\tfrac{4}{9} y\right) dy.
\end{equation}
In Example~\ref{ex:apendice-ki} we compute the axes and critical values which are given by
\begin{equation*}
  \begin{aligned}
   u_1 & = t_1 = 13, \quad & u_2 & = 18, & \quad t_2 & = 17, \quad & u_3 & = 23,  \quad & t_3 & = 21, \\
   \tilde{u}_1 &  =\tilde{t}_1 = 36, \quad & \tilde{u}_2 & = 32, &\tilde{t}_2 & = 31, \quad &\tilde{u}_3 & =28,  \quad & \tilde{t}_3 &=26.
   \end{aligned}
\end{equation*}
Let $E$ be the cuspidal divisor of the minimal reduction of singularities of $C$ and consider   the  totally $E$-dicritical foliation $\mathcal{F}$ defined by $\omega_\mathcal{F}=0$ where
$$
\omega_\mathcal{F}=\left( 9x^3y + x y^2 + \tfrac{\partial h}{\partial x} \right) dx - \left(4 x^4+\tfrac{4}{3} y^2 - \tfrac{\partial h}{\partial y} \right) dy
$$
with $h \in \mathbb{C}\{x,y\}$ such that $C$ is an invariant curve of $\omega_\mathcal{F}$ and hence $\nu_E(h)\geq 28$. Note that
\begin{align*}
\omega_\mathcal{F}\wedge \omega_1 & =4 y^2( 3 y -x^2 ) dx \wedge dy + dh \wedge \omega_1, \\
\omega_\mathcal{F}\wedge \omega_2 & = \tfrac{112}{9}xy^3 dx \wedge dy + dh \wedge \omega_2.
\end{align*}
and then the foliation $\mathcal{F}$ satisfies the total transversality property. However, we have that $\nu_E(\omega_\mathcal{F})=25$ which is different from $t_3$ and $\tilde{t}_3$.
\end{Example}
Next result gives a sufficient condition in terms of the divisorial value of the 1-form defining a foliation to assure that a totally $E$-dicritical foliation satisfies the total transversality property.
\begin{Theorem}\label{Th:total-transv-property}
Consider a cusp $C \in \operatorname{Cusp}(E)$ and let $\mathcal F$ be a totally $E$-dicritical foliation with $C$ as invariant curve. If the divisorial value
$$
\nu_E(\omega_\mathcal{F})\in [t_{s+1},t_{s+1}+\tilde{t}_{s+1}-t_s),
$$
then the foliation $\mathcal{F}$ satisfies the total transversality property.
\end{Theorem}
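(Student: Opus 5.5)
Since $C$ is an invariant curve of $\mathcal{F}$, the form $\omega_\mathcal{F}$ lies in the Saito module $\Omega^1(C)$. By Theorem~\ref{Th:Saito_basis}, we may fix a standard system $(\mathcal{E},\widetilde{\mathcal{E}})$ for $C$ and write
$$
\omega_\mathcal{F}=g\,\omega_{s+1}+\tilde g\,\widetilde{\omega}_{s+1}, \qquad g,\tilde g\in\mathbb{C}\{x,y\}.
$$
By Proposition~\ref{Prop:l-transv-j-transv}, it suffices to prove the $s$-transversality property. By Theorem~\ref{Th:nu_C}, this amounts to showing
$$
\nu_E(\omega_\mathcal{F}\wedge\omega_s)=\nu_E(\omega_\mathcal{F})+\lambda_s,
$$
because the divisorial formula then forces $i_P(\mathcal{J}',C')=0$. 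Since every free point $P\in E$ is the infinitely near point of some curve of $\operatorname{Cusp}_E(\mathcal{F})$, the same computation applies to any such point.

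First I show that $g$ is a unit and that $\nu_E(\omega_\mathcal{F})=t_{s+1}+\nu_E(g)$. Because the initial parts of $\omega_{s+1}$ and $\widetilde{\omega}_{s+1}$ are $x^{a}y^{b}$-multiples of resonant forms with distinct monomials (Lemma~\ref{lemma:In-total-dicritical}, in the spirit of the proof of Lemma~\ref{lema_eta_resonant-omega_i}), there is no cancellation. Hence
$$
\nu_E(\omega_\mathcal{F})=\min\{\nu_E(g)+t_{s+1},\ \nu_E(\tilde g)+\tilde t_{s+1}\}.
$$
This point needs care. If cancellation did occur, it would have to happen between $\operatorname{In}(g)\operatorname{In}(\omega_{s+1})$ and $\operatorname{In}(\tilde g)\operatorname{In}(\widetilde\omega_{s+1})$. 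Both are multiples of the single resonant form $m\,dx/x-n\,dy/y$, so cancellation is in principle possible. I would handle it by replacing $\widetilde\omega_{s+1}$ by $\widetilde\omega_{s+1}-cx^py^q\omega_{s+1}$, which is again an admissible choice by the remark after Theorem~\ref{Th:Saito_basis}, and iterating. This produces a Saito basis adapted to $\omega_\mathcal{F}$ in which the minimum above holds.

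With this basis in hand, the hypothesis $\nu_E(\omega_\mathcal{F})<t_{s+1}+\tilde t_{s+1}-t_s<\tilde t_{s+1}$ forces the minimum to be attained by the first term, so $\nu_E(\omega_\mathcal{F})=\nu_E(g)+t_{s+1}$. It also gives
$$
\nu_E(g)<\tilde t_{s+1}-t_s,
\qquad
\nu_E(\tilde g)+\tilde t_{s+1}\ge \nu_E(g)+t_{s+1}.
$$

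Next I compute the wedge with $\omega_s$:
$$
\omega_\mathcal{F}\wedge\omega_s=g\,\omega_{s+1}\wedge\omega_s+\tilde g\,\widetilde\omega_{s+1}\wedge\omega_s .
$$
By Lemma~\ref{lemma_t_i*_lambda_i}, the first term has divisorial value $\nu_E(g)+t_{s+1}+\lambda_s=\nu_E(\omega_\mathcal{F})+\lambda_s$. The second term has value $\nu_E(\tilde g)+\tilde t_{s+1}+\lambda_s$. If this is strictly larger than the first, the required equality follows at once. The main obstacle is the case of equality, $\nu_E(\tilde g)+\tilde t_{s+1}=\nu_E(g)+t_{s+1}$, where the leading parts could cancel. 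I would exclude it as follows. The initial part of $g\,\omega_{s+1}\wedge\omega_s$ is the wedge of the initial parts, and these are all proportional to the resonant form times monomials. Comparing monomials: since $\nu_E(g)<\tilde t_{s+1}-t_s<nm$, the function $g$ has a monomial initial form $x^\alpha y^\beta$ by Remark~\ref{monomio-nm}. Equality of values then requires $\nu_E(\tilde g)=\nu_E(g)-(\tilde t_{s+1}-t_{s+1})$. I would use the exponent bounds $a_i,b_i$ of Lemma~\ref{lemma_G_i_dicritica}, together with $\operatorname{In}(\widetilde\omega_{s+1})=y^{\ell^m_{s+1}}\operatorname{In}(\omega_s)$ and $\operatorname{In}(\omega_{s+1})=x^{\ell^n_{s+1}}\operatorname{In}(\omega_s)$ (as in the proof of Lemma~\ref{lema_eta_resonant-omega_i}), to show that the monomial $x^\alpha y^\beta$ would have to be divisible by $y^{\ell^m_{s+1}}$. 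That divisibility means $\nu_E(g)\ge \tilde t_{s+1}-t_s$, contradicting the hypothesis. The bound $\tilde t_{s+1}-t_s$ is precisely the threshold at which this absorption becomes possible, which explains why it is sharp.
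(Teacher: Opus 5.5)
Your strategy is the paper's: decompose $\omega_\mathcal{F}=g\,\omega_{s+1}+\tilde g\,\widetilde\omega_{s+1}$ in the Saito basis, apply Lemma~\ref{lemma_t_i*_lambda_i} to the wedge with $\omega_s$, and compare monomials below $nm$. As written, however, it has three genuine gaps.

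First, the inequality $t_{s+1}+\tilde t_{s+1}-t_s<\tilde t_{s+1}$ is false, since $t_{s+1}>t_s$ by Lemma~\ref{lema_t_i_u_i}. So the hypothesis interval contains $[\tilde t_{s+1},t_{s+1}+\tilde t_{s+1}-t_s)$, where $\omega_\mathcal{F}$ can be reachable only from $\widetilde\omega_{s+1}$; for instance $\omega_\mathcal{F}=\widetilde\omega_{s+1}$, which has $g=0$. This case is simply missing. It must be treated symmetrically, as the paper does, using $\nu_E(\omega_\mathcal{F})=\nu_E(\tilde g)+\tilde t_{s+1}$, $\nu_E(\tilde g)<t_{s+1}-t_s<nm$, and Lemma~\ref{lemma_t_i*_lambda_i} for $\widetilde\omega_{s+1}$. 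Also, $g$ is not a unit in general; what you actually need is that $\operatorname{In}(g)$ is a monomial.

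Second, the iterative change of Saito basis meant to rule out cancellation is unjustified and unnecessary. It is unjustified because a replacement with $\nu_E(x^py^q\omega_{s+1})>\tilde t_{s+1}$ leaves both initial parts, hence any cancellation, unchanged. It is unnecessary because the monomial comparison of your last paragraph, applied at the outset, shows that $\nu_E(g\,\omega_{s+1})=\nu_E(\tilde g\,\widetilde\omega_{s+1})$ is impossible. Indeed, that common value would be $\le\nu_E(\omega_\mathcal{F})<t_{s+1}+\tilde t_{s+1}-t_s$. If $t_{s+1}=t^n_{s+1}$, this gives $\nu_E(g)<m\ell^m_{s+1}$. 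The weighted degree $\nu_E(g)+n\ell^n_{s+1}$ is then $<n\ell^n_{s+1}+m\ell^m_{s+1}<nm$ by Corollary~\ref{corollary_limites}. Uniqueness of the monomial of that degree forces $y^{\ell^m_{s+1}}\mid\operatorname{In}(g)$, contradicting $\nu_E(g)<m\ell^m_{s+1}$. This is exactly Lemma~\ref{lemma_reachable_only_one}. Since the two terms of $\omega_\mathcal{F}\wedge\omega_s$ have the same values shifted by $\lambda_s$, it also disposes of your ``equality case'' at once.

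Third, your reduction of $s$-transversality to $\nu_E(\omega_\mathcal{F}\wedge\omega_s)=\nu_E(\omega_\mathcal{F})+\lambda_s$ is circular at points other than the one through $C$. For $\widetilde C\in\operatorname{Cusp}_E(\mathcal{F})$ through a free point $Q$, Theorem~\ref{Th:nu_C} only gives $\nu_{\widetilde C}(\omega_s)=\lambda_s+i_Q(\mathcal{J}',\widetilde C')$. The equality $\nu_{\widetilde C}(\omega_s)=\lambda_s$ is a consequence of transversality (Theorem~\ref{th:valores-dif-tilde-C}), not something you may use as input. What closes the argument is a degree count. Write $\omega_\mathcal{F}\wedge\omega_s=J\,dx\wedge dy$. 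Since $\lambda_s+t_{s+1}-t_s=u_{s+1}$ and $u_{s+1}+\tilde t_{s+1}=nm+n+m$ (Lemma~\ref{lemma:ui+ti}), the hypothesis gives $\nu_E(J)=\nu_E(\omega_\mathcal{F})+\lambda_s-n-m<nm$. Hence $\operatorname{In}(J)$ is a monomial and $\mathcal{J}'$ misses $E$, as in Remark~\ref{Rm:-1-0-transv}. The paper reaches the same conclusion by writing $\operatorname{In}(J)=x^\alpha y^\beta\operatorname{In}(J_{s+1,s})$ and invoking Theorem~\ref{Th:transv-omega_s+1}. Either route works, but one of them has to be supplied.
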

Let us prove a technical lemma before giving the proof of the theorem above.

\begin{Lemma}\label{lemma_reachable_only_one}
Let $C$ be a cusp and consider a 1-form $\omega$ such that $C$ is an invariant curve of the foliation defined by $\omega=0$.
 If $\nu_E(\omega)\in [t_{s+1},t_{s+1}+\tilde{t}_{s+1}-t_s)$, then $\omega$ is reachable from $\omega_{s+1}$ or from $\widetilde{\omega}_{s+1}$ but not from both.
\end{Lemma}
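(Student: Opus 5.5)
We must show: if $C$ is invariant for $\omega$ and $\nu_E(\omega)\in[t_{s+1},t_{s+1}+\tilde t_{s+1}-t_s)$, then $\omega$ is reachable from exactly one of $\omega_{s+1}$, $\widetilde{\omega}_{s+1}$. We take $\{\omega_{s+1},\widetilde{\omega}_{s+1}\}$ as a Saito basis (Theorem~\ref{Th:Saito_basis}) and work with initial parts in adapted coordinates.

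\textbf{Reachability from some basis element.} Since $\omega\in\Omega^1(C)$, Theorem~\ref{Th:Saito_basis} gives $\omega=g\,\omega_{s+1}+\tilde g\,\widetilde{\omega}_{s+1}$ with $g,\tilde g\in\mathbb{C}\{x,y\}$. We then compare
$$\nu_E(\omega)\quad\text{with}\quad \min\{\nu_E(g)+t_{s+1},\ \nu_E(\tilde g)+\tilde t_{s+1}\}.$$
The inequality $\nu_E(\omega)\ge\min\{\ldots\}$ always holds. If it is strict, the initial parts $\operatorname{In}(g)\operatorname{In}(\omega_{s+1})$ and $\operatorname{In}(\tilde g)\operatorname{In}(\widetilde{\omega}_{s+1})$ must cancel.

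Both $\omega_{s+1}$ and $\widetilde{\omega}_{s+1}$ are resonant. By the Delorme argument in Lemma~\ref{lema_eta_resonant-omega_i}, $\operatorname{In}(\omega_{s+1})=x^{\ell^n_{s+1}}\operatorname{In}(\omega_s)$ and $\operatorname{In}(\widetilde\omega_{s+1})=y^{\ell^m_{s+1}}\operatorname{In}(\omega_s)$, up to swapping roles and constants. Hence such a cancellation yields a combination $\theta=g'\omega_{s+1}+\tilde g'\widetilde\omega_{s+1}$ with strictly larger divisorial value.

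To exclude cancellation, we wedge with the Saito basis. By Saito's criterion (Theorem~\ref{th:Saito-criterion}), $\omega_{s+1}\wedge\widetilde\omega_{s+1}=uf\,dx\wedge dy$ with $u$ a unit. By the Lemma on $2$-forms,
$$\nu_E(\omega_{s+1}\wedge\widetilde\omega_{s+1})=\nu_E(f)+n+m=nm+n+m=t_{s+1}+\tilde u_{s+1}.$$
The last equality uses Remark~\ref{Rm-def-critical-axes}, and it exceeds $t_{s+1}+\tilde t_{s+1}$ by $\tilde u_{s+1}-\tilde t_{s+1}=\lambda_s-t_s$.

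Now compute $\omega\wedge\widetilde\omega_{s+1}=g\,uf\,dx\wedge dy$ and $\omega\wedge\omega_{s+1}=-\tilde g\,uf\,dx\wedge dy$. Using $\nu_E(\alpha\wedge\beta)\ge\nu_E(\alpha)+\nu_E(\beta)$, this gives
$$\nu_E(g)\ge\nu_E(\omega)+\tilde t_{s+1}-(nm+n+m),\qquad \nu_E(\tilde g)\ge\nu_E(\omega)+t_{s+1}-(nm+n+m).$$
These bounds go the wrong way for an upper estimate, so instead I will argue directly with Newton polygons. In the cancellation case, the term of smallest weight in $g$ (respectively $\tilde g$) has $\nu_E(g)+t_{s+1}<\nu_E(\omega)<t_{s+1}+\tilde t_{s+1}-t_s$, which forces $\nu_E(\tilde g)+\tilde t_{s+1}=\nu_E(g)+t_{s+1}<t_{s+1}+\tilde t_{s+1}-t_s$. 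So $\nu_E(\tilde g)<t_{s+1}-t_s=n\ell^n_{s+1}$ (or the $m$-version). The initial monomial of $\tilde g$ must then be divisible by $x^{\ell^n_{s+1}}$ to cancel against $\operatorname{In}(g)\operatorname{In}(\omega_{s+1})$, since $\operatorname{In}(\omega_{s+1})$ and $\operatorname{In}(\widetilde\omega_{s+1})$ differ by the factor $x^{\ell^n}/y^{\ell^m}$, whose exponents are coprime-type limits with $\ell^n<m$ and $\ell^m<n$. That divisibility gives $\nu_E(\tilde g)\ge n\ell^n_{s+1}$, a contradiction. So no cancellation occurs, the minimum is attained, and $\operatorname{In}(\omega)$ equals $\operatorname{In}(g)\operatorname{In}(\omega_{s+1})$ or $\operatorname{In}(\tilde g)\operatorname{In}(\widetilde\omega_{s+1})$. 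Hence $\omega$ is reachable from one of them.

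\textbf{Not reachable from both.} If $\omega$ were reachable from both, we would have $x^ay^b\operatorname{In}(\omega_{s+1})=c\,x^{a'}y^{b'}\operatorname{In}(\widetilde\omega_{s+1})$. This forces $a\ge\ell^m$-type shifts, i.e.\ $x^ay^b$ divisible by $y^{\ell^m_{s+1}}$, so $\nu_E(\omega)\ge t_{s+1}+m\ell^m_{s+1}$. Symmetrically, $\nu_E(\omega)\ge\tilde t_{s+1}+n\ell^n_{s+1}$. In either orientation this gives $\nu_E(\omega)\ge t_{s+1}+\tilde t_{s+1}-t_s$, against the hypothesis.

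The main obstacle is the cancellation step: making the exponent bookkeeping between $\operatorname{In}(\omega_{s+1})$ and $\operatorname{In}(\widetilde\omega_{s+1})$ precise enough to get the sharp threshold $\tilde t_{s+1}-t_s$.
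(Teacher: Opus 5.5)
You have the right framework: write $\omega=g\,\omega_{s+1}+\tilde g\,\widetilde\omega_{s+1}$ in the Saito basis and show that the two summands cannot compete for the initial part. Your divisibility argument correctly excludes the case $\nu_E(\omega)>\min\{\nu_E(g)+t_{s+1},\nu_E(\tilde g)+\tilde t_{s+1}\}$.

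However, you then assert ``the minimum is attained, and $\operatorname{In}(\omega)$ equals $\operatorname{In}(g)\operatorname{In}(\omega_{s+1})$ or $\operatorname{In}(\tilde g)\operatorname{In}(\widetilde\omega_{s+1})$''. This skips the tie case $\nu_E(g\,\omega_{s+1})=\nu_E(\tilde g\,\widetilde\omega_{s+1})=\nu_E(\omega)$ without cancellation. Take $t_{s+1}=t^n_{s+1}$ and, up to constants, $\operatorname{In}(g)=x^ay^b$ and $\operatorname{In}(\tilde g)=x^{a'}y^{b'}$. Then $\operatorname{In}(\omega)=(c_1x^{a+\ell^n_{s+1}}y^b+c_2x^{a'}y^{b'+\ell^m_{s+1}})\operatorname{In}(\omega_s)$. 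If the two monomials are distinct, this is a monomial multiple of neither $\operatorname{In}(\omega_{s+1})$ nor $\operatorname{In}(\widetilde\omega_{s+1})$, so $\omega$ would be reachable from neither. Your divisibility trick needs an actual cancellation identity, and your ``not both'' paragraph presupposes that the two monomials coincide. So neither covers this case.

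The missing ingredient is exactly what the paper's proof rests on: $n\ell^n_{s+1}+m\ell^m_{s+1}<nm$ (Corollary~\ref{corollary_limites}, obtained from Lemma~\ref{lemma_limites}). In a tie, the common weight $n(a+\ell^n_{s+1})+mb=na'+m(b'+\ell^m_{s+1})$ is at most $\nu_E(\omega)-t_s<n\ell^n_{s+1}+m\ell^m_{s+1}<nm$. It therefore has a unique representation, which forces $b=b'+\ell^m_{s+1}$. On the other hand, $na+mb+t_{s+1}\le\nu_E(\omega)<t_{s+1}+m\ell^m_{s+1}$ gives $b<\ell^m_{s+1}$, a contradiction. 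The bounds $\ell^n_{s+1}<m$ and $\ell^m_{s+1}<n$ that you cite do not rule out two distinct monomials of equal weight.

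The paper runs this purely numerical argument to show $\nu_E(A\,\omega_{s+1})\neq\nu_E(B\,\widetilde\omega_{s+1})$ outright, since a common value would be $\le\nu_E(\omega)$. After that, both reachability and ``not both'' are immediate, and no initial-form identities are needed.

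This matters for a second reason. Your identities $\operatorname{In}(\omega_{s+1})=x^{\ell^n_{s+1}}\operatorname{In}(\omega_s)$ and $\operatorname{In}(\widetilde\omega_{s+1})=y^{\ell^m_{s+1}}\operatorname{In}(\omega_s)$ fail for $s=0$: there $\operatorname{In}(\omega_0)=\mu_0\,dy$, while $\operatorname{In}(\omega_1)$ is a multiple of $my\,dx-nx\,dy$. The numerical argument still covers $s=0$, since Corollary~\ref{corollary_limites} includes the index $i=1$.
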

\begin{proof}

From the definition of the critical values in Section~\ref{sec:cuspidal-semimodules}, we have that
$$
t_{s+1}+\tilde{t}_{s+1}-t_s=t_s+ n \ell_{s+1}^n +  m \ell_{s+1}^m,
$$
where $\ell_{s+1}^n$ and $\ell_{s+1}^m$ are  limits of $\Lambda_C$.
By Theorem~\ref{Th:Saito_basis}, there exist $A,B \in \mathcal{O}_{\mathbb{C}^2,0}$ such that
\begin{equation*}
\omega= A  \, \omega_{s+1} + B \, \widetilde{\omega}_{s+1}.
\end{equation*}
Let us see that $\nu_E(A   \, \omega_{s+1}) \ne \nu_E ( B \, \widetilde{\omega}_{s+1})$. The equality  $\nu_E(A   \, \omega_{s+1}) =\nu_E ( B \, \widetilde{\omega}_{s+1})$ implies that
\begin{equation}\label{eq:na-mb}
na + mb + t_{s+1}=na'+mb'+\tilde{t}_{s+1} < t_{s+1}+\tilde{t}_{s+1}-t_s
\end{equation}
for some $a,a',b,b' \in \mathbb{Z}_{\geq 0}$. Assume for instance that $t_{s+1}=t_{s+1}^n=t_s+n\ell^n_{s+1}$ and $\tilde{t}_{s+1}=t_{s+1}^m=t_s+m\ell^m_{s+1}$. Hence, from equation~\eqref{eq:na-mb}, we deduce that
$$
n(a + \ell_{s+1}^n) + mb =na'+m (b'+ \ell^m_{s+1}) < n \ell_{s+1}^n + m \ell^m_{s+1}
$$
By Corollary~\ref{corollary_limites}, we have that $n \ell_{s+1}^n + m \ell^m_{s+1}<nm$ and we deduce that
\begin{equation}\label{eq:a-b}
a + \ell_{s+1}^n= a'; \qquad b=b'+ \ell^m_{s+1}.
\end{equation}
Since $\nu_E(\omega) < t_{s+1} + \tilde{t}_{s+1}-t_s$, then we have
$$
n a + mb + t_{s+1} < t_{s+1} + \tilde{t}_{s+1}-t_s=t_{s+1}+m \ell_{s+1}^m
$$
and consequently $n a + mb < m \ell_{s+1}^m$. In particular, we obtain that $b < \ell_{s+1}^m$ against the expression of $b$ given in \eqref{eq:a-b}. Hence, we obtain that $\nu_E(A   \, \omega_{s+1}) \ne \nu_E ( B \, \widetilde{\omega}_{s+1})$.

Let us consider the case $\nu_E(A   \, \omega_{s+1}) < \nu_E ( B \, \widetilde{\omega}_{s+1})$. Then $\operatorname{In}(\omega)=\operatorname{In} (A \omega_{s+1})$ and $\omega$ is reachable from $\omega_{s+1}$. If $\omega$ were also reachable from $\widetilde{\omega}_{s+1}$, then it would exist a monomial $h$ with $\nu_E(A \omega_{s+1})= \nu_E(h \widetilde{\omega}_{s+1})$ but we have just seen that this equality is not possible. The other case $\nu_E ( B \, \widetilde{\omega}_{s+1}) < \nu_E(A   \, \omega_{s+1})$ works in a similar way.
\end{proof}

\begin{Remark}
Given a 1-form $\omega$ satisfying the hypothesis of the previous lemma, we have that
\begin{itemize}
  \item[(i)] if $\nu_E(\omega) \in [t_{s+1},\tilde{t}_{s+1})$, then $\omega$ is reachable from $\omega_{s+1}$;
  \item[(ii)] if $\omega$ is reachable from $\widetilde{\omega}_{s+1}$, then $\nu_E(\omega) \in [\tilde{t}_{s+1},t_{s+1}+\tilde{t}_{s+1}-t_s)$.
\end{itemize}
\end{Remark}
\begin{proof}[Proof of Theorem~\ref{Th:total-transv-property}]
Consider a totally $E$-dicritical foliation $\mathcal{F}$ defined by $\omega_\mathcal{F}=0$ with $C$ as invariant curve and such that
$\nu_E(\omega_\mathcal{F})\in [t_{s+1},t_{s+1}+\tilde{t}_{s+1}-t_s)$. By Theorem~\ref{Th:Saito_basis}, there exist $A,B \in \mathcal{O}_{\mathbb{C}^2,0}$ such that
\begin{equation}\label{ec:omega_F_s+1}
\omega_{\mathcal{F}}= A  \, \omega_{s+1} + B \, \widetilde{\omega}_{s+1}.
\end{equation}
By Lemma~\ref{lemma_reachable_only_one}, can assume that $\omega_\mathcal{F}$ is reachable from $\omega_{s+1}$ but not from $\widetilde{\omega}_{s+1}$, and hence $\nu_E(\omega_\mathcal{F})=\nu_E(A \omega_{s+1})$. The other case runs in a similar way.
Hence, we have that
$$
\nu_E(A \omega_{s+1}) < \nu_E(B \widetilde{\omega}_{s+1})
$$
and then
$$
\lambda_s + \nu_E(A \omega_{s+1}) < \lambda_s + \nu_E(B \widetilde{\omega}_{s+1})
$$
By Lemma~\ref{lemma_t_i*_lambda_i}, we deduce that
$$
\lambda_s = \nu_E(\omega_{s+1} \wedge \omega_s)-\nu_E(\omega_{s+1})=\nu_E(\widetilde{\omega}_{s+1} \wedge \omega_s)-\nu_E(\widetilde{\omega}_{s+1})
$$
and we obtain that
\begin{align*}
  \lambda_s + \nu_E(A \omega_{s+1}) &= \nu_E(\omega_{s+1} \wedge \omega_s)-\nu_E(\omega_{s+1}) +\nu_E(A \omega_{s+1}) \\ & = \nu_E(\omega_{s+1} \wedge \omega_s) + \nu_E(A) \\
   \lambda_s+\nu_E(B \widetilde{\omega}_{s+1}) & = \nu_E(\widetilde{\omega}_{s+1} \wedge \omega_s)-\nu_E(\widetilde{\omega}_{s+1}) + \nu_E(B \widetilde{\omega}_{s+1}) \\ & = \nu_E(\widetilde{\omega}_{s+1} \wedge \omega_s) + \nu_E(B).
\end{align*}
Consequently, we obtain that $\nu_E(A\omega_{s+1} \wedge \omega_s) < \nu_E(B \widetilde{\omega}_{s+1} \wedge \omega_s)$.
From this inequality and the expression given in~\eqref{ec:omega_F_s+1}, we get that
$$
\nu_E(\omega_\mathcal{F} \wedge \omega_s)=\nu_E(A\omega_{s+1} \wedge \omega_s) < \nu_E(B \widetilde{\omega}_{s+1} \wedge \omega_s).
$$
By hypothesis, $\nu_E(\omega_\mathcal{F}) =\nu_E(A \omega_{s+1})\in [t_{s+1},t_{s+1}+\tilde{t}_{s+1}-t_s)$, then the divisorial value of the coefficient $A$ satisfies $\nu_E(A)<\tilde{t}_{s+1}-t_s<nm$ and we get that the initial part $\operatorname{In}(A)$ is equal to a monomial $A_{\alpha,\beta} x^\alpha y^\beta$ with $A_{\alpha,\beta}$ a constant.

Hence, if the jacobian curve $\mathcal{J}_{\mathcal{F},\mathcal{G}_s}$ is defined by $J=0$  and the jacobian curve $\mathcal{J}_{\mathcal{G}_{s+1},\mathcal{G}_s}$ is given by $J_{s+1,s}=0$, we have that, up to multiply by a constant, the initial parts satisfy the following equality
\begin{equation}\label{eq:in-jacobian}
\operatorname{In}(J)=  x^\alpha y^\beta \operatorname{In}(J_{s+1,s}).
\end{equation}
By Theorem~\ref{Th:transv-omega_s+1}, the foliations $\mathcal{G}_{s+1}$ and $\mathcal{G}_s$ have the $E$-transversality property, then the strict transform of $\mathcal{J}_{\mathcal{G}_{s+1},\mathcal{G}_s}$ by $\pi$ does not intersect the divisor $E$. From the expression of the initial part of $J$ given in~\eqref{eq:in-jacobian}, we deduce that the strict transform of  $\mathcal{J}_{\mathcal{F},\mathcal{G}_s}$ by $\pi$ does not intersect the divisor $E$ and hence the foliation $\mathcal{F}$ satisfies the $s$-transversality property. Consequently, the foliation  $\mathcal{F}$ satisfies the total transversality property by Proposition~\ref{Prop:l-transv-j-transv}.
\end{proof}
Next example shows that the upper bound in the previous result is sharp.

\begin{Example}\label{ex:sharp-bound}
Consider the curve $C$ given by the parametrization $\varphi(t)=(x(t),y(t))$ given by the expression~\eqref{eq:cusp4-9} in Example~\ref{Ex:total-transv-nuE-distinto-t}. Let us consider the 1-forms
\begin{align*}
\omega_3 &=\left(9x^2y +28 y^2+10xy^2+\tfrac{\partial h_1}{\partial x}\right)dx - \left(4x^3+12xy+\tfrac{23}{6} y^2-\tfrac{\partial h_1}{\partial y}\right)dy \\
\widetilde{\omega}_3 &=\left(9xy^2 - \tfrac{28}{9} x^6+\tfrac{\partial h_2}{\partial x}\right)dx - \left(4x^2y-\tfrac{4}{9}y^2-\tfrac{\partial h_2}{\partial y}\right)dy
\end{align*}
with $h_1,h_2 \in \mathbb{C}\{x,y\}$ such that $C$ is an invariant curve of $\omega_3$ and $\tilde{\omega}_3$. Note that $\nu_E(\omega_3)=t_3=21$ and $\nu_E(\widetilde{\omega}_3)=\tilde{t}_3=26$.

Consider now the foliation $\mathcal{F}$ given by the 1-form $\omega=0$ where
$$
\omega=\left(9x^2y^2+y^3-3x^7+\tfrac{\partial h}{\partial x}\right)dx - \left(4x^3y+\tfrac{4}{3}x^6-\tfrac{\partial h}{\partial y} \right) dy
$$
with $h\in \mathbb{C}\{x,y\}$ such that $\varphi^* \omega=0$ which implies that $\nu_E(h)\geq 32$.
The divisorial value of $\omega$ is equal to  $\nu_E(\omega)=30=t_{3}+\tilde{t}_3-t_2=21+26-17$ and it can be checked that the 1-form $\omega$ is reachable by $\omega_3$ and $\widetilde{\omega}_3$.

Let $\omega_2$ be the element of the standard basis for $C$ given in~\eqref{base-standar-ex-9-4} in Example~\ref{Ex:total-transv-nuE-distinto-t}. The 2-form $\omega \wedge \omega_2$ is equal to
$$
\omega \wedge \omega_2= \left(\tfrac{4}{9}y^4+12x^9+\tfrac{32}{3}x^7y\right) dx \wedge dy + dh \wedge \omega_2
$$
and hence the jacobian curve $\mathcal{J}_{\mathcal{F},\mathcal{G}_2}$ intersects the cuspidal divisor $E$. Consequently, the foliation $\mathcal{F}$ does not satisfy the 2-transversality property and hence $\mathcal{F}$ does not have the total transversality property. Note that $\mathcal{F}$ satisfies the 1-transversality property since
$$
  \omega \wedge \omega_1=(-4xy^3+12x^8+12yx^6)dx \wedge dy +dh \wedge \omega_1.
$$
\end{Example}

\section{Families of curves with constant semimodule}\label{sec:constant-semimodule}
Let $\mathcal{F}$ be a cuspidal dicritical foliation with $E$ the dicritical divisor. In general, two curves $C, \widetilde{C} \in \operatorname{Cusp}_E(\mathcal{F})$ does not have the same semimodule of differential values.
As a consequence of the results in Section~\ref{sec:transv-polar}, we get that the first $\ell$ elements of the basis of $\Lambda_C$ and $\Lambda_{\widetilde{C}}$ are the same provided that $\mathcal{F}$ satisfies the $\ell$-transversality property, with $\ell \in \{1,\ldots,s\}$ (see Theorem~\ref{th:valores-dif-tilde-C}). This section is devoted to give conditions which assure that all the $E$-cusps of $\mathcal{F}$ have the same semimodule of differential values (see Theorem~\ref{th:lambda-cte}). We introduce the following definition.

\begin{Definition}
We say that a cuspidal dicritical foliation $\mathcal{F}$ is {\em $\Lambda$-constant} if for all curves $C,\widetilde{C} \in \operatorname{Cusp}_E(\mathcal{F})$ we have that $\Lambda_C=\Lambda_{\widetilde{C}}$.
\end{Definition}
Let us fix a cusp $C \in \operatorname{Cusp}(E)$ and take $(\mathcal{E},\tilde{\mathcal{E}})$ a standard system for $C$ with $\mathcal{E}=(\omega_{-1},\omega_0,\omega_1,\ldots,\omega_s,\omega_{s+1})$ and $\widetilde{\mathcal{E}}=(\widetilde{\omega}_1,\ldots,\widetilde{\omega}_s,\tilde{\omega}_{s+1})$. As before, we denote by $\mathcal{G}_{\ell}$ the foliation defined by $\omega_{\ell}=0$ and $\widetilde{\mathcal{G}}_{\ell}$ the foliation defined by $\widetilde{\omega}_{\ell}=0$ for $\ell \in \{1,2,\ldots,s,s+1\}$. From the results in Section~\ref{sec:standard-systems}, we know that
the foliation $\mathcal{G}_{s+1}$ is $\Lambda$-constant, but the foliation $\widetilde{\mathcal{G}}_{s+1}$ is not in general $\Lambda$-constant as it is showed in the following example.

\begin{Example}\label{ex-G-tilde-no-lambda-cte1}
Let $C$ be the curve defined by the parametrization $\varphi(t)=(x(t),y(t))$ with
\begin{equation*}
\left\{
\begin{aligned}
  x(t) & = t^4 \\
  y(t) & = t^{9}+t^{10}+\tfrac{19}{18}t^{11}
\end{aligned}
\right.
\end{equation*}
The basis of the semimodule $\Lambda_C$ is given by $\mathcal{B}=(4,9,14)$. The axes are equal to $u_1=13$,  $u_2=18$,   $\tilde{u}_1 =36$,   $\tilde{u}_2 =32$  and the critical values are given by $t_1=13$,  $t_2  = 17$,  $\tilde{t}_1  =36$,   $\tilde{t}_2  =31$.

We can take the element $\widetilde{\omega}_2$ of a standard system for $C$ equal to
$$\widetilde{\omega}_2=\left( y^2 - \tfrac{224}{81}x^5\right) (9 y dx  - 4x dy) + \left(x^7 -\tfrac{133}{54}y^3x \right) dx + \tfrac{148}{81} x^6 dy + dh
$$
with $h \in \mathbb{C}\{x,y\}$ such that $\varphi^* \widetilde{\omega}_2\equiv0$.
The curves  in $\operatorname{Cusp}_E( \widetilde{\mathcal G}_2)$ are the curves in the family of  cusps $\{C_a\}_{a \in \mathbb{C}^*}$ given by the parametrizations
$$
\varphi_a(t)=\left(t^4, at^9+\frac{t^{10}}{a^2} + \left(\frac{37a^4-18}{18a^5}\right) t^{11}+ \left(\frac{5-5a^4}{3a^8}\right)t^{12}+\cdots \right)
$$
Note that $C_1=C$.

If $a^4 \neq 1$,  a standard basis for a curve $C_a$ is given by $(dx,dy,\omega_1,\omega_2)$ where
$$\omega_1=9y dy - 4xdy; \qquad \omega_2=x \omega_1+\frac{4}{9a^4}ydy.$$
Hence, the basis of the semimodule of the   curve $C_a$ is $(4,9,14,19)$ and then $\Lambda_C \subsetneq \Lambda_{C_a}$.
\end{Example}

Next result gives a condition in terms of the divisorial value of a 1-form $\omega_\mathcal{F}$ defining a totally $E$-dicritical foliation $\mathcal{F}$ which allows to assure that all the $E$-cusps in $\operatorname{Cusp}_E(\mathcal{F})$ have the same semimodule of differential values.
\begin{Theorem}\label{th:lambda-cte}
Let $\mathcal{F}$ be a totally $E$-dicritical foliation in $(\mathbb{C}^2,0)$ with $C$ as invariant curve. If  $\mathcal{F}$ satisfies the total transversality property and
$$\nu_E(\omega_\mathcal{F}) \leq c(\Gamma_C)-c(\Lambda_C)+2(n+m)-1,$$
then the foliation $\mathcal{F}$ is $\Lambda$-constant.
\end{Theorem}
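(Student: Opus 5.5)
The plan is to prove the two inclusions $\Lambda_C\subset\Lambda_{\widetilde C}$ and $\Lambda_{\widetilde C}\subset\Lambda_C$ for every $\widetilde C\in\operatorname{Cusp}_E(\mathcal F)$. The first inclusion follows directly from the total transversality property. By Theorem~\ref{th:valores-dif-tilde-C} applied with $\ell=s$, the first $s+2$ elements of the basis of $\Lambda_{\widetilde C}$ are $\lambda_{-1},\lambda_0,\ldots,\lambda_s$. Hence $\Lambda_C=\Lambda_s\subset\Lambda_{\widetilde C}$, and $\Lambda^{\widetilde C}_s=\Lambda_s$. In particular the axes and critical values of $\widetilde C$ agree with those of $C$ up to index $s+1$. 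The whole problem is therefore to exclude a new generator $\lambda^{\widetilde C}_{s+1}$. We argue by contradiction: let $\tilde\lambda=\min(\Lambda_{\widetilde C}\smallsetminus\Lambda_C)$. Since $\Lambda_C$ contains every integer $\geq c(\Lambda_C)$, we have $\tilde\lambda<c(\Lambda_C)$.

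The key estimate concerns forms tangent to $C$. Let $\theta\in\Omega^1(C)$ be any 1-form having $C$ as invariant curve, and let $f=0$ be a reduced equation of $C$. Both $\theta$ and $\omega_\mathcal F$ leave $C$ invariant, so $\theta\wedge\omega_\mathcal F=f\,g\,dx\wedge dy$ for some $g$. By Corollary~\ref{lema-2-div-valor} and Lemma~\ref{lemma:nu_E_ij}, using $\nu_E(f)=nm$, this gives
$$\nu_E(\theta\wedge\omega_\mathcal F)\geq nm+n+m.$$
We then apply Theorem~\ref{Th:nu_C}, in the form of Remark~\ref{Th:nu_C_1-forma-no-foliacion}, together with the hypothesis on $\nu_E(\omega_\mathcal F)$ and $c(\Gamma_C)=(n-1)(m-1)$. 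This yields
$$\nu_{\widetilde C}(\theta)\geq nm+n+m-\nu_E(\omega_\mathcal F)\geq nm+n+m-(n-1)(m-1)+c(\Lambda_C)-2(n+m)+1=c(\Lambda_C).$$
Thus every $\theta\in\Omega^1(C)$, in particular $\omega_{s+1}$, $\widetilde\omega_{s+1}$ and all their combinations, has $\widetilde C$-value at least $c(\Lambda_C)$. This is the precise place where the numerical bound of the theorem is used, and it explains its shape.

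Next, take $\eta$ with $\nu_{\widetilde C}(\eta)=\tilde\lambda$. We run the standard reduction (Delorme-type division, Theorem~\ref{Th:descomp-Delorme}) of $\eta$ with respect to the extended standard basis of $C$. Repeatedly subtracting $c\,x^ay^b\omega_i$ with $i\leq s$ raises $\nu_C$ until the remainder lies in $\Omega^1(C)$. This produces a decomposition
$$\eta=\sum_{i=-1}^{s}h_i\omega_i+\theta,\qquad \theta\in\Omega^1(C),\qquad \nu_C(h_i\omega_i)\geq\nu_C(\eta).$$
By the previous paragraph $\nu_{\widetilde C}(\theta)\geq c(\Lambda_C)>\tilde\lambda$, so $\tilde\lambda=\nu_{\widetilde C}(\sum h_i\omega_i)$. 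Each individual term satisfies $\nu_{\widetilde C}(h_i\omega_i)=\nu_{\widetilde C}(h_i)+\lambda_i\in\lambda_i+\Gamma_C\subset\Lambda_C$. So if the minimum of these values is attained only once, then $\tilde\lambda\in\Lambda_C$, a contradiction.

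The main obstacle is cancellation among the terms of minimal $\widetilde C$-value. For this I would organize the reduction in terms of divisorial values rather than $\nu_C$. By Proposition~\ref{prop:nu_C-nu_tilde_C}, which holds because $\mathcal F$ satisfies the $s$-transversality property, $\nu_C$ and $\nu_{\widetilde C}$ agree on all forms with $\nu_E\leq t_s$. Moreover, for $h$ with $\nu_E(h)<nm$ we have $\nu_C(h)=\nu_E(h)=\nu_{\widetilde C}(h)$ by Remark~\ref{monomio-nm}, and Lemma~\ref{lemma_nu_h_omega} guarantees this range for the relevant coefficients below $c(\Lambda_C)$. Hence the reduction steps performed for $C$ are also valid for $\widetilde C$ as long as the values stay below $c(\Lambda_C)$. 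So I would try to prove that the reduction of $\eta$ computed for $C$ is simultaneously a reduction for $\widetilde C$ at every step below the conductor. If this holds, the first step at which the leading term cannot be cancelled by some $x^ay^b\omega_i$ gives the same value for both curves, which forces $\tilde\lambda\in\Lambda_C$. Alternatively, that value is at least $c(\Lambda_C)$ via $\theta$. Either way we contradict the choice of $\tilde\lambda$, so $\Lambda_{\widetilde C}=\Lambda_C$.
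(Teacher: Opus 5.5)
Your first inclusion and your key estimate are correct. The bound $\nu_{\widetilde C}(\theta)\geq c(\Lambda_C)$ for $\theta\in\Omega^1(C)$ is exactly the computation the paper carries out. The paper does it only for $\theta=\omega_{s+1}$, writing $\omega_\mathcal{F}=A\omega_{s+1}+B\widetilde\omega_{s+1}$ and using Saito's criterion, and it is where the numerical hypothesis enters.

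The gap is the last step: you never show $\tilde\lambda\in\Lambda_C$. Once you regroup the reduction as $\sum_i h_i\omega_i$, cancellation among terms of minimal $\widetilde C$-value is uncontrolled. Your suggested remedy cannot work as stated. By Proposition~\ref{prop:nu_C-nu_tilde_C} itself, any $\eta$ with $\nu_{\widetilde C}(\eta)\notin\Lambda_C$ must have $\nu_E(\eta)>t_s$, so that proposition says nothing about the forms you need to control. Your final paragraph is a plan (``I would try to prove\ldots''), not an argument. Moreover, producing a remainder in $\Omega^1(C)$ by repeated subtraction is an infinite process, and you do not justify its convergence.

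The paper closes this step with Theorem~\ref{th_caracterizacion-omega-i}(1), applied to $\widetilde C$ at level $s+1$. Since $\Lambda^{\widetilde C}_s=\Lambda_C$, one has $t^{\widetilde C}_{s+1}=t_{s+1}$. A new generator would then equal $\sup\{\nu_{\widetilde C}(\eta)\ :\ \nu_E(\eta)=t_{s+1}\}\geq\nu_{\widetilde C}(\omega_{s+1})\geq c(\Lambda_C)$. It would therefore lie in $\Lambda_C=\Lambda^{\widetilde C}_s$, a contradiction. Only the single form $\omega_{s+1}$ needs to be estimated.

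Your route can also be completed without that theorem, provided you do not regroup:
\begin{itemize}
\item Each subtracted term $c\,x^ay^b\omega_i$ with $i\leq s$ has $\nu_{\widetilde C}=na+mb+\lambda_i=\nu_C$, by total transversality and $\nu_{\widetilde C}(x)=n$, $\nu_{\widetilde C}(y)=m$.
\item Successive terms have strictly increasing values, so no cancellation occurs.
\item Stop after finitely many steps, once the remainder $\rho$ satisfies $\nu_C(\rho)\geq nm+n$.
\item Then, with $\gamma$ a parametrization of $C$, $\gamma^*\rho=(h\circ\gamma)\,\gamma^*dx$ for some $h$ with $\nu_C(h)\geq nm$. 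This holds because every series of order $\geq c(\Gamma_C)$ is of the form $h\circ\gamma$.
\item By Remark~\ref{monomio-nm}, $\nu_E(h)\geq nm$.
\item Writing $\rho=h\,dx+\theta$ with $\theta\in\Omega^1(C)$, your estimate gives $\nu_{\widetilde C}(\rho)\geq c(\Lambda_C)$.
\end{itemize}
This even yields $\nu_{\widetilde C}(\eta)=\nu_C(\eta)$ whenever $\nu_C(\eta)<c(\Lambda_C)$. As submitted, however, this step is missing.
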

\begin{proof}
Let us consider any curve $C^* \in \operatorname{Cusp}_E(\mathcal{F})$. Since the foliation $\mathcal{F}$ satisfies the total transversality property,  Theorem~\ref{th:valores-dif-tilde-C} implies that the basis of the semimodule $\Lambda_{C^*}$ is given by
$$
\mathcal{B}_{C^*}=(\lambda_{-1},\lambda_0,\lambda_1,\ldots,\lambda_s, \lambda_{s+1}^{C^*},\ldots,\lambda_{s({C^*})}^{C^*}),
$$
where $\mathcal{B}=(\lambda_{-1},\lambda_0,\lambda_1,\ldots,\lambda_s)$ is the basis of the semimodule $\Lambda_C$.
By Theorem~\ref{th_caracterizacion-omega-i}, the element $\lambda_{s+1}^{C^*}$ can be computed as
$$
\lambda_{s+1}^{C^*}= \sup\{ \nu_{C^*}(\eta) \ : \ \nu_E(\eta)=t_{s+1}\}.
$$
Note that $u_{s+1}^{C^*}=u_{s+1}$ and $t_{s+1}^{C^*}=t_{s+1}=t_s+u_{s+1}-\lambda_s$ because these values are determined by $\Lambda_s^{C^*}=\Lambda_s^C=\Lambda_C$. Since $\nu_E(\omega_{s+1})=t_{s+1}$, then $\lambda_{s+1}^{C^*} \geq \nu_{C^*}(\omega_{s+1})$. Let us compute $\nu_{C^*}(\omega_{s+1})$. By Theorem~\ref{Th:nu_C}, we have that
$$
\nu_{C^*}(\omega_{s+1}) =\nu_E(\omega_{s+1}\wedge \omega_{\mathcal{F}}) -\nu_E(\omega_\mathcal{F}) + i_Q(\mathcal{J}_{\mathcal{F},\mathcal{G}_{s+1}}', {C^*}')
$$
where $Q$ is the infinitely near point of ${C^*}$ in $E$. By Theorem~\ref{Th:Saito_basis}, we can write
$$
\omega_{\mathcal{F}}=A \omega_{s+1} + B \widetilde{\omega}_{s+1}.
$$
Moreover, by Saito's criterion \cite{Sai-1980} (see Theorem~\ref{th:Saito-criterion}), we have that
$$
 \omega_{s+1} \wedge \widetilde{\omega}_{s+1} =u f dx \wedge dy
$$
where $u$ is unit and $f=0$ is a reduced equation of the curve $C$.
From the equalities above, we obtain that
\begin{align}
  \nu_{C^*}(\omega_{s+1}) & = \nu_E(B \omega_{s+1} \wedge \widetilde{\omega}_{s+1})-\nu_E(\omega_\mathcal{F}) + i_Q(\mathcal{J}_{\mathcal{F},\mathcal{G}_{s+1}}', {C^*}')   \notag\\
   & \geq \nu_E( B u f dx \wedge dy) - \nu_E(\omega_\mathcal{F})  \notag\\
   & =\nu_E(B) + nm +n+m- \nu_E(\omega_\mathcal{F}) \label{eq:omega_F}\\
   & \geq \nu_E(B) + nm +n+m-(c(\Gamma_C)-c(\Lambda_C)+2(n+m)-1)) \notag \\
   & = \nu_E(B) + c(\Lambda_C) \notag \\
   &  \geq c(\Lambda_C) \notag
  \end{align}
This implies that $\Lambda_{{C^*}}=\Lambda_{C}$ and we obtain the result.
\end{proof}

\begin{Remark}\label{rm:conductor-t_s+1}
By Lemma~\ref{lema_conductor_u_s+1} and Lemma~\ref{lemma:ui+ti}, we have that
$$c(\Lambda_C)=\tilde{u}_{s+1}  -n-m+1=nm-t_{s+1}+1.$$ Hence, we obtain
$$c(\Gamma_C)-c(\Lambda_C)+2(n+m)-1= t_{s+1} + n+m-1.$$
\end{Remark}
From Theorems~\ref{Th:total-transv-property} and \ref{th:lambda-cte}, we deduce
\begin{Corollary}
Let $\mathcal{F}$ be a totally $E$-dicritical foliation in $(\mathbb{C}^2,0)$ with $C$ as invariant curve.
 Denote $t_\star=\min\{\tilde{t}_{s+1}-t_s, n+m-1\}$. If $\nu_E(\omega_\mathcal{F}) \in [t_{s+1},t_{s+1}+t_\star)$, then the foliation $\mathcal{F}$ is $\Lambda$-constant.
\end{Corollary}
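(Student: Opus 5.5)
The corollary should follow by chaining Theorem~\ref{Th:total-transv-property} and Theorem~\ref{th:lambda-cte}, with Remark~\ref{rm:conductor-t_s+1} converting the bound of Theorem~\ref{th:lambda-cte} into a statement about $t_{s+1}$. Let $\nu_E(\omega_\mathcal{F}) \in [t_{s+1},t_{s+1}+t_\star)$ with $t_\star=\min\{\tilde{t}_{s+1}-t_s, n+m-1\}$.

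The first step is total transversality. Since $t_\star \leq \tilde{t}_{s+1}-t_s$, the interval $[t_{s+1},t_{s+1}+t_\star)$ lies inside $[t_{s+1},t_{s+1}+\tilde{t}_{s+1}-t_s)$. Theorem~\ref{Th:total-transv-property} then shows that $\mathcal{F}$ has the total transversality property. This range is nonempty, because $\tilde{t}_{s+1}>t_{s+1}>t_s$ by Lemma~\ref{lema_t_i_u_i}.

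The second step is the divisorial bound. Since $t_\star \leq n+m-1$, we get $\nu_E(\omega_\mathcal{F}) < t_{s+1}+n+m-1$, and since the values are integers,
$$\nu_E(\omega_\mathcal{F}) \leq t_{s+1}+n+m-2 \leq t_{s+1}+n+m-1 = c(\Gamma_C)-c(\Lambda_C)+2(n+m)-1.$$
The last equality is Remark~\ref{rm:conductor-t_s+1}, which uses Lemma~\ref{lema_conductor_u_s+1} together with $\tilde{u}_{s+1}+t_{s+1}=nm+n+m$ from the appendix. Both hypotheses of Theorem~\ref{th:lambda-cte} now hold, so $\mathcal{F}$ is $\Lambda$-constant.

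The only point that needs care is the consistency of the constant. The introduction states $t_{s+1}+n+m+1$ and uses $n+m+1$ in $t_*$, whereas Remark~\ref{rm:conductor-t_s+1} gives $t_{s+1}+n+m-1$. To settle this, I would recompute directly: $c(\Gamma_C)=nm-n-m+1$ and $c(\Lambda_C)=nm-t_{s+1}+1$, so the bound equals $t_{s+1}+n+m-1$. This agrees with the Remark, and the constant $n+m-1$ in $t_\star$ is therefore safe under either reading.
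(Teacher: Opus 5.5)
Your proposal is correct and follows exactly the paper's route: apply Theorem~\ref{Th:total-transv-property}, which is valid since $t_\star\le \tilde{t}_{s+1}-t_s$, and then Theorem~\ref{th:lambda-cte}, whose bound equals $t_{s+1}+n+m-1$ by Remark~\ref{rm:conductor-t_s+1}. Your recomputation of $t_{s+1}+n+m-1$ is also right, so the $n+m+1$ in the introduction is a typo, and your integrality step shows the statement would still hold with $n+m$ in place of $n+m-1$ in $t_\star$.
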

In order to prove the total transversality property in Theorem~\ref{Th:total-transv-property}, we need to consider two cases: either $\omega_\mathcal{F}$ is reachable by $\omega_{s+1}$ or by $\widetilde{\omega}_{s+1}$ but not from both as stated in Lemma~\ref{lemma_reachable_only_one}. Let us study these two cases separately.

\begin{Corollary}\label{Cor:alcanzable-omega-s+1}
Let $\mathcal{F}$ be a totally $E$-dicritical foliation in $(\mathbb{C}^2,0)$ with $C$ as invariant curve.
Assume that $\mathcal{F}$ satisfies the total transversality property and that $\omega_\mathcal{F}$ is reachable only from $\omega_{s+1}$. If the 1-form $\omega_{\mathcal{F}}$ satisfies that
\begin{equation}\label{eq:condicion-alcanzable-omega-s+1}
\nu_E(\omega_\mathcal{F} \wedge \widetilde{\omega}_{s+1}) -\nu_E(\omega_\mathcal{F} \wedge {\omega}_{s+1}) < n+m,
\end{equation}
then the foliation $\mathcal{F}$ is $\Lambda$-constant.
\end{Corollary}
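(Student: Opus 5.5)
We follow the scheme of the proof of Theorem~\ref{th:lambda-cte}, but we estimate $\nu_{C^*}(\omega_{s+1})$ directly from the jacobian-curve data rather than from the bound on $\nu_E(\omega_\mathcal{F})$. Fix $C^*\in\operatorname{Cusp}_E(\mathcal{F})$ with infinitely near point $Q\in E$.

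\textbf{Step 1 (reduction to one form).} By the total transversality property and Theorem~\ref{th:valores-dif-tilde-C}, the basis of $\Lambda_{C^*}$ begins with $(\lambda_{-1},\ldots,\lambda_s)$. It then suffices to show that $\Lambda_{C^*}$ has no further generators. For this we prove $\nu_{C^*}(\omega_{s+1})\ge c(\Lambda_C)$. Indeed, by Theorem~\ref{th_caracterizacion-omega-i}, any extra generator $\lambda^{C^*}_{s+1}$ would be the supremum of $\nu_{C^*}(\eta)$ over forms with $\nu_E(\eta)=t_{s+1}$. Once $\nu_{C^*}(\omega_{s+1})\ge c(\Lambda_C)$, all values $\ge c(\Lambda_C)$ already lie in $\Lambda_C$, so no new element can appear. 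This is exactly how the proof of Theorem~\ref{th:lambda-cte} concludes.

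\textbf{Step 2 (formula for $\nu_{C^*}(\omega_{s+1})$).} Apply Theorem~\ref{Th:nu_C} to get
\[
\nu_{C^*}(\omega_{s+1})\ge \nu_E(\omega_\mathcal{F}\wedge\omega_{s+1})-\nu_E(\omega_\mathcal{F}).
\]
Write $\omega_\mathcal{F}=A\omega_{s+1}+B\widetilde\omega_{s+1}$ (Theorem~\ref{Th:Saito_basis}). Since $\omega_\mathcal{F}$ is reachable only from $\omega_{s+1}$, we have $\nu_E(\omega_\mathcal{F})=\nu_E(A\omega_{s+1})=\nu_E(A)+t_{s+1}$. By Saito's criterion, $\omega_\mathcal{F}\wedge\omega_{s+1}=-Buf\,dx\wedge dy$ and $\omega_\mathcal{F}\wedge\widetilde\omega_{s+1}=Auf\,dx\wedge dy$, with $\nu_E(uf\,dx\wedge dy)=nm+n+m$. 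Hence
\[
\nu_E(\omega_\mathcal{F}\wedge\widetilde\omega_{s+1})-\nu_E(\omega_\mathcal{F}\wedge\omega_{s+1})=\nu_E(A)-\nu_E(B),
\]
and hypothesis \eqref{eq:condicion-alcanzable-omega-s+1} gives $\nu_E(B)>\nu_E(A)-(n+m)$.

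\textbf{Step 3 (the estimate).} Substituting into the formula of Step 2,
\[
\nu_{C^*}(\omega_{s+1})\ge \nu_E(B)+nm+n+m-\nu_E(A)-t_{s+1}>nm-t_{s+1}.
\]
Since these are integers, $\nu_{C^*}(\omega_{s+1})\ge nm-t_{s+1}+1$. By Remark~\ref{rm:conductor-t_s+1}, $nm-t_{s+1}+1=c(\Lambda_C)$, and Step 1 then gives $\Lambda_{C^*}=\Lambda_C$, so $\mathcal{F}$ is $\Lambda$-constant.

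The main point to check is the off-by-one bookkeeping: we need the strict inequality in \eqref{eq:condicion-alcanzable-omega-s+1}, combined with integrality, to land exactly on $c(\Lambda_C)=nm-t_{s+1}+1$. A secondary check is that $\nu_E(\omega_\mathcal{F})=\nu_E(A)+t_{s+1}$ holds, i.e.\ that there is no cancellation between $A\omega_{s+1}$ and $B\widetilde\omega_{s+1}$ in the initial part. This is exactly what the hypothesis "reachable only from $\omega_{s+1}$" provides, as in Lemma~\ref{lemma_reachable_only_one}.
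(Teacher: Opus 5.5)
Your proposal is correct and is the paper's own argument: rerun the proof of Theorem~\ref{th:lambda-cte}, but in inequality~\eqref{eq:omega_F} use $\nu_E(\omega_\mathcal{F})=\nu_E(A\omega_{s+1})<t_{s+1}+\nu_E(B)+n+m$ (hypothesis~\eqref{eq:condicion-alcanzable-omega-s+1} rewritten through the Saito basis) to get $\nu_{C^*}(\omega_{s+1})\ge nm-t_{s+1}+1=c(\Lambda_C)$.

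One correction concerns your justification of $\nu_E(\omega_\mathcal{F})=\nu_E(A\omega_{s+1})$. Lemma~\ref{lemma_reachable_only_one} does not prove this from reachability: there, the absence of cancellation between $A\omega_{s+1}$ and $B\widetilde{\omega}_{s+1}$ comes from the range bound $\nu_E(\omega)<t_{s+1}+\tilde{t}_{s+1}-t_s$, which is not assumed here. So, like the paper, you are in effect reading this equality off the hypothesis that $\omega_\mathcal{F}$ is reachable only from $\omega_{s+1}$, rather than deducing it from the lemma.
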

Note that the condition given in equation~\eqref{eq:condicion-alcanzable-omega-s+1} is weaker than condition
\begin{equation}\label{eq:th-lambda-cte}
\nu_E(\omega_E)<t_{s+1}+n+m
\end{equation}
given in Theorem~\ref{th:lambda-cte}. In fact, if we write $\omega_{\mathcal{F}}= A \omega_{s+1} + B \widetilde{\omega}_{s+1}$, then
$$
\nu_E(\omega_\mathcal{F} \wedge \widetilde{\omega}_{s+1}) -\nu_E(\omega_\mathcal{F} \wedge {\omega}_{s+1}) = \nu_E(A)-\nu_E(B).
$$
Hence, the condition in \eqref{eq:condicion-alcanzable-omega-s+1} is equivalent to
$$
\nu_E(A) < \nu_E(B)+n+m.
$$
However, if $\omega_\mathcal{F}$ is reachable from $\omega_{s+1}$, then condition \eqref{eq:th-lambda-cte} implies $\nu_E(\omega_\mathcal{F})=\nu_E(A \omega_{s+1})<t_{s+1} +n +m$ which is equivalent to
$$
\nu_E(A) < n+m.
$$
The proof of Corollary~\ref{Cor:alcanzable-omega-s+1} is similar to the proof of Theorem~\ref{th:lambda-cte} using that $\nu_E(\omega_\mathcal{F})=\nu_E(A \omega_{s+1}) < t_{s+1} + \nu_E(B) + n +m$ in  inequality~\eqref{eq:omega_F}.

\medskip

Let us study what happens when $\omega_\mathcal{F}$ is reachable by $\widetilde{\omega}_{s+1}$. The condition over $\nu_E(\omega_\mathcal{F})$ given in Theorem~\ref{th:lambda-cte} implies that
\begin{equation}\label{eq:condicion-alcanzable-tilde-omega-s+1}
\tilde{t}_{s+1} \leq \nu_E(\omega_{\mathcal{F}}) < t_{s+1}+n+m.
\end{equation}
By Remark~\ref{rm:conductor-t_s+1}, the conductor $c(\Lambda_C)$ of the semimodule $\Lambda_C$ is equal to $c(\Lambda_C)=nm-t_{s+1}+1$. Moreover, by Lemma~\ref{lemma:ui+ti}, we have $u_{s+1}=nm+n+m-\tilde{t}_{s+1}$. Thus, the hypothesis in \eqref{eq:condicion-alcanzable-tilde-omega-s+1} imply
$$
u_{s+1} > nm-t_{s+1} = c(\Lambda_C)-1.
$$
Let us see that this condition implies that there is no element $\lambda_{s+1}^{C^*}$ in the basis $\mathcal{B}_{C^*}$  in the proof of Theorem~\ref{th:lambda-cte}. Since $\Lambda_{C^*}$ is an increasing semimodule, then $\lambda_{s+1}^{C^*} > u_{s+1}^{C^*}=u_{s+1} \geq c(\Lambda_C)$. Consequently, $\lambda_{s+1}^{C^*} \in \Lambda_C$ and hence $\Lambda_{C^*}=\Lambda_C$.

This proves the following result
\begin{Corollary}
Let $\mathcal{F}$ be a totally $E$-dicritical foliation in $(\mathbb{C}^2,0)$ with $C$ as invariant curve.
Assume that $\mathcal{F}$ satisfies the total transversality property and that $\omega_\mathcal{F}$ is reachable only from $\widetilde{\omega}_{s+1}$. If $\tilde{t}_{s+1} \leq t_{s+1}+n+m-1$,
then the foliation $\mathcal{F}$ is $\Lambda$-constant.
\end{Corollary}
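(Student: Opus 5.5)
The argument just before the statement already carries most of the proof, so I will package it. Fix $C^* \in \operatorname{Cusp}_E(\mathcal{F})$. Since $\mathcal{F}$ has the total transversality property, Theorem~\ref{th:valores-dif-tilde-C} with $\ell=s$ gives the form of the basis of $\Lambda_{C^*}$:
$$
\mathcal{B}_{C^*}=(\lambda_{-1},\lambda_0,\ldots,\lambda_s,\lambda_{s+1}^{C^*},\ldots,\lambda_{s(C^*)}^{C^*}).
$$
Hence $\Lambda_s^{C^*}=\Lambda_C$. Because the axes depend only on $\Lambda_s^{C^*}$, we also get $u_{s+1}^{C^*}=u_{s+1}$. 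It therefore suffices to show $s(C^*)=s$, i.e.\ that no new generator $\lambda_{s+1}^{C^*}$ exists. Equivalently, every element of $\Lambda_{C^*}$ should already lie in $\Lambda_C$, and for this it is enough to place all candidate new generators at or above the conductor $c(\Lambda_C)$.

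Suppose $\lambda_{s+1}^{C^*}$ exists. Since $\Lambda_{C^*}$ is the semimodule of a cusp, it is increasing, so
$$
\lambda_{s+1}^{C^*}>u_{s+1}^{C^*}=u_{s+1}.
$$
Using the hypothesis $\nu_E(\omega_\mathcal{F})\ge \tilde t_{s+1}$ (from reachability from $\widetilde\omega_{s+1}$, Remark after Lemma~\ref{lemma_reachable_only_one}) together with $\tilde t_{s+1}\le t_{s+1}+n+m-1$, we obtain
$$
u_{s+1}=nm+n+m-\tilde t_{s+1}\ \ge\ nm-t_{s+1}+1=c(\Lambda_C).
$$
Here the equality for $u_{s+1}$ is Lemma~\ref{lemma:ui+ti}, and the value of $c(\Lambda_C)$ comes from Lemma~\ref{lema_conductor_u_s+1} via Remark~\ref{rm:conductor-t_s+1}. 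Consequently $\lambda_{s+1}^{C^*}>c(\Lambda_C)$, so $\lambda_{s+1}^{C^*}\in\Lambda_C=\Lambda_s^{C^*}$. This contradicts its being a basis element, hence no such generator exists.

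It follows that $\Lambda_{C^*}=\Lambda_C$ for every $C^*\in\operatorname{Cusp}_E(\mathcal{F})$, which is $\Lambda$-constancy. The only delicate point is the direction of the inequality. The hypothesis gives $\tilde t_{s+1}\le t_{s+1}+n+m-1$, and this is exactly what yields $u_{s+1}\ge c(\Lambda_C)$, so the strict inequality $\lambda_{s+1}^{C^*}>u_{s+1}$ suffices. Note that we never need an upper bound on $\nu_E(\omega_\mathcal{F})$, only the fact that the critical values are shared with $C$.
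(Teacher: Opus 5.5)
Your proof is correct and follows the paper's argument. Total transversality gives $\Lambda_s^{C^*}=\Lambda_C$ and $u_{s+1}^{C^*}=u_{s+1}$, and the hypothesis $\tilde t_{s+1}\le t_{s+1}+n+m-1$, combined with Lemma~\ref{lemma:ui+ti} and Remark~\ref{rm:conductor-t_s+1}, yields $u_{s+1}\ge c(\Lambda_C)$; since $\Lambda_{C^*}$ is increasing, any new generator would satisfy $\lambda_{s+1}^{C^*}>u_{s+1}$ and so already lie in $\Lambda_C$. As you note yourself, the bound $\nu_E(\omega_\mathcal{F})\ge\tilde t_{s+1}$ coming from reachability plays no role in the inequality, and the paper does not use it either.
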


In particular, we obtain that
\begin{Corollary}
If $\tilde{t}_{s+1} \leq t_{s+1}+n+m-1$, then the foliation $\widetilde{\mathcal{G}}_{s+1}$ is $\Lambda$-constant.
\end{Corollary}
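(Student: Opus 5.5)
The statement is a special case of the preceding corollary. We apply it to $\mathcal{F}=\widetilde{\mathcal{G}}_{s+1}$, so the plan is to check its hypotheses one at a time.

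\emph{First}, $\widetilde{\mathcal{G}}_{s+1}$ is totally $E$-dicritical: this is recalled in Section~\ref{sec:standard-systems} from \cite{SS-2025}, Chapter 6, for $i=2,\ldots,s+1$. The degenerate case $s=0$, where $\widetilde{\omega}_1$ would appear, needs a word. There $\Lambda_C=\Gamma_C\cup(\lambda_{-1}+\Gamma_C)\cup(\lambda_0+\Gamma_C)$, and any invariant cusp has $\Lambda=\Gamma\setminus\{0\}$ up to the same basis. I would either handle it directly or note that it is excluded by the standing conventions; the main case is $s\ge 1$. By construction of the standard system, $C$ is an invariant curve of $\widetilde{\mathcal{G}}_{s+1}$, since $\nu_C(\widetilde{\omega}_{s+1})=\infty$.

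\emph{Second}, total transversality of $\widetilde{\mathcal{G}}_{s+1}$ is exactly the last assertion of Theorem~\ref{Th:transv-omega_s+1}.

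\emph{Third}, $\omega_{\mathcal{F}}=\widetilde{\omega}_{s+1}$ must be reachable only from $\widetilde{\omega}_{s+1}$. It is trivially reachable from itself, with $(a,b)=(0,0)$ and $\mu=1$. To exclude reachability from $\omega_{s+1}$, I would write $\widetilde{\omega}_{s+1}=A\omega_{s+1}+B\widetilde{\omega}_{s+1}$ with $A=0$, $B=1$. Reachability from $\omega_{s+1}$ would require a monomial $x^ay^b$ with $t_{s+1}+na+mb=\tilde{t}_{s+1}$ and the initial parts matching. Since $\tilde{t}_{s+1}<t_{s+1}+\tilde{t}_{s+1}-t_s$ (because $t_s>0$), the value $\nu_E(\widetilde{\omega}_{s+1})$ lies in the window of Lemma~\ref{lemma_reachable_only_one}, and that lemma gives ``not from both.'' Alternatively, one can rerun the arithmetic of its proof with $a'=b'=0$: this forces $b\ge \ell^m_{s+1}$ (in the case $\tilde{t}_{s+1}=t^m_{s+1}$), so $na+mb\ge m\ell^m_{s+1}=\tilde{t}_{s+1}-t_s>\tilde{t}_{s+1}-t_{s+1}$, a contradiction; the other case is symmetric.

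\emph{Finally}, with the hypothesis $\tilde{t}_{s+1}\le t_{s+1}+n+m-1$, the previous corollary applies. For completeness, its argument runs as follows. By Lemma~\ref{lemma:ui+ti}, $u_{s+1}=nm+n+m-\tilde{t}_{s+1}\ge nm-t_{s+1}+1=c(\Lambda_C)$ (Remark~\ref{rm:conductor-t_s+1}). Theorem~\ref{th:valores-dif-tilde-C} gives $\Lambda_s^{C^*}=\Lambda_C$ for every $C^*\in\operatorname{Cusp}_E(\widetilde{\mathcal{G}}_{s+1})$. Increasingness then gives $\lambda^{C^*}_{s+1}>u_{s+1}\ge c(\Lambda_C)$, so $\Lambda_{C^*}=\Lambda_C$.

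The only step needing care is the reachability check; everything else is citation.
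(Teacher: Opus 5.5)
Your proof is correct and is the paper's route: the statement is the preceding corollary specialized to $\mathcal{F}=\widetilde{\mathcal{G}}_{s+1}$, with total transversality from Theorem~\ref{Th:transv-omega_s+1}. This implicitly needs $s\ge 1$, and for $s=0$ you should simply exclude the case rather than handle it directly: there $\nu_E(\widetilde{\omega}_1)=nm$ is not of the form $na+mb$ with $a,b\ge 1$, so $\widetilde{\mathcal{G}}_1$ is not totally $E$-dicritical. Two small points: $\tilde{t}_{s+1}<t_{s+1}+\tilde{t}_{s+1}-t_s$ holds because $t_s<t_{s+1}$ (Lemma~\ref{lema_t_i_u_i}), not because $t_s>0$; and your final conductor argument never uses the reachability check, only total transversality and the bound on $\tilde{t}_{s+1}$.
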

Taking into account Corollary~\ref{Cor:alcanzable-omega-s+1}, we can construct families of $\Lambda$-constant foliations with
big multiplicity. Write the 1-form $\omega_\mathcal{F}$ in terms of the Saito basis $\{\omega_{s+1},\widetilde{\omega}_{s+1}\}$ as
$$ \omega_\mathcal{F}=A \omega_{s+1}+B \widetilde{\omega}_{s+1}
$$
with $\nu_E(A)-\nu_E(B)<n+m$.
If $\omega_\mathcal{F}$ is reachable only by $\omega_{s+1}$ and $\operatorname{In}(A)$ is a monomial, with the same arguments as in the proof of Theorem~\ref{Th:total-transv-property}, we can prove that $\mathcal{F}$ satisfies the total transversality property. Consequently we are in the conditions of Corollary~\ref{Cor:alcanzable-omega-s+1} and hence the foliation $\mathcal{F}$ is $\Lambda$-constant.
\begin{Example}
For each $j \in \mathbb{Z}_{\geq 1}$, consider the foliations $\mathcal{F}_j$ and $\widetilde{\mathcal{F}}_j$ defined by $\eta_j=0$ and $\widetilde{\eta}_j=0$ respectively, with
$$
\eta_j=A_j \omega_{s+1} + B_j \widetilde{\omega}_{s+1}; \qquad \widetilde{\eta}_j= \tilde{A}_j \omega_{s+1} + \widetilde{B}_j \widetilde{\omega}_{s+1},
$$
where $A_j=x^j$, $B_j= x^j + y^j$, $\tilde A_j=y^j$, $\widetilde{B}_j=y^j + x^{hj}$ and $h=\left[ \tfrac{m}{n}\right]+1$.

First note that $\nu_E(A_j)-\nu_E(B_j)=\nu_E(\tilde{A}_j)-\nu_E(\widetilde{B}_j)=0$. Using the results  in \cite[Section 4]{Can-C-SS-2023}, we can check that the foliations $\mathcal{F}_j$ and $\widetilde{\mathcal{F}}_j$ are totally $E$-dicritical foliations.
Moreover, for any $j$, the 1-form $\eta_j$ is only reachable by $\omega_{s+1}$ if either $s=0$ or $t_{s+1}=t_{s+1}^n$, and the 1-form $\widetilde{\eta}_j$ is only reachable by $\omega_{s+1}$ if either $s=0$ or $t_{s+1}=t_{s+1}^m$. In these cases, the  foliations $\mathcal{F}_j$ or $\widetilde{\mathcal{F}}_j$ are $\Lambda$-constant.
\end{Example}

\appendix

\section{Some properties of  increasing cuspidal semimodules}\label{sec:apendix-combinatoria}
Consider the semigroup $\Gamma=\langle n,m\rangle$ with $2 \leq n < m$ and $\gcd(n,m)=1$. Let $\Lambda$ be an increasing $\Gamma$-semimodule (that is, $\Lambda$ is a cuspidal semimodule) with basis $\mathcal{B}=(\lambda_{-1},\lambda_0,\lambda_1,\ldots,\lambda_s)$ and denote $\Lambda_i= \bigcup_{k=-1}^i (\lambda_k + \Gamma)$ for $i=-1,0,1,\ldots,s$. As in Section~\ref{sec:cuspidal-semimodules}, we can define the axes $u_i^n, u_i^m, u_i, \tilde{u}_i$ and critical values $t_i^n, t_i^m, t_i, \tilde{t}_i$ for the semimodule $\Lambda$ with $1 \leq i  \leq s+1$. Recall that the semimodule $\Lambda$ is increasing when $\lambda_i > u_i$ for $1 \leq i \leq s$.

From the definition of the axes, we get the following properties
\begin{Lemma}[\cite{Can-C-SS-2026}, Lemma 2.19]\label{lemma:u_i}
Let $\Lambda$ be a cuspidal semimodule. For any $0\leq i\leq s$, there exist two unique integer numbers $k_i^n$ and $k_i^m$ with $-1 \leq k_i^n, k_i^m \leq i-1$ and integers $b_{i+1} \geq 0$ and $a_{i+1} \geq 0$ such that
\begin{align*}
  u_{i+1}^n & = \lambda_i + n \ell_{i+1}^n = \lambda_{k_i^n} + m b_{i+1}, \\
  u_{i+1}^m & = \lambda_i + m \ell_{i+1}^m = \lambda_{k_i^m} + n a_{i+1}.
\end{align*}
\end{Lemma}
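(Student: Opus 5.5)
The argument is purely combinatorial and concerns the semimodule $\Lambda_{i}$ and its axis $u_{i+1}^n$ (resp.\ $u_{i+1}^m$). I treat $u_{i+1}^n$; the case of $u_{i+1}^m$ is symmetric, with the roles of $n$ and $m$ exchanged.

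\textbf{Existence.} By definition, $u_{i+1}^n=\lambda_i+n\ell_{i+1}^n$ lies in $\Lambda_{i-1}=\bigcup_{k=-1}^{i-1}(\lambda_k+\Gamma)$. So there are an index $k\le i-1$ and $\alpha,\beta\ge 0$ with
$$u_{i+1}^n=\lambda_k+n\alpha+m\beta .$$
I then want to show that $\alpha$ can be taken to be $0$.

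Suppose $\alpha\ge 1$. Then $u_{i+1}^n-n=\lambda_i+n(\ell_{i+1}^n-1)$ belongs to $\lambda_k+\Gamma\subset\Lambda_{i-1}$. If $\ell_{i+1}^n\ge 2$, this contradicts the minimality of $\ell_{i+1}^n$.

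If $\ell_{i+1}^n=1$, we get $\lambda_i\in\Lambda_{i-1}$, which contradicts the fact that $\lambda_i$ is a basis element. The same holds for $i=0$, where $\lambda_0=m\notin\Lambda_{-1}=n+\Gamma$.

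Hence every representation of $u_{i+1}^n$ in $\Lambda_{i-1}$ has $\alpha=0$. In particular $u_{i+1}^n=\lambda_{k_i^n}+mb_{i+1}$ with $b_{i+1}\ge 0$.

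\textbf{Uniqueness.} This is the main obstacle. Suppose
$$\lambda_k+mb=\lambda_{k'}+mb' \quad\text{with } k<k'\le i-1.$$
Then $b>b'$, so $\lambda_{k'}=\lambda_k+m(b-b')\in\lambda_k+\Gamma\subset\Lambda_{k'-1}$. This contradicts the minimality of the basis. Therefore $k$ is uniquely determined, and then $b=(u_{i+1}^n-\lambda_k)/m$ is unique as well.

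Finally, I would remark that $k_i^n\ge -1$ holds trivially, and that $b_{i+1}\ge 0$ may equal $0$ only when $u_{i+1}^n=\lambda_k$. In that case it is still a valid representation.
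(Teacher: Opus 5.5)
Your proof is correct and complete. Minimality of $\ell_{i+1}^n$ (resp.\ $\ell_{i+1}^m$), together with $\lambda_i\notin\Lambda_{i-1}$, forces the $n$-coefficient (resp.\ $m$-coefficient) of any representation in $\Lambda_{i-1}$ to vanish. The condition $\lambda_{k'}\notin\Lambda_{k'-1}$ then gives uniqueness of the index and hence of the coefficient.

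The paper gives no proof of its own here; it only cites \cite[Lemma 2.19]{Can-C-SS-2026}. Your argument never uses that $\Lambda$ is increasing, which agrees with the paper's remark that the lemma also holds for non-increasing cuspidal semimodules.

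One small correction to your final remark: $b_{i+1}=0$ can never occur. Since $u_{i+1}^n>\lambda_i>\lambda_k$ for every $k\le i-1$, in fact $b_{i+1}\ge 1$, and likewise $a_{i+1}\ge 1$.
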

The integers $a_i$ and $b_i$ given in the previous result are called {\em colimits}. Recall that the integers $\ell_i^n, \ell_i^m$ above are called limits. Note that this lemma also holds for non-increasing cuspidal semimodules. Next result describe how to compute the integers $k_i^n$ and $k_i^m$ which are called {\em bounds}.
\begin{Lemma}[\cite{Can-C-SS-2026}, Lemma 2.27]\label{lemma:k_i}
Assume that $\Lambda$ is an increasing cuspidal semimodule. For any $1 \leq i \leq s$, we have
\begin{itemize}
  \item[(a)] if $u_i=u_i^n$, then $k_i^n=i-1$ and $k_i^m=k_{i-1}^m$;
  \item[(b)] if $u_i=u_i^m$, then $k_i^n=k_{i-1}^n$ and $k_i^m=i-1$.
\end{itemize}
\end{Lemma}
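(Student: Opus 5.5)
We treat case (a), $u_i=u_i^n$; case (b) is symmetric, exchanging the roles of $n$ and $m$. The plan is to compare the new axis $u_{i+1}^n=\lambda_i+n\ell_{i+1}^n$ with the relations already available at level $i$:
\[
u_i^n=\lambda_{i-1}+n\ell_i^n=\lambda_{k_{i-1}^n}+mb_i,\qquad
u_i^m=\lambda_{i-1}+m\ell_i^m=\lambda_{k_{i-1}^m}+na_i .
\]
These come from Lemma~\ref{lemma:u_i}. We then determine which generator of $\Lambda_{i-1}$ realizes the first meeting of the $n$-ray and of the $m$-ray from $\lambda_i$ with $\Lambda_{i-1}$.

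We work with the standard staircase picture. For each residue $r$ mod $n$, the minimal element of $\Lambda_j$ in class $r$ has the form $\lambda_k+mb$ with $b\geq0$. The $n$-ray $\lambda_i+n\mathbb{Z}_{\geq0}$ enters $\Lambda_{i-1}$ exactly at that minimum for the residue of $\lambda_i$. So $k_i^n$ is the index $k$ whose $m$-chain $\lambda_k+m\mathbb{Z}_{\geq0}$ carries the minimum of $\Lambda_{i-1}$ in the residue class of $\lambda_i$ mod $n$. Similarly, $k_i^m$ is the index whose $n$-chain carries the minimum of $\Lambda_{i-1}$ in the class of $\lambda_i$ mod $m$. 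Uniqueness of the bounds follows because two generators cannot both give the minimum of the same class without one lying in the semimodule generated by the other, which contradicts minimality of the basis.

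\emph{Claim $k_i^n=i-1$.} Write $\lambda_i=\lambda_{i-1}+na+mb$ in $\mathbb{Z}$; since $0\le a<m$, the integers $a,b$ are uniquely determined, with $b$ possibly negative. Passing from $\Lambda_{i-2}$ to $\Lambda_{i-1}$, the $m$-chain of $\lambda_{i-1}$ gives new minima in exactly the residue classes mod $n$ of $\lambda_{i-1}+mb'$ with $0\le b'<\ell_i^m$. The $n$-chain gives new minima in the classes mod $m$ of $\lambda_{i-1}+na'$ with $0\le a'<\ell_i^n$. I will show that the hypothesis $\lambda_i>u_i=u_i^n$ (increasing semimodule), together with $\lambda_i\notin\Lambda_{i-1}$ and Lemma~\ref{Lemma:a-b-l}, forces $\lambda_i\equiv\lambda_{i-1}+mb'$ mod $n$ with $0\le b'<\ell_i^m$. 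Then the minimum of $\Lambda_{i-1}$ in the class of $\lambda_i$ lies on the $m$-chain of $\lambda_{i-1}$, which gives $k_i^n=i-1$.

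\emph{Claim $k_i^m=k_{i-1}^m$.} Here I expect the class of $\lambda_i$ mod $m$ to be one where the $n$-chain of $\lambda_{i-1}$ gives no improvement. Then the minimum of $\Lambda_{i-1}$ in that class equals that of $\Lambda_{i-2}$. I will show that this minimum lies on the same $n$-chain $\lambda_{k_{i-1}^m}+n\mathbb{Z}_{\geq0}$ that realizes $u_i^m$. The intended route is to check $\lambda_i\equiv u_i^m$ mod $m$, i.e.\ $\lambda_i-\lambda_{i-1}\in m\mathbb{Z}+n(a_i-\cdots)$ after reduction, and to use $u_i^n<u_i^m$.

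The main obstacle is pinning down the residues of $\lambda_i$, that is, the sign and range of $a,b$ in $\lambda_i=\lambda_{i-1}+na+mb$. The first thing I would try is to translate $\lambda_i>u_i^n$ together with $\lambda_i\notin\Lambda_{i-1}$ into the constraints $b<\ell_i^m$ and $a\ge\ell_i^n$ with $b<0$. For this I would argue by contradiction: otherwise $\lambda_i$ would lie in $\lambda_{i-1}+\Gamma$, or in $u_i^n+\Gamma\subset\Lambda_{i-2}$. Both are impossible. Once these inequalities are in place, both claims reduce to comparing positions on the staircase.
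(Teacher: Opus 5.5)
The paper does not prove this lemma; it quotes it from \cite{Can-C-SS-2026}, so I judge your plan on its own terms. Your reformulation is correct: $k_i^n=i-1$ is equivalent to $\lambda_i\equiv\lambda_{i-1}+mb'\pmod n$ for some $0\le b'<\ell_i^m$. Likewise, $k_i^m=k_{i-1}^m$ is equivalent to $\lambda_i\equiv\lambda_{k_{i-1}^m}+nx'\pmod m$ for some $0\le x'<a_i$. The gap is in the step that is supposed to establish these congruences.

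Write $\lambda_i=\lambda_{i-1}+na+mb$ with $0\le a<m$. Your contradiction argument yields only $b<0$, and $\lambda_i>\lambda_{i-1}$ adds $b>-n$. Then $\lambda_i>u_i^n$ gives $a>\ell_i^n$, while ``$b<\ell_i^m$'' is vacuous. But the class of $\lambda_i$ mod $n$ is that of $\lambda_{i-1}+m(b+n)$, so what you need is $b+n<\ell_i^m$. None of your constraints says anything about this, and neither does Lemma~\ref{Lemma:a-b-l}, which only describes $(\lambda_{i-1}+\Gamma)\cap\Lambda_{i-2}$.

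To see what is at stake, suppose $b+n\ge\ell_i^m$. Then $\lambda_i\notin\lambda_{k_{i-1}^m}+\Gamma$ forces $\lambda_i=\lambda_{k_{i-1}^m}-nx+my$ with $x\ge1$, $y\ge0$. So $\lambda_i$ sits to the upper left of the corner $\lambda_{k_{i-1}^m}$, under an older part of the staircase of $\Lambda_{i-1}$. Data attached to $\lambda_{i-1}$ alone cannot exclude this.

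For the second claim, the check you propose cannot hold. We have $u_i^m\equiv\lambda_{i-1}\pmod m$, and $\lambda_i\equiv\lambda_{i-1}\pmod m$ together with $\lambda_i>\lambda_{i-1}$ would give $\lambda_i\in\lambda_{i-1}+\Gamma$. The right target is $\lambda_i\equiv u_i^m-nx\pmod m$ with $1\le x\le a_i$.

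The missing ingredient is global, and it uses increasingness at every level $\le i$, not just $\lambda_i>u_i$. Here is one way to supply it.

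\emph{Pockets.} Let $\sigma=\lambda_{k'}+n\alpha=\lambda_k+m\beta$ be an inner corner (syzygy) between consecutive outer corners $\lambda_{k'}$, $\lambda_k$ of the staircase. Call $\{\sigma-nx-my:\ 1\le x\le\alpha,\ 1\le y\le\beta\}$ its pocket. Its elements are exactly the non-members whose $n$-ray first meets the $m$-chain of $\lambda_k$ and whose $m$-ray first meets the $n$-chain of $\lambda_{k'}$.

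\emph{Every large gap lies in a pocket.} A non-member whose two rays meet non-consecutive corners lies strictly below and to the left of an intermediate generator $\lambda_j$. Hence it is smaller than $\lambda_j\le\lambda_{i-1}$. So $\lambda_i$ lies in some pocket and is smaller than that pocket's corner.

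\emph{Induction on inner corners.} Prove by induction on $i$, simultaneously with the lemma, that the inner corners of $\Lambda_{i-1}$ are $u_1,\dots,u_i,\tilde u_i$. For $\Lambda_0$ they are $n+m$ and $nm$. Inserting $\lambda_{i-1}$ into the pocket of $\tilde u_{i-1}$ replaces $\tilde u_{i-1}$ by $u_i^n,u_i^m$.

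\emph{Conclusion.} Since $u_j<\lambda_j<\lambda_i$ for $j<i$ and $u_i<\lambda_i$, the pocket containing $\lambda_i$ must be that of $\tilde u_i=u_i^m=\lambda_{i-1}+m\ell_i^m=\lambda_{k_{i-1}^m}+na_i$. That is, $\lambda_i=\lambda_{i-1}-nx+my$ with $1\le x\le a_i$ and $0\le y<\ell_i^m$. This gives $k_i^n=i-1$ and $k_i^m=k_{i-1}^m$ at once.

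Do not shortcut the inner-corner description by quoting Proposition~\ref{prop:base-siz}. In this paper its proof goes through Lemma~\ref{lemma:ui-ui+1}, which relies on the very statement you are proving.
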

In the following example we show how the previous lemma helps in the computation of the axes.
\begin{Example}\label{ex:apendice-ki}
Let us compute the axes and critical values for the curve $C$ given in Example~\ref{Ex:total-transv-nuE-distinto-t} with basis $\mathcal{B}=(4,9,14,19)$. Note that $n=4$ and $m=9$, then $u_1=u_1^n=13$ and $\tilde{u}_1=u_1^m=36$.

Since $u_1=u_1^n$, then $k_1^n=0$ and $k_1^m=k_0^m=-1$. Therefore, to compute $u_2^n$ and $u_2^m$ we have to solve
$$
u_2^n=14 + 4 \ell_2^n=9 + 9b_2; \qquad u_2^m=14+ 9 \ell_2^m= 4+4a_2
$$
We get $\ell_2^n=b_2=1$ and $u_2^n=18$; $\ell_2^m=2$, $a_2=7$ and $u_2^m=32$. Then,  $u_2=18$ and $\tilde{u}_2=32$. We also obtain   the critical values $t_2=t_1+ 4 \ell_2^n=13+4=17$ and $\tilde{t}_2=t_2+ 9 \ell_2^m=13+18=31$.

Now, to compute $u_3$ and $\tilde{u}_3$, we have $k_2^n=1$ y $k_2^m=k_1^m=-1$ and hence we have to solve
$$
u_3^n=19 + 4 \ell_3^n=14 + 9b_3; \qquad u_3^m=19+ 9 \ell_3^m= 4+4a_3
$$
which gives $\ell_3^n=b_3=1$ and $u_3^n=23$; $\ell_3^m=1$, $a_3=6$ and $u_3^m=28$. Consequently, $u_3=23$, $\tilde{u}_3=28$, $t_3=t_2+4=21$ and $\tilde{t}_3=t_2+9=26$.
\end{Example}
Moreover, we recall a result which establishes a relationship between limits and colimits of the semimodule $\Lambda$.
\begin{Proposition}[\cite{Can-C-SS-2026}, Proposition 2.31]\label{prop:relaciones-limites-colimites}
Let $\Lambda$ be an increasing cuspidal semimodule. For any $1 \leq i \leq s$, we have
\begin{itemize}
  \item[(a)] if $k_i^n=i-1$, then $\ell_{i+1}^n + a_{i+1}=a_i$ and $\ell_{i+1}^m+b_{i+1}=\ell_i^m$;
  \item[(b)] if $k_i^m=i-1$, then $\ell_{i+1}^n + a_{i+1}=\ell_i^n$ and $\ell_{i+1}^m + b_{i+1}=b_i$.
\end{itemize}
\end{Proposition}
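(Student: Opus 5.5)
We work in case (a); case (b) is symmetric, exchanging the roles of $n$ and $m$, of $a$ and $b$, and of the superscripts $n$ and $m$. So assume $k_i^n=i-1$. Since the bounds $k_i^n,k_i^m$ are unique, Lemma~\ref{lemma:k_i} tells us we are in the situation $u_i=u_i^n$, and hence also $k_i^m=k_{i-1}^m$. Write $k:=k_{i-1}^m=k_i^m$. Lemma~\ref{lemma:u_i}, applied at the indices $i-1$ and $i$, then gives four identities:
\begin{align*}
u_i^m&=\lambda_{i-1}+m\ell_i^m=\lambda_{k}+na_i, & u_{i+1}^n&=\lambda_i+n\ell_{i+1}^n=\lambda_{i-1}+mb_{i+1},\\
u_{i+1}^m&=\lambda_i+m\ell_{i+1}^m=\lambda_{k}+na_{i+1}. & &
\end{align*}

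First we eliminate the $\lambda$'s. Subtracting the first identity from the third gives
$$
\lambda_i-\lambda_{i-1}+m(\ell_{i+1}^m-\ell_i^m)=n(a_{i+1}-a_i).
$$
The second identity gives $\lambda_i-\lambda_{i-1}=mb_{i+1}-n\ell_{i+1}^n$. Substituting, we get the key linear relation
$$
m\,(b_{i+1}+\ell_{i+1}^m-\ell_i^m)=n\,(a_{i+1}+\ell_{i+1}^n-a_i).
$$
Since $\gcd(n,m)=1$, there is an integer $\tau$ with
$$
b_{i+1}+\ell_{i+1}^m-\ell_i^m=n\tau,\qquad a_{i+1}+\ell_{i+1}^n-a_i=m\tau .
$$
Both equalities in (a) are therefore equivalent to the single claim $\tau=0$.

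To get $\tau\ge 0$ we use the bounds on the limits. We have $1\le\ell_{i+1}^m$, $1\le\ell_i^m<n$ (recalled after the definition of the axes) and $b_{i+1}\ge 0$. Hence $b_{i+1}+\ell_{i+1}^m-\ell_i^m>-n$, and so $\tau\ge 0$.

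The main obstacle is the reverse inequality $\tau\le 0$, because no a priori upper bound on the colimits $a_{i+1}$ and $b_{i+1}$ has been stated. I would argue by contradiction using minimality of the axes. Suppose $\tau\ge1$. Then $a_i\le a_{i+1}+\ell_{i+1}^n-m<a_{i+1}$, using $\ell_{i+1}^n<m$. Also $b_{i+1}\ge n+\ell_i^m-\ell_{i+1}^m\ge \ell_i^m+1$, using $\ell_{i+1}^m<n$.

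From the second of these, the element $\lambda_{i-1}+mb_{i+1}$ can be rewritten as
$$
\lambda_{i-1}+mb_{i+1}=\bigl(\lambda_{i-1}+m\ell_i^m\bigr)+m(b_{i+1}-\ell_i^m)=\lambda_{k}+na_i+m(b_{i+1}-\ell_i^m).
$$
Hence $u_{i+1}^n-n\ell_{i+1}^n'$-type shortcuts become available. Concretely, I would show that
$$
\lambda_i+n\bigl(\ell_{i+1}^n-m\bigr)+ \text{(a suitable multiple of $m$)}
$$
lies in $\Lambda_{i-1}$ with a smaller $n$-coefficient. Equivalently, via Lemma~\ref{Lemma:a-b-l} applied to $\lambda_i+na+mb\in\Lambda_{i-1}$, there would exist an element of the form $\lambda_i+n\ell\in\Lambda_{i-1}$ with $\ell<\ell_{i+1}^n$, or one of the form $\lambda_i+m\ell'\in\Lambda_{i-1}$ with $\ell'<\ell_{i+1}^m$. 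Either contradicts the definition of the limits as minima.

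If this direct shortcut is awkward, the alternative I would try first is the increasing property $\lambda_i>u_i=u_i^n$. Combining it with $\tau\ge1$ should force $u_{i+1}^m=\lambda_k+na_{i+1}$ to exceed $u_i^m+nm$. That is impossible, because $\lambda_i+m\ell$ already lands in $\lambda_k+\Gamma$ for some $\ell<\ell_i^m+n$.

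Once $\tau=0$ is established, both identities $\ell_{i+1}^n+a_{i+1}=a_i$ and $\ell_{i+1}^m+b_{i+1}=\ell_i^m$ follow immediately. Part (b) is obtained by the same computation with the starting data $u_i=u_i^m$, $k_i^m=i-1$, $k_i^n=k_{i-1}^n$.
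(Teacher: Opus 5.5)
Your reduction is correct. With $k=k_{i-1}^m=k_i^m$, the elimination leading to $m(b_{i+1}+\ell_{i+1}^m-\ell_i^m)=n(a_{i+1}+\ell_{i+1}^n-a_i)$, the integer $\tau$, the bound $\tau\ge 0$, and the symmetry with case (b) are all fine. The gap is exactly the step you flag: $\tau\le 0$ is never proved.

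Your first sketch never exhibits an element. The form $\lambda_i+n(\ell_{i+1}^n-m)+\dots$, with negative $n$-coefficient, is not the one you need. Lemma~\ref{Lemma:a-b-l} is also used in the wrong direction: it is only a necessary condition for $\lambda_i+na+mb\in\Lambda_{i-1}$. It cannot produce an element $\lambda_i+n\ell\in\Lambda_{i-1}$ with $\ell<\ell_{i+1}^n$; you must construct one yourself.

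Your fallback fails outright. If $\tau=1$, then $u_{i+1}^m-u_i^m=n(a_{i+1}-a_i)=n(m-\ell_{i+1}^n)<nm$, so $u_{i+1}^m$ does not exceed $u_i^m+nm$. Moreover, the upper bound $u_{i+1}^m<\lambda_i+m(\ell_i^m+n)$ you would set against it is based at $\lambda_i>\lambda_{i-1}$, so it would contradict nothing anyway.

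The missing step is one line past your own rewriting. If $\tau\ge 1$, then $b_{i+1}\ge \ell_i^m+1$. Combining $u_{i+1}^n=\lambda_k+na_i+m(b_{i+1}-\ell_i^m)$ with $u_{i+1}^n=\lambda_i+n\ell_{i+1}^n$ gives
\[
\lambda_i+n(\ell_{i+1}^n-a_i)=\lambda_k+m(b_{i+1}-\ell_i^m)\in\lambda_k+\Gamma\subset\Lambda_{i-1}.
\]
Since $k\le i-2$, we have $\lambda_k<\lambda_{i-1}<u_i^m=\lambda_k+na_i$, so $a_i\ge 1$. If $a_i\ge \ell_{i+1}^n$, the display puts $\lambda_i$ itself in $\lambda_k+\Gamma$, which is impossible for a basis element. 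Otherwise $1\le \ell_{i+1}^n-a_i<\ell_{i+1}^n$, which contradicts the minimality defining $\ell_{i+1}^n$. Hence $\tau=0$.

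Alternatively, you can quote Lemma~\ref{lema_t_i_u_i}(ii). In case (a) $\tilde u_i=u_i^m$, so $\lambda_k+na_{i+1}=u_{i+1}^m\le\tilde u_{i+1}<\tilde u_i=\lambda_k+na_i$. This gives $a_{i+1}<a_i$, hence $m\tau<\ell_{i+1}^n<m$.
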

Let us prove some technical lemmas concerning a property of the limits needed in the proof of Lemma~\ref{lemma_reachable_only_one}.
\begin{Lemma}\label{lemma_limites}
Let $\Lambda$ be an increasing cuspidal semimodule.
For $1\leq i \leq s+1$, we have
$$t_i + n \ell_{i+1}^n + m \ell_{i+1}^m < \tilde{t}_i.$$
\end{Lemma}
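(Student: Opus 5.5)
The plan is to use \eqref{eq:suma_u-t}, which gives $\tilde{t}_i-t_i=\tilde{u}_i-u_i$. The claimed inequality then becomes the purely combinatorial statement
$$
n\ell_{i+1}^n+m\ell_{i+1}^m<\tilde{u}_i-u_i .
$$
I would prove this by a direct case analysis, according to which of $u_i^n,u_i^m$ realizes $u_i$. The inputs are the bounds of Lemma~\ref{lemma:k_i}, the limit/colimit relations of Proposition~\ref{prop:relaciones-limites-colimites}, and the increasing property $\lambda_i>u_i$. No induction should be needed.

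\emph{Case $u_i=u_i^n$.} By Lemma~\ref{lemma:k_i} we have $k_i^n=i-1$. Then
$$
u_i=\lambda_{i-1}+n\ell_i^n,\qquad \tilde{u}_i=\lambda_{i-1}+m\ell_i^m,
$$
so $\tilde{u}_i-u_i=m\ell_i^m-n\ell_i^n$. Proposition~\ref{prop:relaciones-limites-colimites}(a) gives $\ell_{i+1}^m=\ell_i^m-b_{i+1}$. Substituting, the target inequality becomes
$$
n(\ell_{i+1}^n+\ell_i^n)<m\,b_{i+1}.
$$
Lemma~\ref{lemma:u_i} with $k_i^n=i-1$ gives $\lambda_i+n\ell_{i+1}^n=\lambda_{i-1}+mb_{i+1}$, that is, $mb_{i+1}=\lambda_i-\lambda_{i-1}+n\ell_{i+1}^n$. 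The inequality is therefore equivalent to $\lambda_i-\lambda_{i-1}>n\ell_i^n=u_i-\lambda_{i-1}$, i.e. $\lambda_i>u_i$. This holds because $\Lambda$ is increasing.

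\emph{Case $u_i=u_i^m$.} This is symmetric. Here $k_i^m=i-1$, so $\tilde{u}_i-u_i=n\ell_i^n-m\ell_i^m$. Proposition~\ref{prop:relaciones-limites-colimites}(b) gives $\ell_{i+1}^n=\ell_i^n-a_{i+1}$, and Lemma~\ref{lemma:u_i} gives $na_{i+1}=\lambda_i-\lambda_{i-1}+m\ell_{i+1}^m$. The target reduces to $m(\ell_{i+1}^m+\ell_i^m)<na_{i+1}$, which again is exactly $\lambda_i>u_i$.

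The main obstacle is bookkeeping at the boundary indices. For $i=1$, I need to check that Proposition~\ref{prop:relaciones-limites-colimites} applies with the initial bounds $k_0^n,k_0^m\in\{-1\}$, which should be immediate from $u_1=n+m$ and $\tilde{u}_1=nm$. For $i=s+1$, the quantities $\ell_{s+2}^{n},\ell_{s+2}^m$ and the element $\lambda_{s+1}$ must be given their intended meaning. For example, I would read $\lambda_{s+1}$ as any element $>u_{s+1}$, or use the convention adopted in \cite{Can-C-SS-2026}, so that the same computation goes through with the inequality $\lambda_{s+1}>u_{s+1}$ playing the role of increasingness. I would first verify that the needed relations of Lemma~\ref{lemma:u_i} and Proposition~\ref{prop:relaciones-limites-colimites} hold at that index. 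Only if they fail would I treat $i=s+1$ separately.
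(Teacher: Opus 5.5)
Your proof is correct and is essentially the paper's argument: it uses the same case split on whether $u_i=u_i^n$ or $u_i=u_i^m$ and the same inputs (Lemma~\ref{lemma:k_i}, Lemma~\ref{lemma:u_i}, Proposition~\ref{prop:relaciones-limites-colimites} and $\lambda_i>u_i$), and your preliminary reduction via $\tilde{t}_i-t_i=\tilde{u}_i-u_i$ is only a repackaging of the paper's substitution $t_i=t_{i-1}+n\ell_i^n$, $\tilde{t}_i=t_{i-1}+m\ell_i^m$. On the boundary, $i=1$ needs no extra check because the cited results hold for $1\le i\le s$, while at $i=s+1$ the limits $\ell_{s+2}^n,\ell_{s+2}^m$ are undefined, so the meaningful range is $1\le i\le s$ (exactly what the paper's proof covers and what Corollary~\ref{corollary_limites} needs), and you should restrict to it rather than invent a $\lambda_{s+1}>u_{s+1}$, which need not exist outside $\Lambda$ (for $\mathcal{B}=(4,9,14,19)$ one has $c(\Lambda)=16<u_3=23$).
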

\begin{proof}
We have two possibilities: either $u_i=u_i^n$ or $u_i=u_i^m$ which work in a similar way. We prove the result assuming that $u_i=u_i^n$ and hence we have $t_i=t_i^n$. By Lemma~\ref{lemma:k_i}, the bound $k_i^n$ is given by $k_i^n=i-1$ and  $u_{i+1}^n$ can be written as
$$
u_{i+1}^n=\lambda_i + n \ell_{i+1}^n = \lambda_{i-1} + m b_{i+1}.
$$
as shown in Lemma~\ref{lemma:u_i}. Then, we have that
\begin{align*}
  t_i + n \ell_{i+1}^n + m \ell_{i+1}^m & = t_i + \lambda_{i-1} -\lambda_{i} + m b_{i+1} + m \ell_{i+1}^m \\
   &  = t_i + \lambda_{i-1} -\lambda_{i} + m \ell_{i}^m
\end{align*}
where the last equality follows from the expression $b_{i+1}+ \ell_{i+1}^m =\ell_i^m$ given in Proposition~\ref{prop:relaciones-limites-colimites}. Since the semimodule $\Lambda$ is increasing, we have that $\lambda_i > u_i=\lambda_{i-1} +n \ell_i^n$ and we obtain that $\lambda_{i-1}-\lambda_i < -n\ell_i^n$. Moreover, since $t_i=t_i^n=t_{i-1} + n \ell_i^n$ and $\tilde{t}_i=t_i^m=t_{i-1}+m\ell_i^m$, we conclude that
\begin{align*}
  t_i + n \ell_{i+1}^n + m \ell_{i+1}^m  & = t_i + \lambda_{i-1} -\lambda_{i} + m \ell_{i}^m \\
   & < t_i-n\ell_i^n + m \ell_i^m=t_{i-1} + m \ell_i^m= \tilde{t}_i
\end{align*}
as desired.
\end{proof}
From the previous lemma we deduce the following result
\begin{Corollary}\label{corollary_limites}
Let $\Lambda$ be an increasing cuspidal semimodule with $\lambda_{-1}=n$ and $\lambda_0=m$.
We have
  $$
n \ell_i^n + m \ell_i^m <nm -n-m   \qquad \text{ for } \quad 2\leq i \leq s+1,
$$
and
$$ n \ell_1^n + m \ell_1^m <nm.
$$
\end{Corollary}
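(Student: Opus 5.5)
The bound for $2\le i\le s+1$ will come from Lemma~\ref{lemma_limites} combined with the monotonicity of the critical values. The bound for $i=1$ will come from computing $\ell_1^n$ and $\ell_1^m$ directly from the definitions.

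\emph{Case $2\le i\le s+1$.} Write $i=j+1$ with $1\le j\le s$. Applying Lemma~\ref{lemma_limites} to the index $j$ gives
$$
n\ell_{i}^n+m\ell_{i}^m<\tilde t_j-t_j .
$$
I then bound $\tilde t_j-t_j$ from above by its value at $j=1$. By Lemma~\ref{lema_t_i_u_i}, the sequence $t_j$ is strictly increasing and $\tilde t_j$ is strictly decreasing. Hence $\tilde t_j-t_j\le \tilde t_1-t_1$ for every $j\ge 1$. Since $t_1=n+m$ and $\tilde t_1=nm$, this gives $\tilde t_j-t_j\le nm-n-m$, and the first inequality of the corollary follows.

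\emph{Case $i=1$.} Here $\ell_1^n,\ell_1^m$ are defined through $u_1^n=\lambda_0+n\ell$ and $u_1^m=\lambda_0+m\ell$, both taken minimal in $\Lambda_{-1}=n+\Gamma$, with $\lambda_0=m$.

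For $u_1^n$: when $\ell=1$ we have $m+n=n+m\in n+\Gamma$, so $\ell_1^n=1$.

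For $u_1^m$: we need the least $k=\ell+1\ge 2$ such that $mk-n\in\Gamma$. Suppose $mk-n=an+bm$ with $a,b\ge 0$ and $k\le n$. Then $b<k$, and the equality forces $bm\equiv km \pmod n$. Since $\gcd(n,m)=1$, this gives $b\equiv k\pmod n$. With $0\le b<k\le n$ this is impossible unless $k=n$. For $k=n$ we do get $mn-n=n(m-1)\in\Gamma$, so $\ell_1^m=n-1$. This agrees with the values $u_1=n+m$ and $\tilde u_1=nm$ recorded in Lemma~\ref{lema_t_i_u_i}.

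Therefore
$$
n\ell_1^n+m\ell_1^m=n+m(n-1)=nm-(m-n)<nm,
$$
because $n<m$.

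The only step that needs some care is the $i=1$ computation, specifically the congruence argument showing that no $k<n$ works. The rest is a direct combination of Lemma~\ref{lemma_limites} with the monotonicity in Lemma~\ref{lema_t_i_u_i}.
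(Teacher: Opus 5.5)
Your proof is correct and follows the paper's argument. For $2\le i\le s+1$ you combine Lemma~\ref{lemma_limites} at index $i-1$ with the monotonicity in Lemma~\ref{lema_t_i_u_i}, and for $i=1$ you compute $\ell_1^n=1$ and $\ell_1^m=n-1$ directly; your congruence argument just makes explicit the value $\tilde u_1=nm$ that the paper takes as known. Your bound by $\tilde t_{i-1}-t_{i-1}$ is also the correct reading of the lemma, whereas the paper's displayed chain writes $\tilde t_i-t_i$, which is an index slip.
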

\begin{proof}
For $i\geq 2$, the result follows from the previous lemma and Lemma~\ref{lema_t_i_u_i} since
$$n \ell_i^n + m \ell_i^m < \tilde{t}_{i}-t_i \leq  \tilde{t}_1-t_1 =nm-n-m.$$

For $i=1$, we have that $u_1=n+m$ and hence $\ell_1^n=b_1=1$ whereas $\tilde{u}_1=nm$ and then $\ell_1^m=n-1$ and $a_1=m-1$. We obtain
$$ n \ell_1^n + m \ell_1^m=n+m(n-1)<nm.$$
\end{proof}
Moreover, we also have the following inequality
\begin{Lemma}\label{lema-lambda-t-u}
If $\Lambda$ is an increasing cuspidal semimodule, then
$$
\lambda_j + t_\ell^*-t_j < u_\ell^*
$$
with $1 < \ell  \leq s$ and $1 \leq j < \ell-1$.
\end{Lemma}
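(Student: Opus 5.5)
The plan is to reduce the inequality to a comparison between the differences $\lambda_i - t_i$ at two indices, and then invoke statement (iii) of Lemma~\ref{lema_t_i_u_i}. Throughout, $u_\ell^*$ and $t_\ell^*$ denote a matching pair: either $(u_\ell^*,t_\ell^*)=(u_\ell^n,t_\ell^n)$ or $(u_\ell^*,t_\ell^*)=(u_\ell^m,t_\ell^m)$. In particular this covers the pairs $(u_\ell,t_\ell)$ and $(\tilde{u}_\ell,\tilde{t}_\ell)$, which is the sense in which the lemma is used in Lemma~\ref{lemma_t_i*_lambda_i}.

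First, I unwind the definition of the critical values in Section~\ref{sec:cuspidal-semimodules}. For $1\leq \ell\leq s+1$ we have
$$
t_\ell^n=t_{\ell-1}+u_\ell^n-\lambda_{\ell-1}, \qquad t_\ell^m=t_{\ell-1}+u_\ell^m-\lambda_{\ell-1}.
$$
Taking min and max, the same identity holds for $(u_\ell,t_\ell)$ and $(\tilde u_\ell,\tilde t_\ell)$, as in equations~\eqref{eq:t_i} and~\eqref{eq:tilde_t_i}. So in every case
$$
u_\ell^*-t_\ell^*=\lambda_{\ell-1}-t_{\ell-1}.
$$

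Substituting this identity, the desired inequality $\lambda_j+t_\ell^*-t_j<u_\ell^*$ becomes
$$
\lambda_j-t_j<\lambda_{\ell-1}-t_{\ell-1}, \qquad \text{that is,} \qquad \lambda_{\ell-1}-\lambda_j>t_{\ell-1}-t_j.
$$

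Second, I apply Lemma~\ref{lema_t_i_u_i}(iii) with $k=j$ and $i=\ell-1$. Its hypotheses $-1\leq k<i\leq s$ hold, because $1\leq j<\ell-1$ and $\ell-1\leq s-1$. This gives exactly $\lambda_{\ell-1}-\lambda_j>t_{\ell-1}-t_j$, which finishes the proof.

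The index range $j<\ell-1$ is precisely what makes the inequality strict. When $j=\ell-1$ the two sides become equal, and that is the reason this case is excluded.

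I expect no real obstacle here. The only point to check carefully is that $t_\ell^*$ and $u_\ell^*$ are taken from the same branch, $n$ or $m$, so that the identity in the first step applies. The strict inequality itself is supplied entirely by the increasing property of the semimodule, through Lemma~\ref{lema_t_i_u_i}(iii).
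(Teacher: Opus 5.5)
Your argument is correct and is essentially the paper's: both rest on the identity $u_\ell^*-t_\ell^*=\lambda_{\ell-1}-t_{\ell-1}$ and on the increasing property. The paper telescopes directly, writing $\lambda_j+t_\ell^*-t_j=u_\ell^*+\sum_{k=j+1}^{\ell-1}(u_k-\lambda_k)$ and using $u_k<\lambda_k$; you instead quote Lemma~\ref{lema_t_i_u_i}(iii) with $(k,i)=(j,\ell-1)$, which is exactly that telescoped inequality, and this is also how the paper handles the analogous step in the proof of Lemma~\ref{lemma_t_i*_lambda_i}.
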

\begin{proof}
From the definition of the critical values we obtain that $t^*_\ell-t_{\ell-1}=u^*_\ell-\lambda_{\ell-1}$. We can write
\begin{align*}
  \lambda_j + t_\ell^*-t_j & =\lambda_j + t_\ell^* - t_{\ell-1} + \sum_{k=j}^{\ell-2} (t_{k+1}-t_{k})   \\
   & =  \lambda_j + (u_\ell^* - \lambda_{\ell-1}) +  \sum_{k=j}^{\ell-2}(u_{k+1} - \lambda_{k}) \\
   & =  u_\ell^*+ \sum_{k=j+1 }^{\ell-1}(u_k - \lambda_k) <  u_\ell^*
\end{align*}
where the last inequality follow from the fact $u_k - \lambda_k <0$ since $\Lambda$ is an increasing semimodule.
\end{proof}
As we state in Remark~\ref{Rm-def-critical-axes}, from the definition of the axes and critical values, we get
$$
u_i + \tilde{t}_i= \tilde{u}_i +t_i \qquad \text{ for } 1\leq i\leq s+1.
$$
In next result, we prove that the value above is constant for all index $i$.

\begin{Lemma}\label{lemma:ui+ti}
Let $\Lambda$ be an increasing cuspidal semimodule with $\lambda_{-1}=n$ and $\lambda_0=m$.  We have
\begin{itemize}
  \item[(i)] $\tilde{u}_1+t_1=nm+n+m$
  \item[(ii)] $\tilde{u}_{i+1} + t_{i+1}=\tilde{u}_i+t_i$  for any $1 \leq i \leq s$.
\end{itemize}
Consequently, we obtain $u_i + \tilde{t}_i= \tilde{u}_i +t_i =nm+n+m$ for all $1 \leq i \leq s+1$.
\end{Lemma}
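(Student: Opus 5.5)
Item (i) is a direct computation. Since $\lambda_{-1}=n$, $\lambda_0=m$ and $\Lambda_{-1}=n+\Gamma$, the axis $u_1^n=\min\{m+n\ell\in\Lambda_{-1}\,:\,\ell\ge1\}$ is attained at $\ell=1$, because $m+n=n+m\in n+\Gamma$. Hence $u_1=n+m$ and $\ell_1^n=1$. The other axis $u_1^m=\min\{m+m\ell\in n+\Gamma\}$ requires $m(\ell+1)-n\in\Gamma$. The smallest such $\ell$ is $\ell=n-1$, which gives $\tilde u_1=nm$ (as already recorded in Lemma~\ref{lema_t_i_u_i}). For the critical values, $t_1=t_0+u_1-\lambda_0=m+n$. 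Therefore $\tilde u_1+t_1=nm+n+m$.

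For (ii) I will use the identity $\tilde u_{i+1}+t_{i+1}=\tilde u_{i+1}+t_i+u_{i+1}-\lambda_i$. So it suffices to prove
$$
\tilde u_{i+1}+u_{i+1}-\lambda_i=\tilde u_i .
$$
Equivalently, $n\ell_{i+1}^n+m\ell_{i+1}^m+\lambda_i=\tilde u_i$. I split into the two cases of Lemma~\ref{lemma:k_i}; they are symmetric, so I take $u_i=u_i^n$. Then $k_i^n=i-1$ and $\tilde u_i=u_i^m=\lambda_{i-1}+m\ell_i^m$. By Lemma~\ref{lemma:u_i}, $\lambda_i+n\ell_{i+1}^n=\lambda_{i-1}+mb_{i+1}$. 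By Proposition~\ref{prop:relaciones-limites-colimites}(a), $\ell_{i+1}^m+b_{i+1}=\ell_i^m$. Substituting gives
$$
\lambda_i+n\ell_{i+1}^n+m\ell_{i+1}^m=\lambda_{i-1}+m(b_{i+1}+\ell_{i+1}^m)=\lambda_{i-1}+m\ell_i^m=\tilde u_i .
$$
This is the same manipulation as in the proof of Lemma~\ref{lemma_limites}. Here, though, no strict inequality is lost, because we keep $\lambda_i$ instead of bounding it.

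In the case $u_i=u_i^m$, Proposition~\ref{prop:relaciones-limites-colimites}(b) together with $\lambda_i+m\ell_{i+1}^m=\lambda_{i-1}+na_{i+1}$ gives in the same way $\lambda_i+n\ell_{i+1}^n+m\ell_{i+1}^m=\lambda_{i-1}+n\ell_i^n=u_i^n=\tilde u_i$. The final assertion then follows by induction from (i) and (ii), combined with \eqref{eq:suma_u-t}.

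The main obstacle is the range $i=s$, that is, passing to $s+1$, and also $i=1$. The cited lemmas are stated for $1\le i\le s$, so I need the relations for $u_{s+1}$ to hold; Lemma~\ref{lemma:u_i} covers $i\le s$, so this is fine. For $i=1$ the bounds are $k_1^n=0$ and $k_1^m=k_0^m=-1$. I will check directly that Proposition~\ref{prop:relaciones-limites-colimites} applies with $a_1=m-1$, $b_1=1$ and $\ell_1^m=n-1$, consistently with Corollary~\ref{corollary_limites}. If needed, I will verify the $i=1$ instance by hand, using $\tilde u_1=m+m(n-1)$.
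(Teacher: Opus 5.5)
Your proof is correct and follows essentially the paper's route. It uses the same case split on whether $u_i=u_i^n$ or $u_i=u_i^m$, together with Lemma~\ref{lemma:k_i}, Lemma~\ref{lemma:u_i} and Proposition~\ref{prop:relaciones-limites-colimites}. The difference is minor: reducing to the identity $\lambda_i+n\ell_{i+1}^n+m\ell_{i+1}^m=\tilde u_i$, which is symmetric in the two axes at level $i+1$, lets you skip the paper's extra split on whether $u_{i+1}=u_{i+1}^n$ or $u_{i+1}^m$, and you use the relation $\ell_{i+1}^m+b_{i+1}=\ell_i^m$ where the paper uses $\ell_{i+1}^n+a_{i+1}=a_i$. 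Your worry about $i=1$ and $i=s$ is unnecessary, since the cited results are already stated for $1\le i\le s$ (and $0\le i\le s$ for Lemma~\ref{lemma:u_i}).
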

\begin{proof}
Note that $\tilde{u}_1=nm$ and $t_1=n+m$ and hence $\tilde{u}_1+t_1=nm+n+m$ which gives the first assertion. In order to prove the equality in (ii),
we have to consider the following cases:
\begin{itemize}
  \item[(a)] $(u_i,\tilde{u}_i)=(u_i^n,u_i^m)$ and
  \begin{itemize}
    \item[(a-1)] $(u_{i+1},\tilde{u}_{i+1})=(u_{i+1}^n,u_{i+1}^m)$
    \item[(a-2)] $(u_{i+1},\tilde{u}_{i+1})=(u_{i+1}^m,u_{i+1}^n)$
  \end{itemize}
  \item[(b)] $(u_i,\tilde{u}_i)=(u_i^m,u_i^n)$ and
  \begin{itemize}
    \item[(b-1)]  $(u_{i+1},\tilde{u}_{i+1})=(u_{i+1}^n,u_{i+1}^m)$
    \item[(b-2)] $(u_{i+1},\tilde{u}_{i+1})=(u_{i+1}^m,u_{i+1}^n)$
  \end{itemize}
\end{itemize}
In case (a), we have that
\begin{equation*}
  \begin{aligned}
  u_i & = u_i^n= \lambda_{i-1} + n \ell_i^n = \lambda_{k_{i-1}^n} + m b_i \\
  \tilde{u}_i & = u_i^m =\lambda_{i-1} + m \ell_{i}^m =\lambda_{k_{i-1}^m} + n a_i.
\end{aligned}
\end{equation*}
Moreover, $k_i^n=i-1$ and $k_i^m=k_{i-1}^m$ by Lemma~\ref{lemma:k_i} and $a_{i+1}+\ell_{i+1}^n=a_i$ by Proposition~\ref{prop:relaciones-limites-colimites}.

In case (a-1), the hypothesis $(u_{i+1},\tilde{u}_{i+1})  = (u_{i+1}^n,u_{i+1}^m)$ gives
\begin{equation*}
  \begin{aligned}
  u_{i+1} & = u_{i+1}^n= \lambda_{i} + n \ell_{i+1}^n = \lambda_{k_{i}^n} + m b_{i+1}\\
  \tilde{u}_{i+1} & = u_{i+1}^m =\lambda_{i} + m \ell_{i+1}^m =\lambda_{k_{i}^m} + n a_{i+1}
\end{aligned}
\end{equation*}
and we deduce that
\begin{align*}
\tilde{u}_{i+1} + t_{i+1} & =\lambda_{k_{i}^m} + n a_{i+1}+ t_i + n\ell_{i+1}^n \\
& = \lambda_{k_{i-1}^m} + n(a_{i+1}+\ell_{i+1}^n) + t_i \\
& = \lambda_{k_{i-1}^m} + n a_i + t_i=\tilde{u}_i + t_i.
\end{align*}
Now, in case (a-2), we have that
\begin{equation*}
  \begin{aligned}
  u_{i+1} & =   u_{i+1}^m =\lambda_{i} + m \ell_{i+1}^m =\lambda_{k_{i}^m} + n a_{i+1} \\
  \tilde{u}_{i+1} &= u_{i+1}^n = \lambda_{i} + n \ell_{i+1}^n = \lambda_{k_{i}^n} + m b_{i+1}
\end{aligned}
\end{equation*}
and we obtain that
\begin{align*}
\tilde{u}_{i+1} + t_{i+1} & =\lambda_{i} + n \ell_{i+1}^n+ t_i + \lambda_{k_{i}^m} + n a_{i+1} - \lambda_i \\
& =\lambda_{k_{i}^m} + n (\ell_{i+1}^n + a_{i+1}) + t_i  \\
& = \lambda_{k_{i-1}^m} + n  a_i + t_i
 =\tilde{u}_i + t_i.
\end{align*}
The proof in case (b) works in a similar way.
\end{proof}

Moreover, we have the following property of the axes
\begin{Lemma}\label{lemma:ui-ui+1}
Let $\Lambda$ be an increasing cuspidal semimodule. For any $1 \leq i \leq s$, we have
$$\tilde{u}_i \in (u_{i+1} + \Gamma) \cup (\tilde{u}_{i+1}+\Gamma).$$
\end{Lemma}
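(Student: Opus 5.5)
We prove $\tilde{u}_i \in (u_{i+1}+\Gamma)\cup(\tilde{u}_{i+1}+\Gamma)$ by the same case analysis as in Lemma~\ref{lemma:ui+ti}, expressing the axes through limits, colimits and bounds (Lemmas~\ref{lemma:u_i}, \ref{lemma:k_i} and Proposition~\ref{prop:relaciones-limites-colimites}). By symmetry between $n$ and $m$, we assume $u_i=u_i^n$, so $\tilde{u}_i=u_i^m$. Only the sets $\{u_{i+1},\tilde u_{i+1}\}=\{u_{i+1}^n,u_{i+1}^m\}$ and $\{u_i,\tilde u_i\}$ enter the statement, so the subcases (a-1) and (a-2) need not be separated. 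It is therefore enough to show that $\tilde u_i=u_i^m$ lies in $u_{i+1}^m+\Gamma$ or in $u_{i+1}^n+\Gamma$.

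With $u_i=u_i^n$, Lemma~\ref{lemma:k_i} gives $k_i^n=i-1$ and $k_i^m=k_{i-1}^m$. Proposition~\ref{prop:relaciones-limites-colimites}(a) then gives
$$\ell_{i+1}^n+a_{i+1}=a_i, \qquad \ell_{i+1}^m+b_{i+1}=\ell_i^m.$$
By Lemma~\ref{lemma:u_i}, the two representations of $\tilde u_i$ are
$$\tilde{u}_i=\lambda_{i-1}+m\ell_i^m=\lambda_{k_{i-1}^m}+na_i.$$
The same lemma gives
$$u_{i+1}^m=\lambda_i+m\ell_{i+1}^m=\lambda_{k_i^m}+na_{i+1}=\lambda_{k_{i-1}^m}+na_{i+1}.$$
Subtracting, $\tilde u_i-u_{i+1}^m=n(a_i-a_{i+1})=n\ell_{i+1}^n$. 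Since $\ell_{i+1}^n\ge 1$, this is a positive element of $\Gamma$, so $\tilde u_i\in u_{i+1}^m+\Gamma$ and we are done. Note that this route uses the colimit representations and never needs to compare $\lambda_i$ with $\lambda_{i-1}$.

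The case $u_i=u_i^m$ is symmetric. Here $k_i^m=i-1$ and $k_i^n=k_{i-1}^n$, and Proposition~\ref{prop:relaciones-limites-colimites}(b) gives $\ell_{i+1}^m+b_{i+1}=b_i$. Then
$$\tilde u_i=u_i^n=\lambda_{k_{i-1}^n}+mb_i, \qquad u_{i+1}^n=\lambda_{k_{i-1}^n}+mb_{i+1},$$
so $\tilde u_i-u_{i+1}^n=m\ell_{i+1}^m\in\Gamma$.

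The main point to check is that these relations hold for every $1\le i\le s$ as stated, including $i=1$. There $k_0^n,k_0^m$ and $a_1,b_1$ must be the values from Lemma~\ref{lemma:u_i} with $i=0$. The $i=1$ data are $u_1=u_1^n=n+m$, $\ell_1^n=b_1=1$, $\ell_1^m=n-1$, $a_1=m-1$ (computed in Corollary~\ref{corollary_limites}). Since $u_1=n+m$ lies in both $\lambda_0+n+\Gamma$ and $\lambda_{-1}+m+\Gamma$, one must confirm that Lemma~\ref{lemma:k_i} and Proposition~\ref{prop:relaciones-limites-colimites} indeed apply at $i=1$ with these choices. If they do not, the case $i=1$ can be verified by hand: $\tilde u_1=nm$ and $u_2^m=\lambda_{-1}+na_2$, so $nm-u_2^m=n(m-1-a_2)$, and it remains to see that $a_2<m-1$.
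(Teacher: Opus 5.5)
Your proof is correct. It uses the same case split as the paper ($u_i=u_i^n$ or $u_i=u_i^m$, via Lemmas~\ref{lemma:u_i} and~\ref{lemma:k_i}), but it closes the argument with a different identity.

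In the case $u_i=u_i^n$, the paper proceeds as follows:
\begin{itemize}
\item it uses the bound $k_i^n=i-1$ to write $u_{i+1}^n=\lambda_{i-1}+mb_{i+1}$;
\item it compares this with $\tilde u_i=\lambda_{i-1}+m\ell_i^m$;
\item it gets positivity of the difference from the ordering $\tilde u_i>u_{i+1}^n$ of Lemma~\ref{lema_t_i_u_i}.
\end{itemize}
So the paper lands in $u_{i+1}^n+\Gamma$.

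You instead use the other bound, $k_i^m=k_{i-1}^m$, together with the colimit relation $a_i-a_{i+1}=\ell_{i+1}^n$ of Proposition~\ref{prop:relaciones-limites-colimites}(a). You land in $u_{i+1}^m+\Gamma$ with the explicit gap $n\ell_{i+1}^n$.

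The trade-off: the paper's route does not need Proposition~\ref{prop:relaciones-limites-colimites}, but it does need the monotonicity of the axes. Yours needs no ordering at all. It also yields more. Since $u_{i+1}^m=\lambda_i+m\ell_{i+1}^m$, your identity reads $\tilde u_i=\lambda_i+n\ell_{i+1}^n+m\ell_{i+1}^m=u_{i+1}^n+m\ell_{i+1}^m$. Hence $\tilde u_i$ actually lies in both translates $u_{i+1}+\Gamma$ and $\tilde u_{i+1}+\Gamma$, not just in their union.

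Your worry about $i=1$ is unnecessary. Lemma~\ref{lemma:k_i} and Proposition~\ref{prop:relaciones-limites-colimites} are stated for $1\le i\le s$. At index $0$ the bounds are forced, $k_0^n=k_0^m=-1$, so your main argument already covers $i=1$. As written, your fallback leaves $a_2<m-1$ unproved. It follows at once from $u_2^m\le\tilde u_2<\tilde u_1=nm$ (Lemma~\ref{lema_t_i_u_i}), or from Proposition~\ref{prop:relaciones-limites-colimites}(a) at $i=1$.
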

\begin{proof}
By Lemma~\ref{lemma:k_i}, we have two possibilities $\tilde{u}_i=u_i^m$ and $k_i^n=i-1$ or $\tilde{u}_i=u_i^n$ and $k_i^m=i-1$. Both cases work in a similar way. Assume for instance that $\tilde{u}_i=u_i^m$ and $k_i^n=i-1$, hence
\begin{equation}\label{lemma:ui-ui+1-eq1}
\tilde{u}_i=u_i^m=\lambda_{i-1} + m \ell_i^n
\end{equation}
and by Lemma~\ref{lemma:u_i}, we can write
\begin{equation}\label{lemma:ui-ui+1-eq2}
u_{i+1}^n=\lambda_i + n \ell_{i+1}^n=\lambda_{i-1}+ m b_{i+1}.
\end{equation}
From Lemma~\ref{lema_t_i_u_i}, we obtain that $\tilde{u}_i>u_{i+1}^n$. Consequently, we deduce $\ell_i^n > b_{i+1}$ from the expressions given in equations~\eqref{lemma:ui-ui+1-eq1} and \eqref{lemma:ui-ui+1-eq2}. Hence, we can write
$$
\tilde{u}_i=u_{i+1}^n + m (\ell_{i}^n- b_{i+1}).
$$
The expression above gives the result since $u_{i+1}^n$ is equal to $u_{i+1}$ or to $\tilde{u}_{i+1}$.
\end{proof}
\subsection{Conductor of an increasing cuspidal semimodule}\label{ap:conductor}
 The objective of this section is to prove Lemma~\ref{lema_conductor_u_s+1} which states that the conductor $c(\Lambda)$ of an increasing cuspidal semimodule $\Lambda$ is equal to
\begin{equation}\label{eq:conductor-semimodulo}
c(\Lambda)=\tilde{u}_{s+1}-n-m+1.
\end{equation}
The computation of $c(\Lambda)$ is based on the results of \cite{Alm-M-2021} where the conductor $c(\Lambda)$ is computed in terms of the syzygy semimodule $\operatorname{Syz}(\Lambda)$ of $\Lambda$ and the generators of the semigroup $\Gamma$.

If $\Lambda$ is a cuspidal $\Gamma$-semimodule with basis $\mathcal{B}=(\lambda_{-1},\lambda_0,\lambda_1,\ldots,\lambda_s)$, the {\em syzygy}   $\operatorname{Syz}(\Lambda)$ of $\Lambda$ is the $\Gamma$-semimodule given by
$$
\operatorname{Syz}(\Lambda)= \bigcup_{-1 \leq i < j \leq s} ((\lambda_i + \Gamma) \cap (\lambda_j + \Gamma)).
$$
The semimodule $\operatorname{Syz}(\Lambda)$ consists of those elements in $\Lambda$ which admit more than
one presentation of the form $\lambda + \gamma$ with $\lambda \in \mathcal{B}$ and $\gamma \in \Gamma=\langle n,m\rangle$ (see \cite{Moy-U}). Moreover,  $\operatorname{Syz}(\Lambda)$ has a basis with $s+2$ elements (see  \cite[Theorem 4.3]{Moy-U}). Next result describes the basis of $\operatorname{Syz}(\Lambda)$ in terms of the axes of the semimodule $\Lambda$.
\begin{Proposition}\label{prop:base-siz}
Let $\Lambda$ be an increasing cuspidal $\Gamma$-semimodule with basis \linebreak $\mathcal{B}=(\lambda_{-1},\lambda_0,\lambda_1,\ldots,\lambda_s)$. Then
$$\mathcal{B}'=(u_1,u_2,\ldots, u_{s+1},\tilde{u}_{s+1})
$$
is the basis of $\operatorname{Syz}(\Lambda)$.
\end{Proposition}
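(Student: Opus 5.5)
The plan is to check three things: each proposed element lies in $\operatorname{Syz}(\Lambda)$, the proposed elements generate $\operatorname{Syz}(\Lambda)$ as a $\Gamma$-semimodule, and they are exactly $s+2$ distinct elements. Minimality then follows from the cardinality result of \cite[Theorem 4.3]{Moy-U}. Indeed, any generating set of a $\Gamma$-semimodule contains its basis, because basis elements are exactly the elements not lying in $\lambda+\Gamma\smallsetminus\{\lambda\}$ for any other element $\lambda$. So a generating set with $s+2$ distinct elements must coincide with the basis. Distinctness is immediate from Lemma~\ref{lema_t_i_u_i}, which gives $u_1<u_2<\cdots<u_{s+1}<\tilde u_{s+1}$.

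\emph{Membership.} By Lemma~\ref{lemma:u_i}, for $1\le i\le s+1$ we have $u_i^n=\lambda_{i-1}+n\ell_i^n=\lambda_{k_{i-1}^n}+mb_i$ with $k_{i-1}^n\le i-2$, and similarly $u_i^m=\lambda_{i-1}+m\ell_i^m=\lambda_{k_{i-1}^m}+na_i$. Thus $u_i^n$ and $u_i^m$ have two presentations with different basis elements, so they lie in $(\lambda_{k}+\Gamma)\cap(\lambda_{i-1}+\Gamma)\subset\operatorname{Syz}(\Lambda)$. In particular all the $u_i$, as well as $\tilde u_{s+1}$, belong to $\operatorname{Syz}(\Lambda)$. (The case $i=1$, with $u_1=n+m$ and $\tilde u_1=nm$, is checked directly.)

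\emph{Generation.} Let $\sigma\in(\lambda_i+\Gamma)\cap(\lambda_j+\Gamma)$ with $-1\le i<j\le s$, and write $\sigma=\lambda_j+na+mb$. Then $\sigma\in\Lambda_{j-1}$, so Lemma~\ref{Lemma:a-b-l} (for $j\ge1$; for $j=0$ a direct check modulo $n$ gives $a\ge1=\ell_1^n$ or $b\ge n-1=\ell_1^m$) yields $a\ge\ell_{j+1}^n$ or $b\ge\ell_{j+1}^m$. Hence
$$\sigma\in(u_{j+1}^n+\Gamma)\cup(u_{j+1}^m+\Gamma)=(u_{j+1}+\Gamma)\cup(\tilde u_{j+1}+\Gamma).$$
If $j=s$ we are done. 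Otherwise $u_{j+1}$ is already in $\mathcal B'$, and by Lemma~\ref{lemma:ui-ui+1} we have $\tilde u_{j+1}\in(u_{j+2}+\Gamma)\cup(\tilde u_{j+2}+\Gamma)$. Iterating this up to the index $s+1$ shows that $\tilde u_{j+1}$ lies in the $\Gamma$-semimodule generated by $u_{j+2},\dots,u_{s+1},\tilde u_{s+1}$. Therefore $\operatorname{Syz}(\Lambda)=\bigcup_{k=1}^{s+1}(u_k+\Gamma)\cup(\tilde u_{s+1}+\Gamma)$.

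The main point of care is the generation step. Lemma~\ref{Lemma:a-b-l} must be applied with the correct indices, including $j=s$ where the limits $\ell_{s+1}^n$ and $\ell_{s+1}^m$ are involved, together with the boundary case $j=0$. The descending chain via Lemma~\ref{lemma:ui-ui+1} also requires $\Lambda$ to be increasing, which holds by hypothesis. Once generation is established, the counting argument above finishes the proof.
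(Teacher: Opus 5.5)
Your proposal is correct and follows the same route as the paper. Membership of the axes in $\operatorname{Syz}(\Lambda)$ comes from their definition, and generation comes from writing a syzygy as $\lambda_j+na+mb$, applying Lemma~\ref{Lemma:a-b-l} to land in $(u_{j+1}+\Gamma)\cup(\tilde u_{j+1}+\Gamma)$, and then absorbing $\tilde u_{j+1}$ into the semimodule generated by $\mathcal{B}'$ via Lemma~\ref{lemma:ui-ui+1}. Your version is slightly more complete, since it makes explicit three points the paper's proof leaves implicit: the iteration of Lemma~\ref{lemma:ui-ui+1}, the boundary case $j=0$, and the minimality of $\mathcal{B}'$ (the count $s+2$ from \cite{Moy-U} together with the fact that every generating set contains the basis).
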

\begin{proof}
Let $\Lambda'$ be the $\Gamma$-semimodule generated by $\mathcal{B}'$, that is,
$$
\Lambda'= \bigcup_{i=1}^{s+1}(u_i + \Gamma) \cup (\tilde{u}_{s+1} + \Gamma).
$$
By definition of the axes of $\Lambda$,  we have that $u_1,u_2,\ldots, u_{s+1},\tilde{u}_{s+1} \in \operatorname{Syz}(\Lambda)$ and hence $\Lambda' \subset \operatorname{Syz}(\Lambda)$.

Consider now $\lambda \in \operatorname{Syz}(\Lambda)$. We can write
$$
\lambda = \lambda_i + \gamma_i = \lambda_j + \gamma_j, \qquad \text{ with } \gamma_i, \gamma_j \in \Gamma, \ \ j < i.
$$
In particular, we obtain that $\lambda_i + \gamma_i \in \Lambda_{i-1}$. If we write $\gamma_i=na + mb$, we have that  $a \geq \ell_{i+1}^n$ or $b \geq \ell_{i+1}^m$ by Lemma~\ref{Lemma:a-b-l}. Let us assume that $a \geq \ell_{i+1}^n$ (the other case works in a similar way). Since $u_{i+1}^n=\lambda_i + n \ell_{i+1}^n$, we can write
$$
\lambda = \lambda_i + n a + m b = u_{i+1}^n + n (a-\ell_{i+1}^n)+mb.
$$
Let us see that $u_{i+1}^n \in \Lambda'$. If $u_{i+1}=u_{i+1}^n$ or $i=s$, then $u_{i+1}^n$ belongs to the basis $\mathcal{B}'$ and we have the result. Hence, we only have to prove that $\tilde{u}_{i+1} \in \Lambda'$ when $\tilde{u}_{i+1}=u_{i+1}^n$ and $i < s$, but this is consequence of the fact
$\tilde{u}_i \in (u_{i+1} + \Gamma) \cup (\tilde{u}_{i+1} + \Gamma)$ proved in Lemma~\ref{lemma:ui-ui+1}.
\end{proof}
In \cite[Theorem 1]{Alm-M-2021}, it is shown that the conductor $c(\Lambda)$ is given by
$$
c(\Lambda)=M-n-m+1
$$
where $M$ is the greatest element of the basis of $\operatorname{Syz}(\Lambda)$. Proposition~\ref{prop:base-siz} implies that $M=\tilde{u}_{s+1}$ and we obtain the expression of the conductor given in equation~\eqref{eq:conductor-semimodulo}. This finishes the proof of Lemma~\ref{lema_conductor_u_s+1}.

Note that the expression of the conductor given in equation~\eqref{eq:conductor-semimodulo} does not hold for non-increasing cuspidal semimodules as shown in the following example.
\begin{Example}
Consider the semigroup $\Gamma=\langle 5,11\rangle$ and the $\Gamma$-semimodule $\Lambda$ with basis $\mathcal{B}=(5,11,18,19)$. The axes of $\Lambda$ are given by
$$
\begin{aligned}
u_1& =16,  & \tilde{u}_1&=55; \\
 u_2& =u_2^n=22, \ &\tilde{u}_2&=u_2^m=40; \\
 u_3& = u_3^n=29, \   &\tilde{u}_3&=u_3^m=30,
\end{aligned}
$$
and hence $\Lambda$ is non-increasing since $u_2>\lambda_2$. Computing the elements of $\Lambda$ we obtain that $c(\Lambda)=18$ which does not coincide with the number $\tilde{u}_3-n-m+1=15$.
\end{Example}
\section{Delorme's decomposition}
In this appendix, we recall  Delorme's decomposition following the ideas of \cite{Del,Can-C-SS-2023,Can-C-SS-2026}. Let $C$ be a cusp with Puiseux pair $(m,n)$ with $2 \leq n <m$ and $\gcd(n,m)=1$. We will use the notations introduced in Section~\ref{sec:cuspidal-semimodules} for the cuspidal semimodule $\Lambda_C$.

Consider a standard system $(\mathcal{E},\widetilde{\mathcal{E}})$  for the curve $C$. Recall that
$\mathcal{E}=(\omega_{-1},\omega_0,\omega_1,\ldots, \omega_{s+1})$ is a extended standard basis for $C$ where the 1-forms $\omega_i$ satisfy
\begin{align*}
  \nu_C(\omega_i) & =\lambda_i \qquad \text{ for }i=-1,0,1,\ldots,s, \qquad \nu_C(\omega_{s+1})=\infty\\
  \nu_E(\omega_i) & =t_i \qquad \text{ for } -1 \leq i \leq s+1,
\end{align*}
and $\widetilde{\mathcal{E}}=(\widetilde{\omega}_1,\ldots,\widetilde{\omega}_s,\widetilde{\omega}_{s+1})$ is a family of 1-forms such that
$$
\nu_E(\widetilde{\omega}_i)=\tilde{t}_i \text{  and } \nu_C(\widetilde{\omega}_i)=\infty  \text{ for } i=1,\ldots,s, s+1
$$
 as explained in Section~\ref{sec:cuspidal-semimodules}. Let us state a decomposition result  for 1-forms  which generalizes Delorme's decomposition (see \cite{Del}) and the decomposition given in \cite[Theorem 8.5]{Can-C-SS-2023}. We denote by $(t_\ell^*,u_\ell^*)$ either $(t_\ell,u_\ell)$ or $(\tilde{t}_\ell,\tilde{u}_\ell)$. Moreover, we denote $k_i^*$ the bounds corresponding to the axes $u_{i+1}^*$, that is,
 $$k_{i}^*=\left\{
         \begin{array}{ll}
           k_i^n, & \hbox{ if } u_{i+1}^*=u_{i+1}^n\\
           k_i^m, & \hbox{ if }u_{i+1}^*=u_{i+1}^m
         \end{array}
       \right.
\qquad
 k_{i}=\left\{
         \begin{array}{ll}
           k_i^n, & \hbox{ if } u_{i+1}=u_{i+1}^n\\
           k_i^m, & \hbox{ if }u_{i+1}=u_{i+1}^m
         \end{array}
       \right.
  $$

\begin{Theorem}[Theorem 3.11, Corollary 3.14 in \cite{Can-C-SS-2026}]\label{Th:descomp-Delorme}
Let $\omega$ be a 1-form with $\nu_E(\omega)=t_{\ell}^*$ and $\nu_C(\omega)>u_{\ell}^*$ with $\ell \in \{1,2,\ldots,s,s+1\}$. Given any $j \in \{0,1,\ldots,\ell-1\}$, the 1-form $\omega$ can be written as
$$
\omega=\sum_{i=-1}^{j} h_i \omega_i
$$
with the following properties: if we denote $v_i^*=\nu_C(h_i \omega_i)$ for $i \in \{-1,0,1,\ldots,j\}$, then
\begin{itemize}
  \item[(i)] $v_j^*=\min \{\nu_C(h_i \omega_i) \ : \ -1 \leq i < j\}$;
  \item[(ii)] $v_j^*=\lambda_j + t_\ell^*-t_j$. In particular, in the case we take $j=\ell-1$, we get $v_{\ell-1}^*=\lambda_{\ell-1}+t^*_\ell-t_{\ell-1}=u_\ell^*$;
  \item[(iii)] if $j < \ell-1$, we have that  $\nu_C(h_j \omega_j)= \nu_C(h_{k_j} \omega_{k_j}) < \nu_C(h_i \omega_i)$ for any $i \neq k_j$, with $-1 \leq i \leq j-1$;
 \item[(iv)] if $j=\ell-1$, we have that $\nu_C(h_{\ell-1} \omega_{\ell-1})=\nu_C(h_{k_j^*} \omega_{k_j^*})=u_\ell^*$ and $\nu_C(h_{i} \omega_i)> u_\ell^*$ for $i \neq k_j^*$ and $-1 \leq i < \ell-1$.
\end{itemize}
 Moreover, we have that $\nu_E(\omega)=\nu_E(h_j \omega_j)<\nu_E(h_i \omega_i)$ for all $-1 \leq i <j$ and hence
 $\operatorname{In}(\omega)=\operatorname{In} (h_j \omega_j)$.
\end{Theorem}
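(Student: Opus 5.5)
The plan is to prove the statement first for $j=\ell-1$ by a successive-approximation argument, and then to obtain every smaller $j$ by a descending induction that substitutes the decomposition of $\omega_{j+1}$ (the $j=\ell-1$ case applied with $\ell$ replaced by $j+1$) into the term $h_{j+1}\omega_{j+1}$.

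\emph{Step 1 (the case $j=\ell-1$).} Since $\nu_E(\omega)=t_\ell^*=t_{\ell-1}+u_\ell^*-\lambda_{\ell-1}$ and $\nu_C(\omega)>u_\ell^*$, I expect $\omega$ to be reachable from $\omega_{\ell-1}$ through the monomial $x^{\ell_\ell^n}$ or $y^{\ell_\ell^m}$, depending on whether $u_\ell^*=u_\ell^n$ or $u_\ell^m$. This is the same mechanism used in the proof of Lemma~\ref{lema_eta_resonant-omega_i}, together with Theorem~\ref{th_caracterizacion-omega-i}(3). The justification would be a descending induction on $\ell$: if $\omega$ were reachable from some $\omega_i$ with $i<\ell-1$, then the uniqueness of the pair in Theorem~\ref{th_caracterizacion-omega-i}(3) would force $\nu_C(\omega)\le u_\ell^*$. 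So I take the first coefficient $h_{\ell-1}=\mu x^{\ell_\ell^n}$ (or $\mu y^{\ell_\ell^m}$), which gives $\nu_C(h_{\ell-1}\omega_{\ell-1})=u_\ell^*$. Put $\omega'=\omega-h_{\ell-1}\omega_{\ell-1}$. Then $\nu_E(\omega')>t_\ell^*$ and $\nu_C(\omega')=u_\ell^*$, because $\nu_C(\omega)>u_\ell^*$. By Lemma~\ref{lemma:u_i}, $u_\ell^*=\lambda_{k^*}+mb_\ell$ or $\lambda_{k^*}+na_\ell$ with $k^*=k_{\ell-1}^*$. So I subtract a monomial multiple of $\omega_{k^*}$ to cancel the $\nu_C$-leading term; this is exactly what produces the equality in (iv). From there I keep cancelling leading differential values: each remainder has $\nu_C$ in $\Lambda_{\ell-2}$, and I subtract a suitable $g\,\omega_i$ with $i\le \ell-2$ chosen via Theorem~\ref{th_caracterizacion-omega-i}. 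The differential values strictly increase, and once they pass the conductor everything can be absorbed into the coefficients of $\omega_{-1}=dx$ and $\omega_0=dy$. I would make the process converge in the $\mathfrak m$-adic topology by checking that the $\nu_E$ of the corrections also tends to infinity. All corrections after the first have $\nu_C>u_\ell^*$, which gives (iv). Their $\nu_E$ is strictly greater than $t_\ell^*$; to see this I would use that $\nu_E(g\omega_i)\ge \nu_E(g)+t_i$, and that for $\nu_C<nm$ the coefficient satisfies $\nu_E(g)=\nu_C(g)$ (Lemma~\ref{lemma_nu_h_omega}), together with Lemma~\ref{lema_t_i_u_i}(iii). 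This gives the final claim $\operatorname{In}(\omega)=\operatorname{In}(h_{\ell-1}\omega_{\ell-1})$.

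\emph{Step 2 (descending induction on $j$).} Suppose the statement holds for $j+1\le \ell-1$, so $\omega=\sum_{i\le j+1}h_i\omega_i$. Apply Step 1 to $\omega_{j+1}$, which is legitimate because $\nu_E(\omega_{j+1})=t_{j+1}$ and $\lambda_{j+1}>u_{j+1}$ since $\Lambda_C$ is increasing. Write $\omega_{j+1}=\sum_{i\le j}g_i\omega_i$, multiply by $h_{j+1}$, and regroup. The new $j$-th term has value $\nu_C(h_{j+1})+u_{j+1}$, which by (ii) for $j+1$ equals
\[
t_\ell^*-t_{j+1}+u_{j+1}=\lambda_j+t_\ell^*-t_j .
\]
This is (ii). Moreover $\nu_C(h_j\omega_j)=\nu_C(h_{k_j}\omega_{k_j})$ by (iv) for $\omega_{j+1}$, which gives (iii).

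\emph{Main obstacle.} The hard part is guaranteeing that, after regrouping, the old terms $h_i\omega_i$ for $i\le j$ do not cancel against, or go below, the new minimal value $\lambda_j+t_\ell^*-t_j$. For this I would rely on Lemma~\ref{lema-lambda-t-u} and Lemma~\ref{lema_t_i_u_i}(iii), which say that the old values exceed $u$-type bounds strictly larger than $\lambda_j+t_\ell^*-t_j$. The analogous $\nu_E$ comparisons, needed for the statement about the initial part, would be handled as in the proof of Lemma~\ref{lemma_t_i*_lambda_i}.
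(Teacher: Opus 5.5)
The paper does not prove this statement; it imports it from the cited source, so your argument has to stand on its own.

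\textbf{Step 2 is sound.} The obstacle you single out there is actually immediate. After substituting the expansion of $\omega_{j+1}$, every old term has $\nu_C\ge\lambda_{j+1}+t^*_\ell-t_{j+1}$. By Lemma~\ref{lema_t_i_u_i}(iii) this strictly exceeds the new minimum $\lambda_j+t^*_\ell-t_j$. Likewise, for $\ell\ge 2$, reachability of $\omega$ from $\omega_{\ell-1}$ needs no induction: $\omega$ is resonant because $\nu_C(\omega)>u^*_\ell\ge t^*_\ell$, and $t^*_\ell<nm$ has a unique representation $na+mb$.

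\textbf{The genuine gap is the rest of Step 1.} After your two explicit subtractions (monomial multiples of $\omega_{\ell-1}$ and $\omega_{k^*}$), the remainder $\rho$ is only known to satisfy $\nu_E(\rho)>t^*_\ell$ and $\nu_C(\rho)>u^*_\ell$. Nothing places its leading value in $\Lambda_{\ell-2}$, or even in $\Lambda_{\ell-1}$.

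First, values outside $\Lambda_{\ell-1}$ occur. Values in $\lambda_{\ell-1}+\Gamma$ only force you to reuse $\omega_{\ell-1}$, which is harmless. But values in $\Lambda\smallsetminus\Lambda_{\ell-1}$ also occur: replace $\omega$ by $\omega+c\,\omega_{\ell+1}$ with $c$ generic, when $\ell<s$ and $t^*_\ell=t_\ell$, and the greedy process eventually reaches the value $\lambda_{\ell+1}$. At that point no monomial multiple of $\omega_{-1},\dots,\omega_{\ell-1}$ cancels the leading term. Theorem~\ref{th_caracterizacion-omega-i}(3) only tells you such a remainder is governed by some $\omega_p$ with $p\ge\ell$, which you are not allowed to use.

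Second, a $\nu_C$-driven cancellation cannot recover $\omega$ itself. The corrections converge to some $\sigma$ with $\omega-\sigma\in\Omega^1(C)$, and this need not vanish: adding $f\,dx$ to $\omega$, with $f=0$ an equation of $C$, is invisible to the process. Absorbing such a leftover into $h_{-1},h_0$ with all values $>u^*_\ell$ requires control of $\nu_E$ of the leftover itself. Knowing that the corrections have large $\nu_E$ does not provide this.

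\textbf{The missing idea is to drive the iteration by $\nu_E$, which makes it finite.}

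If $\nu_E(\rho)>u^*_\ell$, write $\rho=A\omega_{-1}+B\omega_0$. Since $\operatorname{In}(\omega_{-1})$ and $\operatorname{In}(\omega_0)$ are multiples of $dx$ and $dy$, you get $\nu_E(\rho)=\min\{\nu_E(A\omega_{-1}),\nu_E(B\omega_0)\}$. Hence both terms have $\nu_C\ge\nu_E>u^*_\ell$, and no convergence or invariant-form issue arises.

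It remains to handle the finitely many levels $e=\nu_E(\rho)\in(t^*_\ell,u^*_\ell]$. There the condition $\nu_C(\rho)>u^*_\ell\ge e$ forces $\rho$ to be resonant. Take $i\le\ell-1$ maximal such that $\rho$ is reachable from $\omega_i$, say via $x^ay^b$.
\begin{itemize}
\item If $i=\ell-1$, the term $x^ay^b\omega_{\ell-1}$ has value $\lambda_{\ell-1}+e-t_{\ell-1}>u^*_\ell$, so you may subtract it.
\item If $i<\ell-1$, the argument of Lemma~\ref{lema_eta_resonant-omega_i} gives two possibilities (reachability from $\omega_{i+1}$ is excluded by maximality). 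Either $\nu_C(x^ay^b\omega_i)\ge\nu_C(\rho)$, and you subtract that term. Or $\rho$ is reachable from $\widetilde\omega_{i+1}$. In that case subtract a monomial multiple of $\widetilde\omega_{i+1}$ and expand it using the statement at level $i+1<\ell$. This requires an induction on $\ell$ covering both families $\omega_\ell$ and $\widetilde\omega_\ell$. The resulting terms have values $\ge\tilde u_{i+1}>\tilde u_\ell\ge u^*_\ell$ by Lemma~\ref{lema_t_i_u_i}(ii).
\end{itemize}
Every step raises $\nu_E$, keeps $\nu_C>u^*_\ell$, and only adds terms with $\nu_E>t^*_\ell$. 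This also yields the initial-part claim.

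\textbf{That claim cannot hold for $j=0$.} $\operatorname{In}(\omega)$ is resonant, so it has nonzero $dx$- and $dy$-components, while $\operatorname{In}(h_0\omega_0)$ is a multiple of $dy$. Hence $\nu_E(h_{-1}\omega_{-1})=\nu_E(h_0\omega_0)$. Your descending induction should therefore carry the initial-part statement only down to $j=1$. Similarly, for $\ell=1$ your first subtraction never raises $\nu_E$; there the decomposition is simply the unique one in the basis $\omega_{-1},\omega_0$.
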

\begin{Remark}
 Note that we have that $v_j^*<u_\ell^*$ by Lemma~\ref{lema-lambda-t-u} when $j < \ell-1$.
\end{Remark}

\end{document}